\newtheorem{theorem}{Theorem}
\newtheorem{lemma}{Lemma}
\newtheorem{fact}{Fact}
\newtheorem{definition}{Definition}
\newtheorem{corollary}{Corollary}
\newtheorem{proposition}{Proposition}
\newtheorem{problem}{Problem}
\newcommand{\defeq}{:=}
\newcommand{\norm}[1]{\left\lVert#1\right\rVert}
\newcommand{\inprod}[2]{\left\langle#1, #2\right\rangle}
\newcommand{\eps}{\epsilon}
\newcommand{\lam}{\lambda}
\newcommand{\argmin}{\textup{argmin}} 
\newcommand{\R}{\mathbb{R}}
\newcommand{\N}{\mathbb{N}}
\newcommand{\gK}{\mathcal{K}}
\newcommand{\half}{\frac{1}{2}}
\newcommand{\E}{\mathbb{E}}
\newcommand{\Nor}{\mathcal{N}}
\newcommand{\xset}{\mathcal{X}}
\newcommand{\id}{\mathbf{I}}
\definecolor{burntorange}{rgb}{0.8, 0.33, 0.0}
\newcommand{\kjtian}[1]{\textcolor{burntorange}{\textbf{kjtian:} #1}}
\newcommand{\yujia}[1]{\textcolor{purple}{\textbf{yujia:} #1}}
\newcommand{\daogao}[1]{\textcolor{blue}{\textbf{daogao:} #1}}
\newcommand{\sidford}[1]{{\textcolor{green!70!black}{\textbf{sidford: } #1}}}
\newcommand{\yintat}[1]{\textcolor{pink}{\textbf{yintat:} #1}}
\newcommand{\arun}[1]{\textcolor{red}{\textbf{arun:} #1}}
\newcommand{\yair}[1]{\textcolor{cyan!80!black}{\textbf{yair:} #1}}
\newcommand{\tO}{\widetilde{O}}
\newcommand{\Prox}{\textup{Prox}}
\newcommand{\Par}[1]{\left(#1\right)}
\newcommand{\Brack}[1]{\left[#1\right]}
\newcommand{\Brace}[1]{\left\{#1\right\}}
\newcommand{\Abs}[1]{\left|#1\right|}
\newcommand{\alg}{\mathcal{A}}
\newcommand{\mech}{\mathcal{M}}
\newcommand{\data}{\mathcal{D}}
\newcommand{\ptot}{\eps_{\textup{opt}}}
\newcommand{\hf}{\widehat{f}}
\newcommand{\hx}{\widehat{x}}
\newcommand{\bx}{\bar{x}}
\newcommand{\tnbx}{\widetilde{\nabla}_{\bar{x}}}
\newcommand{\msig}{\boldsymbol{\Sigma}}
\newcommand{\dist}{\mathcal{P}}
\newcommand{\calS}{\mathcal{S}}
\newcommand{\Fpop}{f^{\textup{pop}}}
\newcommand{\Hpop}{H^{\textup{pop}}}
\newcommand{\Herm}{H^{\textup{erm}}}
\newcommand{\Ferm}{f^{\textup{erm}}}
\newcommand{\ball}{\mathbb{B}}
\newcommand{\hT}{\widehat{T}}
\newcommand{\set}{\mathcal{K}}
\newcommand{\proj}{\Pi}
\newcommand{\by}{\bar{y}}
\newcommand{\xsbx}{x^\star_{\bar{x}}}
\newcommand{\xsbxl}{x^\star_{\bar{x}, \lam}}
\newcommand{\ind}{\mathcal{I}}
\newcommand{\event}{\mathcal{E}}
\newcommand{\Dtv}{D_{\textup{TV}}}
\newcommand{\symsum}{\sum_{\textup{sym}}}
\newcommand{\Geom}{\textup{Geom}}
\newcommand{\jmax}{j_{\max}}
\newcommand{\Tmax}{T_{\max}}
\newcommand{\Csc}{C_{\textup{sc}}}
\newcommand{\Cbias}{C_{\textup{bias}}}
\newcommand{\Cvar}{C_{\textup{var}}}
\newcommand{\Ccvx}{C_{\textup{cvx}}}
\newcommand{\Cpriv}{C_{\textup{priv}}}
\newcommand{\Cba}{C_{\textup{ba}}}
\newcommand{\Ols}{\oracle_{\textup{ls}}}
\newcommand{\Obo}{\oracle_{\textup{bo}}}
\newcommand{\Opg}{\oracle_{\textup{sp}}}
\newcommand{\Ohp}{\oracle_{\textup{hp}}}
\newcommand{\lams}{\lam_\star}
\newcommand{\ballacc}{\mathsf{BallAccel}}
\newcommand{\agg}{\mathsf{Aggregate}}
\renewcommand{\d}{\mathrm{d}}
\newcommand{\hnabla}{\widehat{\nabla}}
\newcommand{\Cls}{C_{\textup{ls}}}
\newcommand{\xopt}{x^\star}
\newcommand{\epsdp}{\eps_{\textup{dp}}}
\newcommand{\epsopt}{\eps_{\textup{opt}}}
\newcommand{\oracle}{\mathcal{O}}
\newcommand{\oracleGrad}{\mathcal{O}}
\newcommand{\grad}{\nabla}
\newcommand{\epochSGD}{\mathsf{EpochSGD}}
\newcommand{\NAGD}{\mathsf{AC}\textup{-}\mathsf{SA}}
\newcommand{\ag}{\mathsf{ag}}
\newcommand{\md}{\mathsf{md}}
\newcommand{\ERM}{\mathrm{erm}}
\newcommand{\SCO}{\mathrm{sco}}
\title{ReSQueing Parallel and Private Stochastic Convex Optimization}
\author{Yair Carmon\thanks{Tel Aviv University, \texttt{ycarmon@tauex.tau.ac.il}.}\\
	\and 
	Arun Jambulapati\thanks{University of Washington, \texttt{\{jmblpati, dgliu\}@uw.edu}.}\\
	\and 
	Yujia Jin\thanks{Stanford University, \texttt{\{yujiajin, sidford\}@stanford.edu}.}\\
	\and
	Yin Tat Lee\thanks{Microsoft Research, \texttt{\{yintatlee, tiankevin\}@microsoft.com}.}\\
	\and
	Daogao Liu\footnotemark[2]\\
	\and
	Aaron Sidford\footnotemark[3]\\
	\and
	Kevin Tian\footnotemark[4]\\
}
\date{}
\begin{document}

\maketitle

\begin{abstract}
We introduce a new tool for stochastic convex optimization (SCO): a Reweighted Stochastic Query (ReSQue) estimator for the gradient of a function convolved with a (Gaussian) probability density. Combining ReSQue with recent advances in \emph{ball oracle acceleration} \cite{CarmonJJJLST20, AsiCJJS21}, we develop algorithms achieving state-of-the-art complexities for SCO in parallel and private settings. For a SCO objective constrained to the unit ball in $\R^d$, we obtain the following results (up to polylogarithmic factors).
\begin{enumerate}
    \item We give a parallel algorithm obtaining optimization error $\epsopt$ with $d^{1/3}\epsopt^{-2/3}$ gradient oracle query depth and $d^{1/3}\epsopt^{-2/3} + \epsopt^{-2}$ gradient queries in total, assuming access to a bounded-variance stochastic gradient estimator. For $\epsopt \in [d^{-1}, d^{-1/4}]$, our algorithm matches the state-of-the-art oracle depth of \cite{BubeckJLLS19} while maintaining the optimal total work of stochastic gradient descent.
    \item Given $n$ samples of Lipschitz loss functions, prior works \cite{bftt19, BFGT20, AFKT21, KLL21} established that if $n \gtrsim d \epsdp^{-2}$, $(\epsdp, \delta)$-differential privacy is attained at no asymptotic cost to the SCO utility. However, these prior works all required a superlinear number of gradient queries. We close this gap for sufficiently large $n \gtrsim d^2 \epsdp^{-3}$, by using ReSQue to design an algorithm with near-linear gradient query complexity in this regime.
\end{enumerate}
\end{abstract}

\newpage
\pagenumbering{gobble}
	\setcounter{tocdepth}{2}
	{
		\tableofcontents
	}
	\newpage
	\pagenumbering{arabic}

\section{Introduction}
\label{sec:intro}

Stochastic convex optimization (SCO) is a foundational problem in optimization theory, machine learning, theoretical computer science, and modern data science. Variants of the problem underpin a wide variety of applications in machine learning, statistical inference, operations research, signal processing, and control and systems engineering \cite{Shalev-Shwartz07, Shalev-ShwartzBD14}. Moreover, SCO provides a fertile ground for the design and analysis of scalable optimization algorithms such as the celebrated stochastic gradient descent (SGD), which is ubiquitous in machine learning practice \cite{Bottou12}.

SGD approximately minimizes a function $f:\R^d \to \R$ by iterating $x_{t+1} \gets x_t - \eta g(x_t)$, where $g(x_t)$ is an unbiased estimator to a (sub)gradient of $f$ at iterate $x_t$. When $f$ is convex,  $\E\norm{g(x)}^2\le 1$ for all $x$ and $f$ is minimized at $x^\star$ in the unit ball, SGD finds an $\epsopt$-optimal point (i.e.\ $x$ satisfying $\E f(x) \le f(x^\star) + \epsopt$) using $O(\epsopt^{-2})$ stochastic gradient evaluations~\cite{Bubeck15}. This complexity is unimprovable without further assumptions~\cite{D18}; for sufficiently large $d$, this complexity is optimal even if $g$ is an exact subgradient of $f$~\cite{DG19}.

Although SGD is widely-used and theoretically optimal in this simple setting, the algorithm in its basic form has natural limitations. For example, when parallel computational resources are given (i.e., multiple stochastic gradients can be queried in batch), SGD has suboptimal sequential depth in certain regimes \cite{DBM12,BubeckJLLS19}.
Furthermore, standard SGD is not differentially private, and existing private\footnote{Throughout this paper, when we use the description ``private'' without further description we always refer to differential privacy \cite{DR14}. For formal definitions of differential privacy, see Section~\ref{ssec:privacy}.} SCO algorithms are not as efficient as SGD in terms of  gradient evaluation complexity \cite{BassilyST14, bftt19, FKT20, BFGT20, AFKT21, KLL21}.   Despite substantial advances in both the parallel and private settings, the optimal complexity of each SCO problem remains open (see Sections~\ref{sec:intro:parallel} and~\ref{sec:intro:dp} for more precise definitions of problem settings and the state-of-the-art rates, and Section~\ref{sec:intro:related_work} for a broader discussion of related work).

Though seemingly disparate at first glance, in spirit parallelism and privacy impose similar constraints on effective algorithms.  Parallel algorithms must find a way to query the oracle multiple times (possibly at multiple points) without using the oracle's output at these points to determine where they were queried. In other words, they cannot be too reliant on a particular outcome to adaptively choose the next query. Likewise, private algorithms must make optimization progress without over-relying on any individual sample to determine the optimization trajectory. In both cases, oracle queries must be suitably robust to preceding oracle outputs. 

In this paper, we provide a new stochastic gradient estimation tool which we call \emph{Reweighted Stochastic Query (ReSQue) estimators} (defined more precisely in Section~\ref{sec:intro:approach}). ReSQue is essentially an efficient parallel method for computing an unbiased estimate of the gradient of a convolution of $f$ with a continuous (e.g.\ Gaussian) kernel. These estimators are particularly well-suited for optimizing a convolved function over small Euclidean balls, as they enjoy improved stability properties over these regions. In particular, these local stability properties facilitate tighter control over the stability of SGD-like procedures. We show that careful applications of ReSQue in conjunction with recent advances in accelerated ball-constrained optimization \cite{CarmonJJJLST20,AsiCJJS21} yield complexity improvements for both parallel and private SCO. 
\paragraph{Paper organization.} In Sections~\ref{sec:intro:parallel} and~\ref{sec:intro:dp} respectively, we formally describe the problems of parallel and private SCO we study, stating our results and contextualizing them in the prior literature. We then cover additional related work in \Cref{sec:intro:related_work} and, in \Cref{sec:intro:approach}, give an overview of our approach to obtaining these results. In Section~\ref{sec:notation}, we describe the notation we use throughout.

In Section~\ref{ssec:estimator} we introduce our ReSQue estimator and prove some of its fundamental properties. In Section~\ref{sec:ball-constrained} we describe our adaptation of the ball acceleration frameworks of \cite{AsiCJJS21, CarmonH22}, reducing SCO to minimizing the objective over small Euclidean balls, subproblems which are suitable for ReSQue-based stochastic gradient methods. Finally, in Sections~\ref{sec:parallel} and~\ref{sec:subproblem}, we prove our main results for parallel and private SCO (deferring problem statements to Problem~\ref{prob:sco_gen} and Problem~\ref{prob:sco_basic}), respectively, by providing suitable implementations of our ReSQue ball acceleration framework.

\subsection{Parallelism}
\label{sec:intro:parallel}

In Section~\ref{sec:parallel} we consider the following formulation of the SCO problem, simplified for the purposes of the introduction.  We assume there is a convex function $f: \R^d \to \R$ whose minimizer lies in the Euclidean ball with unit radius, which can be queried through a \emph{stochastic gradient oracle} $g$, satisfying $\E g \in \partial f$ and $\E \norm{g}^2 \le 1$. We wish to minimize $f$ to expected additive error $\epsopt$. In the standard sequential setting, SGD achieves this goal using roughly $\epsopt^{-2}$ queries to $g$; as  previously mentioned, this complexity is optimal. A generalization of this formulation is restated in Problem~\ref{prob:sco_gen} with a variance bound $L^2$ and a radius bound $R$, which are both set to $1$ here.

In settings where multiple machines can be queried simultaneously, the parallel complexity of an SCO algorithm is a further important measure for consideration. In \cite{Nem94}, this problem was formalized in the setting of oracle-based convex optimization, where the goal is to develop iterative methods with a  number of parallel query batches to $g$. In each batch, the algorithm can submit polynomially many queries to $g$ in parallel, and then perform computations (which do not use $g$) on the results. The \emph{query depth} of a parallel algorithm in the \cite{Nem94} model is the number of parallel rounds used to query $g$, and was later considered in stochastic algorithms~\cite{DBM12}. Ideally, a parallel SCO algorithm will also have bounded \emph{total queries} (the number of overall queries to $g$), and bounded \emph{computational depth}, i.e., the parallel depth used by the algorithm treating the depth of each oracle query as $O(1)$. We discuss these three complexity measures more formally in Section~\ref{ssec:parallel_prelims}.

In the low-accuracy regime $\epsopt \ge d^{-1/4}$, recent work \cite{BubeckJLLS19} showed that SGD indeed achieves the optimal oracle query depth among parallel algorithms.\footnote{We omit logarithmic factors when discussing parameter regimes throughout the introduction.} Moreover, in the high-accuracy regime $\epsopt \le d^{-1}$, cutting plane methods (CPMs) by e.g.\ \cite{KTE88} (see \cite{JiangLSW20} for an updated overview) achieve the state-of-the-art oracle query depth of $d$, up to logarithmic factors in $d, \epsopt$. %

In the intermediate regime $\epsopt \in [d^{-1}, d^{-1/4}]$, \cite{DBM12, BubeckJLLS19} designed algorithms with oracle query depths that improved upon SGD, as summarized in Table~\ref{tab:parallel}. In particular, \cite{BubeckJLLS19} obtained an algorithm with query depth $\widetilde{O} (d^{1/3} \epsopt^{-2/3})$, which they conjectured is optimal for intermediate $\epsopt$. However, the total oracle query complexity of \cite{BubeckJLLS19} is $\widetilde{O} ( d^{4/3}\epsopt^{-8/3})$, a (fairly large) polynomial factor worse than SGD.

\paragraph{Our results.} The main result of Section~\ref{sec:parallel} is a pair of improved parallel algorithms in the setting of Problem~\ref{prob:sco_gen}. Both of our algorithms achieve the ``best of both worlds'' between the \cite{BubeckJLLS19} parallel algorithm and SGD, in that their oracle query depth is bounded by  $\widetilde{O}(d^{1/3}\epsopt^{-2/3})$ (as in \cite{BubeckJLLS19}), but their total query complexity matches  SGD's in the regime $\epsopt \le d^{-1/4}$. We note that $\epsopt \le d^{-1/4}$ is the regime where a depth of $\widetilde{O} (d^{1/3}\epsopt^{-2/3})$ improves upon \cite{DBM12} and SGD. Our guarantees are formally stated in Theorems~\ref{thm:parallel-sgd} and~\ref{thm:parallel-agd}, and summarized in Table~\ref{tab:parallel}.
\begin{table}
    \centering
    \renewcommand{\arraystretch}{1.75}
    \begin{tabular}{{c}{c}{c}{c}}
    \toprule
      Method   &  $g$ query depth & computational depth & \# $g$ queries \\
      \midrule
       SGD~\cite{Nesterov18}  & $\eps^{-2}$ & $\eps^{-2}$ & $\eps^{-2}$\\
       \cite{DBM12} & $d^{\frac{1}{4}}\eps^{-1}$ & $d^{\frac{1}{4}}\eps^{-1}$ & $d^{\frac{1}{4}}\eps^{-1}+\eps^{-2}$\\
       \cite{BubeckJLLS19} & $d^{\frac{1}{3}}\eps^{-\frac{2}{3}}$ & $d^{\frac{4}{3}}\eps^{-\frac{8}{3}}$ & $d^{\frac{4}{3}}\eps^{-\frac{8}{3}}$ \\
       CPM~\cite{KTE88} & $d$ & $d$ & $d$ \\
       \midrule
       $\ballacc+\epochSGD$ (\Cref{thm:parallel-sgd}) & $d^{\frac{1}{3}}\eps^{-\frac{2}{3}}$ & $d^{\frac{1}{3}}\eps^{-\frac{2}{3}}+\eps^{-2}$ & $d^{\frac{1}{3}}\eps^{-\frac{2}{3}}+\eps^{-2}$ \\
        $\ballacc+\NAGD$ (\Cref{thm:parallel-agd}) & $d^{\frac{1}{3}}\eps^{-\frac{2}{3}}$ & $d^{\frac{1}{3}}\eps^{-\frac{2}{3}}+d^{\frac{1}{4}}\eps^{-1}$ & $d^{\frac 1 3}\eps^{-\frac 2 3}+\eps^{-2}$ \\
       \bottomrule
    \end{tabular}
    \caption{\textbf{Comparison of parallel SCO results.} The complexity of finding a point with expected error $\eps \defeq \epsopt$ in Problem~\ref{prob:sco_gen}, where $L = R = 1$. We hide polylogarithmic factors in $d$ and $\eps^{-1}$. }
    \label{tab:parallel}
\end{table}

Our first algorithm (Theorem~\ref{thm:parallel-sgd}) is based on a batched SGD using our ReSQue estimators, within the ``ball acceleration'' framework of \cite{AsiCJJS21} (see Section~\ref{sec:intro:approach}). By replacing SGD with an accelerated counterpart \cite{GhadimiL12}, we obtain a further improved \emph{computational depth} in Theorem~\ref{thm:parallel-agd}. %
Theorem~\ref{thm:parallel-agd} simultaneously achieves the query depth of \cite{BubeckJLLS19}, the computational depth of \cite{DBM12}, and the total query complexity of SGD in the intermediate regime $\epsopt \in [d^{-1}, d^{-1/4}]$.

\subsection{Differential privacy}
\label{sec:intro:dp}

Differential privacy (DP) is a mathematical quantification for privacy risks in algorithms involving data. When performing stochastic convex optimization with respect to a sampled dataset from a population, privacy is frequently a natural practical desideratum \cite{BassilyST14, EPK14, Abo16, Apple17}. For example, the practitioner may want to privately learn a linear classifier or estimate a regression model or a statistical parameter from measurements.

In this paper, we obtain improved rates for private SCO in the following model, which is standard in the literature and restated in Problem~\ref{prob:sco_basic} in full generality. Symmetrically to the previous section, in the introduction, we only discuss the specialization of Problem~\ref{prob:sco_basic} with $L = R = 1$, where $L$ is a Lipschitz parameter and $R$ is a domain size bound. We assume there is a distribution $\dist$ over a population $\calS$, and we obtain independent samples $\{s_i\}_{i \in [n]} \sim \dist$. Every element $s \in \calS$ induces a $1$-Lipschitz convex function $f(\cdot; s)$, and the goal of SCO is to approximately optimize the population loss $\Fpop \defeq \E_{s \sim \dist}[f(\cdot; s)]$. The setting of Problem~\ref{prob:sco_basic} can be viewed as a specialization of Problem~\ref{prob:sco_gen} which is more compatible with the notion of DP, discussed in more detail in Section~\ref{ssec:privacy}.

The cost of achieving approximate DP with privacy loss parameter $\epsdp$ (see Section~\ref{ssec:privacy} for definitions) has been studied by a long line of work, starting with \cite{BassilyST14}. The optimal error (i.e., excess population loss) given $n$ samples scales as (omitting logarithmic factors)
\begin{equation}\label{eq:opt_err_dp} \frac 1 {\sqrt n} + \frac{\sqrt d}{n\epsdp}, \end{equation}
with matching lower and upper bounds given by \cite{BassilyST14} and \cite{bftt19}, respectively. The $n^{-1/2}$ term is achieved (without privacy considerations) by simple one-pass SGD, i.e., treating sample gradients as unbiased for the population loss, and discarding samples after we query their gradients. Hence, the term $\sqrt d \cdot (n\epsdp)^{-1}$ can be viewed as the ``cost of privacy'' in SCO. Assuming that we have access to $n \ge d\epsdp^{-2}$ samples is then natural, as this is the setting where privacy comes at no asymptotic cost from the perspective of the bound \eqref{eq:opt_err_dp}. Moreover, many real-world problems in data analysis have low intrinsic dimension, meaning that the effective number of degrees of freedom in the optimization problem is much smaller than the ambient dimension \cite{sstt21, li2022does}, which can be captured via a dimension-reducing preprocessing step. For these reasons, we primarily focus on the regime when the number of samples $n$ is sufficiently large compared to $d$.

An unfortunate property of private SCO algorithms achieving error \eqref{eq:opt_err_dp} is they all query substantially more than $n$ sample gradients without additional smoothness assumptions \cite{BassilyST14, bftt19, FKT20, BFGT20, AFKT21, KLL21}, which can be viewed as a statistical-computational gap. For example, analyses of simple perturbed SGD variants result in query bounds of $\approx n^2$ \cite{BFGT20}. In fact, \cite{BFGT20} conjectured this quadratic complexity was necessary, which was disproven by \cite{AFKT21, KLL21}.
The problem of obtaining the optimal error \eqref{eq:opt_err_dp} using $n$ gradient queries has been repeatedly highlighted as an important open problem by the private optimization community, as discussed in \cite{BFGT20, AFKT21, KLL21, AsiCJJS21} as well as the recent research overview \cite{Talwar22}. 

Qualitatively, optimality of the bound \eqref{eq:opt_err_dp} shows that there is no statistical cost of privacy when the number of samples $n$ is large enough, as the solver relies less on any specific sample. A natural first step towards developing optimal private SCO algorithms is to ask a similar qualitative question regarding their computational guarantees. Concretely, given enough samples $n$, can we develop statistically-optimal SCO algorithms which only query $\approx n$ sample gradients?

\paragraph{Our results.} In Section~\ref{sec:subproblem}, we develop the first private SCO algorithm with this aforementioned computational guarantee. Our algorithm achieves the error bound \eqref{eq:opt_err_dp} up to logarithmic factors, as well as a new gradient query complexity. Our result is formally stated in Theorem~\ref{thm:DP-SCO} and summarized in Table~\ref{tab:general_SCO} and Figure~\ref{fig:compare_result}. Up to logarithmic factors, our gradient query complexity is
\[\min\Par{n, \frac{n^2 \epsdp^2}{d}} + \min\Par{\frac{(nd)^{\frac 2 3}}{\epsdp}, n^{\frac 4 3}\epsdp^{\frac 1 3}}. \]
Theorem~\ref{thm:DP-SCO} improves upon the prior state-of-the-art gradient query complexity by polynomial factors whenever $d \ll n^{4/3}$ (omitting $\epsdp$ dependencies for simplicity). As with prior recent SCO advancements, our result has the appealing property that it achieves the optimal $n^{-1/2}$ error for SCO when $n \gtrsim d\epsdp^{-2}$. Moreover, given $n \gtrsim d^2\epsdp^{-3}$ samples, the gradient query complexity of Theorem~\ref{thm:DP-SCO} improves to $\tO(n)$, the first near-linear query complexity for a statistically-optimal private SCO algorithm in any regime. In~\Cref{tab:general_SCO} and \Cref{fig:compare_result}, we compare our bounds with the prior art.

While there remains a gap between the sample complexity at which our algorithm is statistically optimal, and that at which it is computationally (nearly)-optimal, we find it promising that our result comes within logarithmic factors of achieving the best-of-both-worlds for sufficiently large $n$. This is a key step towards optimal algorithms for the fundamental problem of private SCO. It is an interesting open question to refine current algorithmic techniques for private SCO to remove this gap, and we are optimistic that the tools developed in this paper will be fruitful in this endeavor.

\begin{table}
    \centering
    \renewcommand{\arraystretch}{1.75}
    \begin{tabular}{{c}{c}{c}{c}}
    \toprule
Method & excess $\Fpop$ loss & \# gradient queries to samples \\
\midrule
\cite{BassilyST14} & $\frac{\sqrt[4]{d}\log\frac n \delta}{\sqrt{n}}+\frac{\sqrt d\log^2\frac n \delta}{n\epsilon}$ & $n^2$ \\ 

\cite{bftt19} &$ \frac{1}{\sqrt{n}}+\frac{\sqrt{d\log\frac 1 \delta}}{n\epsilon}$& $n^{\frac 9 2}$ \\ 

\cite{FKT20} & $  \frac{1}{\sqrt{n}}+\frac{\sqrt{d\log\frac 1 \delta}}{n\epsilon}$ & $n^2$ \\

\cite{BFGT20} & $ \frac{1}{\sqrt{n}}+\frac{\sqrt{d\log\frac 1 \delta}}{n\epsilon}$ &  $n^{2}$\\

\cite{AFKT21} & $  \frac{1}{\sqrt{n}}+\frac{\sqrt{d\log\frac 1 \delta}}{n\epsilon}$ & $\min\Par{n^{\frac 3 2}, \frac{n^2 \eps}{\sqrt d}}$ \\

\cite{KLL21} & $ \frac{1}{\sqrt{n}}+\frac{\sqrt{d\log\frac 1 \delta}}{n\epsilon}$ & $\min\Par{n^{\frac 5 4}d^{\frac 1 8}\sqrt \eps, \frac{n^{\frac 3 2} \eps}{d^{\frac 1 8}} }$ \\
\midrule
Theorem~\ref{thm:DP-SCO} &   $\frac{1}{\sqrt{n}}+\frac{\sqrt{d\log\frac 1 \delta}\log n\log^{1.5}\frac n \delta}{n\epsilon}$ & $\min\Par{n, \frac{n^2\eps^2}{d}} + \min\Par{\frac{(nd)^{\frac 2 3}}{\eps}, n^{\frac 4 3} \eps^{\frac 1 3}}$ \\
\bottomrule
\end{tabular}
    \caption{\textbf{Comparison of private SCO results.} The excess loss and gradient complexity of $(\eps \defeq \epsdp,\delta)$-DP in Problem~\ref{prob:sco_basic}, where $L = R = 1$. We hide polylogarithmic factors in $d, n, \delta^{-1}, \eps^{-1}$ in the third column. The optimal loss  \cite{BassilyST14, SU15} is achieved by rows 2-6.
    }
    \label{tab:general_SCO}
\end{table}

\begin{figure}[h]
    \centering
    \includegraphics[width = .6 \textwidth]{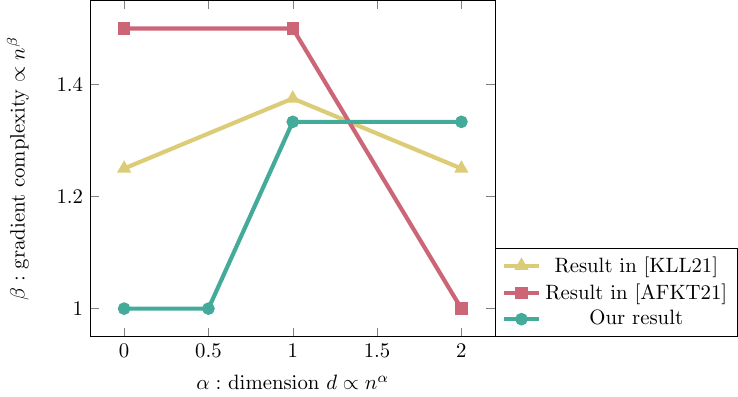}
    \caption{Comparison among our gradient complexity and previous results in \cite{AFKT21,KLL21} for the non-trivial regime $d\leq n^2$. We omit dependencies on $\epsdp$ (treated as $\Theta(1)$ in this figure) and logarithmic terms for simplicity.}
    \label{fig:compare_result}
\end{figure}

\subsection{Related work}
\label{sec:intro:related_work}

\paragraph{Stochastic convex optimization.} 
Convex optimization is a fundamental task with numerous applications in computer science, operations research, and statistics \cite{BoydV14, Bubeck15, Nesterov18}, and has been the focus of extensive research over the past several decades. This paper's primary setting of interest is non-smooth (Lipschitz) stochastic convex optimization in private and parallel computational models. Previously, \cite{G64} gave a gradient method that used $O(\eps^{-2})$ gradient queries to compute a point achieving $\eps$ error for Lipschitz convex minimization. This rate was shown to be optimal in an information-theoretic sense in \cite{NY83}. The stochastic gradient descent method extends \cite{G64} to tolerate randomized, unbiased gradient oracles with bounded second moment: this yields algorithms for Problem~\ref{prob:sco_gen} and Problem~\ref{prob:sco_basic} (when privacy is not a consideration).

\paragraph{Acceleration.} Since the first proposal of accelerated (momentum-based) methods~\cite{Polyak64, Nes83, nes03}, acceleration has become a central topic in optimization. This work builds on the seminal Monteiro-Svaiter acceleration technique~\cite{Monteiro13} and its higher-order variants~\cite{Gasnikov19, BubeckJLLS19}. More specifically, our work follows recent developments in accelerated ball optimization \cite{CarmonJJJLST20, CarmonJJS21, AsiCJJS21}, which can be viewed as a limiting case of high-order methods. Our algorithms directly leverage error-robust variants of this framework developed by \cite{AsiCJJS21, CarmonH22}.

\paragraph{Parallel SCO.} Recently, parallel optimization has received increasing interest in the context of large-scale machine learning. Speeding up SGD by averaging stochastic gradients across mini-batches is extremely common in practice, and optimal in certain distributed optimization settings; see e.g.~\cite{Dekel12, Duchi18, WBSS21}. Related to the setting we study are the distributed optimization methods proposed in~\cite{Scaman18}, which also leverage convolution-based randomized smoothing and apply to both stochastic and deterministic gradient-based methods (but do not focus on parallel depth in the sense of \cite{Nem94}). Finally, lower bounds against the oracle query depth of parallel SCO algorithms in the setting we consider have been an active area of study, e.g.\ \cite{Nem94, BalkanskiS18, DG19, BubeckJLLS19}.

\paragraph{Private SCO.} Both the private stochastic convex optimization problem (DP-SCO) and the private empirical risk minimization problem (DP-ERM) are well-studied by the DP community \cite{CM08,rbht09,cms11,jt14,BassilyST14,kj16,fts17,zzmw17,Wang18,ins+19,bftt19,FKT20}. In particular, \cite{BassilyST14} shows that the exponential mechanism and noisy stochastic gradient descent achieve the optimal loss for DP-ERM for $(\epsdp,0)$-DP and $(\epsdp,\delta)$-DP. In follow-up works, \cite{bftt19,FKT20} show that one can achieve the optimal loss for DP-SCO as well, by a suitable modification of noisy stochastic gradient descent. However, these algorithms suffer from large (at least quadratic in $n$) gradient complexities. Under an additional assumption that the loss functions are sufficiently smooth (i.e., have Lipschitz gradient), \cite{FKT20} remedies this issue by obtaining optimal loss and optimal \emph{gradient complexity} under differential privacy. In a different modification of Problem~\ref{prob:sco_basic}'s setting (where sample function access is modeled through value oracle queries instead of subgradients), \cite{GLL22} designs an exponential mechanism-based method that uses the optimal value oracle complexity to obtain the optimal SCO loss for non-smooth functions.

Most directly related to our approach are the recent works~\cite{KLL21} and~\cite{AsiCJJS21}. Both propose methods improving upon the quadratic gradient complexity achieved by noisy SGD, by using variants of smoothing via Gaussian convolution. The former proposes an algorithm that uses noisy accelerated gradient descent for private SCO with subquadratic gradient complexity. The latter suggests a ball acceleration framework to solve private SCO with linear gradient queries, under a hypothetical algorithm to estimate subproblem solutions. Our work can be viewed as a formalization of the connection between ball acceleration strategies and private SCO as suggested in \cite{AsiCJJS21}, by way of ReSQue estimators, which we use to obtain improved query complexities.

\subsection{Our approach}
\label{sec:intro:approach}

Here we give an overview of our approach towards obtaining the results outlined in Section~\ref{sec:intro:parallel} and Section~\ref{sec:intro:dp}. To illustrate and situate our approach, we first briefly discuss prior approaches, their insights that we leverage, and obstacles that we overcome. Then we discuss a common framework based on a new stochastic gradient estimation tool we introduce and call \emph{Reweighted Stochastic Query (ReSQue) estimators} which enables our results on parallel and private SCO. Our new tool is naturally compatible with ball-constrained optimization frameworks, where an optimization problem is localized to a sequence of constrained subproblems (solved to sufficient accuracy), whose solutions are then stitched together. We exploit this synergy, as well as the local stability properties of our ReSQue estimators, to design our SCO algorithms. We discuss the different instantiations of our framework for parallel and private SCO at the end of this section.

\paragraph{Convolutions and prior approaches.} All new results on parallel and private SCO in this paper use the convolution of a function of interest $f: \R^d \to \R$ with a Gaussian density $\gamma_\rho$ (with covariance $\rho^2 \id_d$), which we denote by $\hf_\rho$. Such \emph{Gaussian convolutions} have a longer history of facilitating algorithmic advances for SCO. All previous advances on parallel SCO and Lipschitz convex function minimization used Gaussian convolutions, i.e., \cite{DBM12,BubeckJLLS19}, as did a state-of-the-art (in some regimes) private SCO algorithm \cite{KLL21}. Each of \cite{DBM12, KLL21} leverage that $\hf_\rho$ is a smooth, additive approximation to $f$, and \cite{BubeckJLLS19} further used that the higher derivatives of $\hf_\rho$ are bounded, as well as the fact that its gradients can be well-approximated within small balls.

As one of our motivating problems, we seek to move beyond the reliance on (high-order) smoothness properties of $\hf_\rho$, and achieve total work bounds improving upon \cite{BubeckJLLS19}. Unfortunately, doing so while following the strategy of \cite{BubeckJLLS19} poses an immediate challenge. Though \cite{BubeckJLLS19} achieves improved parallel depth bounds for Lipschitz convex optimization, it comes at a cost. Their approach, which relies on the $p^{\textup{th}}$-order Lipschitzness of $\hf_\rho$, would naively involve computing $p^{\textup{th}}$ derivatives of the objective, and their approach to gradient approximation involves estimating the gradient everywhere inside a ball of sufficient radius. Na\"ively, either of these approaches would involve making $\Omega(d)$ queries per parallel step. Removing this cost is one of our main contributions to parallel SCO, and our corresponding development is key to enabling our private SCO results. 

\paragraph{ReSQue estimators and ball acceleration.} 
To overcome this bottleneck to prior approaches, we introduce a new tool that capitalizes upon a different property of Gaussian convolutions: the fact that the Gaussian density is locally stable in a small ball around its center. This property is arguably closely related to how \cite{BubeckJLLS19} are able to prove that they can approximate the gradients of $\hf_\rho$ inside a ball. However, rather than building such a complete model of $\hf_\rho$, we instead use only use this property to suitably implement independent stochastic gradient queries to $\hf_\rho$.

Given a reference point $\bx$ and a query point $x$, our proposed estimator for $\grad \hf_\rho(x)$ is
\begin{equation}\label{eq:rgcg_def}\text{draw } 
\xi \sim \Nor(0, \rho^2 \id_d),\text{ and output estimate } \frac{\gamma_\rho(x - \bx - \xi)}{\gamma_\rho(\xi)} g(\bx + \xi), 
\end{equation}
where $g(z)$ is an unbiased estimate for a subgradient of $f$, i.e., $\E g(z)\in \partial f(z)$. That is, to estimate the gradient of $\hf_\rho$, we simply reweight (stochastic) gradients of $f$ that were queried at random perturbations of reference point $\bx$. This reweighted stochastic query (ReSQue) estimator is unbiased for $\grad \hf_\rho(x)$, regardless of $\bx$.  However, when $\norm{x - \bx} \ll \rho$, i.e., $x$ is contained in a small ball around $\bx$, the reweighting factor $\frac{\gamma_\rho(x - \bx - \xi)}{\gamma_\rho(\xi)}$ is likely to be close to $1$. As a result, when $g$ is bounded and $x$ is near $\bx$, the estimator \eqref{eq:rgcg_def} enjoys regularity properties such as moment bounds. 
Crucially, the stochastic gradient queries performed by ReSQue (at points of the form $\bx+\xi$) \emph{do not depend} on the point $x$ at which we eventually estimate the gradient.

We develop this theory in Section~\ref{sec:framework}, but mention one additional property here, which can be thought of as a ``relative smoothness'' property.
We show that when $\norm{x - x'}$ is sufficiently smaller than $\rho$, the \emph{difference} of estimators of the form \eqref{eq:rgcg_def} has many bounded moments, where bounds scale as a function of $\norm{x - x'}$. When we couple a sequence of stochastic gradient updates by the randomness used in defining \eqref{eq:rgcg_def}, we can use this property to bound how far sequences drift apart. In particular, initially nearby points are likely to stay close. We exploit this property when analyzing the stability of private stochastic gradient descent algorithms later in the paper. 

To effectively use these local stability properties of \eqref{eq:rgcg_def}, we combine them with an optimization framework called \emph{ball-constrained optimization} \cite{CarmonJJJLST20}. It is motivated by the question: given parameters $0 < r < R$, and an oracle which minimizes $f: \R^d$ in a ball of radius $r$ around an input point, how many oracles must we query to optimize $f$ in a ball of larger radius $R$? It is not hard to show that simply iterating calls to the oracle gives a good solution in roughly $\frac R r$ queries. In recent work, \cite{CarmonJJJLST20} demonstrated that the optimal number of calls scales (up to logarithmic factors) as $(\frac R r)^{2/3}$, and \cite{AsiCJJS21} gave an approximation-tolerant variant of the \cite{CarmonJJJLST20} algorithm. We refer to these algorithms as \emph{ball acceleration}. Roughly, \cite{AsiCJJS21} shows that running stochastic gradient methods on $\approx (\frac R r)^{2/3}$ subproblems constrained to balls of radius $r$ obtains total gradient query complexity comparable to directly running SGD on the global function of domain radius $R$.

Importantly, in many structured cases, we have dramatically more freedom in solving these subproblems, compared to the original optimization problem, since we are only required to optimize over a small radius. One natural form of complexity gain from ball acceleration is when there is a much cheaper gradient estimator, which is only locally defined, compared to a global estimator. This was the original motivation for combining ball acceleration with stochastic gradient methods in \cite{CarmonJJS21}, which exploited local smoothness of the softmax function; the form of our ReSQue estimator \eqref{eq:rgcg_def} is motivated by the \cite{CarmonJJS21} estimator. In this work, we show that using ReSQue with reference point $\bx$ significantly improves the parallel and private complexity of minimizing the convolution $\hf_\rho$ inside a ball of radius $r \approx \rho$ centered at $\bx$.

\paragraph{Parallel subproblem solvers.} A key property of the ReSQue estimator \eqref{eq:rgcg_def} is that its estimate of $\grad \hf_\rho (x)$ is a scalar reweighting of $g(\bx + \xi)$, where $\xi \sim  N(0, \rho^2 \id_d)$ and $\bx$ is a fixed reference point. Hence, in each ball subproblem (assuming $r = \rho$), we can make \emph{all} the stochastic gradient queries in parallel, and use the resulting pool of vectors to perform standard (ball-constrained) stochastic optimization using ReSQue. Thus, we solve each ball subproblem with a single parallel stochastic gradient query, and --- using ball acceleration --- minimize $\hf_\rho$ with query depth of roughly $\rho^{-2/3}$. To ensure that $\hf_\rho$ is a uniform $\epsopt$-approximation of the original $f$, we must set $\rho$ to be roughly $\epsopt/\sqrt{d}$, leading to the claimed $d^{1/3}\epsopt^{-2/3}$ depth bound. Furthermore, the ball acceleration framework guarantees that we require no more than roughly $\rho^{-2/3} + \epsopt^{-2}$ stochastic gradient computations throughout the optimization, yielding the claimed total query bound. However, the computational depth of the algorithm described thus far is roughly $\epsopt^{-2}$, which is no better than SGD. In \Cref{sec:parallel} we combine our approach with the randomized smoothing algorithm of~\cite{DBM12} by using an accelerated mini-batched method \cite{GhadimiL12} for the ball-constrained stochastic optimization, leading to improved computational depth as summarized in Table~\ref{tab:parallel}. Our parallel SCO results use the ReSQue/ball acceleration technique in a simpler manner than our private SCO results described next and in Section~\ref{sec:subproblem}, so we chose to present them first.

\paragraph{Private subproblem solvers.} To motivate our improved private SCO solvers, we make the following connection. First, it is straightforward to show that the convolved function $\hf_\rho$ is $\frac 1 \rho$-smooth whenever the underlying function $f$ is Lipschitz. Further, recently \cite{FKT20} obtained a linear gradient query complexity for SCO, under the stronger assumption that each sample function (see Problem~\ref{prob:sco_basic}) is $\lesssim \sqrt n$-smooth (for $L = R = 1$ in Problem~\ref{prob:sco_basic}). This bound is satisfied by the result of Gaussian convolution with radius $\frac 1 {\sqrt n}$; however, two difficulties arise. First, to preserve the function value approximately up to $\epsopt$, we must take a Gaussian convolution of radius $\rho \approx \frac{\epsopt}{\sqrt d}$. For $\epsopt$ in \eqref{eq:opt_err_dp}, this is much smaller than $\frac{1}{\sqrt n}$ in many regimes. Second, we cannot access the exact gradients of the convolved sampled functions. Hence, it is natural to ask: is there a way to simulate the smoothness of the convolved function, under stochastic query access? 

Taking a step back, the primary way in which \cite{FKT20} used the smoothness assumption was through the fact that gradient steps on a sufficiently smooth function are \emph{contractive}. This observation is formalized as follows: if $x' \gets x - \eta \nabla f(x)$ and $y' \gets y - \eta \nabla f(y)$, when $f$ is $O(\frac 1 \eta)$-smooth, then $\norm{x' - y'} \le \norm{x - y}$. As alluded to earlier, we show that  ReSQue estimators \eqref{eq:rgcg_def} allow us to simulate this contractivity up to polylogarithmic factors. We show that by coupling the randomness $\xi$ in the estimator \eqref{eq:rgcg_def}, the drift growth in two-point sequences updated with \eqref{eq:rgcg_def} is predictable. We give a careful potential-based argument (see Lemma~\ref{lem:group_privacy_convex}) to bound higher moments of our drift after a sequence of updates using ReSQue estimators, when they are used in an SGD subroutine over a ball of radius $\ll \rho$. This allows for the use of ``iterative localization'' strategies introduced by \cite{FKT20}, based on iterate perturbation via the Gaussian mechanism.
 
We have not yet dealt with the fact that while this ``smoothness simulation'' strategy allows us to privately solve \emph{one} constrained ball subproblem, we still need to solve $K \approx (\frac 1 r)^{2/3}$ ball subproblems to optimize our original function, where $r \ll \rho$ is the radius of each subproblem. Here we rely on arguments based on amplification by subsampling, a common strategy in the private SCO literature \cite{ACG+16, BalleBG18}. We set our privacy budget for each ball subproblem to be approximately $(\epsdp, \delta)$ (our final overall budget), before subsampling. We then use solvers by suitably combining the \cite{FKT20} framework and our estimator \eqref{eq:rgcg_def} to solve these ball subproblems using $\approx n \cdot K^{-1/2}$ gradient queries each. Finally, our algorithm obtains the desired
\begin{equation}\label{eq:tradeoff}
\begin{gathered}
\text{query complexity: }\approx \underbrace{\frac n {\sqrt K}}_{\text{gradient queries per subproblem}} \cdot \underbrace{K}_{\text{number of subproblems}} = n\sqrt{K}, \text{ and }\\
\text{privacy: } \approx \underbrace{\epsdp}_{\textup{privacy budget per subproblem}} \cdot \underbrace{\frac 1 {\sqrt K}}_{\textup{subsampling}} \cdot \underbrace{\sqrt K}_{\textup{advanced composition}} = \epsdp.
\end{gathered}
\end{equation}
Here we used the standard technique of advanced composition (see e.g.\ Section 3.5.2, \cite{DR14}) to bound the privacy loss over $K$ consecutive ball subproblems.

Let us briefly derive the resulting complexity bound and explain the bottleneck for improving it further.
First, the ball radius $r$ must be set to $\approx \rho$ (the smoothing parameter) for our ReSQue estimators to be well-behaved. Moreover, we have to set $\rho \approx \frac{\epsopt}{\sqrt d}$, otherwise the effect of the convolution begins to dominate the optimization error. For $\epsopt \approx \frac 1 {\sqrt n} + \sqrt{d}(n\epsdp)^{-1}$ (see \eqref{eq:opt_err_dp}), this results in $\frac 1 r \approx \min(\sqrt{nd}, n\epsdp)$. Next, $K \approx (\frac 1 r)^{2/3}$ is known to be essentially tight for ball acceleration with $R = 1$ \cite{CarmonJJJLST20}. For the subproblem accuracies required by the \cite{AsiCJJS21} ball acceleration framework,\footnote{These subproblem accuracy requirements cannot be lowered in general, because combined they recover the optimal gradient complexities of SGD over the entire problem domain.} known lower bounds on private empirical risk minimization imply that $\approx \frac n {\sqrt K}$ gradients are necessary for each subproblem to preserve a privacy budget of $\epsdp$ \cite{BassilyST14}. 
As subsampling requires the privacy loss before amplification to already be small (see discussion in \cite{Smith09, BalleBG18}), all of these parameter choices are optimized, leading to a gradient complexity of $n\sqrt{K}$. For our lower bound on $\frac 1 r$, this scales as $\approx \min(n^{4/3}, (nd)^{2/3})$ as we derive in Theorem~\ref{thm:DP-SCO}.\footnote{In the low-dimensional regime $d \le n \epsdp^2$, the gradient queries used per subproblem improves to $\frac {\sqrt{nd}} {\epsdp\sqrt K}$.} To go beyond the strategies we employ, it is natural to look towards other privacy amplification arguments (for aggregating ball subproblems) beyond subsampling, which we defer to future work.

Our final algorithm is analyzed through the machinery of R\'enyi differential privacy (RDP) \cite{Mir17}, which allows for more fine-grained control of the effects of composition and subsampling. We modify the standard RDP machinery in two main ways. We define an approximate relaxation and control the failure probability of our relaxation using high moment bounds on our drift (see Section~\ref{ssec:erm_convex}). We also provide an analysis of amplification under subsampling with replacement by modifying the truncated CDP (concentrated DP) tools introduced by \cite{BunDRS18}, who analyzed subsampling without replacement. Sampling with replacement is crucial in order to guarantee that our ReSQue estimators are unbiased for the empirical risks we minimize when employing a known reduction \cite{FKT20, KLL21} from private SCO to private regularized empirical risk minimization. 
\subsection{Notation}\label{sec:notation}

Throughout $\tO$ hides polylogarithmic factors in problem parameters. For $n \in \N$, we let $[n] \defeq \{i \mid 1 \le i \le n\}$. For $x \in \R^d$ we let $\norm{x}$ denote the Euclidean norm of $x$, and let $\ball_{x}(r) \defeq \{x' \in \R^d \mid \norm{x' - x} \le r\}$ denote a Euclidean ball of radius $r$ centered at $x$; when $x$ is unspecified we take it to be the origin, i.e., $\ball(r) \defeq \{x' \in \R^d \mid \norm{x'} \le r\}$. We let $\Nor(\mu, \msig)$ denote a multivariate Gaussian distribution with mean $\mu \in \R^d$ and covariance $\msig \in \R^{d \times d}$, and $\id_d$ is the identity matrix in $\R^{d \times d}$. For $\set \subseteq \R^d$, we define the Euclidean projection onto $\set$ by $\proj_{\set}(x) \defeq \argmin_{x' \in \set} \norm{x - x'}$. For $p \in [0, 1]$, we let $\Geom(p)$ denote the geometric distribution with parameter $p$.

\paragraph{Optimization.} We say a function $f: \R^d \to \R$ is $L$-Lipschitz if for all $x, x' \in \R^d$ we have $|f(x) - f(x')| \le L\norm{x - x'}$. We say $f$ is $\lam$-strongly convex if for all $x, x' \in \R^d$ and $t\in[0,1]$ we have
\[f(tx + (1 - t)y) \le tf(x) + (1 - t)f(y) - \frac{\lam t (1 - t)}{2}\norm{x - x'}^2.\]
We denote the subdifferential (i.e., set of all subgradients) of a convex function $f: \R^d \to \R$ at $x \in \R^d$ by $\partial f(x)$. Overloading notation, when clear from the context we will write $\partial f(x)$ to denote an arbitrary subgradient. 

\paragraph{Probability.} Let $\mu, \nu$ be two probability densities $\mu$, $\nu$ on the same probability space $\Omega$. We let $\Dtv(\mu, \nu) \defeq \half \int |\mu(\omega) - \nu(\omega)| \d\omega$ denote the total variation distance. The following fact is straightforward to see and will be frequently used.
\begin{fact}\label{fact:cond_TV}
	Let $\event$ be any event that occurs with probability at least $1 - \delta$ under the density $\mu$. Then $\Dtv(\mu, \mu \mid \event) \le \delta$, where $\mu \mid \event$ denotes the conditional distribution of $\mu$ under $\event$.
\end{fact}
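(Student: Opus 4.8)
The plan is to unwind the definition of the conditional density and split the total-variation integral according to whether $\omega$ lies in $\event$ or not. Write $p \defeq \int_{\event} \mu(\omega)\,\d\omega = \Pr_\mu[\event] \ge 1 - \delta$, so that the conditional density is $(\mu \mid \event)(\omega) = \frac{1}{p}\mu(\omega)\1[\omega \in \event]$. Then
\[
\Dtv(\mu, \mu \mid \event) = \half \int_{\Omega} \Abs{\mu(\omega) - (\mu\mid\event)(\omega)}\,\d\omega = \half \int_{\event} \Abs{\mu(\omega) - \tfrac{1}{p}\mu(\omega)}\,\d\omega + \half\int_{\Omega \setminus \event} \mu(\omega)\,\d\omega,
\]
using that $(\mu\mid\event)$ vanishes outside $\event$.

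First I would handle the second term: it is simply $\half(1 - p)$ by definition of $p$. Next, for the first term, since $0 < p \le 1$ we have $\Abs{1 - \tfrac{1}{p}} = \tfrac1p - 1$, so the first term equals $\half\Par{\tfrac1p - 1}\int_{\event}\mu(\omega)\,\d\omega = \half\Par{\tfrac1p - 1}p = \half(1-p)$. Adding the two contributions gives $\Dtv(\mu,\mu\mid\event) = 1 - p \le \delta$, which is the claim.

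There is no real obstacle here; the only points to be slightly careful about are that the claimed bound is actually the sharper $1-p$ (we only need $p \ge 1-\delta$ at the very last step), and that one should note $p > 0$ so that the conditional density is well-defined (if $\delta < 1$ this is automatic, and if $\delta \ge 1$ the statement is vacuous). I would present the computation in the two-line display above followed by the one-line arithmetic, with no auxiliary lemmas needed.
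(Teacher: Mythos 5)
Your computation is correct and is the natural direct argument; the paper states Fact~\ref{fact:cond_TV} without proof (``straightforward to see''), so there is no paper proof to compare against, and your two-region split of the total-variation integral plus the observation that both pieces equal $\half(1-p)$ is exactly the standard way to verify it.
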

For two densities $\mu$, $\nu$, we say that a joint distribution $\Gamma(\mu, \nu)$ over the product space of outcomes is a coupling of $\mu, \nu$ if for $(x, x') \sim \Gamma(\mu, \nu)$, the marginals of $x$ and $x'$ are $\mu$ and $\nu$, respectively. When $\mu$ is absolutely continuous with respect to $\nu$, and $\alpha > 1$, we define the $\alpha$-R\'enyi divergence by
\begin{equation}\label{eq:renyi}
	D_\alpha(\mu \| \nu) \defeq \frac 1 {\alpha - 1} \log \Par{\int \Par{\frac{\mu(\omega)}{\nu(\omega)}}^\alpha \d\nu(\omega)}. 
\end{equation}
$D_\alpha$ is quasiconvex in its arguments, i.e.\ if $\mu = \E_\xi \mu_\xi$ and $\nu = \E_\xi \nu_\xi$ (where $\xi$ is a random variable, and $\mu_\xi$, $\nu_\xi$ are distribution families indexed by $\xi$), then $D_\alpha(\mu \| \nu) \le \max_\xi D_\alpha(\mu_\xi \| \nu_\xi)$. 

%
\section{Framework}
\label{sec:framework}

We now outline our primary technical innovation, a new gradient estimator for stochastic convex optimization (ReSQue). We define this estimator in Section~\ref{ssec:estimator} and prove that it satisfies several local stability properties in a small ball around a ``centerpoint'' used for its definition. In Section~\ref{sec:ball-constrained}, we then give preliminaries on a ``ball acceleration'' framework developed in \cite{CarmonJJJLST20, AsiCJJS21}. This framework aggregates solutions to proximal subproblems defined on small (Euclidean) balls, and uses these subproblem solutions to efficiently solve an optimization problem on a larger domain. Our algorithms in Sections~\ref{sec:parallel} and~\ref{sec:subproblem} instantiate the framework of Section~\ref{sec:ball-constrained} with new subproblem solvers enjoying improved parallelism or privacy, based on our new ReSQue estimator.

\subsection{ReSQue estimators}
\label{ssec:estimator}

Throughout we use $\gamma_\rho: \R^d \to \R_{\ge 0}$ to denote the probability density function of $\Nor(0, \rho^2 \id_d)$, i.e., $\gamma_\rho(x) = (2\pi \rho)^{-\frac d 2} \exp(-\frac{1}{2\rho^2} \norm{x}^2)$. We first define the Gaussian convolution operation.

\begin{definition}[Gaussian convolution]\label{def:gaussian-convolution}
For a function $f: \R^d \to \R$ we denote its convolution with a Gaussian of covariance $\rho^2 \id_d$ by $\hf_\rho \defeq f \ast \gamma_\rho$, i.e.,
\begin{equation}
\hf_\rho(x) \defeq \E_{y\sim \Nor(0, \rho^2 \id_d)}f(x+y)=\int_{y \in \R^n} f(x - y) \gamma_\rho(y)\d y.
\end{equation}
\end{definition}

Three well-known properties of $\hf_\rho$ are that it is differentiable, that if $f$ is $L$-Lipschitz, so is $\hf_\rho$ for any $\rho$, and that $|\hf_\rho - f| \le L\rho\sqrt d$ pointwise (Lemma 8, \cite{BubeckJLLS19}). Next, given a centerpoint $\bx$ and a smoothing radius $\rho$, we define the associated reweighted stochastic query (ReSQue) estimator. 

\begin{definition}[ReSQue estimator]\label{def:stoch_RGCG}
Let $\bx \in \R^d$ and let $f: \R^d \to \R$ be convex. Suppose we have a gradient estimator $g: \R^d \to \R^d$ satisfying $\E g \in \partial f$. We define the \emph{ReSQue estimator of radius $\rho$} as the random vector
\[\tnbx^g \hf_\rho(x) \defeq \frac{\gamma_\rho(x - \bx - \xi)}{\gamma_\rho(\xi)} g(\bx + \xi) \text{ where } \xi \sim \Nor(0, \rho^2\id_d), \]
where we first sample $\xi$, and then independently query $g$ at $\bx + \xi$. When $g$ is deterministically an element of $\partial f$, we drop the superscript and denote the estimator by $\tnbx \hf_\rho$.
\end{definition}

When $g$ is unbiased for $\partial f$ and enjoys a variance bound, the corresponding ReSQue estimator is unbiased for the convolved function, and inherits a similar variance bound.

\begin{lemma}\label{lem:stochastic_varbound}
The estimator in Definition~\ref{def:stoch_RGCG} satisfies the following properties, where expectations are taken over both the randomness in $\xi$ and the randomness in $g$.
\begin{enumerate}
    \item Unbiased: $\E \tnbx^g \hf_\rho(x) = \nabla \hf_\rho(x)$.
    \item Bounded variance: If $\E \norm{g}^2 \le L^2$ everywhere, and $x \in \ball_{\bar{x}}(\rho)$, then
    $\E \|\tnbx^g \hf_\rho(x)\|^2 \le 3L^2$.
\end{enumerate}
\end{lemma}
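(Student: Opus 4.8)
The plan is to compute both claims directly from Definition~\ref{def:stoch_RGCG} by exploiting the explicit Gaussian densities and the change of variables relating $\xi$ to $y := \bx + \xi - x$. For unbiasedness, I would first write
\[
\E \tnbx^g \hf_\rho(x) = \E_{\xi}\Brack{\frac{\gamma_\rho(x - \bx - \xi)}{\gamma_\rho(\xi)} \E_g\Brack{g(\bx + \xi)}},
\]
using that $g$ is sampled independently of $\xi$, so $\E_g[g(\bx+\xi)] = \nabla f(\bx+\xi)$ (or more precisely a fixed subgradient, which suffices; $\nabla\hf_\rho$ is the convolution of any measurable subgradient selection with $\gamma_\rho$). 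Then I would expand the expectation over $\xi \sim \Nor(0,\rho^2\id_d)$ as an integral against $\gamma_\rho(\xi)$: the $\gamma_\rho(\xi)$ in the denominator cancels the density, leaving $\int \gamma_\rho(x - \bx - \xi)\,\nabla f(\bx + \xi)\,\d\xi$. Substituting $z = \bx + \xi$ gives $\int \gamma_\rho(x - z)\,\nabla f(z)\,\d z = (\gamma_\rho \ast \nabla f)(x) = \nabla(\gamma_\rho \ast f)(x) = \nabla\hf_\rho(x)$, where differentiating under the integral is justified because $\gamma_\rho$ is smooth with rapidly decaying derivatives and $f$ is Lipschitz (hence of at most linear growth).

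For the variance bound, the key is to control the second moment of the scalar reweighting factor $w(\xi) := \gamma_\rho(x - \bx - \xi)/\gamma_\rho(\xi)$ when $\norm{x - \bx} \le \rho$. I would use that, with $v := x - \bx$ (so $\norm{v}\le\rho$),
\[
w(\xi) = \exp\Par{-\frac{1}{2\rho^2}\Par{\norm{v - \xi}^2 - \norm{\xi}^2}} = \exp\Par{-\frac{1}{2\rho^2}\Par{\norm{v}^2 - 2\inprod{v}{\xi}}},
\]
and then bound $\E_\xi[w(\xi)^2 \norm{g(\bx+\xi)}^2] \le L^2 \cdot \E_\xi[w(\xi)^2 \mid \text{worst case}]$ after separating out $\E_g\norm{g}^2 \le L^2$; more carefully, condition on $\xi$, apply the bound $\E_g\norm{g(\bx+\xi)}^2 \le L^2$, then compute $\E_\xi[w(\xi)^2]$. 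The latter is a Gaussian integral: $\E_\xi \exp(-\tfrac{1}{\rho^2}\norm{v}^2 + \tfrac{2}{\rho^2}\inprod{v}{\xi})$, and since $\inprod{v}{\xi}\sim\Nor(0,\rho^2\norm{v}^2)$, the moment generating function gives $\E_\xi[w(\xi)^2] = \exp(-\tfrac{1}{\rho^2}\norm{v}^2 + \tfrac{2}{\rho^2}\norm{v}^2) = \exp(\norm{v}^2/\rho^2) \le \exp(1) < 3$. Hence $\E\norm{\tnbx^g\hf_\rho(x)}^2 \le 3L^2$.

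I expect the main (mild) obstacle to be bookkeeping around the order of expectations and the subgradient selection in the unbiasedness argument — ensuring that the convolution of a (possibly set-valued) subgradient with $\gamma_\rho$ genuinely equals $\nabla\hf_\rho(x)$ and that Fubini/differentiation under the integral is licensed by the Lipschitz (linear growth) assumption on $f$ together with Gaussian tails. The variance computation itself is a short exact Gaussian MGF calculation once the reweighting factor is put in the exponential form above; the only thing to be careful about is that the constant $e^{\norm{v}^2/\rho^2}$ is bounded by $e \le 3$ precisely on the ball $\ball_{\bx}(\rho)$, which is where the hypothesis $x \in \ball_{\bx}(\rho)$ is used.
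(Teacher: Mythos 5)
Your proof is correct and takes essentially the same approach as the paper: unbiasedness by conditioning on $\xi$, cancelling the density, and recognizing the convolution $\gamma_\rho \ast \partial f = \nabla\hf_\rho$; the second-moment bound by conditioning on $\xi$, applying $\E_g\norm{g}^2 \le L^2$, and computing the Gaussian integral for $\E_\xi[w(\xi)^2]$. Your arithmetic is in fact slightly more careful than the paper's: the display \eqref{eq:varintegral} inadvertently drops the $\exp(-\norm{v}^2/\rho^2)$ prefactor and writes the integral as $(2\pi\rho)^{d/2}\exp(2\norm{v}^2/\rho^2)$ (which is $> 3(2\pi\rho)^{d/2}$ at $\norm{v}=\rho$), whereas the correct value $\exp(\norm{v}^2/\rho^2)\le e < 3$ that you obtain is what actually yields the advertised bound.
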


\begin{proof}
The first statement follows by expanding the expectation over $\xi$ and $g$:
\begin{align*}
\E_g\int \frac{\gamma_\rho(x - \bx - \xi)}{\gamma_\rho(\xi)} g(\bx + \xi) \gamma_\rho(\xi) \d\xi =& \int \frac{\gamma_\rho(x - \bx - \xi)}{\gamma_\rho(\xi)} \partial f(\bx + \xi) \gamma_\rho(\xi) \d\xi\\
=& \int \partial f(\bx + \xi) \gamma_\rho(x - \bx - \xi) \d\xi = \nabla \hf_\rho(x).    
\end{align*}
The last equality used that the integral is a subgradient of $\hf_\rho$, and $\hf_\rho$ is differentiable.

For the second statement, denote $v \defeq x - \bx$ for simplicity. Since $f$ is $L$-Lipschitz,
\begin{align*}
    \E \|\tnbx^g \hf_\rho(x)\|^2 &= \E_g \int \frac{\Par{\gamma_\rho(v - \xi)}^2}{\gamma_\rho(\xi)} \norm{g(\bx + \xi)}^2\d\xi \\
    &\le  L^2 (2\pi \rho)^{-\frac d 2}\int \exp\Par{-\frac{\norm{v - \xi}^2}{\rho^2} + \frac{\norm{\xi}^2}{2\rho^2}} \d\xi.
\end{align*}
Next, a standard calculation for Gaussian integrals shows
\begin{equation}\label{eq:gaussian_integral}\int \exp\Par{\frac{2\inprod{v}{\xi} - \norm{\xi}^2}{2\rho^2}} \d \xi = \exp\Par{\frac{\norm{v}^2}{2\rho^2}} \int \exp\Par{-\frac{\norm{\xi - v}^2}{2\rho^2}} \d\xi = \exp\Par{\frac{\norm{v}^2}{2\rho^2}}(2\pi\rho)^{\frac d 2}.\end{equation}
The statement then follows from \eqref{eq:gaussian_integral}, which yields
\begin{equation}\label{eq:varintegral}
\begin{aligned}
\int \exp\Par{-\frac{\norm{v - \xi}^2}{\rho^2} + \frac{\norm{\xi}^2}{2\rho^2}} \d\xi &= \exp\Par{-\frac{\norm{v}^2}{\rho^2}} \int \exp\Par{\frac{4 \inprod{v}{\xi} - \norm{\xi}^2}{2\rho^2}} \d\xi \\
&= (2\pi \rho)^{\frac d 2} \exp\Par{\frac{2\norm{v}^2}{\rho^2}} \le 3 \cdot (2\pi \rho)^{\frac d 2}
\end{aligned}
\end{equation}
and completes the proof of the second statement.
\end{proof}

When the gradient estimator $g$ is deterministically a subgradient of a Lipschitz function, we can show additional properties about ReSQue. The following lemma will be used in Section~\ref{sec:subproblem} both to obtain higher moment bounds on ReSQue, as well as higher moment bounds on the difference of ReSQue estimators at nearby points, where the bound scales with the distance between the points. 
\begin{lemma}
\label{lem:p_moment_Gaussian}
If $x, x' \in \ball_{\bx}(\frac{\rho}{p})$ for $p \geq 2$ then 
    \begin{gather*}\E_{\xi \sim \Nor(0, \rho^2 \id_d)} \Brack{\Par{\frac{\gamma_\rho(x - \bx -  \xi)}{\gamma_\rho(\xi)}}^p} \le 2,\\
    	\E_{\xi \sim \Nor(0, \rho^2 \id_d)} \Brack{\Abs{\frac{\gamma_\rho(x - \bx - \xi) - \gamma_\rho(x' - \bx - \xi)}{\gamma_\rho(\xi)}}^p} \le \Par{\frac{24p\norm{x - x'}}{\rho}}^p. \end{gather*}
\end{lemma}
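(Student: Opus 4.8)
The plan is to reduce both inequalities to one-dimensional Gaussian integral computations by a rotation, then bound the resulting moment generating function of a shifted Gaussian. Write $v \defeq x - \bx$ and $v' \defeq x' - \bx$, so by hypothesis $\norm{v}, \norm{v'} \le \rho/p$. The key observation is that the ratio $\gamma_\rho(v - \xi)/\gamma_\rho(\xi)$ is an exponential of a quadratic in $\xi$ whose quadratic terms cancel: $\gamma_\rho(v-\xi)/\gamma_\rho(\xi) = \exp\Par{\frac{2\inprod{v}{\xi} - \norm{v}^2}{2\rho^2}}$. Thus the first quantity is $\E_\xi \exp\Par{\frac{p(2\inprod{v}{\xi} - \norm{v}^2)}{2\rho^2}}$, and expanding the Gaussian integral exactly as in \eqref{eq:gaussian_integral} of the paper (completing the square in $\xi$), this equals $\exp\Par{\frac{p(p-1)\norm{v}^2}{2\rho^2}}$. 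Since $\norm{v} \le \rho/p$, the exponent is at most $\frac{p(p-1)}{2p^2} \le \thalf$, giving the bound $e^{1/2} \le 2$. This proves the first claim.

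\textbf{Second inequality.} For the second claim, factor out $\gamma_\rho(v-\xi)$ to write the integrand as $\Abs{\frac{\gamma_\rho(v-\xi)}{\gamma_\rho(\xi)}}^p \cdot \Abs{1 - \frac{\gamma_\rho(v'-\xi)}{\gamma_\rho(v-\xi)}}^p$, then apply Cauchy--Schwarz (or Hölder) to split the expectation into $\Par{\E \Abs{\frac{\gamma_\rho(v-\xi)}{\gamma_\rho(\xi)}}^{2p}}^{1/2} \Par{\E\Abs{1 - \frac{\gamma_\rho(v'-\xi)}{\gamma_\rho(v-\xi)}}^{2p}}^{1/2}$. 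The first factor is controlled by the first claim applied with $2p$ in place of $p$ (valid since $\norm{v} \le \rho/p \le \rho/(2p) \cdot 2$; one needs to check the constant works out, possibly adjusting the radius constraint bookkeeping — here the clean route is to note $\norm v \le \rho/p$ makes $\exp(2p(2p-1)\norm v^2/(2\rho^2)) \le \exp(2)$, a harmless constant). For the second factor, note $\frac{\gamma_\rho(v'-\xi)}{\gamma_\rho(v-\xi)} = \exp\Par{\frac{\norm{v-\xi}^2 - \norm{v'-\xi}^2}{2\rho^2}} = \exp(Z)$ where $Z = \frac{2\inprod{v'-v}{\xi} + \norm v^2 - \norm{v'}^2}{2\rho^2}$ is an affine function of the Gaussian $\xi$, hence itself Gaussian; one computes its mean $\mu_Z$ and variance $\sigma_Z^2$ explicitly in terms of $\norm{v - v'}$, $\norm v$, $\norm{v'}$, all of which are $O(\rho/p)$. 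Then $\abs{1 - e^Z} \le \abs{Z} e^{\abs Z}$ (an elementary inequality), so $\E\abs{1-e^Z}^{2p} \le \E \abs Z^{2p} e^{2p\abs Z}$, which by Cauchy--Schwarz again is at most $(\E\abs Z^{4p})^{1/2}(\E e^{4p\abs Z})^{1/2}$; the first is a Gaussian absolute moment bounded by $(C p \sigma_Z)^{2p} + \mu_Z^{2p}$ type terms, and the second is a Gaussian MGF which is $O(1)$ since $4p \sigma_Z = O(1)$ and $4p\mu_Z = O(1)$. Collecting the constants and powers of $\norm{v-v'}/\rho$ carefully should yield the stated bound with the constant $24$.

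\textbf{Main obstacle.} I expect the main difficulty to be purely in the \emph{constant bookkeeping}: chaining two Cauchy--Schwarz steps and the inequality $\abs{1-e^Z}\le \abs Z e^{\abs Z}$ inflates the exponent's power of $p$ and the numerical constant, and one must verify that under the hypothesis $\norm{v},\norm{v'} \le \rho/p$ all the stray exponential MGF factors stay bounded by absolute constants, so that the final bound is genuinely of the form $(Cp\norm{x-x'}/\rho)^p$ with $C \le 24$. A cleaner alternative that avoids iterated Cauchy--Schwarz is to directly parametrize: rotate coordinates so $\xi$ decomposes along the two-dimensional span of $v, v'$ and its orthogonal complement, reducing to an explicit integral in two scalar Gaussian variables, and then bound $\abs{e^a - e^b}^p \le \norm{\cdot}^p \cdot (\ldots)$ pointwise before integrating. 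Either way the analytic content is a shifted-Gaussian moment estimate; the engineering is getting the constant down to what the downstream Lemma~\ref{lem:group_privacy_convex} needs. I would present the rotation-based direct computation as the primary argument since it most transparently tracks the $\norm{x-x'}/\rho$ dependence, and fall back on the Cauchy--Schwarz chain only if the direct integral becomes unwieldy.
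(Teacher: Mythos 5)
Your handling of the first inequality is essentially the same as the paper's: both compute the Gaussian integral by completing the square, obtaining $\exp(\tfrac{p^2-p}{2\rho^2}\norm{v}^2)\le 2$.

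For the second inequality you take a genuinely different route. The paper expands $(\gamma_\rho(v-\xi)-\gamma_\rho(v'-\xi))^p$ by the binomial theorem, computes each of the $p+1$ resulting Gaussian integrals $S_q$ exactly, and then bounds the alternating sum via a Taylor-expansion lemma (Fact~\ref{fact:exp_bound_p}) that exploits cancellation: all derivatives up to order $p-1$ of the relevant function vanish at $v=v'$ (a Stirling-number identity, Fact~\ref{fact:polyzero}), and the $p$-th derivative tensor is bounded directly. This cancellation is precisely what produces the tight constant. You instead factor out $\gamma_\rho(v-\xi)/\gamma_\rho(\xi)$, write the remaining ratio as $|1-e^Z|$, and chain two Cauchy--Schwarz steps together with $|1-e^Z|\le|Z|e^{|Z|}$ and Gaussian moment/MGF estimates. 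This argument is correct in structure and yields a bound of the form $(Cp\norm{x-x'}/\rho)^p$, but the constant is \emph{not} $24$, and this is not merely bookkeeping. The obstruction is the term $(\E e^{4p|Z|})^{1/4}$: at the boundary $\sigma_Z=\norm{x-x'}/\rho$ near its maximum $2/p$, you get $\E e^{4p|Z|}\approx e^{8(p\sigma_Z)^2}\approx e^{32}$, whose fourth root is $e^8\approx 3000$. Tracing through all the factors, for $p=2$ this forces $C\gtrsim 300$, and no reallocation of the H\"older exponents (replacing the $(2,2)$ split with $(\alpha,\beta)$) brings $C$ below roughly $30$, since the MGF cost grows with $\beta$ while the first-factor cost grows with $\alpha=\beta/(\beta-1)$. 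So the approach, as written, proves a weaker bound $(Cp\norm{x-x'}/\rho)^p$ with $C$ on the order of several hundred, not $24$. You flag this concern in your "main obstacle" paragraph, but your hope that the constants "should yield $24$" is unwarranted: Cauchy--Schwarz discards the cancellation that the paper's binomial/Taylor argument harvests, and that cancellation is what produces the small constant.

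That said, the constant $24$ is not essential to the downstream application in Lemma~\ref{lem:group_privacy_convex}: a larger absolute constant would only require a proportionally larger $\rho/r$ ratio (e.g.\ replacing the hypothesis $\rho/r\ge 1728\log^2(\cdot)$ by a larger numerical constant), so your approach would still yield a functionally equivalent version of the framework. Your rotation-into-$\mathrm{span}(v,v')$ alternative is sketched too briefly to assess; note that the two-dimensional reduction alone does not resolve the constant issue, since the loss comes from bounding $|e^a-e^b|$ pointwise and then integrating, not from the ambient dimension. If you want the constant $24$, you need the paper's expansion-and-cancellation argument or something equivalent to it.
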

We defer a proof to Appendix~\ref{app:facts}, where a helper calculation (Fact~\ref{fact:exp_bound_p}) is used to obtain the result.

\subsection{Ball acceleration}
\label{sec:ball-constrained}

We summarize the guarantees of a recent ``ball acceleration'' framework originally proposed by \cite{CarmonJJJLST20}. For specified parameters $0 < r < R$, this framework efficiently aggregates (approximate) solutions to constrained optimization problems over Euclidean balls of radius $r$ to optimize a function over a ball of radius $R$. Here we give an approximation-tolerant variant of the \cite{CarmonJJJLST20} algorithm in Proposition~\ref{prop:mainballaccel}, which was developed by \cite{AsiCJJS21}. Before stating the guarantee, we require the definitions of three types of oracles. In each of the following definitions, for some function $F: \R^d \to \R$, scalars $\lam, r$, and point $\bx \in \R^d$ which are clear from context, we will denote
\begin{equation}\label{eq:xsbxldef}\xsbxl \defeq \argmin_{x \in \ball_{\bx}(r)} \Brace{ F(x) + \frac \lam 2 \norm{x - \bx}^2}.\end{equation}

We mention that in the non-private settings of prior work \cite{AsiCJJS21, CarmonH22} (and under slightly different oracle access assumptions), it was shown that the implementation of line search oracles (Definition~\ref{def:Ols}) and stochastic proximal oracles (Definition~\ref{def:Opg}) can be reduced to ball optimization oracles (Definition~\ref{def:Obo}). Indeed, such a result is summarized in Proposition~\ref{prop:mainballaccel2} and used in Section~\ref{sec:parallel} to obtain our parallel SCO algorithms. To tightly quantify the privacy loss of each oracle for developing our SCO algorithms in Section~\ref{sec:subproblem} (and to implement these oracles under only the function access afforded by Problem~\ref{prob:sco_basic}), we separate out the requirements of each oracle definition separately.

\begin{definition}[Line search oracle]\label{def:Ols}
We say $\Ols$ is a \emph{$(\Delta, \lam)$-line search oracle} for $F: \R^d \to \R$ if given $\bx \in \R^d$, $\Ols$ returns $x \in \R^d$ with 
\[\norm{x - \xsbxl} \le \Delta.\]
\end{definition}

\begin{definition}[Ball optimization oracle]\label{def:Obo}
We say $\Obo$ is a \emph{$(\phi, \lam)$-ball optimization oracle} for $F: \R^d \to \R$ if given $\bx \in \R^d$, $\Obo$ returns $x \in \R^d$ with 
\[\E\Brack{F(x) + \frac \lam 2 \norm{x - \bx}^2} \le F(\xsbxl) + \frac \lam 2 \norm{\xsbxl - \bx}^2 + \phi.\]
\end{definition}

\begin{definition}[Stochastic proximal oracle]\label{def:Opg}
We say $\Opg$ is a \emph{$(\Delta, \sigma, \lam)$-stochastic proximal oracle} for $F: \R^d \to \R$ if given $\bx \in \R^d$, $\Opg$ returns $x \in \R^d$ with
\[\norm{\E x - \xsbxl} \le \frac \Delta \lam,\; \E\norm{x - \xsbxl}^2 \le \frac{\sigma^2}{\lam^2}.\]
\end{definition}

Leveraging Definitions~\ref{def:Ols},~\ref{def:Obo}, and~\ref{def:Opg}, we state a variant of the main result of \cite{AsiCJJS21}. Roughly speaking, Proposition~\ref{prop:mainballaccel} states that to optimize a function $F$ over a ball of radius $R$, it suffices to query $\approx (\frac R r)^{\frac 2 3}$ oracles which approximately optimize a sufficiently regularized variant of $F$ over a ball of radius $r$. We quantify the types of approximate optimization of such regularized functions in Proposition~\ref{prop:mainballaccel}, and defer a detailed discussion of how to derive this statement from \cite{AsiCJJS21} in Appendix~\ref{app:ballexplain}, as it is stated slightly differently in the original work.\footnote{In particular, we use an error tolerance for the ball optimization oracles, which is slightly larger than in \cite{AsiCJJS21}, following a tighter error analysis given in Proposition 1 of \cite{CarmonH22}.} 

\begin{proposition}\label{prop:mainballaccel}
Let $F: \R^d \to \R$ be $L$-Lipschitz and convex, and let $x^\star \in \ball(R)$. There is an algorithm $\ballacc$ (Algorithm 4, \cite{AsiCJJS21}) taking parameters $r \in [0, R]$ and $\ptot \in (0, LR]$ with the following guarantee. Define 
\[\kappa \defeq \frac{LR}{\epsopt},\; K \defeq \Par{\frac R r}^{\frac 2 3},\; \lams \defeq \frac{\ptot K^2}{R^2} \log^2\kappa.\]
For a universal constant $\Cba > 0$, $\ballacc$ runs in at most $\Cba K \log \kappa$ iterations and produces $x \in \R^d$ such that 
\[\E F(x) \le F(x^\star) + \ptot.\]
Moreover, in each iteration $\ballacc$ requires the following oracle calls (all for $F$).
\begin{enumerate}
    \item At most $\Cba\log(\frac{R\kappa}{r})$ calls to a $(\frac{r}{\Cba}, \lam)$-line search oracle with values of $\lam \in [\frac{\lams}{\Cba}, \frac{\Cba L}{\ptot}]$.
    \item A single call to $(\frac{\lam r^2}{\Cba \log^3\kappa}, \lam)$-ball optimization oracle with  $\lam \in [\frac{\lams}{\Cba}, \frac{\Cba L}{\ptot}]$.
    \item A single call to $(\frac{\ptot}{\Cba R}, \frac{\ptot \sqrt K}{\Cba R}, \lam)$-stochastic proximal oracle with $\lam \in [\frac{\lams}{\Cba}, \frac{\Cba L}{\ptot}]$.
\end{enumerate}
\end{proposition}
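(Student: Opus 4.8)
The plan is to reduce the statement to the main theorem of \cite{AsiCJJS21} (with the tighter error analysis from Proposition 1 of \cite{CarmonH22}), essentially by a careful bookkeeping of parameters rather than a new argument. The \cite{CarmonJJJLST20} / \cite{AsiCJJS21} framework is a Monteiro--Svaiter-style accelerated proximal-point outer loop: it maintains a sequence of iterates $x_k$, and at step $k$ it (approximately) evaluates a proximal step of $F$ regularized by $\frac{\lambda_k}{2}\norm{\cdot - \bx_k}^2$, restricted to the ball $\ball_{\bx_k}(r)$, where $\lambda_k$ is chosen adaptively via a line search so that the step length matches the acceleration schedule. First I would recall the clean statement of that framework in the form: with $R$ the domain radius and $r$ the ball radius, after $O(K\log\kappa)$ outer iterations with $K=(R/r)^{2/3}$, one obtains an $\ptot$-optimal point, provided each outer iteration is furnished with (a) an approximate solution of the restricted regularized subproblem, and (b) an approximate line search to locate the correct regularization parameter $\lambda$ in the range dictated by the schedule. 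The range $\lambda\in[\lams/\Cba,\,\Cba L/\ptot]$ comes directly from the acceleration schedule: the smallest step corresponds to the largest $\lambda$, which (since $F$ is $L$-Lipschitz so a single gradient step of size $\ptot/L$ already makes $\ptot$ progress) is $O(L/\ptot)$; the largest step corresponds to $\lams = \ptot K^2 R^{-2}\log^2\kappa$, which is exactly the value making the total regularization error telescoped over $K$ iterations at most $\ptot$.

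Next I would match each of the three oracle types in the statement to what the framework actually needs. The line search oracle (Definition~\ref{def:Ols}): locating $\lambda$ requires evaluating the proximal step to within a $\frac{r}{\Cba}$-accurate iterate and doing a binary search over the $O(\log(R\kappa/r))$-length range of candidate $\lambda$ values, which is why item 1 asks for $\Cba\log(R\kappa/r)$ calls to a $(\frac{r}{\Cba},\lambda)$-line search oracle. The ball optimization oracle (Definition~\ref{def:Obo}): once $\lambda$ is fixed, the actual step must be solved to function-value accuracy $\phi$; the choice $\phi = \frac{\lambda r^2}{\Cba\log^3\kappa}$ is precisely the per-iteration accuracy that, when summed over $O(K\log\kappa)$ iterations (with the extra $\log\kappa$ factors absorbing the logarithmic blowup in the accelerated recursion and the line-search-induced inexactness), yields total error $\ptot$ — this is where I would cite the tighter accounting of \cite{CarmonH22} for the constant/logarithmic improvement noted in the footnote. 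The stochastic proximal oracle (Definition~\ref{def:Opg}): the final iterate of the accelerated scheme is a convex combination of proximal steps, and the analysis needs the last such step in a stronger, mean-squared sense (bias $\Delta/\lambda$ and second moment $\sigma^2/\lambda^2$) so that the variance contributes at most $\ptot$ to the final suboptimality after the acceleration weights are applied; the weights scale like $R/\sqrt K$, which is why the tolerated bias is $\ptot/(\Cba R)$ and the tolerated deviation is $\ptot\sqrt K/(\Cba R)$.

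The main obstacle — and the reason this is deferred to Appendix~\ref{app:ballexplain} rather than done inline — is that \cite{AsiCJJS21} states its result under somewhat different oracle access assumptions (e.g.\ a single combined "approximate proximal oracle" rather than the three-way split into line search / ball optimization / stochastic proximal oracles used here), so the real work is to disentangle their composite requirement into the three separate, individually-quantified oracles above, and to verify that the error tolerances I assign to each are simultaneously consistent with both the accelerated convergence proof and the looser ball-oracle tolerance imported from \cite{CarmonH22}. I would handle this by tracking, through the accelerated recursion, exactly which steps of the analysis invoke which oracle and with what accuracy, checking that: the $O(K\log\kappa)$ iteration bound survives the inexactness; the line search terminates in $O(\log(R\kappa/r))$ steps given a $\frac{r}{\Cba}$-accurate proximal evaluation; and all regularization parameters stay in the stated interval. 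The probabilistic parts (the expectation in $\E F(x)\le F(x^\star)+\ptot$) follow since only the stochastic proximal oracle is randomized and its contribution is controlled in expectation by the second-moment bound, so no high-probability concentration is needed at this level. None of this requires new ideas beyond \cite{AsiCJJS21, CarmonH22}; it is a restatement in a form convenient for the privacy analysis of Section~\ref{sec:subproblem}.
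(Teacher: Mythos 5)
Your proposal takes essentially the same route as the paper's Appendix~\ref{app:ballexplain}: reduce to Proposition~2 of \cite{AsiCJJS21} with the improved error accounting from Proposition~1 / Lemma~8 of \cite{CarmonH22}, then bookkeep the three oracle tolerances. That is the right plan, and your matching of the line-search and stochastic-proximal parameters is morally consistent with the paper's (the paper's sharper version of your ``weights scale like $R/\sqrt{K}$'' heuristic is the identity $\lam_k a_k^2 = A_k$, which gives $\eps/a_k = \Omega(\eps^2 K R^{-2}\log\kappa)$).

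There is, however, a real gap in your justification of the ball optimization tolerance $\phi_k = \lam_k r^2/(\Cba\log^3\kappa)$. You argue that summing $\phi_k$ over $O(K\log\kappa)$ iterations yields $\ptot$, but this only closes when $\lam_k = O(\lams)$; the acceleration schedule allows $\lam_k$ as large as $\Theta(L/\ptot)$, and for those iterations $\sum_k A_k\phi_k$ does not telescope to $O(R^2)$ by a naive count. The argument the paper actually uses (via Lemma~5 and Proposition~4 of \cite{AsiCJJS21}, and Lemma~8 of \cite{CarmonH22}) is a dichotomy: whenever $\lam_{k+1} = \omega(\lams)$, the algorithm certifies that the accepted step has length $\norm{\hat{x}_{k+1} - y_k} = \Omega(r)$, so the negative term $-\tfrac{1}{6}\lam_{k+1}A_{k+1}\norm{\hat{x}_{k+1}-y_k}^2$ in the potential recursion dominates $A_{k+1}\phi_{k+1}$ (their ratio is $O(1/(\Cba\log^3\kappa))$); only the iterations with $\lam_{k+1}=O(\lams)$ contribute an uncancelled potential increase, and it is that sum that comes out to $O(R^2)$. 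Without invoking this case split, your bound on the ball-oracle error contribution does not follow, so this step needs the dichotomy stated explicitly rather than the per-iteration sum you gave.

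One further small omission: \cite{AsiCJJS21}'s Proposition~2 is stated as a constant-probability guarantee, whereas Proposition~\ref{prop:mainballaccel} asserts $\E F(x)\le F(x^\star)+\ptot$; the paper notes that inspecting the proof already gives the expectation bound, which you should record since the downstream privacy analysis uses it.
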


The optimization framework in Proposition~\ref{prop:mainballaccel} is naturally compatible with our ReSQue estimators, whose stability properties are local in the sense that they hold in balls of radius $\approx \rho$ around the centerpoint $\bx$ (see Lemma~\ref{lem:p_moment_Gaussian}). Conveniently, $\ballacc$ reduces an optimization problem over a domain of size $R$ to a sequence of approximate optimization problems on potentially much smaller domains of radius $r$. In Sections~\ref{sec:parallel} and~\ref{sec:subproblem}, by instantiating Proposition~\ref{prop:mainballaccel} with $r \approx \rho$, we demonstrate how to use the local stability properties of ReSQue estimators (on smaller balls) to solve constrained subproblems, and consequently design improved parallel and private algorithms. 

Finally, as mentioned previously, in settings where privacy is not a consideration, Proposition 1 of \cite{CarmonH22} gives a direct implementation of all the line search and stochastic proximal oracles required by Proposition~\ref{prop:mainballaccel} by reducing them to ball optimization oracles. The statement in \cite{CarmonH22} also assumes access to \emph{function evaluations} in addition to gradient (estimator) queries; however, it is straightforward to use geometric aggregation techniques (see Lemma~\ref{lem:agg}) to bypass this requirement. We give a slight rephrasing of Proposition 1 in \cite{CarmonH22} without the use of function evaluation oracles, and defer further discussion to Appendix~\ref{app:ballexplain2} where we prove the following.

\begin{proposition}\label{prop:mainballaccel2}
Let $F: \R^d \to \R$ be $L$-Lipschitz and convex, and let $x^\star \in \ball(R)$. There is an implementation of $\ballacc$ (see Proposition~\ref{prop:mainballaccel}) taking parameters $r \in [0, R]$ and $\epsopt \in (0, LR]$ with the following guarantee, where we define $\kappa, K, \lams$ as in Proposition~\ref{prop:mainballaccel}. For a universal constant $\Cba > 0$, $\ballacc$ runs in at most $\Cba K\log\kappa$ iterations and produces $x \in \R^d$ such that $\E F(x) \le F(x^\star) + \epsopt$.
\begin{enumerate}
    \item Each iteration makes at most $\Cba\log^2(\frac{R\kappa}{r})$ calls to $(\frac{\lam r^2}{\Cba}, \lam)$-ball optimization oracle with values of $\lam \in [\frac{\lams}{\Cba}, \frac{\Cba L}{\ptot}]$.
    \item For each $j \in [\lceil\log_2 K + \Cba\rceil]$, at most $\Cba^2 \cdot 2^{-j} K\log(\frac{R\kappa}{r})$ iterations query a $(\frac{\lam r^2}{\Cba 2^j} \cdot \log^{-2}(\frac{R\kappa}{r}), \lam)$-ball optimization oracle for some $\lam \in [\frac{\lams}{\Cba}, \frac{\Cba L}{\ptot}]$.
\end{enumerate}
\end{proposition}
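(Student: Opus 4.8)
\emph{Proof plan.} The plan is to obtain \Cref{prop:mainballaccel2} by combining \Cref{prop:mainballaccel} with Proposition~1 of \cite{CarmonH22}, which shows that in the non-private setting the line search oracles (\Cref{def:Ols}) and the stochastic proximal oracle (\Cref{def:Opg}) required by $\ballacc$ can each be implemented using only a few calls to ball optimization oracles (\Cref{def:Obo}). There are only two gaps to close relative to a verbatim citation: first, the statement in \cite{CarmonH22} additionally assumes access to exact evaluations of $F$, which we do not have (only gradient estimators); second, we must re-express the resulting oracle calls in the per-iteration and per-level form of the proposition and track their accuracies against the schedule of $\lam$ and $\lams$ fixed in \Cref{prop:mainballaccel}.

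To remove the function-evaluation assumption, note that $F$-evaluations are used in \cite{CarmonH22} only to \emph{select} an approximately-best point from a small pool of candidate iterates (e.g.\ in an acceleration-coupling step, or when aggregating across iterations of $\ballacc$). Since $F$ is convex, a near-minimizer within such a pool can be identified using only (stochastic) gradient queries, via a bisection/tournament procedure that compares two candidates by querying a gradient near their midpoint; this is exactly the content of \Cref{lem:agg}. Substituting this geometric aggregation for each function evaluation costs an $O(\log)$ multiplicative overhead in ball-optimization calls and contributes an additive error that can be made an arbitrarily small polynomial factor at the price of polylogarithmic overhead in $R\kappa/r$; since each iteration's aggregation error is controlled independently, these errors do not compound over the $\Cba K\log\kappa$ iterations of $\ballacc$, so the stated accuracies are unchanged up to the displayed $\log$ powers.

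For the call counting, the $\Cba\log(\tfrac{R\kappa}{r})$ line search oracles of \Cref{prop:mainballaccel} (each of accuracy $\tfrac{r}{\Cba}$) are implemented in \cite{CarmonH22} by a binary search over the regularization level making $O(\log\tfrac{R\kappa}{r})$ steps, each invoking a single ball optimization oracle of accuracy $\Theta(\lam r^2)$ up to logs; together with the aggregation overhead this yields the $\Cba\log^2(\tfrac{R\kappa}{r})$ calls to a $(\tfrac{\lam r^2}{\Cba},\lam)$-ball optimization oracle per iteration in the first claim. The single stochastic proximal oracle per iteration, which must meet the $(\tfrac{\ptot}{\Cba R},\tfrac{\ptot\sqrt K}{\Cba R},\lam)$ specification, is built from the truncated multilevel estimator of \cite{AsiCJJS21, CarmonH22}: one draws a level $j \sim \Geom(\thalf)$, truncated at $\lceil\log_2 K + \Cba\rceil$, and forms an (essentially unbiased) estimate of $\xsbxl$ as a telescoping difference of outputs of ball optimization oracles run at accuracy $\Theta(\lam r^2 2^{-j})$; the geometric weighting is precisely what turns the $2^{-j}$-shrinking accuracy into an $O(1)$-in-expectation number of oracle calls while keeping the variance at the $\tfrac{\ptot^2 K}{\Cba^2 R^2}$ scale. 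Over the $\Theta(K)$ iterations of $\ballacc$, each drawing its level independently, the number landing at level $j$ is $O(2^{-j}K)$ in expectation, and a Chernoff bound over these independent draws upgrades this to the high-probability bound $\Cba^2\cdot 2^{-j}K\log(\tfrac{R\kappa}{r})$ of the second claim (the $\log$ and $\Cba^2$ slack absorbing both the concentration tail and the per-level $O(\log^2)$ within-iteration repetitions needed to hit the proximal accuracy); the truncation is harmless because the mass of $\Geom(\thalf)$ above $\log_2 K + \Cba$ is $O(2^{-\Cba}/K)$, within the robustness tolerance of $\ballacc$.

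The main obstacle is the accuracy bookkeeping for the proximal oracle: one must verify that a $(\phi,\lam)$-ball optimization oracle with $\phi = \Theta(\lam r^2 2^{-j}\log^{-2}(\tfrac{R\kappa}{r}))$ really does suffice at level $j$, which requires re-deriving the bias and variance of the truncated multilevel proximal estimator and checking it against the $(\tfrac{\ptot}{\Cba R},\tfrac{\ptot\sqrt K}{\Cba R},\lam)$ target --- the one place where the constants and log powers of \cite{CarmonH22} interact nontrivially with the range $\lam \in [\tfrac{\lams}{\Cba},\tfrac{\Cba L}{\ptot}]$ and with the choice $\lams = \tfrac{\ptot K^2}{R^2}\log^2\kappa$. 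The remaining ingredients --- the tournament-based aggregation of \Cref{lem:agg} and the Chernoff concentration across levels --- are routine, and the full details are deferred to \Cref{app:ballexplain2}.
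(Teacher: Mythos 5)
Your proposal takes essentially the same route as the paper's appendix argument: invoke Proposition~1 of \cite{CarmonH22} for the reduction to ball optimization oracles, remove the function-evaluation assumption via \Cref{lem:agg}, and upgrade the expected gradient count of \cite{CarmonH22} to a high-probability count via Chernoff concentration over the geometric levels. The structure and the cited tools all match.

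There is, however, a substantive mischaracterization of \Cref{lem:agg}. You describe it as ``a bisection/tournament procedure that compares two candidates by querying a gradient near their midpoint; this is exactly the content of \Cref{lem:agg}.'' That is not what \Cref{lem:agg} (Claim~1 of \cite{KelnerLLST22}) is. It is a purely geometric aggregation routine: given a pool of $k$ points and a tolerance $\Delta$, if a $0.51$ fraction of them lie within $\Delta/3$ of an \emph{unknown} point $y$, it returns some $z$ with $\norm{z-y}\le\Delta$, using no function or gradient queries of any kind. The way the paper actually eliminates the function evaluations of \cite{CarmonH22} is: run $O(\log\tfrac{R\kappa}{r})$ independent ball-optimization subroutines for the current $\lam$; strong convexity and Markov give that each output is within $O(r)$ of $\xsbxl$ with constant probability; a Chernoff bound over the independent runs then makes the clustering precondition of \Cref{lem:agg} hold with high probability; and the geometric aggregation returns a point meeting the line-search accuracy $\Delta = O(r)$. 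If you instead tried to build a gradient-based tournament that selects the best candidate by querying stochastic gradients near midpoints, you would need a separate analysis of how stochastic noise affects such comparisons and how that error accumulates --- none of which appears in the paper, and none of which is needed, because the aggregation step is query-free. You should correct this description; the remainder of your sketch (multilevel geometric proximal estimator, per-level call counting, Chernoff for high-probability complexity, and the robustness of $\ballacc$ to the small truncation tail) is aligned with the paper.
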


%
\section{Parallel stochastic convex optimization}\label{sec:parallel}

In this section, we present our main results on parallel convex optimization with improved computational depth and total work. We present our main results below in Theorems~\ref{thm:parallel-sgd} and~\ref{thm:parallel-agd}, after formally stating our notation and the SCO problem we study in this section.

\subsection{Preliminaries}\label{ssec:parallel_prelims}

In this section, we study the following SCO problem, which models access to an objective only through the stochastic gradient oracle.

\begin{problem}\label{prob:sco_gen}
	Let $f: \R^d \to \R$ be convex. We assume there exists a \emph{stochastic gradient oracle} $g: \R^d \to \R^d$ satisfying for all $x \in \R^d$, $\E g(x) \in \partial f(x)$, $\E \norm{g(x)}^2 \le L^2$. Our goal is to produce $x \in \R^d$ such that $\E f(x) \le \min_{x^\star \in \ball(R)} f(x^\star) + \epsopt$. We define parameter
	\begin{equation}\label{eq:kappadef}\kappa\defeq \frac{LR}{\epsopt}. \end{equation} 
\end{problem}

When discussing a parallel algorithm which queries a stochastic gradient oracle, in the sense of Problem~\ref{prob:sco_gen}, we separate its complexity into four parameters. The \emph{query depth} is the maximum number of sequential rounds of interaction with the oracle, where queries are submitted in batch. The \emph{total number of queries} is the total number of oracle queries used by the algorithm. The \emph{computational depth and work} are the sequential depth and total amount of computational work, treating each oracle query as requiring $O(1)$ depth and work. For simplicity we assume that all $d$-dimensional vector operations have a cost of $d$ when discussing computation.

\subsection{Proofs of Theorems~\ref{thm:parallel-sgd} and~\ref{thm:parallel-agd}}

\begin{theorem}[Parallel $\epochSGD$-based solver]\label{thm:parallel-sgd}
$\ballacc$ (\Cref{prop:mainballaccel2}) using parallel $\epochSGD$ (Algorithm~\ref{alg:parallel-sgd}) as a ball optimization oracle solves Problem~\ref{prob:sco_gen} with expected error $\epsopt$, with 
\[O\Par{d^{\frac 1 3}\kappa^{\frac 2 3}\log^3(d\kappa)}\text{ query depth and }O\Par{ d^{\frac 1 3}\kappa^{\frac 2 3}\log^3\Par{d\kappa}+\kappa^2\log^4\Par{d\kappa}}\text{ total queries,}\]
and an additional computational cost of 
\[O\Par{ d^{\frac 1 3}\kappa^{\frac 2 3}\log^3\Par{d\kappa}+\kappa^2\log^4\Par{d\kappa}}\text{ depth and }O\Par{\Par{ d^{\frac 1 3}\kappa^{\frac 2 3}\log^3\Par{d\kappa}+\kappa^2\log^4\Par{d\kappa}} \cdot d}\text{ work}.\]
\end{theorem}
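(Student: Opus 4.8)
Reduce Problem~\ref{prob:sco_gen} to minimizing the Gaussian convolution $\hf_\rho$ over $\ball(R)$, run $\ballacc$ (Proposition~\ref{prop:mainballaccel2}) on $\hf_\rho$ with ball radius $r=\rho$, and implement each required ball optimization oracle by a \emph{single parallel batch} of ReSQue queries followed by an oracle-free $\epochSGD$ computation (the parallel $\epochSGD$ of Algorithm~\ref{alg:parallel-sgd}). After rescaling so that $L=1$ (which preserves $\kappa$), take $\rho = \Theta(\epsopt/\sqrt d)$, so that $|\hf_\rho - f| \le \rho\sqrt d \le \epsopt/3$ pointwise (the bound recalled after Definition~\ref{def:gaussian-convolution}); then any $\tfrac{\epsopt}{3}$-minimizer of $\hf_\rho$ over $\ball(R)$ is an $\epsopt$-minimizer of $f$. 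Since $\hf_\rho$ is convex, $1$-Lipschitz, and $\ball(R)$ contains a point within $\tfrac{\epsopt}{3}$ of $\inf\hf_\rho$, we invoke Proposition~\ref{prop:mainballaccel2} with $F=\hf_\rho$, error $\Theta(\epsopt)$, and $r=\rho$ (the mild gap between ``near-optimal in $\ball(R)$'' and ``minimizer in $\ball(R)$'' is handled by adding a negligible quadratic regularizer, as is standard). This gives $K=(R/\rho)^{2/3} = \Theta(d^{1/3}\kappa^{2/3})$ and $\log(R\kappa/r) = O(\log(d\kappa))$, so $\ballacc$ runs $O(K\log\kappa)$ iterations, each calling the ball optimization oracle $O(\log^2(d\kappa))$ times, with the refined accuracy/count schedule of Item~2 of Proposition~\ref{prop:mainballaccel2} governing the finer structure.

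\textbf{Parallel ball optimization oracle.} Fix a centerpoint $\bx$ and a regularization $\lam$ in the range permitted by Proposition~\ref{prop:mainballaccel2}, so in particular $\lam \ge \lams/\Cba$ and $\lam = O(1/\epsopt)$; the subproblem is to minimize the $\lam$-strongly convex $F_\lam(x) \defeq \hf_\rho(x) + \tfrac\lam 2\norm{x-\bx}^2$ over $\ball_{\bx}(\rho)$. By Lemma~\ref{lem:stochastic_varbound}, on $\ball_{\bx}(\rho)$ the ReSQue estimator $\tnbx^g\hf_\rho(x)$ is unbiased for $\nabla\hf_\rho(x)$ with second moment at most $3$; adding the exact term $\lam(x-\bx)$, of norm $\le \lam\rho = O(1/\sqrt d)$, gives an unbiased estimator of $\nabla F_\lam$ of second moment $O(1)$. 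The crucial point is that this estimator queries $g$ only at points of the form $\bx+\xi$, which \emph{do not depend on the evaluation point} $x$: to run $\epochSGD$ for $T$ steps one samples $\xi_1,\dots,\xi_T$ and queries $g(\bx+\xi_1),\dots,g(\bx+\xi_T)$ in one parallel round, after which each $\epochSGD$ step merely reweights a precomputed gradient by a scalar, projects onto $\ball_{\bx}(\rho)$, and performs $O(1)$ vector operations --- with no further oracle access. Hence one call of accuracy $\phi$ uses $T = \tO(\tfrac1{\lam\phi})$ queries (the standard $\epochSGD$ rate for $\lam$-strongly convex minimization under a constant second-moment bound), with query depth $O(1)$ and computational depth/work $O(T)$ and $O(Td)$.

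\textbf{Accounting.} The query depth is $O(K\log\kappa)$ iterations $\times\, O(\log^2(d\kappa))$ calls per iteration $\times\, O(1)$ depth per call $= O(d^{1/3}\kappa^{2/3}\log^3(d\kappa))$. For the total query count, sum $T=\tO(\tfrac1{\lam\phi})$ over all oracle calls: at level $j$ of Item~2 there are $\tO(2^{-j}K)$ calls of accuracy $\phi_j = \tilde\Theta(\lam\rho^2 2^{-j})$ and hence cost $\tO(2^j/(\lam^2\rho^2))$ each, so the per-level total $\tO(K/(\lam^2\rho^2))$ is independent of $j$; using $\lam \ge \lams/\Cba = \Omega(\tfrac{\epsopt K^2}{R^2}\log^2\kappa)$ together with the identity $K^3\rho^2 = R^2$ (which holds exactly because $r=\rho$ and $K=(R/\rho)^{2/3}$) this collapses to $\tO(R^2/\epsopt^2) = \tO(\kappa^2)$, and the coarse Item~1 calls contribute the same order. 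Since every call costs at least one query and the number of calls is $\tO(K) = \tO(d^{1/3}\kappa^{2/3})$, the total query count is $O(d^{1/3}\kappa^{2/3}\log^3(d\kappa) + \kappa^2\log^4(d\kappa))$. Finally, the oracle calls lie along the sequentially dependent $\ballacc$ trajectory, so the computational depth is $\sum T + \tO(K) = O(\text{total queries})$, and multiplying by the cost $d$ per vector operation gives work $O(\text{total queries}\cdot d)$, matching the claimed bounds.

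\textbf{Main obstacle.} The conceptual heart --- that all ReSQue queries within a subproblem can be issued in one parallel batch because the query points are independent of the evaluation point --- is short. The real work is the accounting: one must verify that the geometrically decaying accuracies and call counts of Proposition~\ref{prop:mainballaccel2}, combined with the \emph{entire} admissible range of $\lam$ and the $\tO(1/(\lam\phi))$ cost of $\epochSGD$, telescope to exactly an $O(\kappa^2)$ contribution rather than a larger power --- a cancellation that hinges on $K^3\rho^2=R^2$ and $\lams\propto \epsopt K^2/R^2$, and on the fact that forcing $r=\rho$ (dictated by the domain of validity of Lemma~\ref{lem:stochastic_varbound}) is precisely the coupling that makes these match. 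One also has to check that no single oracle call, for any $\lam$ in range, dominates the query budget, and that the $O(1)$ query-depth claim survives $\epochSGD$'s epoch structure (it does, since epochs only reuse fresh precomputed samples).
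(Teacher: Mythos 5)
Your proof is correct and follows essentially the same approach as the paper: set $r=\rho=\Theta(\epsopt/(L\sqrt d))$ so $\hf_\rho$ approximates $f$ uniformly, instantiate Proposition~\ref{prop:mainballaccel2} on $\hf_\rho$, exploit that ReSQue samples can be batched up front (Proposition~\ref{lem:parallel-sgd}), and sum $O(L^2/(\lam\phi))$ over the two oracle schedules with the worst-case $\lam\gtrsim\lams$, with the identity $K^3 r^2 = R^2$ driving the collapse to $\tO(\kappa^2)$. The additional care you take (the quadratic regularizer for the constrained-vs-unconstrained discrepancy, and verifying $\norm{\lam(x-\bx)}$ is small) is sound but already subsumed by the hypotheses of Propositions~\ref{prop:mainballaccel2} and~\ref{lem:parallel-sgd} as used in the paper.
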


\begin{theorem}[Parallel $\NAGD$-based solver]\label{thm:parallel-agd}
$\ballacc$ (\Cref{prop:mainballaccel2}) using parallel $\NAGD$ (Algorithm~\ref{alg:parallel-agd}) as a ball optimization oracle solves Problem~\ref{prob:sco_gen} with expected error $\epsopt$, with 
\begin{gather*}O\Par{d^{\frac 1 3}\kappa^{\frac 2 3}\log\kappa}\text{ query depth}\\
\text{and }O\Par{ d^{\frac 1 3}\kappa^{\frac 2 3}\log^3\Par{d\kappa}+d^{\frac 1 4}\kappa\log^4\Par{ d\kappa}+\kappa^2\log^4\Par{d\kappa}}\text{ total queries,}\end{gather*}
and an additional computational cost of
\begin{gather*}
O\Par{ d^{\frac 1 3}\kappa^{\frac 2 3}\log^3\Par{d\kappa}+d^{\frac 1 4}\kappa\log^4\Par{ d\kappa}}\text{ depth}\\
\text{and } O\Par{\Par{d^{\frac 1 3}\kappa^{\frac 2 3}\log^3\Par{d\kappa}+d^{\frac 1 4}\kappa\log^4\Par{ d\kappa}+\kappa^2\log^4\Par{d\kappa}} \cdot d}\text{ work}.
\end{gather*}
\end{theorem}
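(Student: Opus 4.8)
\textbf{Proof plan for Theorems~\ref{thm:parallel-sgd} and~\ref{thm:parallel-agd}.}

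The plan is to instantiate the ball acceleration framework of Proposition~\ref{prop:mainballaccel2} with the convolved objective $\hf_\rho$ in place of $f$, choosing the smoothing radius $\rho \approx \epsopt/(L\sqrt d)$ so that $|\hf_\rho - f| \le L\rho\sqrt d \le \tfrac12\epsopt$ pointwise (by the Lipschitz-convolution bound cited after Definition~\ref{def:gaussian-convolution}); this reduces the task to producing an $\tfrac12\epsopt$-approximate minimizer of $\hf_\rho$ over $\ball(R)$, and any such point is $\epsopt$-optimal for $f$. We also set the ball radius $r \approx \rho$ in Proposition~\ref{prop:mainballaccel2}, so that the number of outer iterations is $K \approx (R/r)^{2/3} \approx (R\sqrt d/\epsopt)^{2/3} = (d^{1/2}\kappa)^{2/3} = d^{1/3}\kappa^{2/3}$, matching the claimed query depth. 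The content of both theorems is then to supply a ball optimization oracle for the regularized subproblems $F(x) + \tfrac\lam2\norm{x-\bx}^2$ with $F = \hf_\rho$ — these are constrained to $\ball_{\bx}(r)$ with $r \approx \rho$, exactly the regime where Lemma~\ref{lem:stochastic_varbound} (and Lemma~\ref{lem:p_moment_Gaussian}) guarantee the ReSQue estimator $\tnbx^g\hf_\rho$ is unbiased with $O(L^2)$ variance — and to account for the required accuracies $\phi \approx \lam r^2$ and $\phi \approx \lam r^2 2^{-j}$ from the two bullets of Proposition~\ref{prop:mainballaccel2}.

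For Theorem~\ref{thm:parallel-sgd}, the key observation is the one flagged in Section~\ref{sec:intro:approach}: once $\xi$ is drawn, the ReSQue estimator at any $x$ is just the \emph{scalar} $\gamma_\rho(x-\bx-\xi)/\gamma_\rho(\xi)$ times the fixed vector $g(\bx+\xi)$, so all stochastic gradient queries a subproblem solver will ever need can be drawn up front in a \emph{single} parallel batch. Thus I would run $\epochSGD$ (a standard epoch-doubling SGD achieving $O(1/(\lam T))$ suboptimality for a $\lam$-strongly convex objective with $O(L^2)$-variance stochastic gradients, after $T$ steps — see Algorithm~\ref{alg:parallel-sgd}) on the $\lam$-strongly convex subproblem; to reach accuracy $\phi \approx \lam r^2$ over the relevant range $\lam \in [\lams/\Cba, \Cba L/\epsopt]$ takes $T = \tO(1)$ iterations for the leading term and, for the small-$\lam$ (i.e.\ $j$-indexed) cases, $T = \tO(2^j \cdot (\text{something}))$, which when multiplied by the $2^{-j}K$ iterations that need such accuracy contributes a geometric sum dominated by $\tO(\kappa^2)$ total queries. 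Each of these $\tO(T)$ SGD steps has query depth $1$ (vectors already in hand), giving query depth $\tO(K) = \tO(d^{1/3}\kappa^{2/3})$ overall; the total number of queries is the number of outer iterations times the per-subproblem queries, which telescopes to $\tO(d^{1/3}\kappa^{2/3} + \kappa^2)$; the computational depth is dominated by the $\tO(\kappa^2)$ sequential scalar-reweight-and-average steps, and work is that times $d$. Polylog factors come from the $\log\kappa$ iteration count of $\ballacc$, the $\log^2(R\kappa/r)$ calls per iteration, and the $\log$ factors inside $\epochSGD$; I would bookkeep these to land on $\log^3(d\kappa)$ and $\log^4(d\kappa)$ as stated.

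For Theorem~\ref{thm:parallel-agd}, the only change is to replace the subproblem solver with an accelerated mini-batched stochastic method — Ghadimi--Lan's $\NAGD$/\,$\mathsf{AC}$-$\mathsf{SA}$ \cite{GhadimiL12} (Algorithm~\ref{alg:parallel-agd}) — exploiting that $\hf_\rho$ is $\tfrac1\rho$-smooth. Running accelerated SGD for $T$ iterations with mini-batch size $b$ gives error $\tO(\tfrac{1}{\rho T^2} + \tfrac{L^2}{\lam b T})$ (smooth-plus-variance rate), so with $T \approx d^{1/8}$-ish and suitably chosen $b$ one balances the terms to hit accuracy $\phi \approx \lam r^2$ using the same \emph{total} query budget $\tO(d^{1/3}\kappa^{2/3}+\kappa^2)$ but with sequential depth only $\tO(T)$ per subproblem; summed over $K$ subproblems this yields computational depth $\tO(d^{1/3}\kappa^{2/3} + d^{1/4}\kappa)$, matching the bound of \cite{DBM12}. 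The query depth is still $\tO(K)$ up to the extra $\log$ factors from the mini-batching. The main obstacle, and where I expect the real work to lie, is the careful bookkeeping in the small-$\lam$ regime: verifying that the per-subproblem iteration counts needed to meet the accuracy schedule $\phi \approx \lam r^2 2^{-j}$ of Proposition~\ref{prop:mainballaccel2}, when weighted by the number $\tO(2^{-j}K)$ of subproblems demanding that accuracy and summed over $j \le \log_2 K + O(1)$, produce a convergent geometric series whose total is $\tO(\kappa^2)$ (and not, say, $\tO(K\kappa^2)$); this is exactly the mechanism by which ball acceleration preserves SGD's optimal total work, and it requires tracking how the strong-convexity parameter $\lam$, the radius $r\approx\rho$, and the variance $L^2$ interact in the $\epochSGD$ / $\NAGD$ rates. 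A secondary subtlety is confirming the ReSQue estimator for the \emph{regularized} objective $\hf_\rho + \tfrac\lam2\norm{\cdot-\bx}^2$ still satisfies the $O(L^2)$ second-moment bound on $\ball_{\bx}(r)$ — the regularizer's gradient is exactly computable and bounded by $\lam r$ there, so this is routine, but it must be stated.
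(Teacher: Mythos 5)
Your plan matches the paper's proof: instantiate Proposition~\ref{prop:mainballaccel2} on $\hf_\rho$ with $r=\rho=\epsopt/(L\sqrt d)$, use the up-front-batching observation so each $\NAGD$ call has constant query depth, and bound totals by plugging Proposition~\ref{lem:parallel-agd} into the dyadic accuracy schedule (with $\lam\ge\lams/\Cba$ giving $\sqrt{L/(r\lams)}\approx\sqrt\kappa/K^{1/4}$, hence $K\cdot\sqrt\kappa/K^{1/4}\cdot\mathrm{polylog}=d^{1/4}\kappa\,\mathrm{polylog}$ depth, and the $L^2/(\lam\phi)$ terms summing geometrically to $\tO(\kappa^2)$ as in Theorem~\ref{thm:parallel-sgd}). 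One minor note: your heuristic "$T\approx d^{1/8}$-ish" per subproblem is off — at $\lam=\lams$ the inner iteration count is $\sqrt{L/(\rho\lams)}\approx\kappa^{1/3}/d^{1/12}$ — and the regularizer's gradient is handled exactly inside the proximal step of Algorithm~\ref{alg:parallel-agd}, so the ReSQue variance bound only needs to cover $\hf_\rho$ itself, not the regularized objective; neither affects the argument's structure.
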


The query depth, total number of queries, and total work for both of our results are the same (up to logarithmic factors). The main difference is that $\NAGD$ attains an improved computational depth for solving SCO, compared to using $\epochSGD$. Our results build upon the $\ballacc$ framework in~Section~\ref{sec:ball-constrained}, combined with careful parallel implementations of the required ball optimization oracles to achieve improved complexities. 

We begin by developing our parallel ball optimization oracles using our ReSQue estimator machinery from Section~\ref{ssec:estimator}. First, Proposition~\ref{prop:mainballaccel2} reduces Problem~\ref{prob:sco_gen} to implementation of a ball optimization oracle. Recall that a ball optimization oracle (Definition~\ref{def:Obo}) requires an approximate solution $x$ of a regularized subproblem. In particular, for some accuracy parameter $\phi$, and defining $\xsbxl$ as in \eqref{eq:xsbxldef}, we wish to compute a random $x \in \ball_{\bar{x}}(r)$ such that
\begin{align*}
\E\Brack{\hf_\rho(x) + \frac \lam 2 \norm{x - \bx}^2} \le \hf_\rho(\xsbxl) + \frac \lam 2 \norm{\xsbxl - \bx}^2 + \phi,\; x \in \ball_{\bx}(r).
\end{align*}
Note that such a ball optimization oracle can satisfy the requirements of Proposition~\ref{prop:mainballaccel2} with $F \gets \hf_\rho$, $r \gets \rho$. In particular, Lemma~\ref{lem:stochastic_varbound} gives a gradient estimator variance bound under the setting $r = \rho$.

\paragraph{$\epochSGD$.} We implement $\epochSGD$~\cite{HazanK14,AsiCJJS21}, a variant of standard stochastic gradient descent on regularized objective functions, in parallel using the stochastic ReSQue estimator constructed in~Definition~\ref{def:stoch_RGCG}. Our main observation is that the gradient queries in Definition~\ref{def:stoch_RGCG} can be implemented in parallel at the beginning of the algorithm. We provide the pseudocode of our parallel implementation of $\epochSGD$ in~Algorithm~\ref{alg:parallel-sgd} and state its guarantees in~Proposition~\ref{lem:parallel-sgd}.

\begin{algorithm2e}\label{alg:parallel-sgd}
\DontPrintSemicolon
\caption{$\epochSGD(f, g, \bar{x}, r, \rho,\lambda,\phi)$}
\textbf{Input:} $f:\R^d\rightarrow\R$ and $g: \R^d \to \R$ satisfying the assumptions of Problem~\ref{prob:sco_gen}, $\bar{x} \in \R^d$, $r, \rho, \lam, \phi >0$\;
$\eta_1 \gets \frac 1 {4\lam}$, $T_1 \gets 16$, $T \gets \lceil \frac{48L^2}{\lam \phi}\rceil$\;
Sample $\xi_i\sim\Nor(0,\rho^2\id_d)$, $i\in[2T]$ independently \;
Query $g(\bar{x}+\xi_i)$ for all $i\in[2T]$ (in parallel)\;
$x_1^0 \gets \bar{x}$, $k \gets 1$\;
\While{$\sum_{j\in[k]}T_j\le T$}{
$x_k^1 \gets \argmin_{x\in\ball_{\bar{x}}(r)}\Brace{\frac{\eta_k\lambda}{2}\|x-\bar{x}\|^2+\frac{1}{2}\|x-x_k^0\|^2}$\;
\For{$t \in [T_k-1]$}{
$i \gets \sum_{j\in[k-1]} T_j+t$\;
$\tnbx^g \hf_\rho(x_k^t) \gets \frac{\gamma_\rho(x_k^t - \bx - \xi_i)}{\gamma_\rho(\xi_i)} g(\bx + \xi_i)$\;
$x_k^{t+1} \gets \argmin_{x\in\ball_{\bx}(r)}\Brace{ \eta_k\langle \tnbx^g \hf_\rho(x_k^t), x\rangle+\frac{\eta_k\lambda}{2}\|x-\bar{x}\|^2+\frac{1}{2}\|x-x_k^t\|^2}$
}
$x_{k+1}^0 \gets \frac{1}{T_k}\sum_{t\in[T_k]}x_k^t$, $T_{k+1} \gets 2T_k$, $\eta_{k+1} \gets \frac{\eta_k} 2$, $k \gets k+1$
}
\textbf{return} $x_{k}^0$
\end{algorithm2e}

\begin{proposition}[Proposition 3,~\cite{AsiCJJS21}]\label{lem:parallel-sgd}
Let $f, g$ satisfy the assumptions of Problem~\ref{prob:sco_gen}. When $\rho = r$, Algorithm~\ref{alg:parallel-sgd} is a $(\phi,\lambda)$-ball optimization oracle for $\hf_\rho$ which makes $O(\frac{L^2}{\phi\lam})$ total queries to $g$ with constant query depth,
and an additional computational cost of $O(\frac{L^2}{\phi\lam})$ depth and work.
\end{proposition}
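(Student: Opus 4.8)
The statement restates Proposition 3 of \cite{AsiCJJS21}, so the plan is to run the textbook epoch-SGD analysis, with the only new ingredient being that the stochastic gradients are supplied by the ReSQue estimator of Definition~\ref{def:stoch_RGCG}. Throughout, work with the regularized objective $F_\lambda(x) \defeq \hf_\rho(x) + \frac{\lambda}{2}\norm{x - \bx}^2$, which is $\lambda$-strongly convex and whose constrained minimizer over $\ball_{\bx}(r)$ is exactly $\xsbxl$ from \eqref{eq:xsbxldef} with $F \gets \hf_\rho$. The first step is to record that $\tnbx^g \hf_\rho(x_k^t) + \lambda(x_k^t - \bx)$ is an unbiased estimator of $\nabla F_\lambda(x_k^t)$: the ReSQue part is unbiased for $\nabla \hf_\rho(x_k^t)$ by Lemma~\ref{lem:stochastic_varbound}(1) regardless of the centerpoint, and the regularizer term is exact. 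Crucially, since $\rho = r$ and every iterate $x_k^t$ produced by Algorithm~\ref{alg:parallel-sgd} is obtained by a projection onto $\ball_{\bx}(r)$, Lemma~\ref{lem:stochastic_varbound}(2) applies at \emph{every} iterate and gives $\E\norm{\tnbx^g \hf_\rho(x_k^t)}^2 \le 3L^2$; hence the second moment of the full stochastic gradient of $F_\lambda$ is $O(L^2)$ (using $\lambda r \lesssim L$, which holds whenever $\ballacc$ invokes the oracle; otherwise one carries an additive $\lambda^2 r^2$ term that does not affect the stated query bound). This is the only place where the local stability of ReSQue and the choice $r = \rho$ are used.

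Given an unbiased stochastic gradient with second moment $O(L^2)$ for a $\lambda$-strongly convex objective, the convergence proof is the standard epoch-SGD argument \cite{HazanK14}. Algorithm~\ref{alg:parallel-sgd} runs epochs $k = 1,2,\dots$, where epoch $k$ performs $T_k = 2^{k-1}T_1$ projected stochastic-gradient steps with step size $\eta_k = \eta_1 2^{-(k-1)}$, keeping the strongly convex term $\frac{\lambda}{2}\norm{x - \bx}^2$ un-linearized inside the prox-style updates (the usual device for composite strongly convex minimization, including the initial regularized projection defining $x_k^1$), and warm-starts epoch $k+1$ at the iterate average $x_{k+1}^0$. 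Combining the standard mirror-descent regret bound over one epoch with strong convexity, $\norm{x_k^0 - \xsbxl}^2 \le \frac{2}{\lambda}\big(F_\lambda(x_k^0) - F_\lambda(\xsbxl)\big)$, and the choices $\eta_1 = \frac1{4\lambda}$, $T_1 = 16$ (so that $\eta_k T_k$ is constant across epochs), yields a recursion of the form $\Delta_{k+1} \le \tfrac14 \Delta_k + O\big(\tfrac{L^2}{\lambda 2^k}\big)$ for $\Delta_k \ge \E[F_\lambda(x_k^0) - F_\lambda(\xsbxl)]$, which telescopes to $\Delta_k = O\big(\tfrac{L^2}{\lambda 2^k}\big)$. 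Therefore after $k^\star = O\big(\log\tfrac{L^2}{\lambda\phi}\big)$ epochs the gap is at most $\phi$, and the total iteration count is $\sum_{k \le k^\star} T_k = O(2^{k^\star}) = O\big(\tfrac{L^2}{\lambda\phi}\big)$, which is exactly the budget $T = \lceil \tfrac{48L^2}{\lambda\phi}\rceil$ enforced by the \texttt{while} condition $\sum_{j\in[k]}T_j \le T$. The returned point $x = x_k^0$ (for the final loop index $k$) thus lies in $\ball_{\bx}(r)$ and satisfies $\E[F_\lambda(x)] \le F_\lambda(\xsbxl) + \phi$, which is precisely the requirement of Definition~\ref{def:Obo} with $F \gets \hf_\rho$.

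For the complexity claims: the entire interaction with $g$ consists of the queries $g(\bx + \xi_i)$ for $i \in [2T]$, and — this is the point of the ReSQue form \eqref{eq:rgcg_def} — these are at points $\bx + \xi_i$ that do not depend on any iterate. Hence they are all issued in a single parallel batch at the start, giving constant query depth and $O(T) = O\big(\tfrac{L^2}{\phi\lambda}\big)$ total queries (the loop uses at most $T$ of the $2T$ samples). Each of the at most $T$ subsequent SGD iterations only forms a scalar reweighting $\tfrac{\gamma_\rho(x_k^t - \bx - \xi_i)}{\gamma_\rho(\xi_i)}$ and performs a projection onto $\ball_{\bx}(r)$, each an $O(1)$-depth, $O(d)$-work operation; since these iterations are sequential, the additional computational cost is $O(T) = O\big(\tfrac{L^2}{\phi\lambda}\big)$ depth and $O(Td) = O\big(\tfrac{L^2 d}{\phi\lambda}\big)$ work.

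I expect the main obstacle to be purely bookkeeping: pinning down the epoch-SGD recursion so that the suboptimality gap geometrically decreases under the specific constants $\eta_1 = \frac{1}{4\lambda}$, $T_1 = 16$, and checking that $T = \lceil 48L^2/(\lambda\phi)\rceil$ iterations suffice to drive the gap below $\phi$ — all of which is standard and identical to \cite{AsiCJJS21}. The only genuinely new verification — that the ReSQue second-moment bound $3L^2$ holds at every iterate — is immediate from the fact that all iterates are projected into $\ball_{\bx}(\rho) = \ball_{\bx}(r)$, so Lemma~\ref{lem:stochastic_varbound}(2) applies unchanged.
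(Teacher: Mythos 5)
Your proposal is correct and takes the same route the paper intends. The paper in fact gives no new proof here: Proposition~\ref{lem:parallel-sgd} is cited verbatim as Proposition~3 of \cite{AsiCJJS21}, and the only ingredient original to this paper is the observation (stated in prose just before Algorithm~\ref{alg:parallel-sgd}) that all queries $g(\bx+\xi_i)$ are at oblivious points and can be issued in one parallel batch, which you correctly identify and justify. Your reconstruction of the epoch-SGD recursion, the use of Lemma~\ref{lem:stochastic_varbound} for unbiasedness and the variance bound at every projected iterate with $\rho=r$, and the composite treatment of the $\frac{\lambda}{2}\norm{\cdot-\bx}^2$ term are all exactly what Proposition 3 of \cite{AsiCJJS21} establishes. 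Two small remarks: (i) your parenthetical about $\lambda r \lesssim L$ is superfluous once you observe (as you do) that the regularizer is kept un-linearized in the prox step — the epoch-SGD regret bound then only needs the second moment of the ReSQue part; (ii) you report the work as $O(Td)$ in scalar operations, while the proposition's $O(\tfrac{L^2}{\phi\lambda})$ is implicitly in $d$-vector-operation units (the $\cdot d$ factor is reintroduced in the statement of Theorem~\ref{thm:parallel-sgd}), so there is no real discrepancy, just a units convention.
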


\paragraph{$\NAGD$.} We can also implement $\NAGD$~\cite{GhadimiL12}, a variant of accelerated gradient descent under stochastic gradient queries, in parallel using stochastic ReSQue estimators. We provide the pseudocode of our parallel implementation of $\NAGD$ in~Algorithm~\ref{alg:parallel-agd} and state its guarantees in~Lemma~\ref{lem:parallel-agd}.

\begin{algorithm2e}[t]\label{alg:parallel-agd}
\DontPrintSemicolon
\caption{$\NAGD(f, \bar{x}, r, \rho,\lambda,\phi)$}
\textbf{Input:} $f:\R^d\to\R$, $g: \R^d \to \R$ satisfying the assumptions of Problem~\ref{prob:sco_gen}, $\bar{x} \in \R^d$, $r,\rho, \lam, \phi >0$\;
$K \gets \lceil \log_2(\frac{\lam r^2}{\phi}) \rceil $, $T \gets \lceil4\sqrt{\frac{ L}{\rho\lambda} + 1}\rceil$, $N_k \gets \left\lceil48 \cdot 2^k \cdot \frac{L^2}{\lam^2 r^2 T}\right\rceil$ for $k \in [K]$\;
Sample $\xi_i\sim\Nor(0,\rho^2\id_d)$, $i\in[N]$ independently, for $N = T\cdot(\sum_{k\in[K]}N_k)$ \;
Query $g(\bar{x}+\xi_i)$ for all $i\in[N]$ (in parallel)\;
$x_0^{\ag} \gets \bx$, $x_0 \gets \bar{x}$\;
\For{$k \in [K]$}{
\For{$t \in [T]$}{
$\alpha_t \gets \frac{2}{t+1}$, $\gamma_t \gets \frac{4(\frac L \rho + \lam)}{ t(t+1)}$\;
$x_t^{\md} \gets \frac{(1-\alpha_t)(\lambda+\gamma_t)}{\gamma_t+(1-\alpha_t^2)\lambda}x_{t-1}
^{\ag}+\frac{\alpha_t(1-\alpha_t)(\lambda+\gamma_t)}{\gamma_t+(1-\alpha_t^2)\lambda}x_{t-1}$\;
$N_{T,[k-1]} \gets T\cdot\sum_{k'\in[k-1]}N_{k'}$\;
$\widehat{\nabla} f(x_t^\md) \gets \frac{1}{N_k}\sum_{n\in[N_k]} \frac{\gamma_\rho(x_t^\md - \bx - \xi_{N_{T,[k-1]}+n})}{\gamma_\rho(\xi_{N_{T,[k-1]}+n})} g(\bx + \xi_{N_{T,[k-1]}+n})$\;
$x_{t} \gets \argmin_{x\in\ball_{\bar{x}}(r)} \Psi_t(x)$, where $\Psi_t(x) \defeq \langle \alpha_t \widehat{\nabla} f(x_t^\md) + \lam(x_t^{\md} - \bx), x-x_t\rangle+\frac{\gamma_t+\lambda(1-\alpha_t)}{2}\|x-x_{t-1}\|^2+\frac{\lambda\alpha_t}{2}\|x-x_{t}^{\md}\|^2$\;
$x_{t}^\ag \gets \alpha_tx_{t}+(1-\alpha_t)x_{t-1}^\ag$
}
$x_0^\ag\gets x_T^{\ag}$, $x_0\gets x_T^\ag$\;
}
\textbf{Return:} $x_{T}^\ag$
\end{algorithm2e}

\begin{proposition}[Special case of Theorem 1, \cite{GhadimiL12}]\label{lem:parallel-agd}
Let $f, g$ satisfy the assumptions of Problem~\ref{prob:sco_gen}. When $\rho = r$, Algorithm~\ref{alg:parallel-agd} is a $(\phi,\lambda)$-ball optimization oracle for $\hf_\rho$ which makes
\[O\Par{\sqrt{1 + \frac{L}{\rho\lambda}}\log\Par{\frac{\lam r^2}{\phi}}+\frac{L^2}{\lambda\phi}} \text{ total queries}\]
with constant query depth, 
and an additional computational cost of
\[O\Par{\sqrt{1 + \frac{L}{\rho\lambda}}\log\Par{\frac{\lam r^2}{\phi}}} \text{ depth and } O\Par{\sqrt{1 + \frac{L}{\rho\lambda}}\log\Par{\frac{\lam r^2}{\phi}}+\frac{L^2}{\lambda\phi}} \text{ work.}\]
\end{proposition}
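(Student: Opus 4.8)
The plan is to read Algorithm~\ref{alg:parallel-agd} as a restarted (multi-epoch) instantiation of the accelerated stochastic approximation method $\NAGD$ of \cite{GhadimiL12}, run on the regularized objective
\[F_\lam(x) \defeq \hf_\rho(x) + \frac\lam2\norm{x - \bx}^2 \quad\text{over }\ball_{\bx}(r),\]
and to control the expected gap $F_\lam(\cdot) - F_\lam(\xsbxl)$ (recall $\xsbxl$ from \eqref{eq:xsbxldef} with $F = \hf_\rho$) across the $K$ epochs. Three structural observations drive this. (i) Since $f$ is $L$-Lipschitz, $\hf_\rho$ is convex and $\tfrac L\rho$-smooth, so $F_\lam$ splits as a $\tfrac L\rho$-smooth convex function plus the $\lam$-strongly convex ``simple'' part $\tfrac\lam2\norm{\cdot - \bx}^2$ restricted to $\ball_{\bx}(r)$; the updates $x_t \gets \argmin_{x \in \ball_{\bx}(r)} \Psi_t(x)$ are precisely the composite proximal steps of $\NAGD$ for this splitting, whose condition number is $\Theta(1 + \tfrac{L}{\rho\lam})$ --- hence the choice $T = \Theta(\sqrt{1 + \tfrac{L}{\rho\lam}})$ of inner iterations per epoch. (ii) Because $\rho = r$, every $x_t^\md$ is a convex combination of earlier iterates and so lies in $\ball_{\bx}(r) = \ball_{\bx}(\rho)$; Lemma~\ref{lem:stochastic_varbound} then shows each ReSQue summand $\tfrac{\gamma_\rho(x_t^\md - \bx - \xi_i)}{\gamma_\rho(\xi_i)} g(\bx + \xi_i)$ (Definition~\ref{def:stoch_RGCG}) is unbiased for $\grad\hf_\rho(x_t^\md)$ with second moment $\le 3L^2$, so the mini-batch average $\hnabla f(x_t^\md)$ formed in epoch $k$ is unbiased with variance $\le \tfrac{3L^2}{N_k}$. (iii) The $\xi_i$ are drawn once up front and partitioned into disjoint blocks --- one block of size $N_k$ per inner iteration of epoch $k$ --- so these estimators are mutually independent given the iterates, as required by the $\NAGD$ analysis.

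Given this, I would invoke the guarantee of \cite{GhadimiL12} (Theorem 1, strongly convex composite case): $T$ iterations of $\NAGD$ on $F_\lam$ started from $x_0^{(k)}$ with per-step variance $\sigma_k^2 = \tfrac{3L^2}{N_k}$ return $x_T^{(k),\ag}$ with
\[\E\Brack{F_\lam(x_T^{(k),\ag})} - F_\lam(\xsbxl) \;\le\; \frac{c_1(\tfrac L\rho + \lam)}{T^2}\,\E\norm{x_0^{(k)} - \xsbxl}^2 + \frac{c_2 \sigma_k^2}{\lam T}\]
for absolute constants $c_1, c_2$. Writing $\Delta_k$ for the left-hand side, the definition of $T$ makes the first coefficient at most $\tfrac{\lam}{16}$, and $N_k = \lceil 48 \cdot 2^k \cdot \tfrac{L^2}{\lam^2 r^2 T}\rceil$ makes the noise term at most $\tfrac{\lam r^2}{16 \cdot 2^k}$. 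For $k = 1$, $\norm{x_0^{(1)} - \xsbxl} = \norm{\bx - \xsbxl} \le r$, so $\Delta_1 \le \tfrac{\lam r^2}{4}$; for $k \ge 2$, $\lam$-strong convexity of $F_\lam$ gives $\E\norm{x_0^{(k)} - \xsbxl}^2 \le \tfrac 2\lam \Delta_{k-1}$, and the recursion $\Delta_k \le \tfrac 18 \Delta_{k-1} + \tfrac{\lam r^2}{16 \cdot 2^k}$ yields $\Delta_k \le \tfrac{\lam r^2}{2^{k+1}}$ by induction. Hence after $K = \lceil \log_2(\tfrac{\lam r^2}{\phi})\rceil$ epochs, $\Delta_K \le \tfrac{\lam r^2}{2^{K+1}} \le \phi$; since the returned $x_T^\ag$ is again a convex combination of in-ball iterates, $x_T^\ag \in \ball_{\bx}(r)$, and Algorithm~\ref{alg:parallel-agd} satisfies Definition~\ref{def:Obo} for $F = \hf_\rho$.

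It remains to do the complexity accounting, which is routine. All $N = T\sum_{k \in [K]} N_k$ queries to $g$ are of the form $g(\bx + \xi_i)$ and are submitted in one parallel round, so the query depth is $O(1)$. Using $\sum_{k \in [K]} 2^k = O(2^K) = O(\tfrac{\lam r^2}{\phi})$ and $N_k \le 1 + 48 \cdot 2^k \tfrac{L^2}{\lam^2 r^2 T}$,
\[N \;\le\; TK + \frac{48 L^2}{\lam^2 r^2}\sum_{k \in [K]} 2^k \;=\; O\Par{\sqrt{1 + \tfrac L{\rho\lam}}\,\log\Par{\tfrac{\lam r^2}{\phi}} + \tfrac{L^2}{\lam\phi}},\]
matching the stated total query bound. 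Outside the oracle queries the algorithm performs $KT$ inner iterations sequentially, each doing $O(1)$ vector operations plus one mini-batch average of $N_k$ vectors (a parallel reduction), for computational depth $O(KT) = O(\sqrt{1 + \tfrac L{\rho\lam}}\log(\tfrac{\lam r^2}{\phi}))$ and total work $O(KT) + O(N) = O(\sqrt{1 + \tfrac L{\rho\lam}}\log(\tfrac{\lam r^2}{\phi}) + \tfrac{L^2}{\lam\phi})$.

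The main obstacle is not an isolated hard step but the faithful instantiation of the $\NAGD$ convergence bound in the present composite setting --- $\tfrac L\rho$-smooth part, $\lam$-strongly convex quadratic regularizer, and hard ball constraint $\ball_{\bx}(r)$ --- followed by checking that the explicit constants built into $T$, $N_k$, and $K$ close the two-term per-epoch recursion; a small but essential auxiliary point is verifying $x_t^\md \in \ball_{\bx}(\rho)$ at every step, since the variance bound of Lemma~\ref{lem:stochastic_varbound} is only available there.
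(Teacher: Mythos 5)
Your proposal is correct and takes essentially the same approach as the paper's proof in Appendix~\ref{app:GLexplain}: view the algorithm as restarted accelerated stochastic approximation on the $\lambda$-strongly convex, $(\tfrac L\rho+\lambda)$-smooth composite objective $\hf_\rho + \tfrac\lam2\norm{\cdot-\bx}^2$, invoke the \cite{GhadimiL12} per-epoch rate with mini-batch variance $3L^2/N_k$ from Lemma~\ref{lem:stochastic_varbound}, close the same geometric recursion over the $K = \lceil\log_2(\lam r^2/\phi)\rceil$ epochs (the paper tracks $\E\tfrac12\norm{x^{k}-\xsbxl}^2$, you track $\Delta_k$ directly, but these are interchangeable via strong convexity), and do the same complexity accounting. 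The only cosmetic divergence is in the exact intermediate constants (e.g., the first coefficient is $\lam/8$, not $\lam/16$, under the paper's cited Proposition~9 of \cite{GhadimiL12}), but the induction closes either way; your explicit check that $x_t^\md \in \ball_{\bx}(\rho)$ is a useful clarification the paper leaves implicit.
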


Because the statement of Proposition~\ref{lem:parallel-agd} follows from specific parameter choices in the main result in \cite{GhadimiL12}, we defer a more thorough discussion of how to obtain this result to Appendix~\ref{app:GLexplain}.

\paragraph{Main results.} We now use our parallel ball optimization oracles to prove Theorems~\ref{thm:parallel-sgd} and~\ref{thm:parallel-agd}.
\begin{proof}[Proofs of~Theorems~\ref{thm:parallel-sgd} and~\ref{thm:parallel-agd}]
We use~Proposition~\ref{prop:mainballaccel2} with $r=\rho = \frac{\epsopt}{\sqrt{d}L}$ on $F \gets \hf_\rho$, which approximates $f$ to additive $\epsopt$, and $x^\star \defeq \arg\min_{x \in \ball(R)} f(x)$. Rescaling $\epsopt$ by a constant from the guarantee of Proposition~\ref{prop:mainballaccel2} gives the error claim. For the oracle query depths, note that each ball optimization oracle (whether implemented using Algorithm~\ref{alg:parallel-sgd} or Algorithm~\ref{alg:parallel-agd}) has constant query depth, and at most $O(\log^2(d\kappa))$ ball optimization oracles are queried per iteration on average. Note that (see Proposition~\ref{prop:mainballaccel})
\[\kappa = \frac{LR}{\epsopt},\; K = \Par{\frac R r}^{\frac 2 3} = d^{\frac 1 3}\kappa^{\frac 2 3},\; \lams = \frac{\epsopt K^2}{R^2}\log^2\kappa = \frac{\epsopt d^{\frac 2 3}\kappa^{\frac 4 3}}{R^2}\log^2\kappa. \]

For the total oracle queries, computational depth, and work, when implementing each ball optimization oracle with $\epochSGD$, we have that for $\jmax \defeq \lceil\log_2 K + \Cba\rceil$, these are all
\begin{gather*}
O\Par{K\log\Par{d\kappa}\cdot\Par{\sum_{j \in [\jmax]}\frac{1}{2^j}\Par{\frac{L^2 \cdot 2^j\log^2(d\kappa)}{\lams^2 r^2}}+\Par{\frac{L^2}{\lams^2 r^2}}\log^2\Par{d\kappa}}}\\
= O\Par{K\log^4\Par{d\kappa}\cdot\frac{L^2}{\lams^2 r^2}}
= O\Par{\kappa^2\log^4\Par{d\kappa}}
\end{gather*}
due to~Proposition~\ref{lem:parallel-sgd}. The additional terms in the theorem statement are due to the number of ball oracles needed. For the computational depth when implementing each ball optimization oracle with $\NAGD$ we have that (due to~Proposition~\ref{lem:parallel-agd}), it is bounded by
\begin{gather*}
O\Par{K\log^3(d\kappa) \cdot \sqrt{\frac{L}{r\lams}}\log(d\kappa)} = O\Par{K\log^4(d\kappa) \cdot \frac{\sqrt \kappa}{K^{\frac 1 4 }}} = O\Par{d^{\frac 1 4} \kappa \log^4(d\kappa)}.
\end{gather*}
Finally, for the total oracle queries and work bounds, the bound due to the $\frac{L^2}{\lam\phi}$ term is as was computed for Theorem~\ref{thm:parallel-sgd}, and the bound due to the other term is the same as the above display.
\end{proof} %
\section{Private stochastic convex optimization}\label{sec:subproblem}

We now develop our main result on an improved gradient complexity for private SCO. 
First, in Section~\ref{ssec:privacy}, we introduce several variants of differential privacy including a relaxation of R\'enyi differential privacy \cite{Mir17}, which tolerates a small amount of total variation error.
Next, in Sections~\ref{ssec:erm_convex},~\ref{ssec:erm_sc}, and~\ref{ssec:proxgrad}, we build several private stochastic optimization subroutines which will be used in the ball acceleration framework of Proposition~\ref{prop:mainballaccel}. Specifically, these subroutines will be called as the oracles in Definitions~\ref{def:Ols},~\ref{def:Obo}, and~\ref{def:Opg} with the parameters required by Proposition~\ref{prop:mainballaccel} in the proof of our main result (see \eqref{eq:ols_params}, \eqref{eq:obo_params}, and \eqref{eq:opg_params}). Finally, in Sections~\ref{ssec:private_erm} and~\ref{ssec:private_sco}, we give our main results on private ERM and SCO respectively, by leveraging the subroutines we develop.

\subsection{Preliminaries}\label{ssec:privacy}

In this section, we study the following specialization of Problem~\ref{prob:sco_gen} naturally compatible with preserving privacy with respect to samples, through the formalism of DP (to be defined shortly).

\begin{problem}\label{prob:sco_basic}
	Let $\dist$ be a distribution over $\calS$, and suppose there is a family of functions indexed by $s \in \calS$, such that $f(\cdot; s): \R^d \to \R$ is convex for all $s \in \calS$. Let $\data \defeq \{s_i\}_{i \in [n]}$ consist of $n$ i.i.d.\ draws from $\dist$, and define the \emph{empirical risk} and \emph{population risk} by
	\begin{align*}
		\Ferm(x) \defeq \frac 1 n \sum_{i \in [n]} f(x; s_i)
		\text{ and }
		\Fpop(x) \defeq \E_{s\sim \dist} {f(x; s)}.
	\end{align*}
	We denote $f^i \defeq f(\cdot; s_i)$ for all $i \in [n]$, and assume that for all $s \in \calS$, $f(\cdot; s)$ is $L$-Lipschitz. We are given $\data$, and can query subgradients of the ``sampled functions'' $f^i$. Our goal is to produce $x \in \R^d$ such that $\E \Fpop(x) \le \min_{x^\star \in \ball(R)} \Fpop(x^\star) + \epsopt$. We again define $\kappa=\frac{LR}{\epsopt}$ as in \eqref{eq:kappadef}. 
\end{problem}

In the ``one-pass'' setting where we only query each $\partial f^i$ a single time, we can treat each $\partial f^i$ as a bounded stochastic gradient of the underlying population risk $\Fpop$. We note the related problem of \emph{empirical risk minimization}, i.e., optimizing $\Ferm$ (in the setting of Problem~\ref{prob:sco_basic}), can also be viewed as a case of Problem~\ref{prob:sco_gen} where we construct $g$ by querying $\partial f^i$ for $i\sim_{\text{unif.}}[n]$. We design $(\epsdp, \delta)$-DP algorithms for solving Problem~\ref{prob:sco_basic} which obtain small optimization error for $\Ferm$ and $\Fpop$. To disambiguate, we will always use $\epsopt$ to denote an optimization error parameter, and $\epsdp$ to denote a privacy parameter. Our private SCO algorithm will require querying $\partial f^i$ multiple times for some $i \in [n]$, and hence incur bias for the population risk gradient. Throughout the rest of the section, following the notation of Problem~\ref{prob:sco_basic}, we will fix a dataset $\data \in \calS^n$ and define the empirical risk $\Ferm$ and population risk $\Fpop$ accordingly. We now move on to our privacy definitions.

We say that two datasets $\data = \{s_i\}_{i \in [n]} \in \calS^n$ and $\data' = \{s'_i\}_{i \in [n]} \in \calS^n$ are \emph{neighboring} if $|\{i \mid s_i \neq s'_i \}| = 1$. We say a mechanism (i.e., a randomized algorithm) $\mech$ satisfies $(\epsdp, \delta)$-differential privacy (DP) if, for its output space $\Omega$ and all neighboring $\data$, $\data'$, we have for all $S \subseteq \Omega$,
\begin{equation}\label{eq:dpepsdelta}
	\Pr[\mech(\data) \in S] \le \exp(\epsdp) \Pr[\mech(\data') \in S] + \delta.
\end{equation}
We extensively use the notion of R\'enyi differential privacy (RDP) due to its compatibility with the subsampling arguments we will use, as well as an approximate relaxation of its definition which we introduce. While it is likely that our results can be recovered (possibly up to logarithmic terms) by accounting for privacy losses via approximate differential privacy, we present our privacy accounting via RDP to simplify calculations.

We say that a mechanism $\mech$ satisfies $(\alpha, \eps)$-R\'enyi differential privacy if for all neighboring $\data, \data' \in \calS^n$, the $\alpha$-R\'enyi divergence~\eqref{eq:renyi} satisfies
\begin{equation}\label{eq:rdp}
D_\alpha(\mech(\data) \| \mech(\data')) \le \eps.
\end{equation}
RDP has several useful properties which we now summarize.

\begin{proposition}[Propositions 1, 3, and 7, \cite{Mir17}]\label{prop:rdp_facts}
RDP has the following properties.
\begin{enumerate}
    \item (Composition): Let $\mech_1: \calS^n \to \Omega$ satisfy $(\alpha, \eps_1)$-RDP and $\mech_2: \calS^n \times \Omega \to \Omega'$ satisfy $(\alpha, \eps_2)$-RDP for any input in $\Omega$. Then the composition of $\mech_2$ and $\mech_1$, defined as $\mech_2(\data, \mech_1(\data))$ satisfies $(\alpha, \eps_1 + \eps_2)$-RDP.
    \item (Gaussian mechanism): For $\mu, \mu' \in \R^d$, $D_\alpha(\Nor(\mu, \sigma^2 \id_d) \| \Nor(\mu', \sigma^2 \id_d)) \le \frac{\alpha}{2\sigma^2} \norm{\mu - \mu'}^2$.
    \item (Standard DP): If $\mech$ satisfies $(\alpha, \eps)$-RDP, then for all $\delta \in (0, 1)$, $\mech$ satisfies $(\eps + \frac 1 {\alpha - 1} \log \frac 1 \delta, \delta)$-DP.
\end{enumerate}
\end{proposition}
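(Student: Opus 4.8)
The plan is to verify the three listed properties of R\'enyi differential privacy one at a time, each by a short direct computation with the densities of the mechanisms involved; none of these requires machinery from elsewhere in the paper, and throughout we implicitly assume the absolute-continuity conditions needed for \eqref{eq:renyi} to be well-defined (these hold in all our applications, where outputs are Gaussian-mechanism perturbations). For composition, fix neighboring $\data,\data'$, let $p_1,p_1'$ be the densities of $\mech_1(\data),\mech_1(\data')$ on $\Omega$, and for each $y\in\Omega$ let $p_2(\cdot\mid y),p_2'(\cdot\mid y)$ be the densities of $\mech_2(\data,y),\mech_2(\data',y)$; the composed mechanism then has density $(y,z)\mapsto p_1(y)p_2(z\mid y)$ and likewise with primes. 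The key step is to write $\exp((\alpha-1)D_\alpha)$ of the composed mechanisms (via \eqref{eq:renyi}) as an iterated integral, $\int \frac{p_1(y)^\alpha}{p_1'(y)^{\alpha-1}}\left(\int \frac{p_2(z\mid y)^\alpha}{p_2'(z\mid y)^{\alpha-1}}\,\d z\right)\d y$, recognize the inner integral as $\exp((\alpha-1)D_\alpha(\mech_2(\data,y)\,\|\,\mech_2(\data',y)))\le \exp((\alpha-1)\eps_2)$ by hypothesis, pull it out of the outer integral, and bound what remains by $\exp((\alpha-1)\eps_1)$. Taking logarithms and dividing by $\alpha-1>0$ gives $D_\alpha\le\eps_1+\eps_2$.

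For the Gaussian mechanism, I would first reduce to one dimension: by tensorization of R\'enyi divergence over product measures together with an orthogonal change of coordinates whose first axis is aligned with $\mu-\mu'$, the divergence equals $D_\alpha(\Nor(0,\sigma^2)\,\|\,\Nor(v,\sigma^2))$ with $v\defeq\norm{\mu-\mu'}$ (the remaining coordinates contribute zero since the two Gaussians agree on them). The one-dimensional claim is then a completing-the-square exercise: expanding $\int \phi_\sigma(x)^\alpha\phi_\sigma(x-v)^{-(\alpha-1)}\,\d x$ where $\phi_\sigma$ is the density of $\Nor(0,\sigma^2)$, the exponent is a quadratic in $x$, and after collecting terms and completing the square the integral of the resulting (unnormalized) Gaussian evaluates to $\exp\!\left(\frac{\alpha(\alpha-1)v^2}{2\sigma^2}\right)$; dividing its logarithm by $\alpha-1$ yields $D_\alpha=\frac{\alpha}{2\sigma^2}\norm{\mu-\mu'}^2$, which in particular gives the stated inequality.

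For the conversion to $(\epsdp,\delta)$-DP, set $\epsdp\defeq\eps+\frac{1}{\alpha-1}\log\frac1\delta$, let $p,p'$ be the densities of $\mech(\data),\mech(\data')$, and introduce the privacy-loss variable $Z\defeq\log\frac{p(\omega)}{p'(\omega)}$ for $\omega\sim p$. Given any measurable $S$, split $\Pr_{p}[\omega\in S]$ according to whether $Z\le\epsdp$ or $Z>\epsdp$: on the first event $p(\omega)\le e^{\epsdp}p'(\omega)$ pointwise, so that contribution is at most $e^{\epsdp}\Pr_{p'}[\omega\in S]$; for the second, observe $\E_{p}[\exp((\alpha-1)Z)]=\int \frac{p^\alpha}{(p')^{\alpha-1}}=\exp((\alpha-1)D_\alpha(p\,\|\,p'))\le\exp((\alpha-1)\eps)$ by the $(\alpha,\eps)$-RDP hypothesis, whence Markov's inequality applied to $\exp((\alpha-1)Z)$ (legitimate since $\alpha-1>0$) gives $\Pr_{p}[Z>\epsdp]\le\exp((\alpha-1)(\eps-\epsdp))=\delta$. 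Adding the two bounds gives exactly \eqref{eq:dpepsdelta}. None of these arguments is deep; the only point demanding care is this last one, where the threshold $\epsdp$ must be chosen so that the Markov exponent is precisely $\log\delta$ and the inequality must be applied to $\exp((\alpha-1)Z)$ in the correct direction — the rest is bookkeeping with densities.
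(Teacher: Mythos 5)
Your three arguments are all correct: the composition bound follows from writing the joint $\alpha$-divergence as a nested integral and bounding the inner one uniformly; the Gaussian bound follows from rotation-invariance, tensorization, and completing the square (your computation in fact gives equality); and the $(\epsdp,\delta)$-DP conversion via Markov applied to $\exp((\alpha-1)Z)$ with $Z$ the privacy-loss variable is exactly right, including the choice of threshold that makes the exponent collapse to $\log\delta$. The paper cites these three facts directly from \cite{Mir17} without reproducing proofs, and your arguments coincide with the standard ones given there, so there is nothing to compare beyond noting the match.
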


We also use the following definition of approximate R\'enyi divergence:
\begin{equation}\label{eq:approx_renyi}
	D_{\alpha, \delta}(\mu \| \nu) \defeq \min_{\Dtv(\mu', \mu) \le \delta, \Dtv(\nu', \nu) \le \delta} D_{\alpha}(\mu' \| \nu').
\end{equation}

We relax the definition \eqref{eq:rdp} and say that $\mech$ satisfies $(\alpha, \eps, \delta)$-RDP if for all neighboring $\data$, $\data' \in \calS^n$, recalling definition \eqref{eq:approx_renyi},
\[D_{\alpha, \delta}(\mech(\data) \| \mech(\data')) \le \eps. \]
The following is then immediate from Proposition~\ref{prop:rdp_facts}, and our definition of approximate RDP, by coupling the output distributions with the distributions realizing the minimum \eqref{eq:approx_renyi}.

\begin{corollary}\label{cor:approx_rdp}
If $\mech$ satisfies $(\alpha, \eps, \delta)$-RDP, then for all $\delta' \in (0, 1)$,  $\mech$ satisfies $(\epsdp, \delta' + (1 + \exp(\epsdp))\delta)$-DP for $\epsdp \defeq \eps + \frac 1 {\alpha - 1} \log \frac 1 {\delta'}$.
\end{corollary}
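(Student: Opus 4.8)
\textbf{Proof proposal for Corollary~\ref{cor:approx_rdp}.} The plan is to reduce to the exact RDP-to-DP conversion (Proposition~\ref{prop:rdp_facts}, item 3) applied to a pair of auxiliary distributions, and then pay two total-variation ``corrections'' to pass back to the true output distributions. First I would fix an arbitrary pair of neighboring datasets $\data, \data' \in \calS^n$ and write $\mu \defeq \mech(\data)$, $\nu \defeq \mech(\data')$ for their output distributions. By hypothesis $\mech$ satisfies $(\alpha, \eps, \delta)$-RDP, so $D_{\alpha, \delta}(\mu \| \nu) \le \eps$; unpacking the definition \eqref{eq:approx_renyi}, this means there exist distributions $\mu'$, $\nu'$ on the output space with $\Dtv(\mu', \mu) \le \delta$, $\Dtv(\nu', \nu) \le \delta$, and $D_\alpha(\mu' \| \nu') \le \eps$.

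Next I would invoke the RDP-to-DP conversion at the level of these two distributions: the content of Proposition~\ref{prop:rdp_facts}(3) (whose proof only uses the divergence bound between the two relevant output distributions) is that $D_\alpha(\mu' \| \nu') \le \eps$ implies, for every measurable subset $S$ of the output space,
\[ \mu'(S) \le \exp(\epsdp)\, \nu'(S) + \delta', \qquad \epsdp \defeq \eps + \frac{1}{\alpha - 1}\log\frac{1}{\delta'}. \]
Finally I would chain in the total-variation bounds. Since $\Dtv(\mu', \mu) \le \delta$ we have $\mu(S) \le \mu'(S) + \delta$, and since $\Dtv(\nu', \nu) \le \delta$ we have $\nu'(S) \le \nu(S) + \delta$. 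Combining the three displayed inequalities,
\[ \mu(S) \le \mu'(S) + \delta \le \exp(\epsdp)\nu'(S) + \delta' + \delta \le \exp(\epsdp)\bigl(\nu(S) + \delta\bigr) + \delta' + \delta = \exp(\epsdp)\nu(S) + \delta' + \bigl(1 + \exp(\epsdp)\bigr)\delta. \]
Since $\data, \data'$ and $S$ were arbitrary, this is exactly the statement that $\mech$ is $\bigl(\epsdp,\, \delta' + (1 + \exp(\epsdp))\delta\bigr)$-DP, as claimed.

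I do not anticipate a genuine obstacle here; the argument is a short chain of inequalities. The only two points requiring a moment of care are (i) making sure the two TV inequalities are applied in the correct directions — an upper bound on $\mu(S)$ in terms of $\mu'(S)$, and an upper bound on $\nu'(S)$ in terms of $\nu(S)$, which is what the one-sided form $|\mu(S) - \mu'(S)| \le \Dtv(\mu',\mu)$ gives — and (ii) noting that Proposition~\ref{prop:rdp_facts}(3), although phrased for mechanisms, is really a pointwise implication between two fixed distributions, so it applies verbatim to $\mu', \nu'$ even though these are not themselves of the form $\mech(\cdot)$.
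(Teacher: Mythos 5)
Your proof is correct and takes exactly the same route as the paper's: pass to the nearby distributions realizing the approximate R\'enyi divergence, apply the exact RDP-to-DP conversion there, then chain in the two total-variation corrections (one on each side) to recover a bound on the true output distributions. The paper's argument is the same chain of inequalities, so there is nothing to add.
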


\begin{proof}
Let $\mu$, $\nu$ be within total variation $\delta$ of $\mech(\data)$ and $\mech(\data')$, such that $D_{\alpha}(\mu\|\nu)\le \epsilon$ 
 and hence for any event $S$,
\[\Pr_{\omega \sim \mu}\Brack{\omega \in S} \le \exp(\epsdp) \Pr_{\omega \sim \nu}[\omega \in S] + \delta'. \]
Combining the above with
\[\Pr_{\omega \sim \mech(\data)}\Brack{\omega \in S} - \delta \le \Pr_{\omega \sim \mu}[\omega \in S],\; \Pr_{\omega \sim \nu}[\omega \in S] \le \Pr_{\omega \sim \mech(\data')}\Brack{\omega \in S} + \delta, \]
we have
\begin{align*}
    \Pr_{\omega\sim \mech(\data)}[\omega\in S] &\le \exp(\epsdp)\Pr_{\omega\sim \nu}[\omega\in S]+\delta'+\delta \\
    &\le \exp(\epsdp)\Pr_{\omega\sim\mech(\data')}[\omega\in S]+\delta'+(1+\exp(\epsdp))\delta.
\end{align*}
\end{proof}

Finally, our approximate RDP notion enjoys a composition property similar to standard RDP.

\begin{lemma}\label{lem:composition_rdp}
Let $\mech_1: \calS^n \to \Omega$ satisfy $(\alpha, \eps_1, \delta_1)$-RDP and $\mech_2: \calS^n \times \Omega \to \Omega'$ satisfy $(\alpha, \eps_2, \delta_2)$-RDP for any input in $\Omega$. Then the composition of $\mech_2$ and $\mech_1$, defined as $\mech_2(\data, \mech_1(\data))$ satisfies $(\alpha, \eps_1 + \eps_2, \delta_1 + \delta_2)$-RDP.
\end{lemma}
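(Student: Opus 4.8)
\textbf{Proof plan for Lemma~\ref{lem:composition_rdp}.} The plan is to mimic the standard proof that RDP composes (Proposition~\ref{prop:rdp_facts}, part 1), but carry along the total variation slack using the definition \eqref{eq:approx_renyi} of approximate R\'enyi divergence. Fix neighboring datasets $\data, \data' \in \calS^n$. By hypothesis, $D_{\alpha,\delta_1}(\mech_1(\data) \| \mech_1(\data')) \le \eps_1$, so by definition \eqref{eq:approx_renyi} there exist distributions $\mu_1, \nu_1$ on $\Omega$ with $\Dtv(\mu_1, \mech_1(\data)) \le \delta_1$, $\Dtv(\nu_1, \mech_1(\data')) \le \delta_1$, and $D_\alpha(\mu_1 \| \nu_1) \le \eps_1$. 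Similarly, for each fixed input $\omega \in \Omega$, the hypothesis on $\mech_2$ gives distributions $\mu_2^\omega, \nu_2^\omega$ on $\Omega'$ with $\Dtv(\mu_2^\omega, \mech_2(\data, \omega)) \le \delta_2$, $\Dtv(\nu_2^\omega, \mech_2(\data', \omega)) \le \delta_2$, and $D_\alpha(\mu_2^\omega \| \nu_2^\omega) \le \eps_2$.

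First I would build the candidate ``good'' distributions witnessing the approximate RDP bound for the composition. Define $\mu$ to be the distribution on $\Omega'$ obtained by drawing $\omega \sim \mu_1$ and then outputting a sample from $\mu_2^\omega$; define $\nu$ analogously from $\nu_1$ and $\nu_2^\omega$. It then suffices to establish two things: (i) $\Dtv(\mu, \text{composition}(\data)) \le \delta_1 + \delta_2$ and the analogous bound for $\nu$; and (ii) $D_\alpha(\mu \| \nu) \le \eps_1 + \eps_2$. For (i), I would use a triangle-inequality argument on total variation: replacing $\mech_1(\data)$ by $\mu_1$ in the first stage changes the output distribution of the composition by at most $\delta_1$ in total variation (total variation does not increase under applying a fixed Markov kernel), and then replacing $\mech_2(\data, \omega)$ by $\mu_2^\omega$ uniformly over $\omega$ changes it by at most $\sup_\omega \Dtv(\mu_2^\omega, \mech_2(\data,\omega)) \le \delta_2$; combining gives $\delta_1 + \delta_2$. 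For (ii), I would invoke the quasi-convexity of $D_\alpha$ stated in Section~\ref{sec:notation}: writing $\mu = \E_{\omega \sim \mu_1} \mu_2^\omega$ is not directly of the form needed, so instead I would use the standard RDP composition computation — expand $D_\alpha(\mu\|\nu)$ as $\frac{1}{\alpha-1}\log \int\int \left(\frac{\mu_1(\omega)\mu_2^\omega(\omega')}{\nu_1(\omega)\nu_2^\omega(\omega')}\right)^\alpha \nu_1(\omega)\nu_2^\omega(\omega')\,\d\omega'\,\d\omega$, factor the integrand, bound the inner integral over $\omega'$ by $\exp((\alpha-1)\eps_2)$ uniformly in $\omega$, and then bound the remaining integral over $\omega$ by $\exp((\alpha-1)\eps_1)$. (Alternatively, one can cite part 1 of Proposition~\ref{prop:rdp_facts} applied to $\mu_1$, $\nu_1$, $\mu_2^\omega$, $\nu_2^\omega$ as genuine RDP mechanisms, which is cleaner.)

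The main obstacle is the careful handling of (i): one must verify that the $\delta_2$ slack, which is a bound holding for each fixed input $\omega$, aggregates correctly when $\omega$ is itself random and its law has already been perturbed by $\delta_1$. The key facts are that total variation distance is a metric (triangle inequality) and is non-increasing under post-processing by any (possibly randomized) map, so that $\Dtv\big(\E_{\omega\sim\mu_1}\mech_2(\data,\omega),\, \E_{\omega\sim\mu_1}\mu_2^\omega\big) \le \E_{\omega\sim\mu_1}\Dtv(\mech_2(\data,\omega),\mu_2^\omega) \le \delta_2$ and $\Dtv\big(\E_{\omega\sim\mech_1(\data)}\mech_2(\data,\omega),\, \E_{\omega\sim\mu_1}\mech_2(\data,\omega)\big) \le \Dtv(\mech_1(\data),\mu_1) \le \delta_1$. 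Adding these via the triangle inequality gives the claim. Once (i) and (ii) are in hand, the pair $(\mu,\nu)$ witnesses $D_{\alpha, \delta_1+\delta_2}(\text{composition}(\data)\|\text{composition}(\data')) \le \eps_1 + \eps_2$, which is exactly $(\alpha, \eps_1+\eps_2, \delta_1+\delta_2)$-RDP, completing the proof.
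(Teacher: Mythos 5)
Your proposal is correct and matches the paper's proof essentially step for step: both construct the approximating distributions by mixing the TV-close witnesses for $\mech_1$ with the per-$\omega$ TV-close witnesses for $\mech_2$, bound the resulting TV gap by $\delta_1 + \delta_2$, and cite Mironov's RDP composition (Proposition~\ref{prop:rdp_facts}, part 1) for the R\'enyi bound. The only difference is that you spell out the triangle-inequality/post-processing argument for the $\delta_1 + \delta_2$ TV bound in more detail, where the paper compresses it to ``by a union bound.''
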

\begin{proof}
Let $\data$, $\data'$ be neighboring datasets, and let $\mu$, $\mu'$ be distributions within total variation $\delta_1$ of $\mech_1(\data)$, $\mech_1(\data')$ realizing the bound $D_\alpha(\mu \|\mu') \le \eps_1$. For any $\omega \in \Omega$, similarly let $\nu_\omega$, $\nu'_\omega$ be the distributions within total variation $\delta_2$ of $\mech_2(\data, \omega)$ and $\mech_2(\data', \omega)$ realizing the bound $D_\alpha(\nu_\omega \| \nu'_\omega) \le \eps_2$. Finally, let $P_1$ be the distribution of $\omega \in \Omega$ according to $\mech_1(\data)$, and $Q_1$ to be the distribution of $\mech_1(\data')$; similarly, let $P_{2, \omega}$, $Q_{2, \omega}$ be the distributions of $\omega' \in \Omega'$ according to $\mech_2(\data, \omega)$ and $\mech_2(\data', \omega)$. We first note that by a union bound,
\begin{align*}\Dtv\Par{\int \nu_\omega(\omega') \mu(\omega) d\omega d\omega', \int P_1(\omega) P_{2, \omega}(\omega') d\omega d\omega'} \le \delta_1 + \delta_2,\\ \Dtv\Par{\int \nu'_\omega(\omega') \mu'(\omega) d\omega d\omega', \int Q_1(\omega) Q_{2, \omega}(\omega') d\omega d\omega'} \le \delta_1 + \delta_2. \end{align*}
Finally, by Proposition 1 of \cite{Mir17}, we have
\[D_\alpha\Par{\int \nu_\omega(\omega') \mu(\omega) d\omega d\omega' \Bigg\| \int \nu'_\omega(\omega') \mu'(\omega) d\omega d\omega'} \le \eps_1 + \eps_2. \]
Combining the above two displays yields the claim.
\end{proof}

\subsection{Subsampled smoothed ERM solver: the convex case}\label{ssec:erm_convex}

We give an ERM algorithm that takes as input a dataset $\data \in \calS^n$, parameters $T \in \N$ and $r, \rho, \beta > 0$, and a center point $\bx \in \R^d$. Our algorithm is based on a localization approach introduced by \cite{FKT20} which repeatedly decreases a domain size to bound the error due to adding noise for privacy. In particular we will obtain an error bound on $\widehat{\Ferm_\rho}$ 
with respect to the set $\ball_{\bx}(r)$, using at most $T$ calls to the ReSQue estimator in Definition~\ref{def:stoch_RGCG} with a deterministic subgradient oracle. Here we recall that $\Ferm$ is defined as in Problem~\ref{prob:sco_basic}, and $\widehat{\Ferm_\rho}$ is correspondingly defined as in Definition~\ref{def:gaussian-convolution}.
Importantly, our ERM algorithm developed in this section attains RDP bounds improving with the subsampling parameter $\frac T n$ when $T \ll n$, due to only querying $T$ random samples in our dataset.

\begin{algorithm2e}[t!]\label{alg:subsample_convex}
\caption{Subsampled ReSQued ERM solver, convex case}
\textbf{Input:} $\bx \in \R^d$, ball radius, convolution radius, and privacy parameter $r, \rho, \beta > 0$, dataset $\data \in \calS^n$, iteration count $T \in \N$ \\
$\hT \gets 2^{\lfloor \log_2 T \rfloor}$, $k \gets \log_2 \hT$, $\eta \gets \frac r L \min(\frac{1}{\sqrt{T}}, \frac \beta {\sqrt d})$, $x_0 \gets \bx$ \\
\For{$i \in [k]$}
{
    $T_i \gets 2^{-i} \hT$, $\eta_i \gets 4^{-i}\eta$, $\sigma_i \gets \frac{L\eta_i}{\beta}$\label{line:params}\\
    $y_0 \gets x_{i-1}$\\
    \For{$j \in [T_i]$}
    {
    	$z_{i,j}\sim_{\text{unif.}} [n]$ \\
        $y_j \gets \Pi_{\ball_{\bx}(r)}(y_{j-1}-\eta_i\tnbx \hf_\rho^{z_{i, j}}(y_{j-1}))$ \label{line:psgd} \Comment*{PSGD step using ReSQue (See Definition~\ref{def:stoch_RGCG}) for a subsampled function. Lemma~\ref{lem:group_privacy_convex} denotes the random Gaussian sample by $\xi_{i,j}$.}
    }
    $\by_i \gets \frac 1 {T_i} \sum_{j \in [T_i]} y_j$ \\
    $x_{i} \gets \by_i + \zeta_i$, for $\zeta_i\sim\Nor(0,\sigma^2_i \id_d)$\label{line:addnoise}
}

\textbf{return} $x_{k}$

\end{algorithm2e}

We summarize our optimization and privacy guarantees on Algorithm~\ref{alg:subsample_convex} in the following. The proof follows by combining Lemma~\ref{lem:erm_convex_utility} (the utility bound) and Lemma~\ref{lem:erm_convex_privacy} (the privacy bound).

\begin{proposition}\label{prop:subsample_convex_bounds}
Let $\xsbx \in \argmin_{x \in \ball_{\bx}(r)} \widehat{\Ferm_\rho}(x)$. Algorithm~\ref{alg:subsample_convex} uses at most $T$ gradients and produces $x \in \ball_{\bx}(r)$ such that, for a universal constant $\Ccvx$,
\[\E\Brack{\widehat{\Ferm_\rho}(x)} - \widehat{\Ferm_\rho}(\xsbx) \le \Ccvx Lr\Par{\frac{\sqrt d}{\beta T} + \frac {1} {\sqrt T}}. \]
Moreover, there is a universal constant $\Cpriv \ge 1$, such that if $\frac T n \le \frac 1 {\Cpriv}$, $\beta^2 \log^2(\frac 1 \delta) \le \frac 1 {\Cpriv}$, $\delta \in (0, \frac 1 6)$, and $\frac \rho r \ge \Cpriv\log^2(\frac{\log T}{\delta})$, Algorithm~\ref{alg:subsample_convex} satisfies $(\alpha, \alpha\tau, \delta)$-RDP for
\[\tau \defeq \Cpriv\Par{\beta\log\Par{\frac 1 \delta} \cdot \frac T n}^2
\text{ and }
\alpha \in \Par{1, \frac{1}{\Cpriv\beta^2\log^2(\frac 1 \delta)}}.\]
\end{proposition}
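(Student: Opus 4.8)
The proof splits into a utility claim and a privacy claim, as the statement of Proposition~\ref{prop:subsample_convex_bounds} indicates, and I would prove each in its own lemma (call them Lemma~\ref{lem:erm_convex_utility} and Lemma~\ref{lem:erm_convex_privacy}) and then assemble them.

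\textbf{Utility.} Algorithm~\ref{alg:subsample_convex} is exactly the ``iterative localization'' scheme of \cite{FKT20}: it runs $k = \log_2 \hT$ phases, phase $i$ consisting of $T_i = 2^{-i}\hT$ steps of projected SGD on $\widehat{\Ferm_\rho}$ over $\ball_{\bx}(r)$ with stepsize $\eta_i = 4^{-i}\eta$, averaging the iterates, and then adding Gaussian noise $\zeta_i$ with $\sigma_i = L\eta_i/\beta$ before handing off to the next phase. First I would record the ingredients: the ReSQue estimator $\tnbx \hf_\rho^{z}(y)$ for a uniformly random sample $z\in[n]$ is unbiased for $\nabla\widehat{\Ferm_\rho}(y)$ (Lemma~\ref{lem:stochastic_varbound}(1), using that averaging over $z$ gives $\nabla\widehat{\Ferm_\rho}$), and has second moment at most $3L^2$ on $\ball_{\bx}(\rho) \supseteq \ball_{\bx}(r)$ by Lemma~\ref{lem:stochastic_varbound}(2). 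Then the standard regret/averaging analysis of projected SGD on the domain $\ball_{\bx}(r)$ gives, for phase $i$ started at $x_{i-1}$,
\[
\E\Brack{\widehat{\Ferm_\rho}(\by_i)} - \min_{x\in\ball_{\bx}(r)}\widehat{\Ferm_\rho}(x) \;\lesssim\; \frac{\E\norm{x_{i-1} - \xsbx}^2}{\eta_i T_i} + \eta_i L^2.
\]
The localization recursion then controls the distance term: $\E\norm{x_i - \xsbx}^2 \lesssim \E\norm{\by_i - \xsbx}^2 + d\sigma_i^2$, and $\E\norm{\by_i-\xsbx}^2$ is bounded via strong convexity of the (unregularized) analysis or, more simply, by the fact that $\by_i$ comes from SGD on a convex function over a ball of radius $r$ so $\norm{\by_i - \xsbx}\le 2r$; combined with the geometric decay $\eta_i T_i = 2^{-i}\eta\hT$ and $\sigma_i = 4^{-i}\eta L/\beta$ one gets a telescoping bound. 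Plugging $\eta = \frac{r}{L}\min(T^{-1/2}, \beta d^{-1/2})$ and summing over the $\log T$ phases yields $\E[\widehat{\Ferm_\rho}(x_k)] - \widehat{\Ferm_\rho}(\xsbx) \lesssim Lr(\sqrt d/(\beta T) + 1/\sqrt T)$. The gradient count is $\sum_i T_i \le \hT \le T$. This part is routine given the cited machinery; I would essentially mirror the localization argument of \cite{FKT20} with the ReSQue variance bound substituted in.

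\textbf{Privacy.} This is the heart of the proof and the main obstacle. Without subsampling, the localization scheme's privacy comes from the Gaussian mechanism applied to $\by_i$, whose $\ell_2$-sensitivity is controlled by how much the SGD trajectory moves when one sample is swapped — and for a \emph{smooth} objective, gradient steps are contractive so the trajectory sensitivity is $O(\eta_i L / T_i \cdot T_i) = O(\eta_i L)$ per changed gradient. Here $\widehat{\Ferm_\rho}$ is $\frac L\rho$-smooth, but we do \emph{not} access its exact gradient — we use the ReSQue estimator — so contractivity must be ``simulated.'' The key step is to \emph{couple} the two trajectories (on neighboring datasets $\data, \data'$) through a shared choice of the Gaussian samples $\xi_{i,j}$ and shared subsample indices $z_{i,j}$, and then bound the higher moments of the drift $\norm{y_j - y_j'}$. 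When $z_{i,j}$ does not pick the differing index, the two updates use the \emph{same} sample function, and the drift growth is governed by $\|\tnbx\hf_\rho^{z}(y_{j-1}) - \tnbx\hf_\rho^{z}(y_{j-1}')\|$; the second bound of Lemma~\ref{lem:p_moment_Gaussian} says the reweighting-difference has $p$-th moment $\le (24p\norm{y_{j-1}-y_{j-1}'}/\rho)^p$, so since $\norm{y_{j-1}-y_{j-1}'} \le 2r \ll \rho$ (this is where the hypothesis $\rho/r \ge \Cpriv \log^2(\log T/\delta)$ enters), this is a genuine contraction up to polylog factors in high probability. When $z_{i,j}$ \emph{does} pick the differing index (probability $1/n$ per step), we absorb an $O(\eta_i L)$ additive jump. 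I would package this as a potential-function argument over the $T_i$ steps of each phase — this is exactly what the excerpt advertises as Lemma~\ref{lem:group_privacy_convex} — yielding a high-moment (hence, via Markov, a $(1-\delta)$-probability) bound on $\norm{\by_i - \by_i'} \lesssim \eta_i L \cdot \frac{T_i}{n} \cdot \mathrm{polylog}$. Conditioning on this good event costs $\delta$ in total variation (Fact~\ref{fact:cond_TV}), which is why we land in the \emph{approximate} RDP notion $(\alpha,\cdot,\delta)$; then the Gaussian mechanism (Proposition~\ref{prop:rdp_facts}(2)) with $\sigma_i = L\eta_i/\beta$ gives per-phase RDP $\frac{\alpha}{2\sigma_i^2}\norm{\by_i-\by_i'}^2 \lesssim \alpha\beta^2 (\frac{T_i}{n})^2 \mathrm{polylog}$. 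Summing over phases via the approximate-RDP composition of Lemma~\ref{lem:composition_rdp}, the $(T_i/n)^2$ terms are geometric and dominated by $i=1$, i.e. by $(T/n)^2$, giving total $(\alpha, \alpha\tau, \delta)$-RDP with $\tau = \Cpriv(\beta\log(1/\delta)\cdot T/n)^2$. The constraints $\alpha < (\Cpriv\beta^2\log^2(1/\delta))^{-1}$ and $\beta^2\log^2(1/\delta) < 1/\Cpriv$ and $T/n < 1/\Cpriv$ are precisely what is needed to keep the moment bounds in Lemma~\ref{lem:p_moment_Gaussian} (which requires the relevant $\alpha$-like exponent times $\norm{x-x'}/\rho$ to be small) and the subsampling amplification valid; I would track them as they arise rather than optimize.

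\textbf{Where the difficulty concentrates.} The routine parts are the SGD regret bound and the geometric summation. The real work is Lemma~\ref{lem:group_privacy_convex}: setting up the coupling so that the subsample index and the Gaussian ReSQue randomness are shared, then running a potential argument that turns the per-step moment bound of Lemma~\ref{lem:p_moment_Gaussian} into an end-of-phase high-probability drift bound, while correctly accounting for the $1/n$-probability ``bad'' steps where the differing sample is hit. A secondary subtlety is that Lemma~\ref{lem:p_moment_Gaussian} is stated for a \emph{deterministic} subgradient oracle, which is consistent with Algorithm~\ref{alg:subsample_convex} using $\tnbx\hf_\rho^{z}$ (no inner stochasticity), so I must be careful that the ``stochasticity'' in the privacy analysis is only over the index $z$ and the coupled Gaussians $\xi_{i,j}$, and that subsampling amplification is applied on top of the resulting conditional RDP bound exactly as in the truncated-CDP framework the paper cites. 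Getting the interaction between the approximate ($\delta$-)relaxation and composition right — so that the $\delta$'s add and the good events across phases are jointly conditioned — is the last place an error could hide.
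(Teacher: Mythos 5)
Your plan mirrors the paper's actual decomposition (utility via Lemma~\ref{lem:erm_convex_utility}, privacy via Lemma~\ref{lem:group_privacy_convex} plus Lemma~\ref{lem:erm_convex_privacy}), and the utility half and the coupling-plus-potential idea for the drift bound are essentially what the paper does. But the final assembly of the privacy argument contains a genuine conflation that would not survive being written out.

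You write that the per-phase drift leads to ``per-phase RDP $\tfrac{\alpha}{2\sigma_i^2}\norm{\by_i-\by_i'}^2 \lesssim \alpha\beta^2(T_i/n)^2\,\mathrm{polylog}$'' and that ``the $(T_i/n)^2$ terms are geometric and dominated by $i=1$.'' Neither claim is how the bound is actually obtained. The drift bound at the end of phase $i$ (Lemma~\ref{lem:group_privacy_convex}) is \emph{conditional} on the realized index multiset $\ind_i$ and depends on $b_i$, the realized number of times the differing index is sampled: $\norm{y_{T_i}-y'_{T_i}}^2 \lesssim b_i^2(\eta_i L)^2$ with high probability. Since $\sigma_i = L\eta_i/\beta$, the per-phase Gaussian-mechanism RDP is $\lesssim \alpha\beta^2 b_i^2$ --- the $\eta_i$ dependence \emph{cancels}, so the phase-wise RDP contributions are not a geometric series at all; they sum via $\sum_i b_i^2 \le b^2$ to $\lesssim \alpha\beta^2 b^2$. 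A Chernoff bound controls $b\lesssim\log(1/\delta)$ with probability $1-\delta/2$, giving a conditional RDP of $\lesssim\alpha\beta^2\log^2(1/\delta)$. The factor $(T/n)^2$ only appears afterwards, from a \emph{single} application of amplification by subsampling over the entire algorithm: one writes the output law as a mixture $sP+(1-s)R$ over whether $\ind$ contains the differing index at all, with $s=\Pr[\text{any hit}\mid\text{few hits}]\lesssim T/n$, and applies the truncated-CDP mixture bound (Proposition~\ref{prop:subsample_fkt}) together with a coupling between the $\ge 1$-hit and $0$-hit conditional laws (Lemma~\ref{lem:zero_one_couple}) to obtain $D_\alpha \lesssim s^2\alpha\beta^2\log^2(1/\delta)$. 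Applying subsampling per phase, or baking $T_i/n$ into the drift bound, does not produce the stated $\tau$; the subsampling step must be done once, on the mixture over the joint realization of $\ind$. This is the concrete gap between your sketch and a correct proof of the privacy half.
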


\paragraph{Utility analysis.} We begin by proving a utility guarantee for Algorithm~\ref{alg:subsample_convex}, following \cite{FKT20}.

\begin{lemma}\label{lem:erm_convex_utility}
Let $\xsbx \defeq \argmin_{x \in \ball_{\bx}(r)} \widehat{\Ferm_\rho}(x)$. We have, for a universal constant $\Ccvx$,
\[\E\Brack{\widehat{\Ferm_\rho}(x_k)} - \widehat{\Ferm_\rho}(\xsbx) \le \Ccvx Lr\Par{\frac{\sqrt d}{\beta T} + \frac {1} {\sqrt T}}. \]
\end{lemma}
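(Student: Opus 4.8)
The plan is to analyze Algorithm~\ref{alg:subsample_convex} as a sequence of $k = \log_2 \hT$ phases, where phase $i$ runs $T_i = 2^{-i}\hT$ steps of projected stochastic gradient descent on $\widehat{\Ferm_\rho}$ over the ball $\ball_{\bx}(r)$ with step size $\eta_i = 4^{-i}\eta$, starting from $x_{i-1}$ and outputting the average $\by_i$, after which Gaussian noise $\zeta_i \sim \Nor(0,\sigma_i^2\id_d)$ is added to form $x_i$. The key observations I would assemble are: (i) each stochastic gradient used in phase $i$ is the ReSQue estimator $\tnbx\hf_\rho^{z_{i,j}}(y_{j-1})$ for a uniformly random sample $z_{i,j}$, which is unbiased for $\grad\widehat{\Ferm_\rho}(y_{j-1})$ (by Lemma~\ref{lem:stochastic_varbound}, applied to the empirical risk viewed as a stochastic oracle obtained by subsampling $\partial f^i$) and has second moment $O(L^2)$ since $\rho = r$ and iterates stay in $\ball_{\bx}(r)$; (ii) $\widehat{\Ferm_\rho}$ is convex; (iii) the domain $\ball_{\bx}(r)$ has diameter $O(r)$.

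First I would write down the standard averaged-SGD regret bound for a single phase: running $T_i$ steps of projected SGD with step size $\eta_i$ on a convex function with stochastic gradients of second moment $\le GL^2$ (for an absolute constant $G$ from Lemma~\ref{lem:stochastic_varbound}), starting at $y_0 = x_{i-1}$, gives
\[
\E\Brack{\widehat{\Ferm_\rho}(\by_i)} - \widehat{\Ferm_\rho}(\xsbx) \le \frac{\norm{x_{i-1} - \xsbx}^2}{2\eta_i T_i} + \frac{\eta_i G L^2}{2},
\]
where the expectation is over the phase-$i$ randomness conditioned on $x_{i-1}$. Then I would bound the distance $\norm{x_{i-1} - \xsbx}$ recursively. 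The localization argument of \cite{FKT20} controls this via a potential/induction: both $\xsbx$ and every iterate of phase $i-1$ lie in $\ball_{\bx}(r)$, so after the averaging step $\norm{\by_{i-1} - \xsbx} \le 2r$ deterministically; moreover the added noise contributes $\E\norm{\zeta_{i-1}}^2 = d\sigma_{i-1}^2$. More carefully, one shows by induction that $\E\norm{x_{i-1} - \xsbx}^2$ (or a surrogate thereof) decays geometrically — the optimization error of phase $i-1$ shrinks $\norm{\cdot - \xsbx}^2$ roughly by a factor of $2$ relative to $\eta_{i-1}T_{i-1}$, and since $\eta_i T_i = \frac14\eta_{i-1}\cdot\frac12 T_{i-1}\cdot$(ratio), the per-phase error terms $\frac{\norm{x_{i-1}-\xsbx}^2}{2\eta_i T_i}$ and $\frac{\eta_i G L^2}{2}$ stay balanced across phases. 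Summing the telescoping contributions, plus the accumulated noise terms $\sum_i d\sigma_i^2 / (\eta_{i+1}T_{i+1})$ with $\sigma_i = L\eta_i/\beta$, and plugging in $\eta = \frac{r}{L}\min(\frac{1}{\sqrt T}, \frac{\beta}{\sqrt d})$ and $\hT = \Theta(T)$, yields the final bound $O\!\left(Lr\left(\frac{\sqrt d}{\beta T} + \frac{1}{\sqrt T}\right)\right)$: the $\frac{1}{\sqrt T}$ term comes from the regime where $\eta$ is set by $\frac{1}{\sqrt T}$ (standard SGD rate), and the $\frac{\sqrt d}{\beta T}$ term comes from the noise-injection / localization overhead when $\eta$ is set by $\frac{\beta}{\sqrt d}$.

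The main obstacle I anticipate is the bookkeeping in the localization induction: carefully tracking how $\E\norm{x_i - \xsbx}^2$ evolves through both an SGD phase and a Gaussian perturbation, choosing the geometric schedules $T_i, \eta_i, \sigma_i$ so that every contribution to the final telescoped sum is $O(Lr(\frac{\sqrt d}{\beta T} + \frac{1}{\sqrt T}))$, and confirming that the iterates genuinely remain in $\ball_{\bx}(r)$ so the variance bound of Lemma~\ref{lem:stochastic_varbound} (which needs $y_{j-1} \in \ball_{\bx}(\rho)$ with $\rho = r$) applies at every step. The convexity, projection, and unbiasedness facts are routine; the delicate part is verifying the constants and exponents in the recursion line up, which is exactly the content imported from the analysis of \cite{FKT20}.
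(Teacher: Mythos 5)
Your high-level outline — phases of averaged projected SGD with ReSQue gradients, Gaussian noise between phases, a final Lipschitzness term, and the two regimes picked out by $\eta = \frac{r}{L}\min(\frac{1}{\sqrt T},\frac{\beta}{\sqrt d})$ — matches the paper's proof, but the per-phase regret bound you write down uses the wrong comparator, and this is a genuine gap rather than a bookkeeping issue.

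You bound each phase against $\xsbx$:
\[
\E\Brack{\widehat{\Ferm_\rho}(\by_i)} - \widehat{\Ferm_\rho}(\xsbx) \le \frac{\E\norm{x_{i-1}-\xsbx}^2}{2\eta_i T_i} + \frac{\eta_i G L^2}{2}.
\]
The trouble is that $\eta_i T_i = (4^{-i}\eta)(2^{-i}\hT) = 8^{-i}\eta\hT$ shrinks geometrically, while $\E\norm{x_{i-1}-\xsbx}^2$ does \emph{not}: $\widehat{\Ferm_\rho}$ is only convex, so a small function gap gives no distance control, and $\by_{i-1}$ and $\xsbx$ are simply two points of $\ball_{\bx}(r)$, potentially $\Theta(r)$ apart. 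Your "geometric decay of $\E\norm{x_{i-1}-\xsbx}^2$" induction has nothing to make it go through, and the first term above is $\Omega(8^{i}r^2/(\eta\hT))$ in the worst case — catastrophic by $i=k=\log_2\hT$. Nor can you sum the bounds: each compares to the same $F(\xsbx)$, so they do not telescope.

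The paper's fix is to compare each phase to its own anchor $\by_{i-1}$, not to $\xsbx$. Writing $F \defeq \widehat{\Ferm_\rho}$ and setting $\by_0 \defeq \xsbx$, it telescopes
\[
\E\Brack{F(x_k)} - F(\xsbx) = \sum_{i\in[k]}\E\Brack{F(\by_i) - F(\by_{i-1})} + \E\Brack{F(x_k) - F(\by_k)},
\]
and for each summand applies the regret bound with comparator $\by_{i-1}$ and starting point $x_{i-1}=\by_{i-1}+\zeta_{i-1}$, so the initial-distance term is $\E\norm{x_{i-1}-\by_{i-1}}^2=\E\norm{\zeta_{i-1}}^2 = d\sigma_{i-1}^2$ — pure noise, decaying as $16^{-i}$, which beats the $8^{-i}$ decay of $\eta_i T_i$ and leaves a convergent geometric sum. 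This is exactly the $\sum_i d\sigma_{i-1}^2/(\eta_i T_i)$ term you wrote, but it only arises after the comparator switch. The FKT20 localization is not that the iterates converge toward $\xsbx$ (they need not, without strong convexity), but that the perturbed point $x_{i-1}$ stays near the previous phase's output $\by_{i-1}$; the telescope over these moving anchors is what converts that local control into a bound on $\E F(x_k)-F(\xsbx)$.
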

\begin{proof}
Denote $F \defeq \widehat{\Ferm_\rho}$, $\by_0 \defeq \xsbx$, and $\zeta_0 \defeq \bx - \xsbx$, where by assumption $\norm{\zeta_0} \le r$. We begin by observing that in each run of Line~\ref{line:psgd}, by combining the first property in Lemma~\ref{lem:stochastic_varbound} with the definition of $\Ferm$, we have that $\E\big[\tnbx \hf_\rho^{z_{i, j}}(y_{j-1})\mid y_{j-1} \big] \in \partial F(y_{j - 1})$. Moreover, by the second property in Lemma~\ref{lem:stochastic_varbound} and the fact that $f^{z_{i,j}}$ is $L$-Lipschitz,
\[\E \norm{\tnbx \hf_\rho^{z_{i, j}}(y_{j-1})}^2 \le 3L^2. \]
We thus have
\begin{equation}\label{eq:telescope_error}
\begin{aligned}
\E\Brack{F(x_k)} - F(\xsbx) &= \sum_{i \in [k]} \E[F(\by_i) - F(\by_{i - 1})] + \E\Brack{F(x_k) - F(\by_k)} \\
&\le \sum_{i \in [k]} \Par{\frac{\E\Brack{\norm{x_{i - 1} - \by_{i - 1}}^2 } }{2\eta_i T_i} + \frac{3\eta_i L^2}{2}} + L \E\Brack{\norm{x_k - \by_k}} \\
&\le \frac{8r^2}{\eta T} + 4\sum_{i \in [k - 1]} \frac{\sigma_i^2 d}{\eta_i T_i} + \sum_{i \in [k]} \frac{3\eta_i L^2}{2} + L\sigma_k \sqrt d.
\end{aligned}
\end{equation}
In the second line, we used standard regret guarantees on projected stochastic gradient descent, e.g.\ Lemma 7 of \cite{HazanK14}, where we used that all $\by_i \in \ball_{\bx}(r)$; in the third line, we used 
\[\E[\norm{x_k - \by_k}] \le \sqrt{\E\Brack{\norm{x_k - \by_k}^2}} = \sqrt{ \E\Brack{\norm{\zeta_k}^2}} = \sigma_k\sqrt d\] 
by Jensen's inequality. Continuing, we have by our choice of parameters that $\frac{\sigma_i^2}{\eta_i T_i} \le 2^{-i}\frac{L^2 \eta}{2\beta^2 \hT}$, hence
\begin{align*}\E\Brack{F(x_k)} - F(\xsbx) &\le \frac{8r^2}{\eta T} + \frac{4L^2\eta d}{\beta^2 \hT} + \frac{3\eta L^2}{2} + \frac{L^2\eta \sqrt d}{\beta} \cdot \frac{1}{\hT^2} \\
&\le \Par{\frac{8Lr}{\sqrt T} + \frac{8Lr\sqrt d}{\beta T}} + \frac{8Lr\sqrt d}{\beta T} + \frac{3Lr}{2\sqrt T} + \frac{Lr}{\sqrt T}. \end{align*}
Here we used that $2\hT \ge T$ and $\hT^2 \ge \sqrt T$, for all $T \in \N$.
\end{proof}

\paragraph{Privacy analysis.} We now show that our algorithm satisfies a strong (approximate) RDP guarantee. Let $\data' = \{s'_i\}_{i \in [n]} \in \calS^n$ be such that $\data = \{s_i\}_{i \in [n]}$ and $\data'$ are neighboring, and without loss of generality assume $s'_1 \neq s_1$. Define the multiset
\begin{equation}\label{eq:inddef}\ind \defeq \{z_{i, j} \mid i \in [k], j \in [T_i]\}\end{equation}
to contain all sampled indices in $[n]$ throughout Algorithm~\ref{alg:subsample_convex}. We begin by giving an (approximate) RDP guarantee conditioned on the number of times ``$1$'' appears in $\ind$. The proof of Lemma~\ref{lem:group_privacy_convex} is primarily based on providing a potential-based proof of a ``drift bound,'' i.e., how far away iterates produced by two neighboring datasets drift apart (coupling all other randomness used). To carry out this potential proof, we rely on the local stability properties afforded by Lemma~\ref{lem:p_moment_Gaussian}.

\begin{lemma}\label{lem:group_privacy_convex}
Define $\ind$ as in \eqref{eq:inddef} in one call to Algorithm~\ref{alg:subsample_convex}. Let $\ind$ be deterministic (i.e., this statement is conditioned on the realization of $\ind$). Let $b$ be the number of times the index $1$ appears in $\ind$. Let $\mu$ be the distribution of the output of Algorithm~\ref{alg:subsample_convex} run on $\data$, and $\mu'$ be the distribution when run on $\data'$, such that $\data$ and $\data'$ are neighboring and differ in the first entry, and the only randomness is in the Gaussian samples used to define ReSQue estimators and on Line~\ref{line:addnoise}. Suppose $\frac{\rho}{r} \ge 1728\log^2(\frac{\log T}{\delta})$. Then we have for any $\alpha > 1$,
\[D_{\alpha, \delta}(\mu \| \mu') \le 1500\alpha \beta^2 b^2. \]
\end{lemma}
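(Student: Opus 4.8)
The plan is to couple the two runs of Algorithm~\ref{alg:subsample_convex} on $\data$ and on $\data'$ by using the same Gaussian samples $\xi_{i,j}$ inside every ReSQue estimator and the same perturbations $\zeta_i$ on Line~\ref{line:addnoise}, with the index multiset $\ind$ fixed as conditioned on. Since $\data$ and $\data'$ differ only in their first entry, every iteration with $z_{i,j}\neq 1$ applies the \emph{identical} ReSQue update for $f^{z_{i,j}}$ in both runs, so only the $b$ iterations with $z_{i,j}=1$ drive the iterate sequences apart. Because the returned point is $x_k$ and the intermediate $x_i$ form a Markov chain with fresh randomness per epoch, the data-processing inequality for R\'enyi divergence together with adaptive composition of approximate RDP (Lemma~\ref{lem:composition_rdp}) reduce the claim to an epoch-by-epoch bound: for every common starting point $w = x_{i-1} = x'_{i-1}$, control $D_{\alpha,\delta/k}$ between the laws of $x_i$ and $x'_i$, then sum over $i\in[k]$. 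Conditioning further on $\xi$, those laws are $\Nor(\by_i,\sigma_i^2\id_d)$ and $\Nor(\by'_i,\sigma_i^2\id_d)$, so by quasiconvexity of $D_\alpha$ and the Gaussian-mechanism bound in Proposition~\ref{prop:rdp_facts}, on any event $E_i$ measurable in the $\xi$'s on which $\norm{\by_i - \by'_i}\le B_i$ the conditional-on-$E_i$ laws are within R\'enyi divergence $\frac{\alpha}{2\sigma_i^2}B_i^2 = \frac{\alpha\beta^2}{2L^2\eta_i^2}B_i^2$; Fact~\ref{fact:cond_TV} then converts this into a $D_{\alpha,\Pr[E_i^c]}$ bound on the unconditional laws. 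Thus it suffices to exhibit $E_i$ with $\Pr[E_i^c]\le \delta/k$ and $B_i = O(L\eta_i b_i)$, since then $\sum_{i\in[k]}\frac{\alpha\beta^2}{2L^2\eta_i^2}B_i^2 = O(\alpha\beta^2\sum_i b_i^2) \le O(\alpha\beta^2 b^2)$ using $\sum_i b_i^2 \le (\sum_i b_i)^2 = b^2$, which is at most $1500\alpha\beta^2 b^2$ after fixing constants.

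The core is a high-moment drift bound inside a single epoch started from a common point $w$. Fix an exponent $p$ with $2\le p\le \rho/(2r)$; here the hypothesis $\frac{\rho}{r}\ge 1728\log^2(\frac{\log T}{\delta})$ enters, guaranteeing $p$ can be taken much larger than $\log(k/\delta)$. Using the projection step and the fact that the injected noise satisfies $\sigma_i\sqrt d\le r$ (so all iterates of both runs lie in $\ball_{\bx}(2r)\subseteq \ball_{\bx}(\rho/p)$, pushing the rare large-$\norm{\zeta}$ events into the failure probability), Lemma~\ref{lem:p_moment_Gaussian} applies at every step. Writing $D_j \defeq y_j - y'_j$ with $D_0 = 0$: on an iteration with $z_{i,j}\neq 1$, split the common ReSQue increment into its conditional mean $\grad\hf_\rho^{z_{i,j}}(y_{j-1}) - \grad\hf_\rho^{z_{i,j}}(y'_{j-1})$ and a zero-mean part $N_j$; the mean part turns $D_{j-1}$ into the difference of gradient steps on the $\frac L\rho$-smooth convex $\hf_\rho^{z_{i,j}}$ with step $\eta_i\le\frac\rho L$, hence is non-expansive, as is the projection, while Lemma~\ref{lem:p_moment_Gaussian} gives $\E_\xi[\norm{N_j}^p]^{1/p} = O(\frac{pL}{\rho})\norm{D_{j-1}}$. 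A routine $L^p$ estimate for a fixed vector perturbed by a small zero-mean vector then yields $\E[\norm{D_j}^p]^{1/p}\le (1 + O(\frac{p\eta_i^2 L^2}{\rho^2}))\norm{D_{j-1}}$. On an iteration with $z_{i,j}=1$, the same decomposition for $f^1$, plus the extra difference of the two ReSQue estimators for $f^1$ and $f'^1$ at the common point $y'_{j-1}$ (whose norm is at most $2L$ times a reweighting factor with $p$-th moment $\le 2$, by the first inequality of Lemma~\ref{lem:p_moment_Gaussian}), gives the same multiplicative factor plus an additive $O(L\eta_i)$ in $L^p$-norm. Unrolling over the $T_i$ steps of the epoch, the product of multiplicative factors is $\exp(O(\frac{pT_i\eta_i^2 L^2}{\rho^2})) = \exp(O(\frac r\rho)) = 1 + o(1)$ by $\eta\le\frac{r}{L\sqrt T}$, $T_i\le \hT\le T$, and $p\le\rho/r$; hence $\E[\norm{\by_i-\by'_i}^p]^{1/p} = O(L\eta_i b_i)$ after averaging. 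Markov's inequality at $p = \lfloor\rho/(2r)\rfloor$ gives $\Pr[\norm{\by_i - \by'_i} > C_0 L\eta_i b_i] \le (O(1)/C_0)^p\le \delta/k$ for a large enough absolute constant $C_0$, since $\log(k/\delta)\le 2\log(\frac{\log T}{\delta})\ll \rho/r$ makes $(k/\delta)^{1/p} = O(1)$; this is exactly why the final bound carries no logarithmic factor. Taking $B_i = C_0 L\eta_i b_i$ closes the reduction.

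The main obstacle is this high-moment recursion for the ``good'' iterations. One cannot simply bound $\norm{D_j}\le \norm{D_{j-1}} + \eta_i\norm{\text{ReSQue diff}}$ and iterate, because Lemma~\ref{lem:p_moment_Gaussian} only controls the \emph{full} ReSQue difference at scale $\frac{pL}{\rho}\norm{D_{j-1}}$, and the naive triangle inequality would accumulate a useless factor $\exp(O(\frac{pT\eta_i L}{\rho})) = \exp(O(\sqrt T))$. The improvement must come from the zero-mean structure: contractivity of the smooth gradient step absorbs the first-order term, so the per-step inflation drops from $O(\frac{p\eta_i L}{\rho})$ to $O(\frac{p\eta_i^2 L^2}{\rho^2})$, and this --- together with the restriction $p\le\rho/r$ dictated by Lemma~\ref{lem:p_moment_Gaussian} --- is what makes $\rho/r\ge\mathrm{polylog}$ both necessary and sufficient to eliminate every $1+o(1)$ factor and reach the clean $O(\alpha\beta^2 b^2)$ bound. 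A secondary subtlety is that the Gaussian-mechanism sensitivity $\norm{\by_i - \by'_i}$ is itself random, which is why the argument passes through the approximate-RDP relaxation $D_{\alpha,\delta}$ and Fact~\ref{fact:cond_TV} instead of a deterministic worst-case sensitivity.
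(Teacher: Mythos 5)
Your proof is correct and its broad architecture (couple the Gaussian samples, reduce by adaptive composition of approximate RDP to an epoch-by-epoch sensitivity bound, control a high moment of the iterate drift, then Markov + the Gaussian mechanism under a high-probability event) is exactly the paper's. The step where you genuinely diverge is the per-iteration drift recursion. The paper writes $\Phi_j=\|D_j\|^2$, Taylor-expands $\E[\Phi_j^p]$ around $\Phi_{j-1}$, kills the first-order cross term via monotonicity of convex subgradients ($\E[A_j]\le 0$ in their notation), and bounds all remaining terms with the moment estimates of Lemma~\ref{lem:p_moment_Gaussian}. You instead split the ReSQue increment into its conditional mean $\nabla\hf_\rho(y_{j-1})-\nabla\hf_\rho(y'_{j-1})$ (absorbed by non-expansiveness of a gradient step on the $\tfrac{L}{\rho}$-smooth $\hf_\rho$, using $\eta_i\le 2\rho/L$) plus a zero-mean remainder $N_j$, then invoke a perturbed-fixed-vector $L^p$ estimate. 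Both decompositions kill the dangerous first-order drift; yours explicitly uses smoothness (co-coercivity) where the paper uses only monotonicity.

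One concrete consequence of your cleaner bookkeeping is that the per-step inflation you obtain, $1 + O(p\eta_i^2L^2/\rho^2)$ on $\E[\|D_j\|^p]^{1/p}$, is genuinely tighter than what the paper's chain of inequalities produces (which amounts to $1+O(p^3\eta_i^2L^2/\rho^2)$ because it factors out $(\tfrac{48\eta_i Lp}{\rho})^2$ uniformly in its second-order remainder and picks up an extra $p(p-1)$ from the Taylor prefactor). Carrying your estimate through, the accumulated multiplicative factor is $\exp\bigl(O(pT_i\eta_i^2L^2/\rho^2)\bigr)\le\exp(O(pr^2/\rho^2))$, and this stays $1+o(1)$ for $p$ as large as $\Theta(\rho/r)$ — which is how you can afford $p=\lfloor\rho/(2r)\rfloor$ and then win the failure probability from Markov with no $(\log\tfrac{\log T}{\delta})^{1/p}$ correction. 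The paper cannot take $p$ that large (its accumulation forces $p\lesssim\sqrt{\rho/r}$), and instead uses the smaller $p=\lceil 5\log\tfrac{\log T}{\delta}\rceil$ together with the observation that $(\tfrac{\log T}{\delta})^{1/p}=e^{1/5}$ is already a constant. Both routes land at the same $1500\alpha\beta^2 b_i^2$ per epoch. You also make explicit the need to ensure the starting point $y_0=x_{i-1}$ lies in $\ball_{\bx}(\rho/p)$ and push the rare $\|\zeta_{i-1}\|>r$ events into the approximate-RDP slack, a detail the paper glosses over; this is worth keeping. The only cosmetic imprecision is that you write the single exponent $p$ in $\E_\xi[\|N_j\|^p]^{1/p}=O(\tfrac{pL}{\rho})\|D_{j-1}\|$ where the subsequent Taylor estimate really needs the full scaling $\E[\|N_j\|^m]^{1/m}=O(\tfrac{mL}{\rho})\|D_{j-1}\|$ for all $m\le p$ (and exponents up to $p$ for $\|w\|$, which fixes the constraint $p\le\rho/r$); this is what Lemma~\ref{lem:p_moment_Gaussian} actually provides, so the argument is sound, but it would be worth stating.
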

\begin{proof}
Throughout this proof we treat $\ind$ as fixed with $b$ occurrences of the index $1$. Let $b_i$ be the number of times $1$ appears in $\ind_i \defeq \{z_{i, j} \mid j \in [T_i]\}$, such that $\sum_{i \in [k]} b_i = b$. We first analyze the privacy guarantee of one loop, and then analyze the privacy of the whole algorithm.

We begin by fixing some $i \in [k]$, and analyzing the RDP of the $i^{\text{th}}$ outer loop in Algorithm~\ref{alg:subsample_convex}, conditioned on the starting point $y_0$. Consider a particular realization of the $T_i$ Gaussian samples used in implementing Line~\ref{line:psgd}, $\Xi_i \defeq \{\xi_{i,j}\}_{j \in [T_i]}$, where we let $\xi_{i,j} \sim \Nor(0, \rho^2 \id_d)$ denote the Gaussian sample used to define the update to $y_{j - 1}$. Conditioned on the values of $\ind_i$, $\Xi_i$, the $i^{\text{th}}$ outer loop in Algorithm~\ref{alg:subsample_convex} (before adding $\zeta_i$ in Line~\ref{line:addnoise}) is a deterministic map. For a given realization of $\ind_i$ and $\Xi_i$, we abuse notation and denote $\{y_j\}_{j \in [T_i]}$ to be the iterates of the $i^{\text{th}}$ outer loop in Algorithm~\ref{alg:subsample_convex} using the dataset $\data$ starting at $y_0$, and $\{y'_j\}_{j \in [T_i]}$ similarly using $\data'$. Finally, define
\[\Phi_j \defeq \norm{y_j - y'_j}^2,\; p \defeq \left\lceil5\log\Par{\frac{\log T}{\delta}}\right\rceil.\]
In the following parts of the proof, we will bound for this $p$ the quantity $\E \Phi_{T_i}^p$, to show that with high probability it remains small at the end of the loop, regardless of the location of the $1$ indices. 

\textit{Potential growth: iterates with $z_{i, j} \neq 1$.}  We first bound the potential growth in any iteration $j \in [T_i]$ where $z_{i, j} \neq 1$. Fix $y_0,y_0'$ and $\{\xi_{i,t}\}_{t \in [j - 1]}$, so that $\Phi_{j - 1}$ is deterministic. We have (taking expectations over only $\xi_{i,j}$), 
\begin{equation}\label{eq:phipgood}
\E_{\xi_{i,j}} \Phi_j^p \le \E \Par{\Phi_{j - 1} + A_j + B_j}^p,
\end{equation}
where
\begin{align*}
A_j &\defeq -2\eta_i Z_j \inprod{\partial f^{z_{i, j}}(\bx + \xi_{i,j})}{y_{j - 1} - y'_{j - 1}}, \\
B_j &\defeq \eta_i^2 Z_j^2 \norm{\partial f^{z_{i, j}}(\bx + \xi_{i,j})}^2, ~~\mbox{and}\\
Z_j &\defeq \frac{\gamma_\rho(y_{j - 1} - \bx - \xi_{i,j}) - \gamma_\rho(y'_{j - 1} - \bx - \xi_{i,j})}{\gamma_\rho(\xi_{i,j})}.
\end{align*}
The inequality in \eqref{eq:phipgood} follows from expanding the definition of the update to $\Phi_j$ before projection, and then using the fact that Euclidean projections onto a convex set only decrease distances. By the second part of Lemma~\ref{lem:p_moment_Gaussian}, for all $q \in [2, p]$, if $\sqrt{\Phi_{j-1}}\le \frac \rho p$ (which is always satisfied as $\sqrt{\Phi_{j-1}}\le r$),
\[\E_{\xi_{i,j}} Z_j^q \le \Par{\frac{24q\sqrt{\Phi_{j - 1}}}{\rho}}^q.\]
By Lipschitzness of $f^{z_{i,j}}$ and Cauchy-Schwarz (on $A_j$), we thus have
\begin{equation}\label{eq:abbounds}
\begin{aligned}
\E_{\xi_{i,j}} |A_j|^q &\le \Par{\frac{48 \eta_i L q \Phi_{j - 1}}{\rho}}^q \text{ for all } q \in [2, p], \\
\E_{\xi_{i,j}} B_j^q &\le \Par{\frac{48\eta_i L q}{\rho}}^{2q} \Phi_{j - 1}^q \text{ for all } q \in [1, p].
\end{aligned}
\end{equation}
Next, we perform a Taylor expansion of \eqref{eq:phipgood}, which yields
\begin{equation}\label{eq:taylorp}
\begin{aligned}
\E_{\xi_{i,j}} \Phi_j^p &\le \Phi_{j - 1}^p + p\Phi_{j - 1}^{p - 1} \E_{\xi_{i,j}}\Brack{A_j + B_j} \\
&+ p(p-1)\int_0^1 (1 - t) \E_{\xi_{i,j}}\Brack{\Par{\Phi_{j - 1} + t(A_j + B_j)}^{p - 2} \Par{A_j + B_j}^2}\d t.
\end{aligned}
\end{equation}
By monotonicity of convex gradients and the first part of Lemma~\ref{lem:stochastic_varbound}, we have
\begin{equation}\label{eq:A1}
\E_{\xi_{i,j}}\Brack{A_j} = -2\eta_i\inprod{\partial \hf_\rho^{z_{i, j}}(y_{j - 1}) - \partial \hf_\rho^{z_{i, j}}(y'_{j - 1})}{y_{j - 1} - y'_{j - 1}} \le 0.
\end{equation}
By applying \eqref{eq:abbounds}, we have
\begin{equation}\label{eq:B1}
p\Phi_{j - 1}^{p - 1}\E_{\xi_{i,j}} B_j \le p\Par{\frac{48\eta_i L}{\rho}}^2 \Phi_{j - 1}^p.
\end{equation}
Next we bound the second-order terms. For any $t \in [0, 1]$ we have denoting $C_j \defeq A_j + B_j$,
\begin{equation}\label{eq:ABge2}
\begin{aligned}
\E_{\xi_{i,j}}\Brack{\Par{\Phi_{j - 1} + tC_j}^{p - 2}C_j^2} &= \sum_{q = 0}^{p - 2} \binom{p - 2}{q} \Phi_{j - 1}^{p - 2 - q} \E_{\xi_{i,j}}\Brack{t^{2 + q}C_j^{2 + q}} \\
&\le 4\sum_{q = 0}^{p - 2} 2^q \binom{p - 2}{q} \Phi_{j - 1}^{p - 2 - q}\E_{\xi_{i,j}}\Brack{|A_j|^{2 + q}} \\
&+ 4\sum_{q = 0}^{p - 2} 2^q \binom{p - 2}{q} \Phi_{j - 1}^{p - 2 - q}\E_{\xi_{i,j}}\Brack{B_j^{2 + q}} \\
&\le 4\Phi_{j - 1}^p \Par{\frac{48 \eta_i L p}{\rho}}^2 \sum_{q = 0}^{p - 2} 2^q \binom{p - 2}{q} \Par{\frac{48 \eta_i L q}{\rho}}^q \\
&+ 4\Phi_{j - 1}^p \Par{\frac{48 \eta_i L p}{\rho}}^2 \sum_{q = 0}^{p - 2} 2^q \binom{p - 2}{q} \Par{\frac{48 \eta_i L (2 + q)}{\rho}}^{2q + 2} \\
&\le 8\Phi_{j - 1}^p \Par{\frac{48 \eta_i L p}{\rho}}^2 \Par{1 + \frac{96\eta_i Lp}{\rho}}^{p - 2} \\
&\le 16\Phi_{j - 1}^p \Par{\frac{48 \eta_i L p}{\rho}}^2.
\end{aligned}
\end{equation}
The first inequality used $(a + b)^p \le 2^p (a^p + b^p)$ for any nonnegative $a, b$ and $0\le t\le 1$, the second inequality used \eqref{eq:abbounds}, and the third and fourth inequalities used
\[ \frac{48\eta_i L(2 + q)}{\rho} \le \frac 1 {2p}\]
for our choices of $\eta_i L \le \frac r 4$ and $\rho$. Finally, plugging \eqref{eq:A1}, \eqref{eq:B1}, and \eqref{eq:ABge2} into \eqref{eq:taylorp},
\[
\E_{\xi_{i,j}} \Phi_j^p \le \Phi_{j - 1}^p \Par{1 + 16p^2 \Par{\frac{48 \eta_i L p}{\rho}}^2} \le \Phi_{j - 1}^p\Par{1 + 16p \Par{\frac{48 \eta_i L p}{\rho}}^2}^p.\]
Finally, using $(\eta_i L)^2 \le \frac {r^2} {16T} \le \frac {r^2} {16T_i}$ and our assumed bound on $\frac r \rho$, which implies $\frac{16p}{\rho^2} (48\eta_i Lp)^2 \le \frac 1 {T_i}$, taking expectations over $\{\xi_t\}_{t \in [j - 1]}$ yields
\begin{equation}\label{eq:good_total}
\begin{aligned}
\E \Phi_j^p \le \E \Phi_{j - 1}^p \Par{1 + \frac 1 {T_i}}^p
\text{ when } z_{i, j} \neq 1.
\end{aligned}\end{equation}

\textit{Potential growth: iterates with $z_{i, j} = 1$.}
Next, we handle the case where $z_{i, j} = 1$. We have that conditional on fixed values of $\{\xi_{i,t}\}_{t \in [j - 1]}$, $y_{0}$ and $y_{0}'$,
\begin{equation}\label{eq:phibad}
\begin{aligned}
\E_{\xi_{i,j}} \Phi_j^p &\le \E_{\xi_{i,j}}\Par{\Phi_{j - 1} + D_j + E_j}^p \\
&\le \E_{\xi_{i,j}} \Par{\Par{1 + \frac 1 {b_i}} \Phi_{j - 1} + 2b_i E_j}^p,
\end{aligned}
\end{equation}
where overloading $f \gets f(\cdot; s_1)$, $h \gets f(\cdot; s'_1)$,
\begin{align*}
D_j &\defeq -2\eta_i \inprod{\tnbx \hf_\rho(y_{j - 1}) - \tnbx \widehat{h}_\rho(y'_{j - 1})}{y_{j - 1} - y'_{j - 1}}, \\
E_j &\defeq \eta_i^2 \norm{\tnbx \hf_\rho(y_{j - 1}) - \tnbx \widehat{h}_\rho(y'_{j - 1})}^2,
\end{align*}
and we use $D_j \le \frac 1 {b_i} \Phi_{j - 1} + b_i E_j$ by Cauchy-Schwarz and Young's inequality. Next, convexity of $\norm{\cdot}^{2q}$ implies that
\[E_j^q \le \eta_i^{2q} 2^{2q - 1} \Par{\norm{\tnbx \hf_\rho(y_{j - 1})}^{2q} + \norm{\tnbx \widehat{h}_\rho(y'_{j - 1})}^{2q}}. \]
Next, we note that since $f$ is Lipschitz, the first part of Lemma~\ref{lem:p_moment_Gaussian} implies for all $q \le p$,
\[\E \norm{\tnbx \hf_\rho(y_{j - 1})}^{2q} \le L^{2q} \E\Brack{\Par{\frac{\gamma_\rho(y_{j - 1} - \bx - \xi)}{\gamma_\rho(\xi)}}^{2q}} \le 2(L)^{2q}, \]
and a similar calculation holds for $h$. Here we used our assumed bound on $\frac r \rho$ to check the requirement in Lemma~\ref{lem:p_moment_Gaussian} is satisfied.
By linearity of expectation, we thus have
\begin{equation}\label{eq:ejbound}\E_{\xi_{i,j}} E_j^{q} \le \Par{9\eta_i L}^{2q}. \end{equation}
Finally, expanding \eqref{eq:phibad} and plugging in the moment bound \eqref{eq:ejbound},
\begin{align*}
\E_{\xi_{i,j}} \Phi_j^p &\le \sum_{q = 0}^{p} \binom{p}{q}\Par{1 + \frac 1 {b_i}}^q \Phi_{j - 1}^q (2b_i)^{p - q} \E_{\xi_{i,j}}\Brack{E_j^{p - q}} \\
&\le \sum_{q = 0}^{p} \binom{p}{q}\Par{1 + \frac 1 {b_i}}^q \Phi_{j - 1}^q (2b_i)^{p - q} (9 \eta_i L)^{2(p - q)} \\
&= \Par{\Par{1 + \frac 1 {b_i}} \Phi_{j - 1} + 2b_i (9\eta_i L)^2}^p.
\end{align*}
Taking expectations over $\{\xi_{i, t}\}_{t \in [j - 1]}$, and using Fact~\ref{fact:shiftbase} with $Z \gets (1 + \frac 1 {b_i})\Phi_{j - 1}$ and $C \gets 2b_i(9\eta_i L)^2$,
\begin{equation}\label{eq:bad_total}
\E \Phi_j^p \le \Par{\Par{1 + \frac 1 {b_i}} \E\Brack{\Phi_{j - 1}^p}^{\frac 1 p} + 2b_i (9\eta_i L)^2}^p,\text{ when } z_{i,j} = 1.  \end{equation}

\textit{One loop privacy.} We begin by obtaining a high-probability bound on $\Phi_{T_i}$. Define
\[W_j \defeq \E[\Phi_j^p]^{\frac 1 p}.\]
By using \eqref{eq:good_total} and \eqref{eq:bad_total}, we observe
\begin{align*}
W_j \le \begin{cases}
\Par{1 + \frac 1 {T_i}} W_{j - 1} & z_{i, j} \neq 1 \\
\Par{1 + \frac 1 {b_i}} W_{j - 1} + 2b_i(9\eta_i L)^2 & z_{i, j} = 1
\end{cases}.
\end{align*}
Hence, regardless of the $b_i$ locations of the $1$ indices in $\ind_i$, we have
\[W_{T_i} \le \Par{1 + \frac 1 {T_i}}^{T_i}\Par{1 + \frac 1 {b_i}}^{b_i}\Par{2b_i^2(9\eta_i L)^2} \le 1200 b_i^2 (\eta_i L)^2. \]
Thus, by Markov's inequality, with probability at least $1 - \frac \delta {\log T}$ over the randomness of $\Xi_i = \{\xi_{i,j}\}_{j \in [T_i]}$, we have using our choice of $p$,
\begin{equation}\label{eq:bound_sensitivity}\norm{y_{T_i} - y'_{T_i}}^2 \le 1200 b_i^2(\eta_i L)^2 \cdot \Par{\frac{\log T}{\delta}}^{\frac 1 p} \le 1500 b_i^2(\eta_i L)^2.\end{equation}
In the last inequality, we used our choice of $p$. Call $\event_i$ the event that the sampled $\Xi_i$ admits a deterministic map which yields the bound in \eqref{eq:bound_sensitivity}. By the second part of Proposition~\ref{prop:rdp_facts}, the conditional distribution of the output of the $i^{\text{th}}$ outer loop under $\event_i$ satisfies $(\alpha, 1500\beta^2 b_i^2)$-RDP, where we use the value of $\sigma_i$ in Line~\ref{line:params} of Algorithm~\ref{alg:subsample_convex}. We conclude via Fact~\ref{fact:cond_TV} with $\event \gets \event_i$ that the $i^{\text{th}}$ outer loop of Algorithm~\ref{alg:subsample_convex} satisfies
\[\Par{\alpha, 1500\alpha\beta^2 b_i^2, \frac{\delta}{\log T}}\text{-RDP.}\]
\textit{All loops privacy.} By applying composition of RDP (the third part of Proposition~\ref{prop:rdp_facts}), for a given realization of $\ind = \cup_{i \in [k]} \ind_i$ with $b$ occurrences of $1$, applying composition over the $\log T$ outer iterations (Lemma~\ref{lem:composition_rdp}), Algorithm~\ref{alg:subsample_convex} satisfies
\[\Par{\alpha, 1500\alpha\beta^2 b^2, \delta}\text{-RDP.}\]
Here, we used $\sum_{i \in [k]} b_i^2 \le b^2$. This is the desired conclusion.
\end{proof}

We next apply amplification by subsampling to boost the guarantee of Lemma~\ref{lem:group_privacy_convex}. To do so, we use the following key Proposition~\ref{prop:subsample_fkt}, which was proven in \cite{BunDRS18}. The use case in \cite{BunDRS18} involved subsampling with replacement and was used in a framework they introduced termed truncated CDP, but we will not need the framework except through the following powerful fact.

\begin{proposition}[Theorem 12, \cite{BunDRS18}]\label{prop:subsample_fkt}
Let $\tau \le \frac 1 3$, $s \in (0, \frac 1 {40})$. Let $P$, $Q$, $R$ be three distributions over the same probability space, such that for each pair $P_1, P_2 \in \{P, Q, R\}$, we have $D_\alpha(P_1 \| P_2) \le \alpha\tau$ for all $\alpha > 1$. Then for all $\alpha \in (1, \frac 3 \tau)$,
\[D_\alpha(sP + (1 - s)R \| sQ + (1 - s)R) \le 13 s^2 \alpha\tau.\]
\end{proposition}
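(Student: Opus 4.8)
Write $M := sP + (1-s)R$ and $M' := sQ + (1-s)R$ for the two mixtures, so that $M - M' = s(P-Q)$ and in particular $\int (M - M') = 0$. By the definition of $D_\alpha$ and the bound $e^u \ge 1 + u$, it suffices to prove the ``unlogged'' estimate $\E_{z \sim M'}[(M(z)/M'(z))^\alpha] \le 1 + 13\, s^2 \alpha(\alpha-1)\tau$, since then $D_\alpha(M\|M') = \frac{1}{\alpha-1}\log \E_{M'}[(M/M')^\alpha] \le 13 s^2\alpha\tau$. The first move is a second-order Taylor expansion of $x \mapsto x^\alpha$ about $x = 1$: from the integral form of the remainder, bounding $t^{\alpha-2} \le \max(1,x^{\alpha-2})$ on the segment between $1$ and $x$, one gets $x^\alpha \le 1 + \alpha(x-1) + \tfrac{\alpha(\alpha-1)}{2}\max(1,x^{\alpha-2})(x-1)^2$ for all $x > 0$, $\alpha > 1$. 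Substituting $x = M/M'$ and taking $\E_{M'}$, the linear term vanishes because $\E_{M'}[M/M' - 1] = \int(M-M') = 0$, reducing everything to bounding
\[ \E_{M'}\!\left[\max\!\Big(1,\,\big(\tfrac{M}{M'}\big)^{\alpha-2}\Big)\Big(\tfrac{M}{M'}-1\Big)^2\right] = s^2\,\E_{M'}\!\left[\max\!\Big(1,\,\big(\tfrac{M}{M'}\big)^{\alpha-2}\Big)\frac{(P-Q)^2}{M'^2}\right]. \]

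\textbf{The main ($\chi^2$) term.} On the region where the $\max$ equals $1$ — which is all of space when $\alpha \le 2$ — we must control $s^2\int (P-Q)^2/M'$. Here I would write $P - Q = (P-R) - (Q-R)$, so $(P-Q)^2 \le 2(P-R)^2 + 2(Q-R)^2$, and use the pointwise bound $M' \ge (1-s)R$ to get $\int (P-Q)^2/M' \le \frac{2}{1-s}\big(\chi^2(P\|R) + \chi^2(Q\|R)\big)$, where $\chi^2(\cdot\|R) = \exp(D_2(\cdot\|R)) - 1$. The hypothesis at $\alpha = 2$ gives $D_2(P\|R), D_2(Q\|R) \le 2\tau$, so each $\chi^2$ is at most $e^{2\tau} - 1 \le 2\tau e^{2\tau} \le 2\tau e^{2/3}$ using $\tau \le \tfrac13$. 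Combined with $\tfrac{1}{1-s} \le (1-\tfrac1{40})^{-1}$ from $s < \tfrac1{40}$, this contribution is a small explicit constant times $s^2\tau$; carefully multiplying out $\frac{2}{1-s}\cdot 2 \cdot 2 e^{2/3}$ and the $\frac{\alpha(\alpha-1)}{2}$ prefactor is what produces the stated constant $13$.

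\textbf{The higher-order ($\alpha > 2$) term.} The remaining piece is the region $M/M' > 1$ with $\alpha > 2$, where $(M/M')^{\alpha-2}$ can a priori be large. The key structural fact is $M/M' \le 1 + \frac{s}{1-s}\cdot\frac{P}{R}$ (again from $M' \ge (1-s)R$), so on this region $(M/M')^{\alpha-2} \le (1 + \frac{s}{1-s}\frac{P}{R})^{\alpha-2}$ is large only where $P/R \gtrsim \frac{1-s}{s} \approx 40$, a set of tiny $R$-mass. I would therefore bound the contribution by $\frac{s^2}{(1-s)^2}\int M'\,(1+\frac{s}{1-s}\frac{P}{R})^{\alpha-2}\frac{(P-Q)^2}{R^2}$, expand the power, split $(P-Q)^2$ and $M' = sQ+(1-s)R$ as before, and bound each resulting integral by a quantity of the form $\int (P/R)^\beta R = \exp((\beta-1)D_\beta(P\|R)) \le \exp((\beta-1)\beta\tau)$ for $\beta \le \alpha$, using monotonicity of $\beta \mapsto D_\beta$ and the pairwise hypothesis. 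The restriction $\alpha < 3/\tau$ is precisely what keeps each such exponential below $e^{3\alpha}$, so that the accompanying powers of $s$ (one extra power of $\frac{s}{1-s}$ per order of the expansion) dominate the growing base because $s < \tfrac1{40}$ is small enough, and the whole higher-order sum is absorbed into a geometric series that is $o(s^2\tau)$ relative to the main term.

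\textbf{Main obstacle.} The Taylor reduction and the $\chi^2$ estimate are routine; the delicate part is the last step — getting the higher-order terms to close \emph{simultaneously} with the claimed constant $13$ and over the full range $\alpha \in (1, 3/\tau)$, since the bounds on $\E_R[|P/R-1|^j]$ for $j \ge 3$ are not themselves $O(\tau)$ and must be tamed using the smallness of $s$. A convenient simplification is to first prove the inequality for integer $\alpha = m$, where $(M/M')^m$ admits a finite binomial expansion with every cross term carrying an explicit factor $s^j\binom{m}{j}$, verify the geometric summability there, and then transfer to real $\alpha$ by monotonicity of $\alpha\mapsto D_\alpha$ together with a rounding argument that costs only constant factors (using $m := \lceil\alpha\rceil < 3/\tau$, which holds after a harmless shrinkage of the admissible range).
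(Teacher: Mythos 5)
The paper does not prove Proposition~\ref{prop:subsample_fkt}: it is imported verbatim as Theorem~12 of \cite{BunDRS18}, and the surrounding text explicitly says ``which was proven in \cite{BunDRS18}.'' So there is no ``paper's own proof'' to compare against; you are proposing a from-scratch derivation of a cited black box.

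\textbf{Assessment of the proposed proof.} The setup (reduce to the unlogged quantity, Taylor/binomial expand $(M/M')^\alpha$, observe the linear term integrates to $0$) and the $\alpha\le 2$ estimate via $\chi^2$ and $M'\ge(1-s)R$ are fine and do give a bound of the right form in that range. The problem is the part you yourself flag as ``the main obstacle'': the $\alpha>2$ higher-order terms. The sketch there does not close, and I do not see how to make it close along the lines you describe.

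Concretely, after expanding (say at integer $\alpha=m$), the $j$-th term for $j\ge 3$ has the schematic form $\binom{m}{j}s^j\int |P-Q|^j/M'^{\,j-1}$. Bounding $1/M'^{\,j-1}\le((1-s)R)^{1-j}$ as you propose and controlling the remaining integral by R\'enyi moments of $P/R$, $Q/R$ gives factors like $e^{j(j-1)\tau}$, \emph{not} $O(\tau)$ — for $j\ge 3$ the centered/pairwise moments under $R$ are not small, only the $j=2$ one is. So the $j$-th term is of order $\binom{m}{j}s^j\,2^j\,e^{j(j-1)\tau}$, and the claim is that the sum over $j\ge 3$ is negligible relative to $s^2\alpha^2\tau$. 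Plugging in the extreme allowed $\alpha\approx 3/\tau$: the $j=3$ term alone is $\approx \binom{3/\tau}{3}s^3\cdot 8e^{6\tau}\sim s^3/\tau^3$, while the target is $\sim s^2\alpha^2\tau\sim s^2/\tau$. Comparing, one would need $s\lesssim\tau^2$, which does not follow from the hypotheses $s<1/40$, $\tau\le 1/3$ (take $\tau=0.01$, $s=1/41$). So the ``powers of $s$ dominate the growing base'' argument, as stated, fails for $\alpha$ near $3/\tau$; the smallness of $s$ alone does not beat the $\binom{\alpha}{j}e^{O(j^2\tau)}$ growth, and the constraint $\alpha<3/\tau$ only caps the exponential at $e^{O(\alpha)}$, which is still unbounded. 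Closing this requires a genuinely sharper mechanism (this is exactly where the truncated-CDP machinery of \cite{BunDRS18} is doing real work), not just a geometric-series observation. Separately, the proposed integer-to-real transfer loses a factor: from $D_{\lceil\alpha\rceil}\le 13 s^2\lceil\alpha\rceil\tau$ one only gets $D_\alpha\le 13 s^2\lceil\alpha\rceil\tau$, which can exceed $13s^2\alpha\tau$ by up to a factor of $2$ near $\alpha=1$, so the integer bound would have to be proved with a smaller constant; and $\lceil\alpha\rceil$ can exceed $3/\tau$, so the admissible range also shrinks. These are fixable with bookkeeping, but they compound the fact that the core high-$\alpha$ estimate is not actually established in the sketch. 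I would recommend simply citing \cite{BunDRS18}, Theorem~12, as the paper does.
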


We also require a straightforward technical fact about binomial distributions. 

\begin{lemma}\label{lem:zero_one_couple}
Let $m, n \in \N$ satisfy $\frac m n \le \frac 1 {60}$. Consider the following partition of the elements $\ind \in [n]^m$ with at most $b$ copies of $1$:
\begin{align*}S_0 &\defeq \{\ind \in [n]^m \mid \ind_i \neq 1 \text{ for all } i \in [m]\},\\
S_1 &\defeq \{\ind \in [n]^m \mid \ind_i = 1 \text{ for between 1 and }b\text{ many } i \in [m]\}. \end{align*}
Let $\pi_0$ and $\pi_1$ be the uniform distributions on $S_0$ and $S_1$ respectively. Then there exists a coupling $\Gamma(\pi_0, \pi_1)$ such that for all $(\ind, \ind')$ in the support of $\Gamma$,
\[\Abs{\Brace{i \mid \ind_i \neq \ind'_i}} \le b. \]
\end{lemma}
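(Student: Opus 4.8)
The plan is to build the coupling by \emph{reverse sampling}: first draw $\ind' \sim \pi_1$, and then obtain $\ind$ from $\ind'$ by resampling exactly the coordinates at which $\ind'$ equals $1$. Concretely, given $\ind' \in S_1$, let $P \defeq \{i \in [m] : \ind'_i = 1\}$, so that $1 \le |P| \le b$ by the definition of $S_1$; draw an independent uniform value in $\{2, \dots, n\}$ for each coordinate in $P$, set $\ind_i = \ind'_i$ for $i \notin P$, and call the resulting vector $\ind$. By construction $\ind$ has no coordinate equal to $1$, so $\ind \in S_0$, and $\ind$ and $\ind'$ agree off $P$, so $\Abs{\{i : \ind_i \neq \ind'_i\}} \le |P| \le b$, which is exactly the property required of $\Gamma(\pi_0, \pi_1)$.

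It then remains to verify that the marginal law of $\ind$ is $\pi_0$, i.e.\ that $\ind$ is uniform on $\{2,\dots,n\}^m$. The key observation is that if $\ind' \sim \pi_1$, then conditioned on $\{P = Q\}$ for any fixed $Q \subseteq [m]$ with $1 \le |Q| \le b$, the coordinates $(\ind'_i)_{i \notin Q}$ are i.i.d.\ uniform on $\{2,\dots,n\}$; this holds because $S_1$ intersected with the set of strings whose $1$-coordinates are exactly $Q$ equals $\{\ind : \ind_i = 1 \text{ for } i \in Q,\ \ind_i \in \{2,\dots,n\} \text{ for } i \notin Q\}$, on which $\pi_1$ restricts to the uniform distribution. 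Consequently, conditioned on $\{P = Q\}$, the coupled vector $\ind$ has its $Q$-coordinates i.i.d.\ uniform on $\{2,\dots,n\}$ (the freshly resampled ones) and its $([m]\setminus Q)$-coordinates i.i.d.\ uniform on $\{2,\dots,n\}$ (inherited from $\ind'$), hence $\ind \mid \{P = Q\}$ is uniform on $\{2,\dots,n\}^m$. Since this conditional law does not depend on $Q$, the unconditional law of $\ind$ is uniform on $\{2,\dots,n\}^m = S_0$, as desired.

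I do not expect a real obstacle here: the only point requiring care is the conditional-uniformity claim, which follows at once from the product structure of $S_1$ after the $1$-positions are fixed; everything else is bookkeeping. I will also remark that the hypothesis $\tfrac m n \le \tfrac 1{60}$ plays no role in this particular statement — it is used only downstream, when this coupling is combined with Proposition~\ref{prop:subsample_fkt} in the subsampling-amplification argument (where one additionally needs the subsampling masses $\Pr[\ind \in S_0]$ and $\Pr[\ind \in S_1]$ to be comparably sized).
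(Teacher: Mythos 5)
Your proof is correct, and the coupling you construct is in fact the \emph{same} coupling as the paper's: both join $(\ind,\ind')$ so that they agree off the $1$-positions of $\ind'$, with the remaining coordinates independently uniform on $\{2,\dots,n\}$. The difference is purely in the direction of construction and the marginal you verify. The paper starts from $\ind \sim \pi_0$, defines an auxiliary distribution $p_a \propto \binom m a (n-1)^{m-a}$ on the number of ones to insert, places them at a random $a$-subset, and then checks by a short explicit computation that the resulting $\ind'$ is uniform on $S_1$. You instead start from $\ind' \sim \pi_1$ and resample its $1$-coordinates uniformly in $\{2,\dots,n\}$, using the observation that conditioned on the $1$-positions, $\pi_1$ is a product of uniforms on the remaining coordinates; this makes the $\pi_0$ marginal immediate and sidesteps the need to name $p$ or compute anything. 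Both arguments are valid and equally rigorous; yours is arguably a touch cleaner. Your remark that the hypothesis $\tfrac m n \le \tfrac 1{60}$ is not used in this lemma is also correct --- the paper's proof does not use it either, and it enters only in Lemma~\ref{lem:erm_convex_privacy} where this coupling is combined with the subsampling bound.
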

\begin{proof}
Define a probability distribution $p$ on elements of $[b]$ such that
\[p_a \defeq \frac{\binom m a (n - 1)^{m - a}}{\sum_{a \in [b]} \binom m a (n - 1)^{m - a}} \text{ for all } a \in [b].\]
Clearly, $\sum_{a \in [b]} p_a = 1$. Our coupling $\Gamma \defeq \Gamma(\pi_0, \pi_1)$ is defined as follows.
\begin{enumerate}
    \item Draw $\ind \sim \pi_0$ and $a \sim p$ independently.
    \item Let $\ind'$ be $\ind$ with a uniformly random subset of $a$ indices replaced with $1$. Return $(\ind, \ind')$.
\end{enumerate}
This coupling satisfies the requirement, so it suffices to verify it has the correct marginals. This is immediate for $S_0$ by definition. For $\ind' \in S_1$, suppose $\ind'$ has $a$ occurrences of the index $1$. The total probability $\ind'$ is drawn from $\Gamma$ is then indeed
\[\frac{(n - 1)^a}{(n - 1)^m} \cdot \frac{p_a}{\binom m a} = \frac 1 {\sum_{a \in [b]} \binom{m}{a} (n - 1)^{m - a}} = \frac 1 {|S_1|}.\]
The first equality follows as the probability we draw $\ind \sim \pi_0$ which agrees with $\ind'$ on all the non-$1$ locations is $(n - 1)^{a - m}$, and the probability $\ind'$ is drawn given that we selected $\ind$ is $p_a \cdot \binom{m}{a}^{-1}$. 
\end{proof}

Finally, we are ready to state our main privacy guarantee for Algorithm~\ref{alg:subsample_convex}.

\begin{lemma}\label{lem:erm_convex_privacy}
There is a universal constant $\Cpriv \in [1, \infty)$, such that if $\frac T n \le \frac 1 {\Cpriv}$, $\beta^2 \log^2(\frac 1 \delta) \le \frac 1 {\Cpriv}$, $\delta \in (0, \frac 1 6)$, and $\frac \rho r \ge \Cpriv\log^2(\frac{\log T}{\delta})$, Algorithm~\ref{alg:subsample_convex} satisfies $(\alpha, \alpha\tau, \delta)$-RDP for
\[\tau \defeq \Cpriv\Par{\beta\log\Par{\frac 1 \delta} \cdot \frac T n}^2,\; \alpha \in \Par{1, \frac{1}{\Cpriv\beta^2\log^2(\frac 1 \delta)}}.\]
\end{lemma}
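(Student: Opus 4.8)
The plan is to combine the conditional (on the index multiset $\ind$) approximate-RDP bound from Lemma~\ref{lem:group_privacy_convex} with an amplification-by-subsampling argument, using the quasiconvexity of R\'enyi divergence to aggregate over the randomness of $\ind$. First I would recall that Lemma~\ref{lem:group_privacy_convex} gives, conditioned on any deterministic realization of $\ind$ with $b$ occurrences of the index $1$, the bound $D_{\alpha,\delta}(\mu\|\mu') \le 1500\alpha\beta^2 b^2$; note the precondition $\frac\rho r \ge 1728\log^2(\frac{\log T}{\delta})$ is subsumed by our hypothesis $\frac\rho r\ge\Cpriv\log^2(\frac{\log T}{\delta})$ once $\Cpriv$ is taken large enough. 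Since $\ind$ is a multiset of $m := \sum_{i\in[k]}T_i \le \hT \le T$ i.i.d.\ uniform draws from $[n]$, and since $\data,\data'$ differ only in the first coordinate, the output distributions depend on $\ind$ only through the multiset of positions where $\ind_i = 1$; so it suffices to analyze a single ``swap coordinate.''

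The core step is the subsampling amplification. I would split the distribution of $\ind$ according to whether the index $1$ appears zero times or at least once. Writing $s := \Pr[\text{index }1\text{ appears at least once among the }m\text{ draws}] \le \frac m n \le \frac T n$, we can express $\mu$ (and $\mu'$) as a mixture $\mu = (1-s)\mu_{S_0} + s\,\mu_{S_1}$, where $\mu_{S_0}$ conditions on $\ind\in S_0$ (no copies of $1$) and $\mu_{S_1}$ conditions on $\ind\in S_1$. Crucially, when $\ind\in S_0$ the iterates produced on $\data$ and on $\data'$ are \emph{identical} (the swapped sample is never queried), so $\mu_{S_0} = \mu'_{S_0}$ exactly. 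This is precisely the structure needed for Proposition~\ref{prop:subsample_fkt}: taking $P = \mu_{S_1}$, $Q = \mu'_{S_1}$, $R = \mu_{S_0}=\mu'_{S_0}$, one checks (using Lemma~\ref{lem:group_privacy_convex}, quasiconvexity over the conditioning on $\ind$ within $S_1$ — where $b\le$ some truncation level — and the triangle-type relations among $P,Q,R$) that every pair among $\{P,Q,R\}$ has $D_\alpha(\cdot\|\cdot)\le\alpha\tau_0$ for $\tau_0$ of order $\beta^2\log^2(\frac1\delta)$, and then Proposition~\ref{prop:subsample_fkt} yields $D_\alpha(\mu\|\mu') \le 13 s^2\alpha\tau_0 \lesssim \alpha\beta^2\log^2(\frac1\delta)(T/n)^2$, which is the claimed $\tau$. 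The couplings in Lemma~\ref{lem:zero_one_couple} handle the fact that we are subsampling \emph{with} replacement (so the number of copies of $1$ is genuinely binomial, not Bernoulli), letting us reduce $\mu_{S_1}$ to distributions that differ from $\mu_{S_0}$ on at most $b$ coordinates and invoke the group-privacy-style bound $1500\alpha\beta^2 b^2$; the hypothesis $\beta^2\log^2(\frac1\delta)\le\frac1{\Cpriv}$ guarantees this stays within the radius $\frac3\tau$ of validity of Proposition~\ref{prop:subsample_fkt} for all $\alpha$ in the stated range, and $\frac Tn\le\frac1{\Cpriv}$ ensures $s$ is small enough for both Proposition~\ref{prop:subsample_fkt} ($s<\frac1{40}$) and Lemma~\ref{lem:zero_one_couple} ($\frac mn\le\frac1{60}$).

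The $\delta$-slack is tracked separately: each application of Lemma~\ref{lem:group_privacy_convex} introduces the same additive total-variation error $\delta$ (coming from the failure event $\event_i$ in its proof), and since the mixture decomposition and the couplings do not inflate this, the final guarantee is $(\alpha,\alpha\tau,\delta)$-RDP rather than $(\alpha,\alpha\tau)$-RDP. I expect the main obstacle to be the bookkeeping in applying Proposition~\ref{prop:subsample_fkt} rigorously: the proposition is stated for three \emph{fixed} distributions with a clean $\alpha\tau$ bound between each pair, whereas here $\mu_{S_1}$ is itself a mixture over realizations of $\ind$ (with varying $b$), and the three-distribution hypothesis must be verified in the approximate-RDP sense. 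Making this precise requires first conditioning on $b$ (the number of copies of $1$), applying Lemma~\ref{lem:zero_one_couple} to couple the $b$-conditional $S_1$ distribution to $S_0$, using Lemma~\ref{lem:group_privacy_convex} at that $b$, then taking a further expectation/quasiconvexity step over $b\sim\mathrm{Binomial}$-conditioned-on-$\{\ge1\}$ — and checking that the dominant contribution comes from $b=1$ so that the effective $\tau_0$ is $O(\beta^2\log^2\frac1\delta)$ and the $s^2$ savings is not lost. Everything else (the parameter inequalities, absorbing logarithmic and constant factors into $\Cpriv$) is routine.
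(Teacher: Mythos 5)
Your overall strategy matches the paper's: condition on the multiset $\ind$, invoke Lemma~\ref{lem:group_privacy_convex} via quasiconvexity, use Lemma~\ref{lem:zero_one_couple} to handle with-replacement sampling, and finish with the truncated-CDP subsampling bound of Proposition~\ref{prop:subsample_fkt} applied to the mixture $sP + (1-s)R$ versus $sQ + (1-s)R$. You also correctly identify that $\mu_{S_0}=\mu'_{S_0}$ gives the common component $R$ needed for that proposition, and that the hypothesis must be verified in the approximate-RDP sense.

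The gap is in how you account for the varying count $b$ of copies of index $1$, and where the $\log^2(1/\delta)$ and the $\delta$-slack actually come from. Quasiconvexity of $D_\alpha$ over a mixture gives a \emph{maximum} over the conditioning, not an average, so the claim ``the dominant contribution comes from $b=1$'' does not lead anywhere: with no control on $b$, the per-realization bound $1500\alpha\beta^2 b^2$ from Lemma~\ref{lem:group_privacy_convex} can be as large as $1500\alpha\beta^2 m^2$, which destroys the argument. The paper instead introduces an explicit truncation event $\event$ that $\ind$ contains at most $b_\star \defeq 2\log(2/\delta)$ copies of $1$; a Chernoff bound gives $\Pr[\event^c] \le \delta/2$. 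The distributions $P,Q,R$ fed into Proposition~\ref{prop:subsample_fkt} are all conditioned on $\event$ (with $P,Q$ further conditioned on at least one copy, and $R$ on zero copies), and the max over $b \le b_\star$ is what yields $\tau_0 = \Theta(\beta^2\log^2(1/\delta))$ — not any ``$b=1$ dominates'' phenomenon. Correspondingly, the total $\delta$-slack is split: $\delta/2$ pays for the Chernoff truncation via Fact~\ref{fact:cond_TV}, and the remaining $\delta/2$ is passed to Lemma~\ref{lem:group_privacy_convex} for its internal failure event. Your proposal only identifies the second source and so does not correctly close the $\delta$ accounting. Once you add the explicit truncation and the Chernoff bound, the rest of your sketch (coupling via Lemma~\ref{lem:zero_one_couple}, the bound $s = \Pr[\event_1 \mid \event] \lesssim T/n$, and the final application of Proposition~\ref{prop:subsample_fkt} with $\tau_0 = 6000\beta^2\log^2(2/\delta)$) goes through exactly as in the paper.
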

\begin{proof}
Let $\data$, $\data'$ be neighboring, and without loss of generality, suppose they differ in the first entry. Let $\Cpriv \ge 60$, and let $\ind$ be defined as in \eqref{eq:inddef}. Let $\event$ be the event that $\ind$ contains at most $b$ copies of the index $1$, where
\[b \defeq 2 \log\Par{\frac 2 \delta}.\]
By a Chernoff bound, $\event$ occurs with probability at least $1 - \frac \delta 2$ over the randomness of $\ind$. We define $P$ to be the distribution of the output of Algorithm~\ref{alg:subsample_convex} when run on $\data$, conditioned on $\event$ and $\ind$ containing at least one copy of the index $1$ (call this total conditioning event $\event_1$, i.e., there are between $1$ and $b$ copies of the index $1$). Similarly, we define $Q$ to be the distribution when run on $\data'$ conditioned on $\event_1$, and $R$ to be the distribution conditioned on $\event \cap \event_1^c$ (when run on either $\data$ or $\data'$). We claim that for all $P_1, P_2 \in \{P, Q, R\}$, we have
\begin{equation}\label{eq:pairwise_renyi}D_{\alpha, \frac \delta 2}(P_1 \| P_2) \le 1500\alpha\beta^2b^2,\text{ for all } \alpha > 1. \end{equation}
To see \eqref{eq:pairwise_renyi} for $P_1 = P$ and $P_2 = Q$ (or vice versa), we can view $P$, $Q$ as mixtures of outcomes conditioned on the realization $\ind$. Then, applying quasiconvexity of R\`enyi divergence (over this mixture), and applying Lemma~\ref{lem:group_privacy_convex} (with $\delta \gets \frac \delta 2$), we have the desired claim. To see \eqref{eq:pairwise_renyi} for the remaining cases, we first couple the conditional distributions under $\event_1$ and $\event \cap \event_1^c$ by their index sets, according to the coupling in Lemma~\ref{lem:zero_one_couple}. Then applying quasiconvexity of R\'enyi divergence (over this coupling) again yields the claim, where we set $m \gets \hT - 1 \le T$. Finally, let 
\begin{align*}
s \defeq \Pr[\event_1 \mid \event] = 1 - \frac{ \Par{1 - \frac 1 n}^{\hT - 1}}{\Pr[\event]} \le 1 - \frac{1 - \frac{1.1T}{n}}{1 - \frac \delta 2} \le \frac {1.2T} n.
\end{align*}
Note that conditional on $\event$ and the failure event in Lemma~\ref{lem:group_privacy_convex} not occurring, the distributions of Algorithm~\ref{alg:subsample_convex} using $\data$ and $\data'$ respectively are $sP + (1 - s)R$ and $sQ + (1 - s)R$. Hence, union bounding with $\event^c$ (see Fact~\ref{fact:cond_TV}), the claim follows from Proposition~\ref{prop:subsample_fkt} with $\tau \gets 6000\beta^2\log^2(\frac 2 \delta)$.
\end{proof}

\paragraph{Regularized extension.} We give a slight extension to Algorithm~\ref{alg:subsample_convex} which handles regularization, and enjoys similar utility and privacy guarantees as stated in Proposition~\ref{prop:subsample_convex_bounds}. Let
\begin{equation}\label{eq:lam_opt_def}
\xsbxl \defeq \argmin_{x \in \ball_{\bx}(r)}\Brace{\widehat{\Ferm_\rho}(x) + \frac \lam 2 \norm{x - \bx}^2}.
\end{equation}
Our extension Algorithm~\ref{alg:subsample_convex_reg} is identical to Algorithm~\ref{alg:subsample_convex}, except it requires a regularization parameter $\lam$, allows for an arbitrary starting point with an expected distance bound (adjusting the step size accordingly), and takes composite projected steps incorporating the regularization.

\begin{algorithm2e}\label{alg:subsample_convex_reg}
\caption{Subsampled ReSQued ERM solver, regularized case, convex rate}
\textbf{Input:} $\bx \in \R^d$, ball radius, convolution radius, privacy parameter, and regularization parameter $r, \rho, \beta, \lam > 0$, dataset $\data \in \calS^n$, iteration count $T \in \N$, distance bound $r' \in [0, 2r]$, initial point $x_0 \in \ball_{\bx}(r)$ satisfying $\E \|x_0 - \xsbxl\|^2 \le (r')^2$ \\
$\hT \gets 2^{\lfloor \log_2 T \rfloor}$, $k \gets \log_2 \hT$, $\eta \gets \frac {r'} L \min(\frac{1}{\sqrt{T}}, \frac \beta {\sqrt d})$ \\
\For{$i \in [k]$}
{
    $T_i \gets 2^{-i} \hT$, $\eta_i \gets 4^{-i}\eta$, $\sigma_i \gets \frac{L\eta_i}{\beta}$\\
    $y_0 \gets x_{i-1}$\\
    \For{$j \in [T_i]$}
    {  $z_{i,j}\sim_{\text{unif.}} [n]$ \\
        $y_j \gets \argmin_{y \in \ball_{\bx}(r)}\{\langle\eta_i\tnbx \hf_\rho^{z_{i, j}}(y_{j-1}), y\rangle + \half \norm{y - y_{j - 1}}^2 + \frac {\eta_i \lam} 2 \norm{y - \bx}^2\}$ \label{line:psgd_reg}
    }
    $\by_i \gets \frac 1 {T_i} \sum_{j \in [T_i]} y_j$ \\
    $x_{i} \gets \by_i + \zeta_i$, for $\zeta_i\sim\Nor(0,\sigma^2_i \id_d)$
}

\textbf{return} $x_{k}$

\end{algorithm2e}

\begin{corollary}\label{cor:subsample_reg_bounds}
Let $\xsbxl$ be defined as in \eqref{eq:lam_opt_def}. Algorithm~\ref{alg:subsample_convex_reg} uses at most $T$ gradients and produces $x \in \ball_{\bar{x}}(r)$ such that, for a universal constant $\Ccvx$,
\[\E\Brack{\widehat{\Ferm_\rho}(x) + \frac \lam 2 \norm{x - \bx}^2} - \Par{\widehat{\Ferm_\rho}(\xsbxl) + \frac \lam 2 \norm{\xsbxl - \bx}^2}\le \Ccvx Lr'\Par{\frac{\sqrt d}{\beta T} + \frac {1} {\sqrt T}}. \]
Moreover, there is a universal constant $\Cpriv \ge 1$, such that if $\frac T n \le \frac 1 {\Cpriv}$, $\beta^2 \log^2(\frac 1 \delta) \le \frac 1 {\Cpriv}$, $\delta \in (0, \frac 1 6)$, and $\frac \rho r \ge \Cpriv\log^2(\frac{\log T}{\delta})$, Algorithm~\ref{alg:subsample_convex_reg} satisfies $(\alpha, \alpha\tau, \delta)$-RDP for
\[\tau \defeq \Cpriv \Par{\beta\log\Par{\frac 1 \delta} \cdot \frac T n}^2,\; \alpha \in \Par{1, \frac{1}{\Cpriv\beta^2\log^2(\frac 1 \delta)}}.\]
\end{corollary}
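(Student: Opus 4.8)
The plan is to establish the utility claim by adapting the proof of Lemma~\ref{lem:erm_convex_utility}, and the privacy claim by checking that the proof of Lemma~\ref{lem:erm_convex_privacy} (via Lemma~\ref{lem:group_privacy_convex}) goes through essentially verbatim. Algorithm~\ref{alg:subsample_convex_reg} differs from Algorithm~\ref{alg:subsample_convex} only in that (i) the warm start $x_0$ is an arbitrary point of $\ball_{\bx}(r)$ with $\E\norm{x_0 - \xsbxl}^2 \le (r')^2$, with the step size rescaled by $\frac{r'}{L}$ rather than $\frac{r}{L}$; and (ii) Line~\ref{line:psgd} is replaced by the composite projected step on Line~\ref{line:psgd_reg} that folds in the regularizer $\frac{\lam}{2}\norm{\cdot-\bx}^2$. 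Write $G \defeq \widehat{\Ferm_\rho} + \frac{\lam}{2}\norm{\cdot-\bx}^2$; it is convex, and Lemma~\ref{lem:stochastic_varbound} (combined, as in Lemma~\ref{lem:erm_convex_utility}, with the definition of $\Ferm$ and $L$-Lipschitzness of each $f^i$) still gives $\E[\tnbx\hf_\rho^{z_{i,j}}(y_{j-1}) \mid y_{j-1}] \in \partial\widehat{\Ferm_\rho}(y_{j-1})$ and $\E\norm{\tnbx\hf_\rho^{z_{i,j}}(y_{j-1})}^2 \le 3L^2$ on $\ball_{\bx}(r)$. The gradient count $\sum_{i\in[k]}T_i \le \hT \le T$ is immediate.

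For the utility bound I would run the argument of Lemma~\ref{lem:erm_convex_utility} with $G$ in place of $F=\widehat{\Ferm_\rho}$. The update on Line~\ref{line:psgd_reg} is precisely a composite projected stochastic mirror-descent step with $\lam$-strongly-convex composite term $\frac{\lam}{2}\norm{\cdot-\bx}^2$, so the standard composite regret analysis (the proximal variant of Lemma 7 of \cite{HazanK14}) yields, for each epoch $i\in[k]$ and with $\by_0 \defeq \xsbxl$, $\E[G(\by_i)] - G(\by_{i-1}) \le \frac{\E\norm{x_{i-1} - \by_{i-1}}^2}{2\eta_i T_i} + \frac{3\eta_i L^2}{2}$, using unbiasedness and the $3L^2$ second-moment bound together with convexity of $\widehat{\Ferm_\rho}$ (the comparators $\by_{i-1}$ and $\xsbxl$ lie in $\ball_{\bx}(r)$, and the composite treatment of the regularizer means no Lipschitz bound on $\grad G$ is needed). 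Telescoping over the $k=\log_2\hT$ epochs exactly as in \eqref{eq:telescope_error}, using $\E\norm{x_0 - \by_0}^2 \le (r')^2$ and $\E\norm{x_{i-1} - \by_{i-1}}^2 = \sigma_{i-1}^2 d$ for $i\ge 2$, and bounding $\E[G(x_k) - G(\by_k)] \le L\sigma_k\sqrt d + \frac{\lam\sigma_k^2 d}{2}$ (the first summand as in Lemma~\ref{lem:erm_convex_utility} via Lipschitzness of $\widehat{\Ferm_\rho}$ and Jensen, the second by expanding $\norm{\by_k + \zeta_k - \bx}^2$ and independence of $\zeta_k$), reproduces the chain of inequalities of Lemma~\ref{lem:erm_convex_utility} with $r$ replaced by $r'$; since $\sigma_k$ carries a factor $\hT^{-2}$ the correction $\frac{\lam\sigma_k^2 d}{2}$ is lower-order (alternatively, one may project $x_k$ onto $\ball_{\bx}(r)$ before returning). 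Plugging in $\eta = \frac{r'}{L}\min\Par{\frac{1}{\sqrt T},\frac{\beta}{\sqrt d}}$ and $\sigma_i=\frac{L\eta_i}{\beta}$ then gives $\E[\widehat{\Ferm_\rho}(x_k) + \frac{\lam}{2}\norm{x_k-\bx}^2] - (\widehat{\Ferm_\rho}(\xsbxl) + \frac{\lam}{2}\norm{\xsbxl-\bx}^2) \le \Ccvx Lr'\Par{\frac{\sqrt d}{\beta T} + \frac{1}{\sqrt T}}$.

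For the privacy bound I claim Lemma~\ref{lem:erm_convex_privacy}, hence Lemma~\ref{lem:group_privacy_convex}, applies with no change beyond universal constants. Solving the argmin on Line~\ref{line:psgd_reg} shows that, before projecting onto $\ball_{\bx}(r)$, the new iterate equals $\frac{1}{1+\eta_i\lam}\Par{y_{j-1} - \eta_i\tnbx\hf_\rho^{z_{i,j}}(y_{j-1}) + \eta_i\lam\bx}$. Coupling the Gaussian samples $\xi_{i,j}$ and the subsampled indices of the runs on neighboring $\data,\data'$ as in Lemma~\ref{lem:group_privacy_convex}, the difference of the two pre-projection iterates equals $\frac{1}{1+\eta_i\lam}$ times $(y_{j-1}-y'_{j-1}) - \eta_i(g_j-g'_j)$ (the $\eta_i\lam\bx$ terms cancel), so the pre-projection squared drift is $\frac{1}{(1+\eta_i\lam)^2}\le 1$ times $\Phi_{j-1} + A_j + B_j$ when $z_{i,j}\ne 1$ and $\frac{1}{(1+\eta_i\lam)^2}$ times $\Phi_{j-1} + D_j + E_j$ when $z_{i,j}=1$ — exactly the bounds used in \eqref{eq:phipgood} and \eqref{eq:phibad}. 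Since this extra factor is at most $1$ and Euclidean projection onto a convex set is nonexpansive, every inequality in that proof — in particular the recursions \eqref{eq:good_total} and \eqref{eq:bad_total}, the per-loop high-probability sensitivity bound \eqref{eq:bound_sensitivity}, the Gaussian-mechanism RDP of each loop (which uses the unchanged $\sigma_i$) and RDP composition over the $\log T$ loops — holds verbatim. The subsampling amplification then proceeds identically: the Chernoff bound on the number of copies of the differing index in $\ind$, the coupling of Lemma~\ref{lem:zero_one_couple}, and Proposition~\ref{prop:subsample_fkt} give $(\alpha,\alpha\tau,\delta)$-RDP with the same $\tau$ and range of $\alpha$ as in Lemma~\ref{lem:erm_convex_privacy}.

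I expect the main obstacle to be the utility bookkeeping rather than the privacy argument: one must verify that the composite-step regret bound retains the right constants when the per-epoch comparator is $\by_{i-1}$ and the global comparator is the regularized optimum $\xsbxl$, that the doubling/localization telescoping still closes when the warm-start distance is controlled only in expectation (by $r'$), and that the Gaussian-perturbed output $x_k$ — which need not lie exactly in $\ball_{\bx}(r)$ — contributes only lower-order terms to $G$. The privacy side, by contrast, is essentially free once one observes that the composite regularizer turns each update into a strict contraction and therefore can only shrink the drift potential studied in Lemma~\ref{lem:group_privacy_convex}.
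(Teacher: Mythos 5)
Your proposal matches the paper's proof in both structure and substance: the utility claim is derived by rerunning the telescoping argument of Lemma~\ref{lem:erm_convex_utility} with the regularized objective, invoking a composite stochastic mirror descent regret bound (the paper cites Lemma 12 of \cite{CarmonJST19} where you cite a proximal version of Lemma 7 of \cite{HazanK14}) and replacing the initial radius $r$ by $r'$; the privacy claim is reduced to Lemma~\ref{lem:group_privacy_convex} by completing the square to exhibit the $\frac{1}{1+\eta_i\lam}$ contraction, observing it only helps, and adjusting the assumed bound on $\rho/r$ by a constant since $\eta L\sqrt T \le r' \le 2r$. You additionally flag (and correctly dispatch) the lower-order $\tfrac{\lam\sigma_k^2 d}{2}$ contribution from evaluating the non-Lipschitz regularizer at the Gaussian-perturbed output $x_k$, a point the paper's proof passes over silently.
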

\begin{proof}
The proof is almost identical to Proposition~\ref{prop:subsample_convex_bounds}, so we only discuss the differences. Throughout this proof, for notational convenience, we define
\[F^\lam(x) \defeq \widehat{\Ferm_\rho}(x) + \frac \lam 2 \norm{x - \bx}^2. \]

\textit{Utility.} Standard results on composite stochastic mirror descent (e.g.\ Lemma 12 of \cite{CarmonJST19}) show the utility bound in \eqref{eq:telescope_error} still holds with $F^\lam$ in place of $F$. In particular each term $\E[F^\lam(\by_i) - F^\lam(\by_{i - 1})]$ as well as $\E[F^\lam(x_k) - F^\lam(\by_k)]$ enjoys the same bound as its counterpart in \eqref{eq:telescope_error}. The only other difference is that, defining $\zeta_0 \defeq x_0 - \xsbxl$ in the proof of Lemma~\ref{lem:erm_convex_utility}, we have $\E \zeta_0^2 \le (r')^2$ in place of the bound $r^2$, and we appropriately changed $\eta$ to scale as $r'$ instead. 

\textit{Privacy.} The subsampling-based reduction from Lemma~\ref{lem:erm_convex_privacy} to Lemma~\ref{lem:group_privacy_convex} is identical, so we only discuss how to obtain an analog of Lemma~\ref{lem:group_privacy_convex} for Algorithm~\ref{alg:subsample_convex_reg}. In each iteration $j \in [T_i]$, by completing the square, we can rewrite Line~\ref{line:psgd_reg} as
\[y_j \gets \argmin_{y \in \ball_{\bx}(r)}\Brace{\half\norm{y - \Par{\frac 1 {1 + \eta_i \lam} y_{j - 1} + \frac {\eta_i \lam} {1 +\eta_i \lam} \bx - \frac{\eta_i}{1 + \eta_i \lam}\tnbx \hf_\rho^{z_{i, j}}(y_{j-1})} }^2 }.\]
Now consider our (conditional) bounds on $\E_{\xi_{i,j}} \Phi_j$ in \eqref{eq:phipgood} and \eqref{eq:phibad}. We claim these still hold true; before projection, the same arguments used in \eqref{eq:phipgood} and \eqref{eq:phibad} still hold (in fact improve by $(1 + \eta_i\lam)^2$), and projection only decreases distances. Finally, note that the proof of Lemma~\ref{lem:group_privacy_convex} only used the choice of step size $\eta$ through $\eta L \sqrt{T} \le r$ and used the assumed bound on $\frac r \rho$ to bound the drift growth. As we now have $\eta L \sqrt{T} \le r' \le 2r$, we adjusted the assumed bound on $\frac r \rho$ by a factor of $2$. The remainder of the proof of Lemma~\ref{lem:group_privacy_convex} is identical.
\end{proof}

Without loss of generality, $\Cpriv$ is the same constant in Proposition~\ref{prop:subsample_convex_bounds} and Corollary~\ref{cor:subsample_reg_bounds}, since we can set both to be the maximum of the two. The same logic applies to the following Proposition~\ref{prop:subsample_strongly_convex_bounds} and Lemma~\ref{lem:proxgradbound} (which will also be parameterized by a $\Cpriv$) so we will not repeat it. Finally, the following fact about initial error will also be helpful in the following Section~\ref{ssec:erm_sc}.

\begin{lemma}
\label{lem:bound_initial_ferm}
We have
\[\widehat{\Ferm_\rho}(\bx) - \Par{\widehat{\Ferm_\rho}(\xsbxl) + \frac \lam 2 \norm{\xsbxl - \bx}^2} \le \frac{2L^2}{\lam}.\]
\end{lemma}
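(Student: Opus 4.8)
The plan is to use only two properties of $F \defeq \widehat{\Ferm_\rho}$: convexity and $L$-Lipschitzness. Both are inherited from the fact that $\Ferm = \frac1n\sum_{i\in[n]} f^i$ is convex and $L$-Lipschitz (each $f^i$ is), together with the standard fact (recalled below Definition~\ref{def:gaussian-convolution}, see Lemma 8 of \cite{BubeckJLLS19}) that Gaussian convolution preserves convexity and $L$-Lipschitzness. First I would record that the displayed quantity is nonnegative: since $\bx \in \ball_{\bx}(r)$ and the regularizer $\frac\lam2\norm{\cdot - \bx}^2$ vanishes at $\bx$, optimality of $\xsbxl$ over $\ball_{\bx}(r)$ gives $\widehat{\Ferm_\rho}(\xsbxl) + \frac\lam2\norm{\xsbxl - \bx}^2 \le \widehat{\Ferm_\rho}(\bx)$. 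This is not needed for the stated inequality but clarifies the picture.

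For the upper bound, set $t \defeq \norm{\bx - \xsbxl} \ge 0$ and apply $L$-Lipschitzness of $F$ to get $\widehat{\Ferm_\rho}(\bx) - \widehat{\Ferm_\rho}(\xsbxl) \le Lt$. Then
\[\widehat{\Ferm_\rho}(\bx) - \Par{\widehat{\Ferm_\rho}(\xsbxl) + \frac\lam2\norm{\xsbxl - \bx}^2} \le Lt - \frac\lam2 t^2 \le \max_{s \ge 0}\Par{Ls - \frac\lam2 s^2} = \frac{L^2}{2\lam} \le \frac{2L^2}{\lam},\]
the maximum being attained at $s = L/\lam$. There is essentially no obstacle here; the only points to be attentive to are that we must use the global Lipschitz bound (a subgradient inequality at $\bx$ would not directly help, since $\bx$ need not be near a minimizer of $F$) and that Lipschitzness survives the Gaussian convolution. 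The constant in the claim is loose — the argument in fact yields $\frac{L^2}{2\lam}$ — but $\frac{2L^2}{\lam}$ already suffices for the downstream use in Section~\ref{ssec:erm_sc}.
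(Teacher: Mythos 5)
Your proof is correct, and in fact slightly more efficient than the paper's. The paper first uses $\lam$-strong convexity of the regularized objective $F^\lam(x)=\widehat{\Ferm_\rho}(x)+\frac\lam2\norm{x-\bx}^2$ (together with optimality of $\xsbxl$ and $\bx\in\ball_{\bx}(r)$) to derive the distance bound $\norm{\xsbxl-\bx}\le\frac{2L}{\lam}$, and then plugs this into the Lipschitz bound $\widehat{\Ferm_\rho}(\bx)-\widehat{\Ferm_\rho}(\xsbxl)\le L\norm{\xsbxl-\bx}$ while discarding the $-\frac\lam2\norm{\xsbxl-\bx}^2$ term. You instead keep that quadratic term and simply maximize the scalar function $t\mapsto Lt-\frac\lam2t^2$, which makes the strong-convexity step unnecessary and sharpens the constant to $\frac{L^2}{2\lam}$. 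Both proofs rest on the same key fact (that $\widehat{\Ferm_\rho}$ inherits $L$-Lipschitzness from $\Ferm$, via Lemma 8 of \cite{BubeckJLLS19}); your route is the more direct of the two, while the paper's route has the side benefit of explicitly exhibiting the bound $\norm{\xsbxl-\bx}\le\frac{2L}{\lam}$, which is the kind of quantity that recurs elsewhere in the argument.
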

\begin{proof}
By strong convexity and Lipschitzness of $\widehat{\Ferm_\rho}$, we have
\begin{align*}\frac \lam 2 \norm{\xsbxl - \bx}^2 &\le \widehat{\Ferm_\rho}(\bx) - \Par{\widehat{\Ferm_\rho}(\xsbxl) + \frac \lam 2 \norm{\xsbxl - \bx}^2} \\
&\le \widehat{\Ferm_\rho}(\bx) - \widehat{\Ferm_\rho}(\xsbxl) \le L\norm{\xsbxl - \bx}. \end{align*}
Rearranging gives $\|\xsbxl - \bx\| \le \frac{2L}{\lam}$, which can be plugged in above to yield the conclusion.
\end{proof}

We also state a slight extension to Lemma~\ref{lem:bound_initial_ferm} which will be used in Section~\ref{ssec:private_erm}.

\begin{lemma}\label{lem:outside_ball}
Define $x^\star_{\bx, x', \lam} \defeq \argmin_{x \in \ball_{\bx}(r)}\{\widehat{\Ferm_\rho}(x) + \frac \lam 2 \norm{x - x'}^2\}$, where $x' \in \R^d$ is not necessarily in $\ball_{\bx}(r)$. Let $x_0 \defeq \proj_{\ball_{\bx}(r)}(x')$. We have
\[\Par{\widehat{\Ferm_\rho}(x_0) + \frac \lam 2 \norm{x_0 - x'}^2} - \Par{\widehat{\Ferm_\rho}(x^\star_{\bx, x', \lam}) + \frac \lam 2 \norm{x^\star_{\bx, x', \lam} - x'}^2} \le \frac{2L^2}{\lam}.\]
\end{lemma}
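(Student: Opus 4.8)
The plan is to follow the proof of Lemma~\ref{lem:bound_initial_ferm} almost verbatim, inserting one extra step that exploits the fact that $x_0$ is a Euclidean projection. Write $G(x) \defeq \widehat{\Ferm_\rho}(x) + \frac{\lam}{2}\norm{x - x'}^2$. Since $\widehat{\Ferm_\rho}$ is convex (Gaussian convolution preserves convexity) and $L$-Lipschitz (one of the properties recalled right after Definition~\ref{def:gaussian-convolution}), $G$ is $\lam$-strongly convex, and $x^\star_{\bx, x', \lam}$ (abbreviated $x^\star$ below) is its minimizer over the convex set $\ball_{\bx}(r)$. The goal is to show $G(x_0) - G(x^\star) \le \frac{2L^2}{\lam}$, where $x_0 = \proj_{\ball_{\bx}(r)}(x') \in \ball_{\bx}(r)$.

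First I would record the constrained strong-convexity inequality: from the first-order optimality condition $\inprods{\grad G(x^\star)}{x - x^\star} \ge 0$ for all $x \in \ball_{\bx}(r)$ together with $\lam$-strong convexity of $G$, one has $G(x) - G(x^\star) \ge \frac{\lam}{2}\norm{x - x^\star}^2$ for every such $x$, and in particular at $x = x_0$. Second, I would upper bound $G(x_0) - G(x^\star)$ by expanding it as $\Par{\widehat{\Ferm_\rho}(x_0) - \widehat{\Ferm_\rho}(x^\star)} + \frac{\lam}{2}\Par{\norm{x_0 - x'}^2 - \norm{x^\star - x'}^2}$; since $x_0$ is the projection of $x'$ onto $\ball_{\bx}(r)$ and $x^\star$ lies in this ball, $\norm{x_0 - x'} \le \norm{x^\star - x'}$, so the quadratic difference is nonpositive and may be discarded, leaving $G(x_0) - G(x^\star) \le \widehat{\Ferm_\rho}(x_0) - \widehat{\Ferm_\rho}(x^\star) \le L\norm{x_0 - x^\star}$ by Lipschitzness of $\widehat{\Ferm_\rho}$. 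Combining the two bounds gives $\frac{\lam}{2}\norm{x_0 - x^\star}^2 \le L\norm{x_0 - x^\star}$, hence $\norm{x_0 - x^\star} \le \frac{2L}{\lam}$; substituting this back into the upper bound yields $G(x_0) - G(x^\star) \le \frac{2L^2}{\lam}$, which is exactly the claim.

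I do not expect a genuine obstacle here. The only ingredient beyond the proof of Lemma~\ref{lem:bound_initial_ferm} is the projection inequality $\norm{x_0 - x'} \le \norm{x^\star - x'}$, which plays exactly the role that ``the regularizer $\frac{\lam}{2}\norm{\xsbxl - \bx}^2$ is nonnegative'' played in that earlier proof (an argument valid there only because the center $x' = \bx$ itself lies in the ball, so its projection is itself). The one small point worth keeping in mind is that both the strong convexity bound and the Lipschitz bound must be applied to $\widehat{\Ferm_\rho}$ rather than to $\Ferm$, which is legitimate since Gaussian convolution preserves convexity and does not increase the Lipschitz constant.
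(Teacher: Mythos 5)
Your proof is correct and matches the paper's argument exactly: the paper states that the proof is identical to that of Lemma~\ref{lem:bound_initial_ferm} together with the projection inequality $\frac{\lam}{2}\norm{x_0 - x'}^2 \le \frac{\lam}{2}\norm{x^\star_{\bx, x', \lam} - x'}^2$, which is precisely the extra step you identified and justified. You have simply spelled out the details that the paper leaves implicit.
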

\begin{proof}
The proof is identical to Lemma~\ref{lem:bound_initial_ferm}, where we use $\frac \lam 2 \norm{x_0 - x'}^2 \le \frac \lam 2 \|x^\star_{\bx, x', \lam} - x'\|^2$.
\end{proof}

\subsection{Subsampled smoothed ERM solver: the strongly convex case}\label{ssec:erm_sc}

We next give an ERM algorithm similar to Algorithm~\ref{alg:subsample_convex_reg}, but enjoys an improved optimization rate. In particular, it again attains RDP bounds improving with the subsampling parameter $\frac T n$, and we obtain error guarantees against $\xsbxl$ defined in \eqref{eq:lam_opt_def} at a rate decaying as $\frac 1 T$ or better.

\begin{algorithm2e}\label{alg:subsample_strong_convex}
\caption{Subsampled ReSQued ERM solver, strongly convex case}
\textbf{Input:} $\bx \in \R^d$, ball radius, convolution radius, privacy parameter, and regularization parameter $r, \rho,\beta, \lam > 0$, dataset $\data \in \calS^n$, iteration count $T \in \N$\\
$k\gets \lceil \log \log T\rceil,x_0\gets\bx$\\
\For{$i\in [k]$}
{
$\beta_{i - 1} \gets 2^{\frac {k - i + 1} 2}\beta$, $r_{i - 1} \gets \min(2r, \sqrt{\frac {2D_{i - 1}} \lam})$ (see \eqref{eq:DEdef}), $T_{i - 1} \gets 2^{i - 1 - k} T$ \\
$x_i \gets$ output of Algorithm~\ref{alg:subsample_convex_reg} with inputs $(\bx,r, \rho,\beta_{i - 1},\lam, \data,T_{i - 1}, r_{i - 1}, x_{i - 1})$
}
\textbf{return} $x_{k+1}$

\end{algorithm2e}
 We now give our analysis of Algorithm~\ref{alg:subsample_strong_convex} below. The proof follows a standard reduction template from the strongly convex case to the convex case (see e.g.\ Lemma 4.7 in \cite{KLL21}).
\begin{proposition}
\label{prop:subsample_strongly_convex_bounds}
Let $\xsbxl$ be defined as in \eqref{eq:lam_opt_def}. Algorithm~\ref{alg:subsample_strong_convex} uses at most $T$ gradients and produces $x$ such that, for a universal constant $\Csc$,
\[\E\Brack{\widehat{\Ferm_\rho}(x)+\frac{\lambda}{2}\|x-\bx\|^2 } - \widehat{\Ferm_\rho}(\xsbxl)-\frac{\lambda}{2}\|\xsbxl-\bx\|^2 \le 
\frac{\Csc L^2}{\lambda}\Par{\frac{d}{\beta^2 T^2}+\frac{1}{T}}. \]
Moreover, there is a universal constant $\Cpriv \ge 1$, such that if $\frac T n \le \frac 1 {\Cpriv}$, $\beta^2\log^2(\frac{\log \log T}{\delta}) \le \frac 1 {\Cpriv}$, $\delta \in (0, \frac 1 6)$, and $\frac \rho r \ge \Cpriv \log^2(\frac{\log T}{\delta})$, Algorithm~\ref{alg:subsample_strong_convex} satisfies $(\alpha, \alpha\tau,\delta)$-RDP for
\[\tau \defeq \Cpriv\Par{\beta \log\Par{\frac{\log \log T}{\delta}}\cdot \frac T n}^2,\; \alpha \in \Par{1, \frac{1}{\Cpriv\beta^2\log^2(\frac{\log\log T}{\delta})}}.\]
\end{proposition}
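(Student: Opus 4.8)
The plan is to analyze Algorithm~\ref{alg:subsample_strong_convex} as an iterated, geometrically shrinking application of the convex-rate solver (Algorithm~\ref{alg:subsample_convex_reg}), using Corollary~\ref{cor:subsample_reg_bounds} as a black box for each stage $i \in [k]$. Define the objective $F^\lam(x) \defeq \widehat{\Ferm_\rho}(x) + \frac\lam2\norm{x - \bx}^2$, and let $D_{i-1}$, $E_{i-1}$ be the quantities referenced in \eqref{eq:DEdef} (tracking the target error at the end of stage $i-1$). By strong convexity of $F^\lam$, an error bound $\E[F^\lam(x_i)] - F^\lam(\xsbxl) \le D_i$ implies the distance bound $\E\norm{x_i - \xsbxl}^2 \le \frac{2D_i}{\lam}$, which is exactly what is needed to feed $x_i$ as a warm start into the next call of Algorithm~\ref{alg:subsample_convex_reg} with distance parameter $r_i = \min(2r, \sqrt{2D_i/\lam})$.

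\textbf{Utility.} First I would initialize the recursion: by Lemma~\ref{lem:bound_initial_ferm}, $F^\lam(\bx) - F^\lam(\xsbxl) \le \frac{2L^2}{\lam}$, so $D_0 \asymp \frac{L^2}{\lam}$ and correspondingly $r_0 \asymp \frac L \lam$ (capped at $2r$). For the inductive step, suppose at the start of stage $i$ we have $\E\norm{x_{i-1} - \xsbxl}^2 \le (r_{i-1})^2$. Applying the utility half of Corollary~\ref{cor:subsample_reg_bounds} with inputs $(\bx, r, \rho, \beta_{i-1}, \lam, \data, T_{i-1}, r_{i-1}, x_{i-1})$ gives
\[
\E[F^\lam(x_i)] - F^\lam(\xsbxl) \le \Ccvx L r_{i-1}\Par{\frac{\sqrt d}{\beta_{i-1} T_{i-1}} + \frac{1}{\sqrt{T_{i-1}}}}.
\]
Since $r_{i-1} \le \sqrt{2D_{i-1}/\lam}$, this is at most $\Ccvx L \sqrt{2D_{i-1}/\lam}\big(\tfrac{\sqrt d}{\beta_{i-1}T_{i-1}} + \tfrac{1}{\sqrt{T_{i-1}}}\big)$, so $D_i \le D_{i-1} \cdot \tfrac12$ provided $\tfrac{L}{\sqrt\lam}\big(\tfrac{\sqrt d}{\beta_{i-1}T_{i-1}} + \tfrac1{\sqrt{T_{i-1}}}\big) \lesssim \sqrt{D_{i-1}}$; the schedules $\beta_{i-1} = 2^{(k-i+1)/2}\beta$ and $T_{i-1} = 2^{i-1-k}T$ are chosen precisely so that the ratio $D_i/D_{i-1}$ halves each stage (the $\beta_{i-1}$ growth compensates for the $T_{i-1}$ shrinkage in the $\sqrt d/(\beta T)$ term, and the $\sqrt{T_{i-1}}$ term contracts geometrically against the halving of $D$). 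Iterating $k = \lceil\log\log T\rceil$ times multiplies the initial error $\frac{L^2}{\lam}$ by a factor $\prod 2^{-1}$ down to the floor set by the final stage, namely $D_k \lesssim \frac{L^2}{\lam}\big(\frac d{\beta^2 T^2} + \frac1T\big)$; one last application of Corollary~\ref{cor:subsample_reg_bounds} at stage $k$ with $T_k \asymp T$, $\beta_k \asymp \beta$ yields the claimed bound. The total gradient count is $\sum_{i} T_{i-1} \le 2T$ by the geometric schedule, which I would absorb by rescaling $T$.

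\textbf{Privacy.} Here the argument is cleaner: Algorithm~\ref{alg:subsample_strong_convex} is the adaptive composition of $k = \lceil\log\log T\rceil$ calls to Algorithm~\ref{alg:subsample_convex_reg}, the $i$-th using parameters $(\beta_{i-1}, T_{i-1})$. By the privacy half of Corollary~\ref{cor:subsample_reg_bounds}, call $i$ satisfies $(\alpha, \alpha\tau_i, \frac{\delta}{k})$-RDP with $\tau_i \asymp (\beta_{i-1}\log(\tfrac k\delta) \cdot \tfrac{T_{i-1}}{n})^2$, valid in the range $\alpha \in (1, (\Cpriv\beta_{i-1}^2\log^2(\tfrac k\delta))^{-1})$ and requiring $\beta_{i-1}^2\log^2(\tfrac k\delta) \le \tfrac1{\Cpriv}$ and $\tfrac{T_{i-1}}{n}\le\tfrac1{\Cpriv}$. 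I would verify that the \emph{product} $\beta_{i-1}T_{i-1} = 2^{(k-i+1)/2} \cdot 2^{i-1-k}\beta T = 2^{-(k-i+1)/2}\beta T$ is \emph{decreasing} in $i$, so $\tau_i$ is a geometric series dominated by its largest term $\tau_1 \asymp (\beta\log(\tfrac{k}{\delta})\cdot \tfrac Tn)^2$; thus $\sum_i \tau_i \lesssim (\beta\log(\tfrac{\log\log T}{\delta})\cdot\tfrac Tn)^2 =: \tau$ after applying composition of approximate RDP (Lemma~\ref{lem:composition_rdp}), and the $\delta/k$ errors sum to $\delta$. The range constraint on $\alpha$ is governed by the largest $\beta_{i-1}$, namely $\beta_0 = 2^{k/2}\beta \asymp \beta\sqrt{\log T}$, which is why the hypotheses of the proposition carry $\log^2(\tfrac{\log\log T}{\delta})$ (absorbing the $\sqrt{\log\log T}$ blowup) rather than $\log^2(\tfrac1\delta)$; I would track this constant carefully and choose $\Cpriv$ large enough.

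\textbf{Main obstacle.} The delicate point is the bookkeeping of the two interleaved geometric schedules: one must check simultaneously that (a) $D_i/D_{i-1} \le \tfrac12$ holds at \emph{every} stage given the growing $\beta_{i-1}$ and shrinking $T_{i-1}$ and the corresponding $r_{i-1} = \min(2r,\sqrt{2D_{i-1}/\lam})$ — in particular that the $\min$ does not get stuck at $2r$ in a way that breaks the contraction (it does not, because $D_0 \lesssim L^2/\lam$ gives $\sqrt{2D_0/\lam}\lesssim L/\lam$ and the cap is only relevant in degenerate parameter ranges that can be handled separately), and (b) the privacy parameters $\beta_{i-1}$ stay within the admissible window $\beta_{i-1}^2\log^2 \le 1/\Cpriv$ for \emph{all} $i$, which is exactly the origin of the $\log\log T$ factors in the statement. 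Everything else is a routine telescoping of the per-stage guarantees already established in Corollary~\ref{cor:subsample_reg_bounds}.
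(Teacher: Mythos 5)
Your high-level plan matches the paper's: iterated calls to Corollary~\ref{cor:subsample_reg_bounds} with warm starts from strong convexity, Lemma~\ref{lem:bound_initial_ferm} for the base case, and RDP composition for privacy. But there is a genuine gap in the utility argument and a nontrivial wiring error in the privacy argument.

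\textbf{Utility.} You claim the induction step gives $D_i \le \tfrac12 D_{i-1}$ and conclude that iterating $k = \lceil\log\log T\rceil$ times ``multiplies the initial error by $\prod 2^{-1}$ down to the floor.'' This does not follow. Simple geometric halving over $k$ steps with $2^k \approx \log T$ only reduces $D_0 \asymp L^2/\lam$ to $D_0/2^k \asymp L^2/(\lam \log T)$, which is larger than the claimed $\frac{L^2}{\lam}(\frac{d}{\beta^2 T^2} + \frac1T)$ by roughly a factor of $T/\log T$. The step where you invoke Corollary~\ref{cor:subsample_reg_bounds} actually yields the stronger contraction $\Delta_i \lesssim \Ccvx L \sqrt{2\Delta_{i-1}/\lam}\bigl(\tfrac{\sqrt d}{\beta_{i-1}T_{i-1}} + \tfrac{1}{\sqrt{T_{i-1}}}\bigr) \asymp \sqrt{\Delta_{i-1}E_{i-1}}$, and it is this geometric-mean recursion that the paper uses: the ratio $\Delta_i/E_i$ square-roots each step, so after $\log\log T$ steps $\Delta_k/E_k \le (T)^{1/\log T} = O(1)$, which is how one reaches the floor $E_k$. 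By weakening $\sqrt{\Delta_{i-1}E_{i-1}}$ to $\tfrac12\Delta_{i-1}$ you discard exactly the gain that makes the doubly-logarithmic schedule work. Your proof would need to track the recursion $D_{i+1} = \sqrt{D_i E_i}$ (as in \eqref{eq:DEdef} and the paper's identity $\sqrt{D_i E_i} = D_{i+1}$), prove $\Delta_i \le D_i$ by induction against \emph{that} $D_i$, and then observe $D_k \le 8 E_k$ using $E_0 \ge \frac{L^2}{2\lam T}$.

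\textbf{Privacy.} You assert $\beta_{i-1}T_{i-1} = 2^{-(k-i+1)/2}\beta T$ is \emph{decreasing} in $i$ and that $\tau_1$ is the dominant term; in fact it is increasing in $i$ (the exponent $-(k-i+1)/2$ grows with $i$), so $\beta_{k-1}T_{k-1} = 2^{-1/2}\beta T$ is the largest and $\tau_k$, not $\tau_1$, dominates the geometric sum. Your formula $\tau_1 \asymp (\beta\log(\tfrac{k}{\delta})\cdot\tfrac Tn)^2$ would require $\beta_0 T_0 = \beta T$, but $\beta_0 T_0 = 2^{-k/2}\beta T$. The final composed bound $\sum_i\tau_i \asymp (\beta\log(\tfrac{\log\log T}{\delta})\cdot\tfrac Tn)^2$ happens to agree because a geometric series is dominated by its largest term regardless of direction, but the mechanism you describe is inverted. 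Relatedly, the $\log\log T$ in the statement arises from the union bound $\delta\gets\delta/k$ with $k\approx\log\log T$, not from ``absorbing a $\sqrt{\log\log T}$ blowup'' in $\beta_0$; the actual growth of $\beta_0 = 2^{k/2}\beta$ is by a factor of $\sqrt{\log T}$, and the admissible $\alpha$-range is constrained by the stage with the largest $\beta_{i-1}$, which you correctly identify as $i=1$.
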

\begin{proof} We analyze the utility and privacy separately. \\

\textit{Utility.} 
Denote for simplicity $F^\lam(x) :=\widehat{\Ferm_\rho}(x)+\frac{\lambda}{2}\|x-\bx\|^2$, $F^\lam_\star \defeq F^\lam(\xsbxl)$, and
$\Delta_i \defeq \E[F^\lam(x_i)-F^\lam_\star]$. Moreover, define for all $0 \le i \le k$,
\begin{equation}\label{eq:DEdef}E_i \defeq \frac{2 \Ccvx^2 L^2}{\lam} \cdot \Par{\frac{ \sqrt d}{\beta_i T_i} + \frac{1}{\sqrt{T_i}}}^2,\; D_i \defeq 4E_i \sqrt[2^i]{\frac{2L^2}{\lam} \cdot \frac 1 {4E_0}},\end{equation}
where we define $T_k = T$ and $\beta_k = \beta$. By construction, for all $0 \le i \le k - 1$, $E_{i + 1} = \half E_i$, and so
\begin{equation}\label{eq:sqrt}\frac{D_{i + 1}}{4E_{i + 1}} = \sqrt{\frac{D_i}{4E_i}} \implies \sqrt{D_i E_i} = D_{i + 1}.\end{equation}
We claim inductively that for all $0 \le i \le k$, $\Delta_i \le D_i$. The base case of the induction follows because by Lemma~\ref{lem:bound_initial_ferm}, we have $\Delta_0 \le \frac{2L^2} \lam = D_0$. Next, suppose that the inductive hypothesis is true up to iteration $i$. By strong convexity,
\[\E\Brack{\norm{x_i - \xsbxl}^2} \le \frac{2\Delta_i}{\lam} \le \frac{2 D_i}{\lam},\]
where we used the inductive hypothesis. Hence, the expected radius upper bound (defined by $r_i$) is valid for the call to Algorithm~\ref{alg:subsample_convex_reg}. Thus, by Corollary~\ref{cor:subsample_reg_bounds}, 
\begin{align*}
\Delta_{i + 1} = \E\Brack{F^\lam(x_{i + 1}) - F^\lam_\star} &\le \Ccvx Lr_i\Par{\frac{\sqrt d}{\beta_i T_i} + \frac{1}{\sqrt{T_i}}}\\ &\le \Ccvx L\sqrt{\frac{2D_i}{\lam}}\Par{\frac{ \sqrt d}{\beta_i T_i} + \frac{1}{\sqrt{T_i}}} = \sqrt{D_i E_i} = D_{i + 1}.
\end{align*}
Here we used \eqref{eq:sqrt} in the last equation, which completes the induction. 
 Hence, iterating \eqref{eq:sqrt} for $k = \lceil \log_2\log_2 T\rceil$ iterations, where we use $E_0 \ge \frac{L^2}{2\lam T}$ so that $D_k \le 8E_k$, we have
\[\Delta_k \le 8E_k \le \frac{32 \Ccvx^2 L^2}{\lam} \cdot \Par{\frac{d}{\beta^2 T^2} + \frac{1}{T}}. \]

\textit{Privacy.} The privacy guarantee follows by combining the privacy guarantee in Corollary~\ref{cor:subsample_reg_bounds} and composition of approximate RDP (Lemma~\ref{lem:composition_rdp}), where we adjusted the definition of $\delta$ by a factor of $k$. In particular, we use that the privacy guarantee in each call to Corollary~\ref{cor:subsample_reg_bounds} is a geometric sequence (i.e., $\beta_i^2 T_i^2$ is doubling), and at the end it is $\half \beta^2 T^2$.
\end{proof}

\subsection{Private stochastic proximal estimator}\label{ssec:proxgrad}

In this section, following the development of \cite{AsiCJJS21}, we give an algorithm which calls Algorithm~\ref{alg:subsample_strong_convex} with several different iteration counts and returns a (random) point $\hx$ which enjoys a substantially reduced bias for $\xsbxl$ defined in \eqref{eq:lam_opt_def} compared to the expected number of gradient queries.

\begin{algorithm2e}
\caption{Bias-reduced ReSQued stochastic proximal estimator}
\label{alg:low_bias_est}
{\bf Input:} $\bx \in \R^d$, ball radius, convolution radius, privacy parameter, and regularization parameter $r,\rho,\beta,\lambda>0$, dataset $\data\in\calS^n$, iteration count $T\in\N$ with $T \le \lfloor\frac n {2\Cpriv}\rfloor$\\
$\Tmax \gets \lfloor\frac n {\Cpriv}\rfloor, \jmax\gets \lfloor\log_2 \frac{\Tmax}{T}\rfloor$\\
\For{$k \in [\jmax]$}
{
Draw $J \sim \Geom(\half)$\\
$x_0 \gets $ output of Algorithm~\ref{alg:subsample_strong_convex} with inputs $(\bx, r, \rho, \beta, \lam, \data, T)$\\
\If{$J \le \jmax$}
{
$x_J \gets$ output of Algorithm~\ref{alg:subsample_strong_convex} with inputs $(\bx,r,\rho,2^{-\frac J 2}\beta,\lambda,\data,2^{J}T)$ \\
$x_{J - 1}\gets$ output of Algorithm~\ref{alg:subsample_strong_convex} with inputs $(\bx,r,\rho,2^{-\frac{J - 1}{2}}\beta,\lambda,\data,2^{J - 1}T)$\\
$\hx_k \gets x_0+2^J(x_J-x_{J-1})$ 
}
\Else{
$\hx_k \gets x_0$ 
}
}
{\bf Return:} $\hx\gets\frac{1}{\jmax}\sum_{k\in[\jmax]}\hx_k$
\end{algorithm2e}

\begin{proposition}
\label{prop:gradient_estimator}
Let $\xsbxl$ be defined as in \eqref{eq:lam_opt_def}.
We have, for a universal constant $\Cbias$:
\begin{align*}
\|\E\hx-\xsbxl\|\leq \Cbias \Par{\frac L \lam \cdot \Par{\frac {\sqrt d} {\beta n}+ \frac 1 {\sqrt n}}},
\end{align*}
and, for a universal constant $\Cvar$,
\begin{align*}
\E\|\hx-\xsbxl\|^2 \le
\frac{\Cvar L^2}{\lam^2}\Par{\frac{d}{\beta^2 T^2} + \frac 1 T}.
\end{align*}
\end{proposition}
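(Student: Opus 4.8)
The plan is to recognize $\hx$ as an average of $\jmax$ i.i.d.\ copies of a single-sample randomized-truncation (multilevel) estimator $\hx_k$, following the construction of \cite{AsiCJJS21}, so that one per-level moment estimate for Algorithm~\ref{alg:subsample_strong_convex} drives both halves of the claim. First I would fix notation: for $0 \le j \le \jmax$ let $x^{(j)}$ be the output of Algorithm~\ref{alg:subsample_strong_convex} run with privacy parameter $2^{-j/2}\beta$ and iteration count $2^j T$ (so $x^{(0)}$ uses $\beta$ and $T$), set $\mu_j \defeq \E x^{(j)}$, and write $F^\lam(x) \defeq \widehat{\Ferm_\rho}(x) + \frac{\lam}{2}\norm{x - \bx}^2$, which is $\lam$-strongly convex with constrained minimizer $\xsbxl$ over $\ball_{\bx}(r)$. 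Because $2^j T \le 2^{\jmax}T \le \Tmax = \lfloor n/\Cpriv \rfloor$ and $2^{-j/2}\beta \le \beta$, each such run meets the hypotheses of Proposition~\ref{prop:subsample_strongly_convex_bounds}; I would then combine its utility bound with $\lam$-strong convexity and the cancellation $(2^{-j/2}\beta)^2(2^j T)^2 = 2^j\beta^2 T^2$ to obtain the key estimate $\E\norm{x^{(j)} - \xsbxl}^2 \le \frac{2}{\lam}\Par{\E F^\lam(x^{(j)}) - F^\lam(\xsbxl)} = O\Par{2^{-j}\cdot\frac{L^2}{\lam^2}\Par{\frac{d}{\beta^2 T^2} + \frac{1}{T}}}$, i.e.\ the subroutine error at ``level $j$'' decays geometrically in $j$.

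For the bias I would compute $\E\hx = \E\hx_1$ (the $\hx_k$ are i.i.d.) by conditioning on $J \sim \Geom(\half)$, for which $\Pr[J = j] = 2^{-j}$. Given $J = j \le \jmax$ the three runs producing $x^{(0)}, x^{(J)}, x^{(J-1)}$ are mutually independent, so $\E[\hx_1 \mid J = j] = \mu_0 + 2^j(\mu_j - \mu_{j-1})$, while $\E[\hx_1 \mid J > \jmax] = \mu_0$; the weights $\Pr[J=j]\cdot 2^j = 1$ then collapse the sum to a telescope, giving $\E\hx = \mu_{\jmax}$. Hence the bias is exactly $\norm{\mu_{\jmax} - \xsbxl} \le \sqrt{\E\norm{x^{(\jmax)} - \xsbxl}^2}$ by Jensen, and plugging the key estimate at $j = \jmax$ with $2^{\jmax} = \Theta(\Tmax/T) = \Theta(n/(\Cpriv T))$ (so $2^{\jmax}T = \Theta(n)$) and simplifying yields the first bound $\norm{\E\hx - \xsbxl} = O\Par{\frac{L}{\lam}\Par{\frac{\sqrt d}{\beta n} + \frac{1}{\sqrt n}}}$.

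For the second moment I would decompose $\E\norm{\hx - \xsbxl}^2 = \norm{\E\hx - \xsbxl}^2 + \E\norm{\hx - \E\hx}^2$; the first term is the squared bias, which is at most the target rate since $T \le n$. For the second, the i.i.d.\ structure gives $\E\norm{\hx - \E\hx}^2 = \frac{1}{\jmax}\E\norm{\hx_1 - \E\hx_1}^2 \le \frac{1}{\jmax}\E\norm{\hx_1 - \xsbxl}^2$ (the variance is at most the second moment about any fixed point). Conditioning on $J$ once more: for $J > \jmax$ this equals $\E\norm{x^{(0)} - \xsbxl}^2 = O\Par{\frac{L^2}{\lam^2}\Par{\frac{d}{\beta^2 T^2} + \frac{1}{T}}}$, and for $J = j \le \jmax$, using $\norm{a+b}^2 \le 2\norm{a}^2 + 2\norm{b}^2$ together with $\E\norm{x^{(j)} - x^{(j-1)}}^2 \le 2\E\norm{x^{(j)} - \xsbxl}^2 + 2\E\norm{x^{(j-1)} - \xsbxl}^2 = O\Par{2^{-j}\frac{L^2}{\lam^2}\Par{\frac{d}{\beta^2 T^2} + \frac{1}{T}}}$, the conditional second moment is $O\Par{(1 + 4^j 2^{-j})\frac{L^2}{\lam^2}\Par{\frac{d}{\beta^2 T^2} + \frac{1}{T}}}$. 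Multiplying by $\Pr[J=j] = 2^{-j}$, each of the $\jmax$ levels contributes $O\Par{\frac{L^2}{\lam^2}\Par{\frac{d}{\beta^2 T^2} + \frac{1}{T}}}$ uniformly in $j$, so $\E\norm{\hx_1 - \xsbxl}^2 = O\Par{\jmax\cdot\frac{L^2}{\lam^2}\Par{\frac{d}{\beta^2 T^2} + \frac{1}{T}}}$, and the $\frac{1}{\jmax}$ from averaging cancels this logarithmic factor, giving the second bound.

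The hard part will be this variance bookkeeping: the $2^J$ multiplier in $\hx_k$ injects a factor $4^j$ into the level-$j$ term of the variance, offset only by the truncation probability $2^{-j}$, so the argument hinges on the level-$j$ difference $x^{(j)} - x^{(j-1)}$ having second moment $O(2^{-j})$. That is exactly why Algorithm~\ref{alg:low_bias_est} runs level $j$ with the inflated iteration count $2^j T$, and why the coupled $(2^{-j/2}\beta,\, 2^j T)$ rescaling is chosen so that \emph{both} terms of the Proposition~\ref{prop:subsample_strongly_convex_bounds} utility bound decay in lockstep at rate $2^{-j}$. The remaining care points --- verifying the telescoping identity for $\E\hx$, handling the truncation $J > \jmax$, confirming the parameter ranges required by Proposition~\ref{prop:subsample_strongly_convex_bounds} hold at every level, and checking that the residual $\log$ factor is exactly absorbed by the $\jmax$-fold averaging --- are routine.
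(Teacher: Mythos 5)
Your proposal is correct and follows essentially the same route as the paper's proof: telescoping $\E\hx_k$ to $\E x_{\jmax}$ via the $\Pr[J=j]\cdot 2^j = 1$ identity, invoking Proposition~\ref{prop:subsample_strongly_convex_bounds} together with strong convexity and Jensen for the bias, decomposing the second moment into squared bias plus variance, averaging $\jmax$ i.i.d.\ copies, and exploiting the $(2^{-j/2}\beta,\,2^j T)$ rescaling so that $\Pr[J=j]\cdot 4^j \cdot \E\norm{x^{(j)}-x^{(j-1)}}^2 = O(1)$ per level, making the $\jmax$-fold averaging cancel the logarithmic blow-up. The paper organizes the variance bookkeeping slightly differently (first bounding $\E\norm{\hx_k - x_0}^2$ and $\E\norm{x_0 - \xsbxl}^2$ unconditionally, rather than conditioning on $J$ and then summing), but the arithmetic and the key cancellations are identical.
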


\begin{proof}
We begin by analyzing the output $\hx_k$ of a single loop $k \in [\jmax]$.
For $J\sim\Geom(\half)$, we have $\Pr[J=j]=2^{-j}$ if $j \in [\jmax]$, and $\Pr[J=j]=0$ otherwise. We denote $x_j$ to be the output of Algorithm 3 with privacy parameter $2^{-\frac j 2} \beta$ and gradient bound $2^j T$. First,
\begin{align*}
\E\hx_k = \E x_0+\sum_{j \in [\jmax]} \Pr[J=j]2^j (\E x_j-\E x_{j-1})=\E x_{\jmax}.
\end{align*}

Since $T \cdot 2^{\jmax} \ge \frac{\Tmax}{2}\ge \frac{n}{2\Cpriv}$, applying Jensen's inequality gives
\begin{align*}
    \|\E x_{\jmax}-\xsbxl\|\le \sqrt{\E\|x_{\jmax}-\xsbxl\|^2}\le \frac{\sqrt{2\Csc}L}{\lambda}\Par{ \frac{\sqrt{d}}{\beta n} + \frac{1}{\sqrt{n}}},
\end{align*}
where the last inequality follows from
Proposition~\ref{prop:subsample_strongly_convex_bounds} and strong convexity of the regularized function to convert the function error bound to a distance bound. This implies the first conclusion, our bias bound. Furthermore, for our variance bound, we have
\begin{align*}
    \E\|\hx_k-\E\hx_k\|^2\le \E\|\hx_k-\xsbxl\|^2\le 2\E\|\hx_k-x_0\|^2+2\E\|x_0-\xsbxl\|^2.
\end{align*}
By Proposition~\ref{prop:subsample_strongly_convex_bounds} and strong convexity, $\E\|x_0-\xsbxl\|^2\le \frac{\Csc L^2}{\lam^2}(\frac{d}{\beta^2 T^2}+\frac{1}{T})$.
Next,
\begin{align*}
    \E\|\hx_k-x_0\|^2=\sum_{j \in [\jmax]}\Pr[J=j]2^{2j}\E\|x_{j}-x_{j-1}\|^2=\sum_{j\in[\jmax]}2^j\E\|x_j-x_{j-1}\|^2.
\end{align*}
Note that
\begin{align*}
    \E\|x_j-x_{j-1}\|^2\le 2\E\|x_j-\xsbxl\|^2+2\E\|x_{j-1}-\xsbxl\|^2\le 2^{-j} \cdot \frac{6 \Csc L^2}{\lam^2}\Par{\frac{d}{\beta^2 T^2} + \frac 1 T},
\end{align*}
and hence combining the above bounds yields
\[\E\norm{\hx_k - \E \hx_k}^2 \le \frac{14 \Csc \jmax L^2}{\lam^2} \cdot \Par{\frac{d}{\beta^2 T^2} + \frac 1 T}.\]
Now, averaging $\jmax$ independent copies shows that
\begin{align*}
\E \norm{\hx - \xsbxl}^2 &= \norm{\hx - \E \hx}^2 + \norm{\E \hx - \xsbxl}^2 \\
&\le \frac{1}{\jmax} \cdot \Par{\frac{14 \Csc \jmax L^2}{\lam^2} \cdot \Par{\frac{d}{\beta^2 T^2} + \frac 1 T}} + \Cbias^2 \Par{\frac L \lam \cdot \Par{\frac {\sqrt d} {\beta n}+ \frac 1 {\sqrt n}}}^2,
\end{align*}
where we used our earlier bias bound. The conclusion follows by letting $\Cvar = \Cbias^2 + 14\Csc$.
\end{proof}

We conclude with a gradient complexity and privacy bound, depending on the sampled $J$.

\begin{lemma}\label{lem:proxgradbound}
There is a universal constant $\Cpriv \ge 1$, such that if $\beta^2\log^2(\frac{\log \log n}{\delta}) \le \frac{1}{\Cpriv}$, $\delta \in (0, \frac 1 2)$, and $\frac \rho r \ge \Cpriv \log^2(\frac{\log T}{\delta})$, the following holds.
Consider one loop indexed by $k \in [\jmax]$, and let $J$ be the result of the $\Geom(\half)$ draw. If $J \in [\jmax]$, loop $k$ of Algorithm~\ref{alg:low_bias_est} uses at most $2^{J + 1} T$ gradients. Furthermore, the loop satisfies $(\alpha, \alpha\tau, \delta)$-RDP for
\[\tau \defeq 2^J \cdot \Cpriv\Par{\beta\log\Par{\frac{\log \log n}{\delta}} \cdot \frac T n}^2,\; \alpha \in \Par{1,\frac{1}{\Cpriv\beta^2\log^2\Par{\frac{\log \log n}{\delta}}}}. \]
If $J \not\in [\jmax]$, Algorithm~\ref{alg:low_bias_est} uses at most $T$ gradients, and the loop satisfies $(\alpha, \alpha\tau, \delta)$-RDP for
\[\tau \defeq  \Cpriv\Par{\beta\log\Par{\frac{\log \log n}{\delta}} \cdot \frac T n}^2,\; \alpha \in \Par{1,\frac{1}{\Cpriv\beta^2\log^2\Par{\frac{\log \log n}{\delta}}}}.\]
\end{lemma}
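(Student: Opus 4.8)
The plan is to read the gradient count directly off Algorithm~\ref{alg:low_bias_est} and to obtain the privacy bound by applying the guarantee of Proposition~\ref{prop:subsample_strongly_convex_bounds} to each internal invocation of Algorithm~\ref{alg:subsample_strong_convex} and then composing via Lemma~\ref{lem:composition_rdp}. For the gradient count: loop $k$ always produces $x_0$ via Algorithm~\ref{alg:subsample_strong_convex} with iteration count $T$, which by Proposition~\ref{prop:subsample_strongly_convex_bounds} costs at most $T$ gradients; when $J \notin [\jmax]$ this is the only call, matching the claim. When $J \in [\jmax]$, the loop additionally computes $x_J$ and $x_{J - 1}$ using at most $2^J T$ and $2^{J-1} T$ gradients, for a total of at most $T(1 + 2^{J - 1} + 2^J) \le 2^{J + 1} T$ (using $1 \le 2^{J - 1}$ since $J \ge 1$).

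For privacy, recall that Proposition~\ref{prop:subsample_strongly_convex_bounds} gives that a call to Algorithm~\ref{alg:subsample_strong_convex} with privacy parameter $\beta'$, iteration count $T'$, and slack $\delta'$ is $(\alpha, \alpha\tau', \delta')$-RDP with $\tau' = \Cpriv(\beta'\log(\frac{\log\log T'}{\delta'}) \cdot \frac{T'}{n})^2$ over the range $\alpha \in (1, \frac{1}{\Cpriv(\beta')^2\log^2(\log\log T'/\delta')})$, as long as $\frac{T'}{n} \le \frac{1}{\Cpriv}$, $(\beta')^2\log^2(\frac{\log\log T'}{\delta'}) \le \frac{1}{\Cpriv}$, and $\frac{\rho}{r} \ge \Cpriv\log^2(\frac{\log T'}{\delta'})$. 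I would first verify these preconditions are inherited by every internal call: each uses $T' \le 2^{\jmax}T \le \Tmax \le \frac{n}{\Cpriv}$ and $\beta' \le \beta$, and (choosing $\delta' \defeq \frac{\delta}{3}$) the logarithmic arguments satisfy $\log\log T' \le \log\log n$ and $\log T' \le \log\Tmax = O(\log n)$, so all three conditions follow from the hypotheses of the present lemma after absorbing $O(1)$ factors into $\Cpriv$.

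The key point is how $\tau'$ scales under the \emph{coupled} parameter choice in Algorithm~\ref{alg:low_bias_est}: the call producing $x_j$ uses $\beta' = 2^{-j/2}\beta$ and $T' = 2^j T$, so $(\beta' T')^2 = 2^{-j} \cdot 2^{2j}(\beta T)^2 = 2^j (\beta T)^2$, hence $\tau_j \le 2^j \cdot \Cpriv(\beta\log(\frac{\log\log n}{\delta}) \cdot \frac{T}{n})^2$ for each $j \in \{0, J - 1, J\}$. When $J \notin [\jmax]$ the loop is the single call with $j = 0$ (taking $\delta' = \delta$), which is precisely the claimed bound. When $J \in [\jmax]$, composing the three calls with Lemma~\ref{lem:composition_rdp} yields $(\alpha, \alpha(\tau_0 + \tau_{J - 1} + \tau_J), \delta)$-RDP, and $\tau_0 + \tau_{J - 1} + \tau_J \le (1 + 2^{J - 1} + 2^J)\Cpriv(\beta\log(\frac{\log\log n}{\delta}) \frac{T}{n})^2 \le 2^{J + 1}\Cpriv(\cdots)^2 = 2 \cdot 2^J\Cpriv(\cdots)^2$, which gives the claimed $\tau$ after enlarging $\Cpriv$ by a factor of $2$; the admissible range of $\alpha$ is the intersection of the three individual ranges, and since $\beta' \le \beta$ and $\log\log T' \le \log\log n$ for every call, this intersection contains $(1, \frac{1}{\Cpriv\beta^2\log^2(\log\log n/\delta)})$ (again enlarging $\Cpriv$ to cover the $\delta \to \delta/3$ change).

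The calculations here are routine; the only genuine content — and hence the ``main obstacle,'' such as it is — is (i) observing that the coupling $\beta' = 2^{-j/2}\beta$ with $T' = 2^j T$ is exactly what forces $\tau_j$ to grow like $2^j$ rather than $2^{2j}$, so that the estimator's privacy cost is governed by its single most expensive internal call, and (ii) the bookkeeping needed to confirm that every precondition of Proposition~\ref{prop:subsample_strongly_convex_bounds} survives the substitution of parameters, and that the constant-factor losses incurred by union-bounding $\delta$ over the (at most three) composed mechanisms and by the $2^{J+1}$-versus-$2^J$ slack can all be folded into the universal constant $\Cpriv$.
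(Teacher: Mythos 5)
Your argument is the same as the paper's: apply Proposition~\ref{prop:subsample_strongly_convex_bounds} to each of the (at most three) internal calls to Algorithm~\ref{alg:subsample_strong_convex}, observe that the coupled parameter choice $\beta' = 2^{-j/2}\beta$, $T' = 2^j T$ gives $(\beta' T')^2 = 2^j(\beta T)^2$, compose via Lemma~\ref{lem:composition_rdp} with $\delta$ rescaled to $\delta/3$, and absorb the resulting constant-factor losses into $\Cpriv$. The one small slip is that in the single-call case ($J \notin [\jmax]$) you take $\delta' = \delta$, but Proposition~\ref{prop:subsample_strongly_convex_bounds} requires $\delta' \in (0, \frac 1 6)$ while this lemma only assumes $\delta \in (0, \frac 1 2)$; taking $\delta' = \delta/3$ there as well (which enlarges the $\log$ argument only by a constant, absorbed into $\Cpriv$, and gives a strictly stronger TV slack) repairs this and matches the paper's uniform choice.
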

\begin{proof}
This is immediate by Proposition~\ref{prop:subsample_strongly_convex_bounds}, where we applied Lemma~\ref{lem:composition_rdp} and set $\delta \gets \frac \delta 3$ (taking a union bound over the at most $3$ calls to Algorithm~\ref{alg:subsample_strong_convex}, adjusting $\Cpriv$ as necessary).
\end{proof} %
\subsection{Private ERM solver}\label{ssec:private_erm}

In this section, we give our main result on privately solving ERM in the setting of Problem~\ref{prob:sco_basic}, which will be used in a reduction framework in Section~\ref{ssec:private_sco} to solve the SCO problem as well. Our ERM algorithm is an instantiation of Proposition~\ref{prop:mainballaccel}. We first develop a line search oracle (see Definition~\ref{def:Ols}) based on the solver of Section~\ref{ssec:erm_sc} (Algorithm~\ref{alg:subsample_strong_convex}), which succeeds with high probability. To do so, we leverage the following geometric lemma for aggregating independent runs of our solver.

\begin{lemma}[Claim 1, \cite{KelnerLLST22}]\label{lem:agg}
There is an algorithm $\agg$ which takes as input $(S, \Delta) \in (\R^d)^k \times \R_{\ge 0}$, and returns $z \in \R^d$ such that $\norm{z - y} \le \Delta$, if for some unknown point $y\in\R^d$ satisfying at least $0.51k$ points $x \in S$, $\norm{x - y} \le \frac \Delta 3$. The algorithm runs in time $O(dk^2)$.
\end{lemma}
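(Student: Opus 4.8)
The plan is to show that a simple ``majority ball'' strategy works: since more than half of the points in $S$ lie within $\frac{\Delta}{3}$ of the unknown target $y$, any two such good points are within $\frac{2\Delta}{3}$ of each other, so if we can identify a point $z$ that is within $\frac{2\Delta}{3}$ of a strict majority of $S$, then in particular $z$ is within $\frac{2\Delta}{3}$ of at least one good point $x$, whence $\norm{z-y}\le \norm{z-x}+\norm{x-y}\le \frac{2\Delta}{3}+\frac{\Delta}{3}=\Delta$. So it suffices to exhibit such a $z$ and argue it is computable in $O(dk^2)$ time.

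First I would observe that a valid choice of $z$ is \emph{any} point $x^\star \in S$ that is within distance $\frac{2\Delta}{3}$ of at least $0.51k$ points of $S$: such a point must exist, because every one of the (at least $0.51k$) good points has this property (each good point is within $\frac{2\Delta}{3}$ of all the good points, by the triangle inequality argument above, and there are at least $0.51k$ of them). Thus the algorithm $\agg$ is: for each $x \in S$, compute $\norm{x - x'}$ for all $x' \in S$, count how many satisfy $\norm{x - x'}\le \frac{2\Delta}{3}$, and return the first $x$ whose count is at least $0.51k$. Forming all pairwise distances among $k$ points in $\R^d$ costs $O(dk^2)$ time, and the counting and thresholding is $O(k^2)$, so the total runtime is $O(dk^2)$ as claimed.

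For correctness I would then assemble the two halves: (i) the returned $z = x^\star$ exists by the argument in the previous paragraph (the good points certify nonemptiness of the search), and (ii) since $x^\star$ is within $\frac{2\Delta}{3}$ of at least $0.51k$ points of $S$, and the good points number at least $0.51k$, these two sets of size $>\frac{k}{2}$ must intersect, so $x^\star$ is within $\frac{2\Delta}{3}$ of some good point $x$ with $\norm{x-y}\le\frac{\Delta}{3}$; the triangle inequality then gives $\norm{z-y}=\norm{x^\star-y}\le\Delta$.

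I do not anticipate a genuine obstacle here — the lemma is elementary and the only mild subtlety is the bookkeeping of the two ``strict majority'' sets of size $\ge 0.51k$ forcing a nonempty intersection (which is where the constant $0.51 > \tfrac12$ is used, rather than exactly $\tfrac12$), and making sure the pairwise-distance computation is charged correctly as $O(dk^2)$ rather than naively more. One could alternatively phrase the output as a geometric median / centerpoint of $S$, but the pairwise-distance-majority construction above is the cleanest route to the stated time bound and I would present that.
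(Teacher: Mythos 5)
Your proof is correct. Note that the paper does not actually prove Lemma~\ref{lem:agg}: it is cited verbatim as Claim~1 of \cite{KelnerLLST22}, so there is no in-paper proof to compare against. Your argument — form all pairwise distances, return any point within $\tfrac{2\Delta}{3}$ of at least $0.51k$ others, then use the triangle inequality and the pigeonhole intersection of two sets each of size exceeding $k/2$ — is the standard, self-contained way to establish such a geometric-aggregation claim, and it correctly charges $O(dk^2)$ for the pairwise-distance computation.
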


\begin{algorithm2e}
\label{alg:privat_line_search}
\caption{High probability ReSQued ERM solver, strongly convex case}
\textbf{Input:} $\bx\in \R^d$, ball radius, convolution radius, privacy parameter, regularization parameter, and failure probability $r,\rho,\beta,\lambda,\zeta>0$, dataset $\data\in\calS^n$, iteration count $T\in\N$\\
$k\gets 20\log(\frac 1 \zeta)$\\
\For{$i\in[k]$}
{
$x_i\gets$ output of Algorithm~\ref{alg:subsample_strong_convex} with inputs $(\bx,r,\rho,\beta,\lambda,\data,T)$ 
}
{\bf Return:} $x'\gets \agg(\{x_i\}_{i\in[k]}, \frac{9\sqrt{2\Csc}L}{\lambda}(\frac{d}{\beta^2T^2}+\frac{1}{T})^{\half})$
\end{algorithm2e}

\begin{proposition}
\label{prop:privat_line_search}
Let $\xsbxl$ be defined as in \eqref{eq:lam_opt_def}. Algorithm~\ref{alg:privat_line_search} uses at most $18T\log(\frac 1 \zeta)$ gradients and produces $x'$ such that with probability at least $1-\zeta$, for a universal constant $\Cls$,
\begin{align*}
    \|x'-\xsbxl\|\le \frac{\Cls L}{\lambda}\cdot \Par{\frac{\sqrt d}{\beta T} + \frac 1 {\sqrt{T}}}.
\end{align*}
Moreover, there exists a universal constant $\Cpriv\ge 1 $ such that $\frac{T}{n}\le \frac{1}{\Cpriv},\delta\in(0,\frac{1}{6})$ and $\frac{\rho}{r}\ge \Cpriv\log^2(\frac 1 \delta \log (\frac T {\zeta}))$, Algorithm~\ref{alg:privat_line_search} satisfies $(\alpha,\alpha\tau,\delta)$-RDP for 
\begin{align*}
\tau \defeq \Cpriv \log\Par{\frac 1 \zeta}\Par{\beta \log\Par{\frac 1 \delta \log \Par{\frac T {\zeta}}} \cdot \frac T n}^2,\; \alpha \in \Par{1,\frac{1}{\Cpriv \beta^2\log^2\Par{\frac 1 \delta \log \Par{\frac T {\zeta}}}}}.
\end{align*}
\end{proposition}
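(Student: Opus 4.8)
The plan is to prove Proposition~\ref{prop:privat_line_search} by a standard ``boosting the confidence'' argument applied to Algorithm~\ref{alg:subsample_strong_convex}, combined with the geometric aggregation tool of Lemma~\ref{lem:agg} and composition of approximate RDP (Lemma~\ref{lem:composition_rdp}). First I would establish the accuracy guarantee for a \emph{single} run of Algorithm~\ref{alg:subsample_strong_convex}. By Proposition~\ref{prop:subsample_strongly_convex_bounds} and strong convexity of $\widehat{\Ferm_\rho}(\cdot) + \frac\lambda2\norm{\cdot - \bx}^2$, we have $\E\norm{x_i - \xsbxl}^2 \le \frac{2\Csc L^2}{\lambda^2}(\frac{d}{\beta^2 T^2} + \frac 1 T)$, so by Markov's inequality, with probability at least $\frac{99}{100}$ (say) a single run satisfies $\norm{x_i - \xsbxl} \le \frac{3\sqrt{2\Csc}L}{\lambda}(\frac{d}{\beta^2 T^2}+\frac 1T)^{1/2}$, which is one third of the target radius $\Delta \defeq \frac{9\sqrt{2\Csc}L}{\lambda}(\frac{d}{\beta^2 T^2}+\frac 1T)^{1/2}$. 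Then, running $k = 20\log(\frac1\zeta)$ independent copies, a Chernoff bound guarantees that with probability at least $1 - \zeta$, at least $0.51k$ of the $x_i$ lie within $\frac\Delta3$ of the unknown point $y = \xsbxl$. Lemma~\ref{lem:agg} then returns $x'$ with $\norm{x' - \xsbxl} \le \Delta$, giving the stated bound with $\Cls = 9\sqrt{2\Csc}$ (up to the constant in Proposition~\ref{prop:subsample_strongly_convex_bounds}). The gradient count is immediate: each of the $k$ runs uses at most $T$ gradients, for a total of at most $20T\log(\frac1\zeta)$ (the statement says $18T\log(\frac1\zeta)$, which just reflects a slightly different choice of the Chernoff constant $k$; I would pick $k$ to make the arithmetic match, e.g.\ $k$ slightly smaller suffices since we only need the $0.51$-fraction event).

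For the privacy guarantee, I would invoke the approximate-RDP bound from Proposition~\ref{prop:subsample_strongly_convex_bounds} for each of the $k$ independent calls to Algorithm~\ref{alg:subsample_strong_convex}, run on the whole dataset $\data$, and compose them via Lemma~\ref{lem:composition_rdp}. Each call satisfies $(\alpha, \alpha\tau_0, \delta_0)$-RDP with $\tau_0 = \Cpriv(\beta\log(\frac{\log\log T}{\delta_0})\cdot\frac Tn)^2$ and $\alpha$ in the stated range, so the $k$-fold composition satisfies $(\alpha, k\alpha\tau_0, k\delta_0)$-RDP. Setting $\delta_0 \gets \frac{\delta}{k} = \frac{\delta}{20\log(1/\zeta)}$ and absorbing the $\log(1/\zeta)$ into the logarithmic argument (using $\log(\frac{k\log\log T}{\delta}) \lesssim \log(\frac1\delta\log(\frac T\zeta))$, which holds after adjusting $\Cpriv$), we get $\tau = k\tau_0 = \Cpriv\log(\frac1\zeta)(\beta\log(\frac1\delta\log(\frac T\zeta))\cdot\frac Tn)^2$, matching the statement. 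I would also need to verify that the hypotheses of Proposition~\ref{prop:subsample_strongly_convex_bounds} are met: $\frac Tn \le \frac1{\Cpriv}$ and $\frac\rho r \ge \Cpriv\log^2(\frac{\log T}{\delta_0})$ follow from the assumptions of Proposition~\ref{prop:privat_line_search} (again after adjusting $\Cpriv$ to swallow the extra $\log(1/\zeta)$ factor inside the logarithm), and the constraint $\beta^2\log^2(\frac{\log\log T}{\delta_0}) \le \frac1{\Cpriv}$ likewise follows from the implicit conditions in the ambient setup.

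The one subtlety worth flagging is that Definition~\ref{def:Ols} of a line search oracle requires a \emph{deterministic} accuracy guarantee $\norm{x - \xsbxl} \le \Delta$, whereas Algorithm~\ref{alg:privat_line_search} only achieves this with probability $1-\zeta$. The resolution is the same device used elsewhere in the paper (e.g.\ Lemma~\ref{lem:erm_convex_privacy} and Fact~\ref{fact:cond_TV}): the failure event has probability at most $\zeta$, so conditioning on success changes the output distribution by at most $\zeta$ in total variation, which is harmless for the downstream approximate-RDP accounting and can be folded into the $\delta$ slack when Proposition~\ref{prop:privat_line_search} is actually invoked inside the ball-acceleration framework of Proposition~\ref{prop:mainballaccel}. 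I would state the accuracy bound as a high-probability event exactly as written and leave the reconciliation with Definition~\ref{def:Ols} to the point of use. The main ``obstacle'' is really just bookkeeping: carefully tracking how the various logarithmic factors ($\log(1/\zeta)$, $\log\log T$, $\log(1/\delta)$) combine so that the final $\tau$ and the constraint on $\rho/r$ can all be expressed in terms of a single universal constant $\Cpriv$ and the compact logarithmic expression $\log(\frac1\delta\log(\frac T\zeta))$ — no deep idea is needed beyond Markov, Chernoff, Lemma~\ref{lem:agg}, and Lemma~\ref{lem:composition_rdp}.
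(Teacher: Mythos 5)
Your proposal is correct and follows essentially the same route as the paper's proof: apply Proposition~\ref{prop:subsample_strongly_convex_bounds} and strong convexity to get a moment bound on $\norm{x_i - \xsbxl}$, use Markov to show each independent run is close with constant probability, Chernoff to get a $0.51$-majority with probability $1-\zeta$, aggregate via Lemma~\ref{lem:agg}, and compose the per-run approximate RDP bounds via Lemma~\ref{lem:composition_rdp} with $\delta$ scaled down by $k$. The only cosmetic difference is that you apply Markov to the second moment (Chebyshev) whereas the paper first passes to the first moment via Jensen; both give a small-enough per-run failure probability, and your observations about the $k=20$ vs.\ $18T\log(1/\zeta)$ bookkeeping and the high-probability-vs.-deterministic reconciliation with Definition~\ref{def:Ols} are accurate.
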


\begin{proof}
For each $x_i$, by Proposition~\ref{prop:subsample_strongly_convex_bounds},
\[\E\Brack{\widehat{\Ferm}_r(x_i)+\frac{\lambda}{2}\|x_i-\bx\|^2 } - \widehat{\Ferm}_r(\xsbxl)-\frac{\lambda}{2}\|\xsbxl-\bx\|^2 \le 
\frac{\Csc L^2}{\lambda}\Par{\frac{d}{\beta^2 T^2}+\frac{1}{T}}. \]

Further, by strong convexity and Jensen's inequality we have
$$\E[\| x_i-\xsbxl\|]\le \frac{\sqrt{2\Csc}L}{\lambda}\Par{\frac{d}{\beta^2T^2}+\frac{1}{T}}^{\half}.$$
Hence, by Markov's inequality, for each $i \in [k]$ we have
\begin{align*}
    \Pr\left[\|x_i-\xsbxl\|\ge \frac{3\sqrt{2\Csc}L}{\lambda}\Par{\frac{d}{\beta^2T^2}+\frac{1}{T}}^{\half}\right]\le \frac{1}{3}.
\end{align*}
Hence by a Chernoff bound, with probability $\ge 1-\zeta$, at least $0.51k$ points $x\in\{x_i\}_{i\in[k]}$ satisfy \[\|x-\xsbxl\|\le \frac{3\sqrt{2\Csc}L}{\lambda}\Par{\frac{d}{\beta^2T^2}+\frac{1}{T}}^{\half}.\]
Hence the precondition of Lemma~\ref{lem:agg} holds, giving the distance guarantee with high probability. The privacy guarantee follows from Proposition~\ref{prop:subsample_strongly_convex_bounds} and the composition of approximate RDP, where we adjusted $\Cpriv$ by a constant and the definition of $\delta$ by a factor of $k$.
\end{proof}

Now we are ready to prove our main result on private ERM.

\begin{theorem}[Private ERM]
\label{thm:DP-ERM}
In the setting of Problem~\ref{prob:sco_basic}, let $\epsdp \in (0, 1)$ and $\delta \in (0, \frac 1 6)$. There is an $(\epsdp,\delta)$-DP algorithm which takes as input $\data$ and outputs $\hx \in \R^d$ such that
\begin{align*}
    \E\Brack{\Ferm(\hx)-\min_{x\in\ball(R)}\Ferm(x)} \le  O\Par{LR\cdot\Par{\frac{1}{\sqrt{n}}+\frac{\sqrt{d\log\frac 1 \delta}\log^{1.5}(\frac n \delta)\log n}{n\epsdp}}}.
\end{align*}
Moreover, with probability at least $1 - \delta$, the algorithm queries at most  \begin{align*}O\Par{\log^6\Par{\frac n \delta}\Par{\min\Par{n,\frac{n^2\epsdp^2}{d}} + \min\Par{\frac{(nd)^{\frac 2 3}}{\epsdp}, n^{\frac 4 3}\epsdp^{\frac 1 3}}}} \text{ gradients.}\end{align*}
\end{theorem}

\begin{proof}
Throughout this proof, set for a sufficiently large constant $C$,
\begin{equation}\label{eq:fixed_params}
\begin{aligned}
\ptot \defeq CLR\Par{\frac 1 {\sqrt n} + \frac{\sqrt {d\log \frac 1 \delta} \log^{1.5}(\frac n \delta) \log n}{n \epsdp}},
\;\kappa \defeq \frac{LR}{\ptot},\\
\rho \defeq \frac{\ptot}{L\sqrt d},\; r \defeq \frac{\rho}{\sqrt C \log^2(\frac n \delta)},\;
\alpha \defeq \frac{4\log \frac 2 \delta}{\epsdp},\;
\beta \defeq \frac{\epsdp}{C\log(\frac n \delta)\sqrt{\log \frac 1 \delta}}.
\end{aligned}
\end{equation}
Note that for the given parameter settings, for sufficiently large $C$, we have
\begin{equation}\label{eq:kappabound}\kappa \le \frac 1 C \min\Par{\sqrt n, \frac {n\epsdp}{\sqrt{d\log \frac 1 \delta} \log^{1.5}(\frac n \delta) \log n}},\; \frac R r \le n\log^2\Par{\frac{\log n}{\delta}}.  \end{equation}
Our algorithm proceeds as follows. We apply Proposition~\ref{prop:mainballaccel} with $x^\star \gets \arg\min_{x \in \ball(R)} \Ferm(x)$ and $F \gets \widehat{\Ferm_\rho}$, and instantiate the necessary oracles as follows for $\Cba K\log\kappa$ iterations.
\begin{enumerate}
    \item We use Algorithm~\ref{alg:privat_line_search} with $r, \rho, \beta$ defined in \eqref{eq:fixed_params}, and 
    \begin{equation}\label{eq:ols_params}T_1 \defeq \sqrt C \Par{\frac{\kappa \sqrt d}{\sqrt{K}\beta \log^2\kappa } + \frac{\kappa^2}{K\log^3\kappa \log \frac n \delta}},\; \zeta \defeq \frac{1}{\kappa \Cba K \log \kappa}, \end{equation}
    as a $(\frac r {\Cba}, \lam)$-line search oracle $\Ols$.
    \item We use Algorithm~\ref{alg:subsample_strong_convex} with $r, \rho, \beta$ defined in \eqref{eq:fixed_params}, and 
    \begin{equation}\label{eq:obo_params}T_2 \defeq \sqrt C\Par{\frac{\kappa \sqrt d}{\sqrt K \beta \sqrt{\log \kappa}} + \frac{\kappa^2}{K\log\kappa}}, \end{equation}
    as a $(\frac{\lam r^2}{\Cba\log^3\kappa}, \lam)$-ball optimization oracle $\Obo$. 
    \item We use Algorithm~\ref{alg:low_bias_est} with $r, \rho, \beta$ defined in \eqref{eq:fixed_params}, and
    \begin{equation}\label{eq:opg_params}
    T_3 \defeq \sqrt C\Par{\frac{\kappa \sqrt d}{\sqrt K \beta} + \frac{\kappa^2}{K}}
    \end{equation}
    as a $(\frac{\ptot}{\Cba R}, \frac{\ptot \sqrt K}{\Cba R}, \lam)$-stochastic proximal oracle $\Opg$.
\end{enumerate}

We split the remainder of the proof into four parts. We first show that the oracle definitions are indeed met. We then bound the overall optimization error against $\Ferm$. Finally, we discuss the privacy guarantee and the gradient complexity bound. 

\textit{Oracle correctness.} For the line search oracle, by Proposition~\ref{prop:privat_line_search}, it suffices to show
\[\frac{\Cls L}{\lam} \cdot \Par{\frac{\sqrt d}{\beta T_1} + \frac 1 {\sqrt{T_1}}} \le \frac{r}{\Cba}.\]
This is satisfied for $T_1$ in \eqref{eq:ols_params}, since Proposition~\ref{prop:mainballaccel} guarantees $\lam \ge \frac{\ptot K^2 \log^2\kappa}{R^2 \Cba}$. Hence,
\begin{align*}
\frac{\Cls L}{\lam} \cdot \frac{\sqrt d}{\beta T_1} \cdot \frac{\Cba}{r} \le \Cls \Cba^2 \cdot \frac{\kappa\sqrt d}{\beta \log^2\kappa} \cdot \frac 1 {\sqrt K} \cdot \frac 1 {T_1} \le \half, \\
\frac{\Cls L}{\lam} \cdot \frac{1}{\sqrt {T_1}} \cdot \frac{\Cba}{r} \le \Cls \Cba^2 \cdot \frac{\kappa}{ \log^2\kappa} \cdot \frac 1 {\sqrt K} \cdot \frac 1 {\sqrt {T_1}} \le \half,
\end{align*}
for a sufficiently large $C$, where we used $K^{1.5} = \frac R r$ to simplify. By a union bound, the above holds with probability at least $1 - \frac{\ptot}{LR}$ over all calls to Algorithm~\ref{alg:privat_line_search}, since there are at most $\Cba K \log \kappa$ iterations. For the remainder of the proof, let $\event_{\textup{ls}}$ be the event that all line search oracles succeed. For the ball optimization oracle, by Proposition~\ref{prop:subsample_strongly_convex_bounds}, it suffices to show
\[\frac{\Csc L^2}{\lam} \Par{\frac{d}{\beta^2 T_2^2} + \frac 1 {T_2}} \le \frac{\lam r^2}{\Cba \log^3\kappa}. \]
This is satisfied for our choice of $T_2$ in \eqref{eq:obo_params}, again with $\lam \ge \frac{\ptot K^2 \log^2\kappa}{R^2 \Cba}$. Hence,
\begin{align*}
\frac{\Csc L^2}{\lam} \cdot \frac d {\beta^2 T_2^2} \cdot \frac{\Cba \log^3\kappa}{\lam r^2} \le \Csc\Cba^3 \cdot \frac{\kappa^2 d}{ \beta^2\log\kappa} \cdot \frac 1 K \cdot \frac 1 {T_2^2} \le \half, \\
\frac{\Csc L^2}{\lam} \cdot \frac 1 {T_2} \cdot \frac{\Cba \log^3\kappa}{\lam r^2} \le \Csc\Cba^3 \cdot \frac{\kappa^2}{\log\kappa} \cdot \frac 1 K \cdot \frac 1 {T_2} \le \half,
\end{align*}
again for large $C$. Finally, for the proximal gradient oracle, by Proposition~\ref{prop:gradient_estimator}, it suffices to show
\begin{align*}\Cbias\Par{\frac L \lam \cdot \Par{\frac{\sqrt d}{\beta n} + \frac 1 {\sqrt n}}} \le \frac{\ptot}{\Cba \lam R}, \\
\frac{\Cvar L^2}{\lam^2} \Par{\frac{d}{\beta^2 T_3^2} + \frac 1 {T_3}} \le \frac{\ptot^2 K}{\Cba^2 \lam^2 R^2}.
\end{align*}
The first inequality is clear. The second is satisfied for our choice of $T_3$ in \eqref{eq:opg_params}, which implies
\begin{align*}
\frac{\Cvar L^2}{\lam^2} \cdot \frac{d}{\beta^2 T_3^2} \cdot \frac{\Cba^2 \lam^2 R^2}{\ptot^2 K} = \Cvar \Cba^2 \cdot \frac{\kappa^2 d}{\beta^2} \cdot \frac 1 K \cdot \frac 1 {T_3^2} \le \half, \\
\frac{\Cvar L^2}{\lam^2} \cdot \frac{1}{T_3} \cdot \frac{\Cba^2 \lam^2 R^2}{\ptot^2 K} = \Cvar \Cba^2 \cdot \kappa^2 \cdot \frac 1 K \cdot \frac 1 {T_3} \le \half.
\end{align*}

\textit{Optimization error.} By Proposition~\ref{prop:mainballaccel}, the expected optimization error against $\widehat{\Ferm_\rho}$ is bounded by $\ptot$ whenever $\event_{\textup{ls}}$ occurs. Otherwise, the optimization error is never larger than $LR$ as long as we return a point in $\ball(R)$, since the function is $L$-Lipschitz. Further, we showed $\Pr[\event_{\textup{ls}}] \ge 1 - \frac{\ptot}{LR}$, so the total expected error is bounded by $2\ptot$. Finally, the additive error between $\widehat{\Ferm_\rho}$ and $\Ferm$ is bounded by $\rho L \sqrt d = \ptot$. The conclusion follows by setting the error bound to $3\ptot$.

\textit{Privacy.} We first claim that each call to $\Ols$, and $\Obo$ used by Proposition~\ref{prop:mainballaccel} satisfies
\[\Par{\alpha, \frac {\epsdp} {6\Cba K \log\kappa}, \frac \delta {18\Cba K \log\kappa}}\text{-RDP}. \]
We first analyze $\Ols$. The preconditions of Proposition~\ref{prop:privat_line_search} are met, where $\log(\frac {18\Cba K\log\kappa} \delta \log(\frac T \zeta)) \le 2\log \frac n \delta$ for our parameter settings. Moreover, our $\alpha$ is in the acceptable range. Finally, by Proposition~\ref{prop:privat_line_search} it suffices to note
\begin{align*}\frac{8\alpha\Cpriv \beta^2 T_1^2 \log^3\Par{\frac n \delta}}{n^2} \le  \frac{128C\Cpriv \beta^2\log^3(\frac n \delta)\log\frac 1 \delta}{n^2 \epsdp} \cdot \Par{\frac{\kappa^2 d}{K\beta^2\log\kappa} + \frac{\kappa^4}{K^2\log^2\kappa}}\le \frac{\epsdp}{6\Cba K \log\kappa}, \end{align*}
where the second inequality follows for sufficiently large $C$ due to  \eqref{eq:kappabound}. Next, we analyze the privacy of $\Obo$. 
The preconditions of Proposition~\ref{prop:subsample_strongly_convex_bounds} are met, where $\log(\frac{\log\log T}{\delta}) \le \log \frac n \delta$ for our parameter settings, and our $\alpha$ is again acceptable. Finally, by Proposition~\ref{prop:subsample_strongly_convex_bounds} it suffices to note
\begin{align*}
\frac{\alpha\Cpriv\beta^2 T_2^2 \log^2(\frac n \delta)}{n^2} \le \frac{16C\Cpriv\beta^2\log^2(\frac n \delta)\log \frac 1 \delta}{n^2 \epsdp} \cdot \Par{\frac{\kappa^2 d}{K\beta^2\log\kappa} + \frac{\kappa^4}{K^2\log^2\kappa}} \le \frac{\epsdp}{6\Cba K \log\kappa},
\end{align*}
again for sufficiently large $C$ from \eqref{eq:kappabound}. Hence, by applying Lemma~\ref{lem:composition_rdp}, all of the at most $\Cba K\log\kappa$ calls to $\Ols$ and $\Obo$ used by the algorithm combined satisfy
\[\Par{\alpha, \frac{\epsdp}{3}, \frac{\delta}{9}}\text{-RDP}. \]
Finally, we analyze the privacy of $\Opg$. Let
\[\jmax \defeq \left\lfloor\log_2\Par{\frac{1}{T_3}\cdot\left\lfloor\frac{n}{\Cpriv}\right\rfloor}\right\rfloor\]
be the truncation parameter in Algorithm~\ref{alg:low_bias_est}. 
The total number of draws from $\Geom(\half)$ in Algorithm~\ref{alg:low_bias_est} over the course of the algorithm is $\Cba K\log\kappa \cdot \jmax$. It is straightforward to check that the expected number of draws where $J = j$ for all $j \in [\jmax]$ is 
\[2^{-\jmax} \Cba \kappa\log\kappa \cdot \jmax = \Omega\Par{\frac{T_3}{n} \cdot K\log\kappa \cdot \jmax},\]
which is superconstant. By Chernoff and a union bound, with probability $\ge 1 - \frac \delta {n}$, there is a constant $C'$ such that for all $j \in [\jmax]$, the number of times we draw $J = j$ is bounded by
\[2^{-j} C' K\log\kappa \log\frac n \delta. \]
Similarly, the number of times we draw $J \not\in [\jmax]$ is bounded by $C' K \log\kappa\log\frac n \delta$. 
This implies by Lemma~\ref{lem:composition_rdp} that all calls to $\Opg$ used by the algorithm combined satisfy
\[\Par{\alpha, \frac {\epsdp} 6, \frac \delta {18}}\text{-RDP.}\]
Here, we summed the privacy loss in Lemma~\ref{lem:proxgradbound} over $0 \le J \le \jmax$, which gives
\begin{align*}\sum_{0 \le j \le \jmax} \Par{2^j \cdot \frac{\alpha\Cpriv \beta^2 \log^2(\frac n \delta) T_3^2}{n^2}} \cdot \Par{2^{-j} C' K\log\kappa \log\frac n \delta} \\
\le (\jmax + 1) \cdot \frac{16C C'\Cpriv K\beta^2 \log^3(\frac n \delta)\log \frac 1 \delta\log\kappa}{n^2 \epsdp} \cdot \Par{\frac{\kappa^2 d}{K\beta^2} + \frac{\kappa^4}{K^2}} \le \frac {\epsdp} 6,\end{align*}
for sufficiently large $C$, where we use $\log \kappa, \jmax \le \log n$, and $K \ge \log \frac 1 \delta$ for our parameter settings. Finally, combining these bounds shows that our whole algorithm satisfies $(\alpha, \frac {\epsdp} 2, \frac \delta 6)$-RDP, and applying Corollary~\ref{cor:approx_rdp}, gives the desired privacy guarantee.

\textit{Gradient complexity.} We have argued that with probability at least $1 - \delta$, the number of times we encounter the $J = j$ case of Lemma~\ref{lem:proxgradbound} for all $0 \le j \le \jmax$ is bounded by $2^{-j} C' K \log\kappa\log \frac n \delta$. Under this event, Proposition~\ref{prop:privat_line_search}, Proposition~\ref{prop:subsample_strongly_convex_bounds}, and Lemma~\ref{lem:proxgradbound} imply the total gradient complexity of our algorithm is at most
\begin{align*}
\Cba K\log\kappa \cdot \Par{18T_1\log \frac 1 \zeta + T_2 + \sum_{0 \le j \le \jmax} \Par{2^{-j} C'\log \frac n \delta}\Par{2^{j + 1} T_3}} \\
\le 36 \Cba C' K\log n\Par{T_1 \log n + T_2 + T_3 \log n \log \frac n \delta},
\end{align*}
where we use $\zeta \ge n^{-2}$, $\jmax \le \log n$, and $\kappa \le n$. The conclusion follows from plugging in our parameter choices from \eqref{eq:ols_params}, \eqref{eq:obo_params}, and \eqref{eq:opg_params}.
\end{proof}

Finally, we note that following the strategy of Section~\ref{ssec:erm_sc}, it is straightforward to extend Theorem~\ref{thm:DP-ERM} to the strongly convex setting. We state this result as follows.

\begin{corollary}[Private regularized ERM]
\label{cor:DP-ERM_strongly_convex}
In the setting of Problem~\ref{prob:sco_basic}, let $\epsdp \in (0, 1)$, $\delta \in (0, \frac 1 6)$, $\lam \ge 0$, and $x' \in \ball(R)$. There is an $(\epsdp,\delta)$-DP algorithm which outputs $\hx \in \ball(R)$ such that
\begin{align*}
    \E\Brack{\Ferm(\hx) + \frac \lam 2 \norm{x - x'}^2 -\min_{x\in\ball(R)}\Brace{\Ferm(x) + \frac \lam 2 \norm{x - x'}^2}} \le  O\Par{\frac{ L^2}{\lam}\cdot\Par{\frac{1}{n}+\frac{d\log\frac 1 \delta\log^{3}(\frac n \delta)\log^2 n}{n^2\epsdp^2}}}.
\end{align*}
Moreover, with probability at least $1 - \delta$, the algorithm queries at most  \begin{align*}O\Par{\log^6\Par{\frac n \delta}\Par{\min\Par{n,\frac{n^2\epsdp^2}{d}} + \min\Par{\frac{(nd)^{\frac 2 3}}{\epsdp}, n^{\frac 4 3}\epsdp^{\frac 1 3}}}} \text{ gradients.}\end{align*}
\end{corollary}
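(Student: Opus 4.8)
The plan is to reduce the regularized, $\lam$-strongly convex problem to the convex one of Theorem~\ref{thm:DP-ERM} by the standard phased-localization argument (cf.\ Lemma 4.7 of \cite{KLL21}, and the reduction of Section~\ref{ssec:erm_sc} from Algorithm~\ref{alg:subsample_convex_reg} to Algorithm~\ref{alg:subsample_strong_convex}), carried out at the level of the whole ERM algorithm rather than a single ball subproblem. Write $F^\lam(x) \defeq \Ferm(x) + \frac\lam2\norm{x - x'}^2$ and $\hx^\star \defeq \argmin_{x\in\ball(R)}F^\lam(x)$, and record that $F^\lam$ is $\lam$-strongly convex and, since $x'\in\ball(R)$ is feasible, $\norm{\hx^\star - x'}\le\frac{2L}\lam$ by the computation in the proof of Lemma~\ref{lem:bound_initial_ferm}; hence $F^\lam$ is $O(L)$-Lipschitz on every ball containing $\hx^\star$ that we localize to. First I would verify that the three private oracles of Sections~\ref{ssec:erm_convex}--\ref{ssec:proxgrad} go through verbatim when $\widehat{\Ferm_\rho}$ is replaced by $\widehat{F^\lam_\rho}$: Algorithms~\ref{alg:subsample_convex_reg},~\ref{alg:subsample_strong_convex},~\ref{alg:low_bias_est} and~\ref{alg:privat_line_search} already carry a quadratic regularizer about the ball center, and combining it with $\frac\lam2\norm{\cdot-x'}^2$ yields again a single quadratic about a possibly-infeasible center — precisely the setting of Lemma~\ref{lem:outside_ball}, which supplies the needed $\frac{2L^2}\lam$ initial-error bound — so the utility and approximate-RDP guarantees of Corollary~\ref{cor:subsample_reg_bounds}, Proposition~\ref{prop:subsample_strongly_convex_bounds}, Proposition~\ref{prop:gradient_estimator} and Proposition~\ref{prop:privat_line_search} persist up to constants with $L \mapsto O(L)$.

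Next I would set up the outer recursion. Let $q \defeq \frac1{\sqrt n} + \frac{\sqrt{d\log\frac1\delta}\,\log^{1.5}(\frac n\delta)\log n}{n\epsdp}$ be the convex rate of Theorem~\ref{thm:DP-ERM}, so the corollary's target error is $\Theta(\frac{L^2 q^2}\lam)$. Phase $j$ carries $x_{j-1}$ with $\norm{x_{j-1}-\hx^\star}\le R_{j-1}$ (with high probability; take $R_0 \defeq 2R$, $x_0\in\ball(R)$), sets $\epsilon_j \defeq \Theta(L R_{j-1} q)$, and runs the Theorem~\ref{thm:DP-ERM} solver on $\widehat{F^\lam_{\rho_j}}$ localized to $\ball_{x_{j-1}}(R_{j-1})$ with parameters as in \eqref{eq:fixed_params} but with $R_{j-1},\epsilon_j$ replacing $R,\ptot$; in particular $\rho_j$ is taken so the convolution bias $L\rho_j\sqrt d$ is at most $\epsilon_j$, so it is absorbed into the phase error and does not accumulate. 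This choice keeps $\kappa_j = L R_{j-1}/\epsilon_j = \Theta(q^{-1})$ and $R_{j-1}/\rho_j = \Theta(\sqrt d/q)$ — hence the subproblem count and iteration counts — constant across phases, so each phase costs, up to constants, what Theorem~\ref{thm:DP-ERM} costs. The solver (run in its high-probability form, aggregating $O(\log\frac1\zeta)$ independent copies via Lemma~\ref{lem:agg} as in Algorithm~\ref{alg:privat_line_search}) returns $x_j$ with $F^\lam(x_j)-F^\lam(\hx^\star) = O(\epsilon_j)$ except with exponentially small probability, so $\lam$-strong convexity gives $\norm{x_j-\hx^\star}\le R_j \defeq \sqrt{2\epsilon_j/\lam}$ with the same probability. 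The recursion $R_j = \Theta(\sqrt{L R_{j-1} q/\lam})$ has fixed point $R^\star = \Theta(\frac{Lq}\lam)$ and satisfies $\log(R_j/R^\star) = \tfrac12\log(R_{j-1}/R^\star)$, so after $k = O(\log\log n)$ phases $F^\lam(x_k)-F^\lam(\hx^\star) = O(L R^\star q) = O(\frac{L^2 q^2}\lam)$; a union bound over the phases shows the rare localization failures contribute only $O(\frac{L^2 q^2}\lam)$ in expectation, and expanding $q^2$ and bounding its cross term by AM--GM yields the stated error.

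Finally, for privacy and gradient complexity: since each phase is essentially one run of the Theorem~\ref{thm:DP-ERM} algorithm, I would budget privacy across the $k = O(\log\log n)$ phases by scaling the per-phase privacy parameter $\beta$ down by a polylogarithmic factor (equivalently, splitting the RDP budget), so composition via Lemma~\ref{lem:composition_rdp} — inflating $\delta$ by $O(k)$, absorbed into the polylogarithmic terms — gives an overall $(\alpha,\alpha\tau,\delta)$-RDP guarantee, and Corollary~\ref{cor:approx_rdp} converts it to $(\epsdp,\delta)$-DP with $\alpha,\tau$ as in the proof of Theorem~\ref{thm:DP-ERM}. The resulting polylogarithmic increase in per-phase iteration counts, together with the $O(k)$ phases and the $O(\log\frac1\zeta)$ aggregation, is absorbed into the $\log^6(\frac n\delta)$ factor, so the total gradient count (holding with probability $\ge 1-\delta$) matches the claimed expression. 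The step I expect to need the most care — the only one genuinely new relative to Section~\ref{ssec:erm_sc} — is the interaction of localization with high-probability success: one must ensure $\hx^\star$ provably stays in each phase's ball $\ball_{x_{j-1}}(R_{j-1})$ (hence the aggregated, high-probability solver) without the extra aggregation disturbing the privacy accounting or pushing the gradient budget beyond polylogarithmic; the remaining ingredients — the $O(L)$ effective-Lipschitz bookkeeping via $\norm{\hx^\star-x'}\le\frac{2L}\lam$ and Lemma~\ref{lem:outside_ball}, non-accumulation of convolution bias, and the utility telescoping — mirror the argument already carried out in Section~\ref{ssec:erm_sc}.
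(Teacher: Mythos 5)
Your proposal captures several of the right ingredients — in particular the identification of Lemma~\ref{lem:outside_ball} as the fix for a regularizer centered at an infeasible point, and the $\norm{\hx^\star - x'}\le \frac{2L}{\lam}$ bookkeeping — but the overall route diverges from the paper's in a way that introduces a genuine gap in the gradient-complexity accounting.

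The paper's proof does \emph{not} shrink the domain to $\ball_{x_{j-1}}(R_{j-1})$ and does \emph{not} need high-probability localization or aggregation via Lemma~\ref{lem:agg}. Instead, the first step is to establish a ``regularized extension of Theorem~\ref{thm:DP-ERM}'' that is the direct analogue of Corollary~\ref{cor:subsample_reg_bounds}: the algorithm stays constrained to the \emph{same} domain $\ball(R)$, but accepts an initial point $x_0$ with only an \emph{expected} distance bound $\E\norm{x_0 - \hx^\star}^2 \le (r')^2$, and its error guarantee scales with $r'$ rather than $R$. This works because, as stated explicitly in the paper, the rate of the \cite{AsiCJJS21} ball-acceleration framework already scales with the distance to the optimum when that is smaller than $R$. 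The outer recursion then ``proceeds identically to the proof of Proposition~\ref{prop:subsample_strongly_convex_bounds}'': each phase passes only an \emph{in-expectation} distance bound into the next, so no aggregation is needed. Equally important, Proposition~\ref{prop:subsample_strongly_convex_bounds} uses geometrically graded per-phase budgets, $T_{i-1} \propto 2^{i-1-k}T$ and $\beta_{i-1} \propto 2^{(k-i+1)/2}\beta$, so that the total gradient count telescopes to $O(T)$ and the per-phase RDP contributions $\propto (\beta_{i-1}T_{i-1})^2$ sum geometrically to $O(\beta^2 T^2)$. These two devices — expected-distance parameterization and geometric budget grading — are precisely what make the corollary's $\log^6(\frac n\delta)$ gradient bound match Theorem~\ref{thm:DP-ERM}'s with no loss.

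Your proposal replaces both devices with costlier ones. Constraining the domain to $\ball_{x_{j-1}}(R_{j-1})$ forces you to ensure (not merely expect) that $\hx^\star$ remains inside, hence the $O(\log\frac1\zeta)$-fold aggregation per phase; and ``splitting the RDP budget by a polylogarithmic factor'' inflates $T \propto 1/\beta$ by that same factor per phase, with $O(\log\log n)$ phases on top. The product of these overheads — roughly $O(\log n \cdot \log\log n)$ or worse — pushes the gradient count beyond the claimed $O(\log^6(\frac n\delta) \cdot (\cdot))$ bound, which, notably, is the \emph{same} exponent as in Theorem~\ref{thm:DP-ERM}. You flag the interaction of localization with high-probability success as the step needing the most care and hope the overhead is absorbed into the $\log^6$; that hope is exactly what the paper's in-expectation-plus-geometric-grading strategy is designed to make unnecessary. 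Concretely, the missing idea is: do not shrink the feasible set; let the expected distance bound flow through Proposition~\ref{prop:mainballaccel} as a reduced effective ``$R$'' while geometrically grading the per-phase budgets as in Proposition~\ref{prop:subsample_strongly_convex_bounds}.
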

\begin{proof}
We first note that similar to Corollary~\ref{cor:subsample_reg_bounds} (an extension of Proposition~\ref{prop:subsample_convex_bounds}), it is straightforward to extend Theorem~\ref{thm:DP-ERM} to handle both regularization and an improved upper bound on the distance to the optimum, with the same error rate and privacy guarantees otherwise. The handling of the improved upper bound on the distance follows because the convergence rate of the \cite{AsiCJJS21} algorithm scales proportionally to the distance to the optimum, when it is smaller than $R$. The regularization is handled in the same way as Corollary~\ref{cor:subsample_reg_bounds}, where regularization can only improve the contraction in the privacy proof. One subtle point is that for the regularized problems, we need to obtain starting points for Algorithm~\ref{alg:subsample_strong_convex} when the constraint set is $\ball_{\bx}(r)$, but the regularization in the objective is centered around a point not in $\ball_{\bx}(r)$ (in our case, the centerpoint will be a weighted combination of $\bx$ and $x'$). However, by initializing Algorithm~\ref{alg:subsample_strong_convex} at the projection of the regularization centerpoint, the initial function error guarantee in Lemma~\ref{lem:bound_initial_ferm} still holds (see Lemma~\ref{lem:outside_ball}). 
\\

The reduction from the claimed rate in this corollary statement to the regularized extension of Theorem~\ref{thm:DP-ERM} then proceeds identically to the proof of Proposition~\ref{prop:subsample_strongly_convex_bounds}, which calls Corollary~\ref{cor:subsample_reg_bounds} repeatedly.
\end{proof}

\subsection{Private SCO solver}\label{ssec:private_sco}

Finally, we give our main result on private SCO in this section. To obtain it, we will combine Corollary~\ref{cor:DP-ERM_strongly_convex} with a generic reduction in \cite{FKT20, KLL21}, which uses a private ERM solver as a black box. The reduction is based on the iterative localization technique proposed by \cite{FKT20} (which is the same strategy used by Section~\ref{ssec:erm_sc}), and derived in greater generality by \cite{KLL21}.

\begin{proposition}[Modification of Theorem 5.1 in \cite{KLL21}]
\label{prop:reduce_to_ERM}
Suppose there is an $(\epsdp,\delta)$-DP algorithm $\alg_{\ERM}$ with expected excess loss 
\[O\Par{\frac{L^2}{\lam}\cdot\Par{\frac{1}{n}+\frac{d\log\frac 1 \delta\log^{3}(\frac n \delta)\log^2 n}{n^2\epsdp^2}}},\]
using $N(n,\epsdp,\delta)$ gradient queries, for some function $N$, when applied to an $L$-Lipschitz empirical risk (with $n$ samples, constrained to $\ball(R) \subset \R^d$) plus a $\lam$-strongly convex regularizer. Then there is an $(\epsdp,\delta)$-DP algorithm $\alg_{\SCO}$ using $\sum_{i \in \lceil \log n \rceil} N(\frac n {2^i}, \frac {\epsdp} {2^i}, \frac \delta {2^i})$ gradient queries, with expected excess population loss \[O\Par{LR\cdot\Par{\frac{1}{\sqrt{n}}+\frac{\sqrt{d\log\frac 1 \delta}\log^{1.5}(\frac n \delta)\log n}{n\epsdp}}}.\]
\end{proposition}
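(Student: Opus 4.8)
The plan is to realize the iterative localization reduction of \cite{FKT20}, in the black-box form of \cite{KLL21}, with $\alg_{\ERM}$ from Corollary~\ref{cor:DP-ERM_strongly_convex} as the per-phase subroutine. Set $k \defeq \lceil \log_2 n\rceil$, pick an increasing regularization schedule $\{\lam_i\}_{i\in[k]}$ together with decreasing schedules $n_i \asymp n2^{-i}$ and $(\epsdp_i, \delta_i) \asymp (\epsdp 2^{-i}, \delta 2^{-i})$. Starting from $x_0 \defeq \vzero \in \ball(R)$, for $i = 1, \dots, k$ take a batch $\data_i$ of $n_i$ of the $n$ samples and let $x_i$ be the output of $\alg_{\ERM}$ applied to
\[\min_{x \in \ball(R)}\Brace{\frac 1 {n_i}\sum_{s \in \data_i} f(x; s) + \frac{\lam_i}{2}\norm{x - x_{i-1}}^2}\]
with privacy parameters $(\epsdp_i, \delta_i)$; this is exactly the $L$-Lipschitz, $\lam_i$-strongly-convex ERM problem (regularization centerpoint $x' = x_{i-1} \in \ball(R)$) handled by Corollary~\ref{cor:DP-ERM_strongly_convex}. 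Return $x_k$. The total number of gradient queries is $\sum_{i\in[k]} N(n_i, \epsdp_i, \delta_i)$, which is the stated form.

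For privacy, the $k$ phases compose: since $\sum_{i\in[k]}\epsdp_i \le \epsdp$ and $\sum_{i\in[k]}\delta_i \le \delta$, composition of the per-phase $(\epsdp_i,\delta_i)$-DP guarantees (e.g.\ Section 3.5.2 of \cite{DR14}) yields overall $(\epsdp,\delta)$-DP; if the $\data_i$ are taken pairwise disjoint, feasible since $\sum_{i\in[k]} n_i < n$, the same conclusion follows from parallel composition, since a neighboring pair of datasets differs in a single batch. One additionally checks that every phase meets the hypotheses of Corollary~\ref{cor:DP-ERM_strongly_convex}, i.e.\ $\epsdp_i \in (0,1)$ and $\delta_i \in (0,\tfrac16)$, which holds once $n$ is at least a constant.

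For utility, I would run the standard localization potential argument. Let $x^\star \in \argmin_{x \in \ball(R)}\Fpop(x)$. The per-phase guarantee of Corollary~\ref{cor:DP-ERM_strongly_convex}, combined with (i) the fact that $\alg_{\ERM}$'s output $x_i$ lies within $O(\sqrt{\gamma_i/\lam_i})$ of the phase-$i$ empirical minimizer (by $\lam_i$-strong convexity) and (ii) the $O(L^2/(\lam_i n_i))$ uniform stability of $\lam_i$-regularized ERM over $L$-Lipschitz convex losses --- both used to pass from the empirical regularized objective to its population version, legitimate because $\data_i$ is independent of $x_{i-1}$ --- and comparing against $x^\star$ inside phase $i$, gives for each $i$
\[\E\Brack{\Fpop(x_i) + \tfrac{\lam_i}{2}\norm{x_i - x_{i-1}}^2} - \Fpop(x^\star) \le \tfrac{\lam_i}{2}\,\E\norm{x^\star - x_{i-1}}^2 + \gamma_i,\]
where $\gamma_i = O\big(\tfrac{L^2}{\lam_i}\big(\tfrac1{n_i} + \tfrac{d\log(1/\delta_i)\,\mathrm{polylog}(n/\delta)}{n_i^2\epsdp_i^2}\big)\big)$ is a constant multiple of the Corollary~\ref{cor:DP-ERM_strongly_convex} rate. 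Telescoping these inequalities over the $k$ phases --- using $\Fpop(x_i) \ge \Fpop(x^\star)$ to carry the unwanted $\tfrac{\lam_i}{2}\norm{x_i - x_{i-1}}^2$ terms forward, and the geometric growth of $\{\lam_i\}$ to make the surviving quadratic terms and $\lam_1\norm{x^\star - x_0}^2 \le \lam_1 R^2$ cancel up to constants --- bounds $\E[\Fpop(x_k)] - \Fpop(x^\star)$ by $O\big(\lam_1 R^2 + \sum_{i\in[k]}\gamma_i\big)$. Finally calibrate $\lam_1 \asymp \tfrac L R\big(\tfrac1{\sqrt n} + \tfrac{\sqrt{d\log(1/\delta)}\log^{1.5}(n/\delta)\log n}{n\epsdp}\big)$ and grow $\{\lam_i\}$ fast enough that $\sum_{i\in[k]}\gamma_i$ is a geometric series; this makes both $\lam_1 R^2$ and $\sum_{i\in[k]}\gamma_i$ of the claimed order.

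The main obstacle is exactly this calibration/telescoping: choosing $\{\lam_i, n_i, \epsdp_i, \delta_i\}$ so that the localization potential closes with the right constants, given that the black-box ERM rate $\gamma_i$ has both a statistical ($1/n_i$) and a privacy ($d/(n_i^2\epsdp_i^2)$) component whose behaviors under the schedule must both be balanced into geometric series dominated by the first phase, and so that the empirical-to-population conversions do not inflate the rate. Since \cite{FKT20, KLL21} carry out precisely this argument for an ERM solver of exactly the form supplied by Corollary~\ref{cor:DP-ERM_strongly_convex}, the proof reduces to verifying that our solver fits that black-box interface and then invoking the reduction; the residual work is parameter bookkeeping.
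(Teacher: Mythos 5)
The paper does not actually carry out a proof of this proposition: it simply states that Theorem 5.1 of \cite{KLL21} applies as a black-box reduction, and that the only modification needed is tolerating an extra $\log^3(\frac{n}{\delta})\log^2 n$ factor in the ERM rate, which is "straightforward." Your proposal takes essentially the same route but tries to reconstruct the underlying iterative-localization argument of \cite{FKT20,KLL21} from scratch, which is a reasonable thing to do but is ultimately a more ambitious undertaking than the one-sentence justification the paper gives.

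Within that reconstruction, the ingredients you list (geometric $n_i, \epsdp_i, \delta_i$ schedule, increasing $\lam_i$, regularization centerpoint $x_{i-1}$, privacy via parallel composition over disjoint batches or basic composition over the summable budgets, per-phase empirical-to-population pass via uniform stability) are all correct and do match the structure of the cited reduction. However, the heart of the argument --- the telescoping that converts the per-phase inequality
\[
\E\Brack{\Fpop(x_i) + \tfrac{\lam_i}{2}\norm{x_i - x_{i-1}}^2} - \Fpop(x^\star)
\le \tfrac{\lam_i}{2}\E\norm{x^\star - x_{i-1}}^2 + \gamma_i
\]
into the global bound $O(\lam_1 R^2 + \sum_i \gamma_i)$ --- is only gestured at. The sentence "using $\Fpop(x_i) \ge \Fpop(x^\star)$ to carry the unwanted $\tfrac{\lam_i}{2}\norm{x_i - x_{i-1}}^2$ terms forward, and the geometric growth of $\{\lam_i\}$ to make the surviving quadratic terms $\ldots$ cancel" is not a derivation. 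In fact, the natural attempt to propagate this inequality runs into trouble: strong convexity plus the optimality of the population regularized minimizer $y_i$ gives only the non-expansion $\norm{y_i - x^\star} \le \norm{x_{i-1} - x^\star}$, not a contraction, and the cross-terms $\norm{x_i - x_{i-1}}^2$ in one phase do not directly cancel against $\norm{x^\star - x_{i-1}}^2$ in the next. Closing this loop --- showing that with $\lam_i$ increasing and $n_i, \epsdp_i$ decreasing the quadratic terms genuinely collapse to $O(\lam_1 R^2)$ while the errors $\sum_i \gamma_i$ remain dominated by the first phase --- is exactly the non-trivial content of Theorem 5.1 of \cite{KLL21}, and it is the step your outline delegates to "parameter bookkeeping." So while your proposal is in the right spirit and correctly identifies the ERM interface (Corollary~\ref{cor:DP-ERM_strongly_convex}) that plugs into the reduction, as written it is a proof outline rather than a proof; the paper is arguably more careful to acknowledge this by deferring entirely to \cite{KLL21} rather than partially re-deriving the argument and leaving the most delicate step undone.
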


Theorem 5.1 in \cite{KLL21} assumes a slightly smaller risk guarantee for $\alg_{\ERM}$ (removing the extraneous $\log^3(\frac n \delta)\log^2 n$ factor), but it is straightforward to see that the proof extends to handle our larger risk assumption. Combining Proposition~\ref{prop:reduce_to_ERM} and Corollary~\ref{cor:DP-ERM_strongly_convex} then gives our main result.

\begin{theorem}[Private SCO]\label{thm:DP-SCO}
In the setting of Problem~\ref{prob:sco_basic}, let $\epsdp \in (0, 1)$ and $\delta \in (0, \frac 1 6)$. There is an $(\epsdp,\delta)$-DP algorithm which takes as input $\data$ and outputs $\hx \in \R^d$ such that
\begin{align*}
    \E\Brack{\Fpop(\hx)-\min_{x\in\ball(R)}\Fpop(x)} \le  O\Par{LR\cdot\Par{\frac{1}{\sqrt{n}}+\frac{\sqrt{d\log\frac 1 \delta}\log^{1.5}(\frac n \delta)\log n}{n\epsdp}}}.
\end{align*}
Moreover, with probability at least $1 - \delta$, the algorithm queries at most  \begin{align*}O\Par{\log^6\Par{\frac n \delta}\Par{\min\Par{n,\frac{n^2\epsdp^2}{d}} + \min\Par{\frac{(nd)^{\frac 2 3}}{\epsdp}, n^{\frac 4 3}\epsdp^{\frac 1 3}}}} \text{ gradients.}\end{align*}
\end{theorem}
\section*{Acknowledgements}

We thank Vijaykrishna Gurunathan for helpful conversations on parallel convex optimization that facilitated initial insights regarding ReSQue. We also thank the anonymous reviewers for their feedback.
YC was supported in part by the Israeli Science Foundation (ISF) grant no.\ 2486/21 and the Len Blavatnik and the Blavatnik Family foundation.
AS was supported in part by a Microsoft Research Faculty Fellowship, NSF CAREER Award CCF-1844855, NSF Grant CCF-1955039, a PayPal research award, and a Sloan Research Fellowship.

\newpage
\addcontentsline{toc}{section}{References}
\bibliographystyle{alpha}
\newcommand{\etalchar}[1]{$^{#1}$}

\newpage
\begin{appendix}
\section{Helper facts}\label{app:facts}

\begin{fact}\label{fact:polyzero}
Let $p \in \N$. For any integer $r$ such that $0 \le r \le p - 1$, $\sum_{0 \le q \le p} (-1)^q\binom{p}{q} q^r = 0$.
\end{fact}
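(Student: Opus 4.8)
\textbf{Proof plan for Fact~\ref{fact:polyzero}.} The statement is the classical fact that finite differences of order $p$ annihilate polynomials of degree less than $p$. The plan is to prove it via the finite-difference operator, or equivalently via repeated differentiation of a generating function. First I would recall the binomial identity $(1-x)^p = \sum_{0 \le q \le p} (-1)^q \binom{p}{q} x^q$ as a polynomial identity in $x$. Applying the operator $x \frac{\d}{\d x}$ to this identity $r$ times and then evaluating at $x = 1$ will produce the sum $\sum_{0 \le q \le p} (-1)^q \binom{p}{q} q^r$ on the right-hand side, since each application of $x\frac{\d}{\d x}$ brings down a factor of $q$ from the monomial $x^q$. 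On the left-hand side, I would argue that $(x\frac{\d}{\d x})^r (1-x)^p$ is of the form $(1-x)^{p-r} \cdot (\text{polynomial in } x)$ for every $0 \le r \le p-1$: each differentiation of $(1-x)^m$ lowers the exponent of $(1-x)$ by exactly one and multiplication by $x$ does not change this, so after $r \le p-1$ steps a positive power of $(1-x)$ survives. Evaluating at $x=1$ then gives $0$.

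Alternatively — and perhaps cleaner to write — I would phrase it purely combinatorially: let $\Delta$ be the forward difference operator acting on functions of a nonnegative integer variable, $(\Delta g)(k) = g(k+1) - g(k)$, so that $(\Delta^p g)(0) = \sum_{0 \le q \le p} (-1)^{p-q}\binom{p}{q} g(q)$, which up to the global sign $(-1)^p$ is exactly our sum when $g(q) = q^r$. Then I would invoke (or quickly prove by induction on the degree) the standard fact that $\Delta$ lowers the degree of a polynomial by one, hence $\Delta^p$ kills any polynomial of degree at most $p-1$; since $q \mapsto q^r$ has degree $r \le p-1$, we get $(\Delta^p g)(0) = 0$, and multiplying by $(-1)^p$ leaves $0$. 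The degree-lowering claim itself follows by noting $\Delta(q^m) = (q+1)^m - q^m$ has leading term $m q^{m-1}$.

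The main (very minor) obstacle is simply choosing which of these equivalent formulations to present and making the bookkeeping of signs consistent: the sum in the statement has $(-1)^q$ rather than $(-1)^{p-q}$, so I would either absorb the discrepancy into the global factor $(-1)^p$ (which is harmless since we are proving the sum equals $0$) or work directly with the generating function $(1-x)^p$ where the sign $(-1)^q$ appears naturally. I expect the whole argument to take only a few lines, so I would go with the generating-function version for concision, spelling out the induction that $(x\frac{\d}{\d x})^r$ applied to $(1-x)^p$ retains a factor $(1-x)^{p-r}$, and concluding by substituting $x = 1$.
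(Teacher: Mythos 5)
Your proposal is correct, but it takes a different route than the paper. The paper's proof is a one-liner: it recognizes $\sum_{0 \le q \le p} (-1)^q\binom{p}{q}q^r$ as, up to sign, $p!$ times the Stirling number of the second kind $\left\{\begin{smallmatrix} r \\ p \end{smallmatrix}\right\}$ (equivalently, the number of surjections from an $r$-element set onto a $p$-element set), and observes that no surjections exist when $r < p$. You instead argue analytically: either apply $(x\frac{\d}{\d x})^r$ to $(1-x)^p$ and evaluate at $x=1$, noting that a factor $(1-x)^{p-r}$ with $p-r \ge 1$ survives, or observe that the $p$-th forward difference operator annihilates polynomials of degree at most $p-1$. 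All three arguments are standard and correct; the sign bookkeeping in your second variant is handled fine by absorbing $(-1)^p$ globally. What each buys: the paper's surjection-counting proof is the shortest if you already know the alternating-sum/Stirling identity, but it invokes that identity as a black box. Your generating-function version is fully self-contained and elementary, at the cost of a small induction on the $(1-x)^{p-r}$ factor; your finite-difference version makes explicit the principle actually at work (differences of order $p$ kill polynomials of degree less than $p$), which is the cleanest conceptual framing even if slightly longer to write out. Any of these would be an acceptable replacement.
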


\begin{proof}
We recognize the formula as a scaling of the Stirling number of the second kind with $r$ objects and $p$ bins, i.e., the number of ways to put $r$ objects into $p$ bins such that each bin has at least one object. When $r < p$ there are clearly no such ways.
\end{proof}

\begin{fact}\label{fact:exp_bound_p}
Let $p \in \N$ be even and $p \ge 2$. Let $\norm{x}, \norm{y} \le \frac 1 {p}$. Then
\begin{align*}\sum_{0 \le q \le p} (-1)^q \binom{p}{q} \exp\Par{\frac 1 {2} \Par{\Par{(p - q)^2 - (p - q)}\norm{x}^2 + (q^2 - q)\norm{y}^2 + 2q(p - q)\inprod{x}{y} }} \\
\le (12p\norm{x - y})^p.\end{align*}
\end{fact}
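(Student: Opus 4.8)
\textbf{Proof plan for Fact~\ref{fact:exp_bound_p}.} The key observation is that the alternating sum on the left-hand side is secretly a finite difference. Write $a \defeq \norm{x}^2$, $b \defeq \norm{y}^2$, $c \defeq \inprod{x}{y}$, and note that the exponent attached to the $q$-th term can be reorganized as a quadratic in $q$: expanding $(p-q)^2 - (p-q)$ and $q^2 - q$ and collecting, the exponent equals $\frac12\big( P(q) \big)$ where $P(q)$ is a degree-$2$ polynomial in $q$ whose quadratic coefficient is $a + b - 2c = \norm{x-y}^2$ and whose linear term involves $p$, $a$, $b$, $c$. The plan is therefore to write each summand as $\exp(\alpha q^2 + \beta q + \gamma)$ for explicit scalars $\alpha, \beta, \gamma$ with $\alpha = \frac12\norm{x-y}^2 \ge 0$ small (since $\norm{x-y} \le \frac 2 p$), and then to recognize $\sum_{q} (-1)^q \binom p q \exp(\alpha q^2 + \beta q + \gamma)$ as the $p$-th forward difference (up to sign) of the function $q \mapsto \exp(\alpha q^2 + \beta q + \gamma)$ evaluated at $0$, i.e.\ $\gamma$-scaled copy of $(-1)^p \Delta^p[g](0)$ where $g(t) = \exp(\alpha t^2 + \beta t)$.

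First I would set up this finite-difference interpretation cleanly: for any smooth $g$, $\sum_{0 \le q \le p} (-1)^q \binom p q g(q) = (-1)^p g^{(p)}(\zeta)$ for some $\zeta \in [0,p]$ by the standard mean-value form of the finite difference (or, alternatively, by the integral representation $\int g^{(p)}$ against the $B$-spline density supported on $[0,p]$, which avoids any single-point mean value and is cleaner for bounding). So the whole sum equals $e^\gamma \cdot (\pm) g^{(p)}(\zeta)$ with $g(t) = \exp(\alpha t^2 + \beta t)$, and it remains to bound $|g^{(p)}(\zeta)|$ for $\zeta \in [0,p]$. Here I would use Fact~\ref{fact:polyzero} implicitly through the following sharper route: since $g$ is analytic, $g^{(p)}(\zeta) = \frac{p!}{2\pi i}\oint \frac{g(z)}{(z-\zeta)^{p+1}}dz$; choosing a circle of radius $p$ around $\zeta$ and using $|g(z)| = \exp(\Re(\alpha z^2 + \beta z)) \le \exp(O(\alpha p^2 + |\beta| p))$ on that circle, together with $p!/p^p \le 1$, gives $|g^{(p)}(\zeta)| \le \exp(O(\alpha p^2 + |\beta|p))$. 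One then checks $\alpha p^2 = \frac12 p^2 \norm{x-y}^2 = O(1)$ and $|\beta| p = O(p(a+b+|c|)) = O(1)$ under the hypothesis $\norm{x},\norm{y} \le \frac1p$, and similarly $e^\gamma = O(1)$; so the sum is $O(1)$. But the claimed bound is $(12p\norm{x-y})^p$, which is \emph{smaller} than $O(1)$ when $\norm{x-y} \ll \frac{1}{12p}$ — so the contour bound is too lossy and must be refined.

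The fix, and the real content, is to extract the $\norm{x-y}^p$ (equivalently $\alpha^{p/2}$) decay. The cleanest way: since $g(t) = \exp(\alpha t^2 + \beta t)$, write $g = g_1 \cdot g_2$ with $g_1(t) = \exp(\alpha t^2)$ and $g_2(t) = \exp(\beta t)$, and use the Leibniz rule for the $p$-th finite difference, $\Delta^p[g_1 g_2] = \sum_{k} \binom p k \Delta^k[g_1]\cdot E^k\Delta^{p-k}[g_2]$ where $E$ is the shift. The finite differences of the pure exponential $g_2$ are $\Delta^{p-k}[g_2](t) = (e^\beta - 1)^{p-k} e^{\beta t}$, contributing $|e^\beta - 1|^{p-k} \le (2|\beta|)^{p-k}$ (valid since $|\beta| = O(1)$ is bounded). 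The finite differences of $g_1(t)=\exp(\alpha t^2)$ on $[0,p]$: since $g_1 - 1 = \exp(\alpha t^2) - 1$ has all derivatives up to order one vanishing... more precisely I would bound $|\Delta^k[g_1](t)| \le \sup_{[0,2p]} |g_1^{(k)}|$ and, since $g_1^{(k)}$ is $\exp(\alpha t^2)$ times a polynomial with a factor of $\alpha^{\lceil k/2\rceil}$, get $|\Delta^k[g_1]| \le (C\alpha p^2)^{\lceil k/2 \rceil}$ for a constant $C$ — here the key point is that on $[0,2p]$ one has $\alpha t^2 \le 4\alpha p^2 = O(1)$ and the polynomial $g_1^{(k)}/g_1$ in $t$, evaluated at $|t| \le 2p$, is bounded by $(C \alpha p \cdot p)^{\lceil k/2\rceil}$. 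Combining, $|\Delta^p[g](t)| \le \sum_k \binom p k (C\alpha p^2)^{\lceil k/2\rceil}(2|\beta|)^{p-k}$. The dominant term is $k=p$ (giving $(C\alpha p^2)^{p/2}$) provided $\alpha p^2 \gtrsim \beta^2$, i.e.\ $\norm{x-y}^2 \gtrsim (a+b+|c|)^2/p^2$... which need \emph{not} hold.

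\textbf{Main obstacle.} The genuine difficulty is precisely this last point: when $\norm{x-y}$ is tiny, the linear-in-$q$ part of the exponent does not automatically vanish, yet we must still produce a bound scaling like $\norm{x-y}^p$. I expect the resolution is that $\beta$ is \emph{not} an independent free parameter: one must recompute $\beta$ from the exact form of the exponent and discover that it too carries a factor controlled by $\norm{x-y}$ — indeed, rewriting the original exponent, the linear coefficient is $\beta = \frac12\big( 2p c - 2p a - a + b\big) + \ldots$, hmm, and the constant $\gamma$ contains $\frac12(p^2 - p)a$. The right move is probably to first \emph{complete the square in $q$ inside the exponent} and absorb the shift into reindexing, reducing to the case where the exponent is exactly $\frac12 \norm{x-y}^2 (q - q_0)^2 + \text{const}$ — but $q$ ranges over integers, so the shift $q_0$ must be handled by noting $\sum(-1)^q\binom p q h(q)$ is shift-covariant only up to the $(-1)^p$ and binomial weights, not literally translation-invariant. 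So the honest plan is: (i) complete the square, (ii) bound $|h^{(p)}|$ for $h(t) = \exp(\frac12\norm{x-y}^2 t^2)$ directly, getting $\le (C\norm{x-y}^2 p^2 \cdot \text{something})^{p/2} \cdot e^{O(1)}$, and (iii) carefully track the multiplicative constants from the completing-the-square shift and the leftover $\exp(\text{const})$ factors, verifying they are all $e^{O(1)} = O(1)$ under $\norm{x},\norm{y}\le 1/p$, so that the final bound is $(C\norm{x-y} p)^p \cdot e^{O(1)} \le (12 p \norm{x-y})^p$ after checking the constant $C$ works out to at most $12$ with the $e^{O(1)}$ folded in. Step (iii), pinning the constant to exactly $12$, is the fiddly part but is routine once the structure is in place; step (ii), the clean $\alpha^{p/2}$ extraction for a Gaussian-type function, is the technical heart and is where I would invest the care (via the Hermite-polynomial/Rodrigues formula: $\frac{d^p}{dt^p}e^{\alpha t^2} = \alpha^{p/2} H_p(\sqrt\alpha t) e^{\alpha t^2}$ up to normalization, and $|H_p(u)| \le (C(1+|u|))^p p^{p/2}$, with $|u| = \sqrt\alpha |t| \le \sqrt\alpha \cdot 2p = O(1)$ on the relevant interval).
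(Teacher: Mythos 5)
Your approach is correct and takes a genuinely different route from the paper's, though both ultimately exploit the same combinatorial fact. The paper fixes $x$, Taylor-expands the \emph{vector-valued} map $y\mapsto f_x(y)$ around $y=x$, derives the closed form \eqref{eq:rderivative} for $\nabla^r f^q_x$, and uses Fact~\ref{fact:polyzero} to annihilate all orders $r\le p-1$ because the $q$-dependence of each Taylor coefficient is a polynomial of degree $\le r$; the $p$-th order remainder is then bounded by bookkeeping powers of $p$ in the injective tensor norm. You instead collapse the summand to a scalar function of $q$ and read the alternating binomial sum as $(-1)^p\Delta^p g(0)$ for $g(t)=\exp(\alpha t^2+\beta t+\gamma)$ with $\alpha=\frac12\norm{x-y}^2$, bounding it by $\max_{[0,p]}|g^{(p)}|$. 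It is the same vanishing phenomenon ($\sum_q(-1)^q\binom pq q^r=0$ for $r<p$), applied in the discrete variable $q$ rather than in $y$. Your flagged ``main obstacle'' dissolves exactly as you hoped: expanding the exponent gives $\beta=\frac12\inprod{(1-2p)x+y}{x-y}$, hence $|\beta|\le\norm{x-y}$ under $\norm{x},\norm{y}\le\frac1p$, and $\gamma=\frac12(p^2-p)\norm{x}^2\le\frac12$ contributes only an $e^{1/2}$. Completing the square keeps $\sqrt\alpha\,|t+\beta/(2\alpha)|$ bounded by an absolute constant for $t\in[0,p]$, and the Rodrigues-type identity $(e^{\alpha t^2})^{(p)}=\alpha^{p/2}\cdot(\text{degree-}p\text{ polynomial in }\sqrt\alpha\,t)\cdot e^{\alpha t^2}$, with that polynomial $\lesssim O(1)^p(2p/e)^{p/2}$ on bounded inputs, gives $\max|g^{(p)}|\lesssim(\norm{x-y}\sqrt p)^p e^{O(1)}$ --- in fact sharper than the paper's $(12p\norm{x-y})^p$ by a factor of $\sqrt p^{\,p}$. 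The one thing you understate is the constant-chase: a bare Cauchy contour estimate with $R=1/\sqrt\alpha$ yields an $e^{O(1)}$ prefactor near $e^{10}$ which overwhelms $12^2$, so you genuinely need the Hermite-style derivative bound whose superior $\sqrt p$ scaling gives the slack to absorb it. But since Lemma~\ref{lem:p_moment_Gaussian} and its downstream uses only require \emph{some} explicit $(Cp\norm{x-x'})^p$, the number $12$ is immaterial, and your plan delivers a strictly stronger estimate.
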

\begin{proof}
Fix some $x$.
Let $f_x(y)$ be the left-hand side displayed above, and let
\[f^q_x(y) \defeq \exp\Par{\frac 1 {2} \Par{\Par{(p - q)^2 - (p - q)}\norm{x}^2 + (q^2 - q)\norm{y}^2 + 2q(p - q)\inprod{x}{y} }}.\]
We will perform a $p^{\text{th}}$ order Taylor expansion of $f_x$ around $x$, where we show that partial derivatives of order at most $p - 1$ are all zero at $x$, and we bound the largest order derivative tensor. 

\textit{Derivatives of $f^q_x$.} Fix some $0 \le q \le p$, and define
\begin{equation}\label{eq:CFvdef}C_q \defeq q^2 - q,\; F_q \defeq f^q_x(y),\; v_q \defeq (q^2 - q)y + q(p - q)x.\end{equation}
Note that for fixed $q$, $F_q$ and $v_q$ are functions of $y$, and we defined them such that $\nabla_y v_q = C_q\id_d$, $\nabla_y F_q = v_q F_q$. Next, in the following we use $\symsum$ to mean a symmetric sum over all choices of tensor modes, e.g.\ $\symsum v_q^{\otimes 2} \otimes \id_d$ means we will choose $2$ of the $4$ modes where the action is $v_q^{\otimes 2}$. To gain some intuition for the derivatives of $F_q$,
we begin by evaluating the first few via product rule:
\begin{align*}
\nabla f^q_x(y) &= F_q v_q, \\
\nabla^2 f^q_x(y) &= F_q v_q^{\otimes 2} + C_q F_q \id_d, \\
\nabla^3 f^q_x(y) &= F_q v_q^{\otimes 3} + C_q F_q \symsum v_q \otimes \id_d, \\
\nabla^4 f^q_x(y) &= F_q v_q^{\otimes 4} + C_q F_q \symsum v_q^{\otimes 2} \otimes \id_d + 3C_q^2 F_q \id_d \otimes \id_d.
\end{align*}
For any fixed $0 \le r \le p$, we claim that the $r^{\text{th}}$ derivative tensor has the form
\begin{equation}\label{eq:rderivative}\nabla^r f^q_x(y) = F_q\Par{\sum_{0 \le s \le \lfloor \frac r 2 \rfloor} \frac{N_{r, s}}{\binom r {2s}} \Par{\Par{C_q}^s  \symsum v_q^{\otimes (r - 2s)} \otimes \id_d^{\otimes s}} }, \end{equation}
where the $N_{r, s}$ are nonnegative coefficients which importantly do not depend on $q$. To see this we proceed by induction; the base cases are computed above. Every time we take the derivative of a ``monomial'' term of the form $F_q (C_q)^s v_q^{\otimes(r - 2s)} \otimes \id_d^{\otimes s}$ via product rule, we will have one term in which $F_q$ becomes $v_q F_q$ (and hence we obtain a $F_q C_q^s v_q^{\otimes (r + 1 - 2s)} \otimes \id_d^{\otimes s}$ monomial), and $r - 2s$ many terms where a $v_q$ becomes $C_q \id_d$ (and hence we obtain a $F_q C_q^{s + 1} v_q^{\otimes (r - 1 - 2s)} \otimes \id_d^{\otimes (s + 1)}$ monomial). For fixed $0 \le s \le \lfloor \frac{r + 1}{2}\rfloor$, we hence again see that $N_{r + 1, s}$ has no dependence on $q$.

Next, note $\sum_{0 \le s \le \lfloor \frac r 2\rfloor} N_{r, s}$ has a natural interpretation as the total number of ``monomial'' terms of the form $F_q (C_q)^s v_q^{\otimes (r - 2s)} \otimes \id_d^{\otimes s}$ when expanding $\nabla^r f_x^q(y)$. We claim that for all $0 \le q \le p$ and $0 \le r \le p - 1$,
\begin{equation}\label{eq:monomial_growth}\frac{\sum_{0 \le s \le \lfloor \frac {r + 1} 2 \rfloor} N_{r + 1, s}}{\sum_{0 \le s \le \lfloor \frac r 2 \rfloor} N_{r, s}} \le p.\end{equation}
To see this, consider taking an additional derivative of \eqref{eq:rderivative} with respect to $y$. Each monomial of the form $F_q (C_q)^s v_q^{\otimes (r - 2s)} \otimes \id_d^{\otimes s}$ contributes at most $r - 2s + 1 \le p$ monomials to the next derivative tensor via product rule, namely one from $F_q$ and one from each copy of $v_q$. Averaging this bound over all monomials yields the claim \eqref{eq:monomial_growth}, since each contributes at most $p$.\\

\textit{Taylor expansion at $x$.} Next, we claim that for all $0 \le r \le p - 1$,
\begin{equation}\label{eq:smallr}\nabla^r f_x(x) = 0.\end{equation}
To see this, we have that $((p - q)^2 - (p - q)) + (q^2 - q) + 2q(p - q) = p^2 - p$ is independent of $q$, and hence all of the $F_q$ are equal to some value $F$ when $y = x$. Furthermore, when $y = x$ we have that $v_q = q(p - 1)x$. Now, from the characterization \eqref{eq:rderivative} and summing over all $q$, any monomial of the form $x^{\otimes (r - 2s)} \otimes \id_d^{\otimes s}$ has a total coefficient of
\[F N_{r, s} \sum_{0 \le q \le p} (-1)^q\binom{p}{q} (C_q)^s (q(p - 1))^{r - 2s} = F N_{r, s} (p - 1)^{r - 2s} \sum_{0 \le q \le p} (-1)^q \binom{p}{q} C_q^s q^{r - 2s}. \]
Since $C_q$ is a quadratic in $q$, each summand $(C_q)^s q^{r - 2s}$ is a polynomial of degree at most $r \le p - 1$ in $q$, so applying Fact~\ref{fact:polyzero} to each monomial yields the claim \eqref{eq:smallr}. \\

\textit{Taylor expansion at $y$.} Finally, we will bound the injective tensor norm of $\nabla^p f_x(y)$, where the injective tensor norm of a degree-$p$ symmetric tensor $\mathbf{T}$ is the maximum value of $\mathbf{T}[v^{\otimes p}]$ over unit norm $v$. We proceed by bounding the injective tensor norm of each monomial and then summing. 

 First, for any $0 \le p \le q$, under our parameter settings it is straightforward to see $\norm{v_q} \le p$ and $F_q \le 2$. Also, for any $0 \le s \le \frac p 2$ we have $C_q^s \le p^{2s}$, and by repeatedly applying \eqref{eq:monomial_growth}, we have $\sum_{0 \le s \le \lfloor \frac p 2 \rfloor} N_{p, s} \le p^p$. In other words, each of the monomials of the form $F_q (C_q)^s v_q^{\otimes (r - 2s)} \otimes \id_d^{\otimes s}$ has injective tensor norm at most $2p^p$ (since each $C_q$ contributes two powers of $p$, and each $v_q$ contributes one power of $p$), and there are at most $p^p$ such monomials. Hence, by triangle inequality over the sum of all monomials,
\[\Abs{\nabla^p f^q_x(y)[(y - x)^{\otimes p}]} \le 2p^{2p} \norm{y - x}^p. \]
By summing the above over all $q$ (reweighting by $(-1)^q \binom p q$), and using that the unsigned coefficients sum to $\sum_{0 \le q \le p} \binom q p = 2^p$, we have
\[\Abs{\nabla^p f_x(y)[(y - x)^{\otimes p}]} \le 4^{p} p^{2p}\norm{x - y}^p.\]
The conclusion follows by a Taylor expansion from $x$ to $y$ of order $p$, and using $p^p \le 3^p p!$.
\end{proof}

\begin{proof}[Proof of Lemma~\ref{lem:p_moment_Gaussian}]
 For the first claim,
\begin{align*}
\int \frac{(\gamma_\rho(x - \bx - \xi))^p}{(\gamma_\rho(\xi))^{p - 1}} \d \xi &= (2\pi\rho)^{-\frac d 2} \int \exp\Par{-\frac 1 {2\rho^2} \Par{p\norm{x - \bx}^2 - 2p\inprod{x - \bx}{\xi} + \norm{\xi}^2}} \d\xi \\
&= \exp\Par{\frac {p^2 - p} {2\rho^2}\norm{x - \bx}^2} \le 2,
\end{align*}
where the second equality used the calculation in \eqref{eq:gaussian_integral}, and the inequality used the assumed bound on $\norm{x - \bx}$.
We move onto the second claim. First, we prove the statement for all even $p \in \N$. Denote $v \defeq x - \bx$ and $v' \defeq x' - \bx$ for simplicity. Explicitly expanding the numerator yields that
	\begin{align*}
		(2\pi \rho)^{\frac d 2}\int \frac{\Par{\gamma_\rho(v - \xi) - \gamma_\rho(v' - \xi)}^p}{(\gamma_\rho(\xi))^{p - 1}} \d\xi =             \sum_{0 \le q \le p} (-1)^q \binom{p}{q} S_q
	\end{align*}
	where we define 
	\begin{align*}
		S_q & \defeq (2\pi\rho)^{\frac d 2}\int\frac{ (\gamma_\rho(v-\xi))^{p-q}(\gamma_r(v'-\xi))^{q}}{(\gamma_\rho(\xi))^{p-1}}\d\xi \\
		& = \int \exp\Par{-\frac 1 {2\rho^2}\Par{(p - q)\norm{v}^2 + q\norm{v'}^2 - 2(p - q)\inprod{v}{\xi} - 2q\inprod{v'}{\xi} + \norm{\xi}^2}} \d\xi \\
		& = (2\pi \rho)^{\frac d 2} \exp\Par{\frac 1 {2\rho^2} \Par{\Par{(p - q)^2 - (p - q)}\norm{v}^2 + (q^2 - q)\norm{v'}^2 + 2q(p - q)\inprod{v}{v'} }}. 
	\end{align*}
	In the last line, we again used \eqref{eq:gaussian_integral} to compute the integral.
	When $p\ge 2$ and is even, a strengthening of the conclusion then follows from Fact~\ref{fact:exp_bound_p} (where we overload $x \gets \frac v \rho$, $y \gets \frac {v'} \rho$ in its application). In particular, this shows the desired claim where the base of the exponent is $\frac{12p}{\rho}\norm{x - x'}$ instead of $\frac{24p}{\rho}\norm{x - x'}$. We move to general $p \ge 2$. Define the random variable
	\[Z \defeq \Abs{\frac{\gamma_\rho(x - \bx - \xi) - \gamma_\rho(x' - \bx - \xi)}{\gamma_\rho(\xi)}}.\]
	Recall that we have shown for all even $p \ge 2$, 
	\[\E Z^p \le \Par{\frac{12p\norm{x - x'}}{\rho}}^p.\]
	Now, let $p \ge 2$ be sandwiched between the even integers $q$ and $q + 2$. H\"older's inequality and the above inequality (for $p \gets q$ and $p \gets q + 2$) demonstrate
	\[\E Z^p \le \Par{\E Z^q}^{\frac {q + 2 - p} 2} \Par{\E Z^{q + 2}}^{\frac {p - q} 2} \le \Par{\frac{12(q + 2)\norm{x - x'}}{\rho}}^p,\]
	where we use $q(q + 2 - p) + (q + 2)(p - q) = 2p$. The conclusion follows since $q + 2 \le 2p$.
\end{proof}

\begin{fact}\label{fact:shiftbase}
Let $Z$ be a nonnegative scalar random variable, let $C \ge 0$ be a fixed scalar, and let $p \in \N$ and $p \ge 2$. Then
\[\Par{\E \Brack{(Z + C)^p}}^{\frac 1 p} \le \E\Brack{Z^p}^{\frac 1 p} + C.\]
\end{fact}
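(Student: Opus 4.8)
\textbf{Proof proposal for Fact~\ref{fact:shiftbase}.} This is the statement that for a nonnegative random variable $Z$, a fixed constant $C\ge 0$, and an integer $p\ge 2$, we have $(\E[(Z+C)^p])^{1/p} \le (\E[Z^p])^{1/p} + C$. The natural approach is to recognize this as an instance of the triangle inequality for the $L^p$ norm (Minkowski's inequality) applied to the two ``random variables'' $Z$ and the constant $C$, but since a fully self-contained argument is cleaner here, I would instead expand the $p$-th power via the binomial theorem and control each term with H\"older's inequality.

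The plan is as follows. First, write $\E[(Z+C)^p] = \sum_{q=0}^{p} \binom{p}{q} C^{p-q} \E[Z^q]$ by the binomial theorem, using that $Z, C \ge 0$ so all terms are nonnegative. Next, for each $0 \le q \le p$, apply the power-mean (or Jensen / H\"older) inequality $\E[Z^q] \le (\E[Z^p])^{q/p}$, which holds because $t \mapsto t^{p/q}$ is convex for $p \ge q \ge 1$ (and the bound is trivial for $q = 0$). Substituting, we get
\[
\E[(Z+C)^p] \le \sum_{q=0}^{p} \binom{p}{q} C^{p-q} \left(\E[Z^p]\right)^{q/p} = \left( \left(\E[Z^p]\right)^{1/p} + C \right)^p,
\]
where the last equality is again the binomial theorem applied in reverse with ``$x = (\E[Z^p])^{1/p}$'' and ``$y = C$''. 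Taking $p$-th roots of both sides (valid since both sides are nonnegative) yields the claim.

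There is essentially no obstacle here; the only point requiring a line of care is the justification of $\E[Z^q] \le (\E[Z^p])^{q/p}$ for $1 \le q \le p$, which follows from Jensen's inequality applied to the convex function $u \mapsto u^{p/q}$ with the random variable $Z^q$, or equivalently from H\"older's inequality with exponents $p/q$ and $p/(p-q)$ comparing $Z^q \cdot 1$. One should also note the edge cases: if $\E[Z^p] = \infty$ the inequality is vacuous, and if $C = 0$ it is trivial, so we may assume $\E[Z^p] < \infty$ and $C > 0$ without loss of generality. Alternatively, one could simply invoke Minkowski's inequality for the $L^p(\Omega)$ norm directly, writing $\|Z + C\|_p \le \|Z\|_p + \|C\|_p = \|Z\|_p + C$ since the $L^p$ norm of a constant is that constant; this is a one-line proof if Minkowski is assumed available, but the binomial expansion argument above is preferable as it is elementary and keeps the exposition self-contained.
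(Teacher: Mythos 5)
Your proposal is correct and takes essentially the same route as the paper: both expand $(Z+C)^p$ by the binomial theorem, control each term via $\E[Z^q]\le(\E[Z^p])^{q/p}$ (Jensen / power-mean), and reassemble the resulting sum into $((\E[Z^p])^{1/p}+C)^p$ before taking $p$-th roots. Your closing observation that this is just Minkowski's inequality $\|Z+C\|_p\le\|Z\|_p+C$ is also accurate and is the cleanest way to see the result, though the paper opts for the elementary binomial argument as you do.
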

\begin{proof}
Denote $A \defeq \E\Brack{Z^p}^{\frac 1 p}$. Taking $p^{\text{th}}$ powers of both sides, we have the conclusion if
\[\Par{A + C}^p - \E\Brack{(Z + C)^p} \ge 0 \iff \sum_{q \in [p - 1]} \binom{p}{q} C^{p - q} \Par{A^q - \E\Brack{Z^q}} \ge 0. \]
Here we use that the $q = 0$ and $q = p$ terms cancel. We conclude since Jensen's inequality yields
\[\E[Z^p] \ge \E[Z^q]^{\frac p q} \implies A^q \ge \E[Z^q],\text{ for all } q \in [p - 1].\]
\end{proof} %
\section{Discussion of Proposition~\ref{prop:mainballaccel}}\label{app:ballexplain}

In this section, we discuss how to obtain Proposition~\ref{prop:mainballaccel} from the analysis in \cite{AsiCJJS21}. We separate the discussion into four parts, corresponding to the iteration count, the line search oracle parameters, the ball optimization oracle parameters, and the proximal gradient oracle parameters. We note that Proposition 2 in \cite{AsiCJJS21} states that they obtain function error $\epsopt$ with constant probability; however, examining the proof shows it actually yields an expected error bound. Additionally, Proposition 2 in \cite{AsiCJJS21} is stated for $x^\star$ (the comparison point in the error guarantee) defined to be the minimizer of $F$, but examining the proof shows that the only property about $x^\star$ it uses is that $x^\star \in \ball(R)$.

\paragraph{Iteration count.} The bound $\Cba K \log \kappa$ on the number of iterations follows immediately from the value $K_{\max}$ stated in Proposition 2 of \cite{AsiCJJS21}, where we set $\lam_{\min} \gets \lams$ and $\eps \gets \ptot$.

\paragraph{Line search oracle parameters.} The line search oracle is called in the implementation of Line 2 of Algorithm 4 in \cite{AsiCJJS21}. Our implementation follows the development of Appendix D.2.3 in \cite{AsiCJJS21}, which is a restatement of Proposition 2 in \cite{CarmonJJS21}. The bound $\Cba \log(\frac {R\kappa} r)$ on the number of calls to the oracle is immediate from the statement of Proposition 2. For the oracle parameter $\Delta = \frac r {\Cba}$, we note that the proof of Proposition 2 of \cite{CarmonJJS21} only requires that we obtain points at distance at most $\frac{r}{17}$ from $\xsbxl$, although it is stated as requiring a function error guarantee. This is evident where the proof applies Lemma 3 of the same paper.

\paragraph{Ball optimization oracle parameters.} The ball optimization oracle is called in the implementation of Line 5 of Algorithm 4 in \cite{AsiCJJS21}. In iteration $k$ of the algorithm, the error requirement is derived through the potential bound in Lemma 5 of \cite{AsiCJJS21}. More precisely, Lemma 5 shows that (following their notation), conditioned on all randomness through iteration $k$,
\begin{align*}
\E\Brack{A_{k + 1}\Par{F(x_{k + 1}) - F(x^\star)} + \norm{v_{k + 1} - x^\star}^2} - \Par{A_k\Par{F(x_k) - F(x^\star)} + \norm{v_k - x^\star}^2} \\
\le -\frac 1 6 \lam_{k + 1}A_{k + 1}\norm{\hx_{k + 1} - y_k}^2 + A_{k + 1} \phi_{k + 1} + a_{k + 1}^2 \sigma_{k + 1}^2 + 2Ra_{k + 1} \delta_{k + 1},
\end{align*}
where the terms $a_{k + 1}^2 \sigma_{k + 1}^2 + 2Ra_{k + 1} \delta_{k + 1}$ are handled identically in \cite{AsiCJJS21} and our Proposition~\ref{prop:mainballaccel} (see the following discussion). For the remaining two terms, Proposition 4 of \cite{AsiCJJS21} guarantees that as long as the method does not terminate, one of the following occurs.
\begin{enumerate}
\item $\norm{\hx_{k + 1} - y_k}^2 = \Omega(r^2)$.
\item $\lam_{k + 1} = O(\lams)$.
\end{enumerate}
In the first case, as long as $\phi_{k + 1}$ (the error tolerance to the ball optimization oracle) is set to be $\frac{\lam_{k + 1} r^2}{\Cba}$ for a sufficiently large $\Cba$ (which it is smaller than by logarithmic factors), up to constant factors the potential proof is unaffected. The total contributions to the potential due to all $A_{k + 1} \phi_{k + 1}$ losses from the iterations of the second case across the entire algorithm is bounded by
\[O\Par{\Par{K\log\kappa} \cdot \frac{R^2}{\ptot} \cdot \frac{\lams r^2}{\log^3\kappa}} = O\Par{R^2}. \]
Here, the first term is the iteration count, the second term is due to an upper bound on $A_{k + 1}$, and the third term is bounded since $\lam_{k + 1} = O(\lams)$. The initial potential in the proof of Proposition 2 of \cite{AsiCJJS21} is $R^2$, so the final potential is unaffected by more than constant factors.
For a more formal derivation of the same improved error tolerance, we refer the reader to \cite{CarmonH22}, Lemma 8.

\paragraph{Stochastic proximal oracle parameters.} Our stochastic proximal oracle parameters are exactly the settings of $\delta_k$, $\sigma_k$ required by Proposition 2 of \cite{AsiCJJS21}, except we simplified the bound on $\sigma_k^2 = O(\frac{\eps}{a_k})$ (note we use $\ptot$ in place of $\eps$). In particular, following notation of \cite{AsiCJJS21}, we have
\begin{align*}
\frac{\eps}{a_k} = \frac{\eps \sqrt{\lam_k}}{\sqrt{A_k}} = \Omega\Par{\eps \cdot \sqrt{\lams} \cdot \frac{\sqrt \eps}{R}} = \Omega\Par{\frac{\eps^2 K}{R^2} \log\kappa}.
\end{align*}
The first equality used $\lam_k a_k^2 = A_k$ for the parameter choices of Algorithm 4 in \cite{AsiCJJS21}. The second equality used that all $\lam_k = \Omega(\lams)$ and all $A_k = O(\frac{R^2}{\eps})$ in Algorithm 4 in \cite{AsiCJJS21}, where we chose $\lams = \frac{\eps K^2}{R^2} \log^2\kappa$. The final equality plugged in this bound on $\lams$ and simplified. Hence, obtaining a variance as declared in Proposition~\ref{prop:mainballaccel} suffices to meet the requirement. %
\section{Discussion of Proposition~\ref{prop:mainballaccel2}}\label{app:ballexplain2}

In this section, we discuss how to obtain Proposition~\ref{prop:mainballaccel2} (which is based on Proposition 1 in \cite{CarmonH22}) from the analysis in \cite{CarmonH22}. The iteration count discussion is the same as in Appendix~\ref{app:ballexplain}. We separate the discussion into parts corresponding to the two requirements in Proposition~\ref{prop:mainballaccel2}. Throughout, we will show how to use the analysis in \cite{CarmonH22} to guarantee that with probability at least $1 - \Omega(\frac{1}{\kappa})$, the algorithm has expected function error $O(\epsopt)$; because the maximum error over $\ball(R)$ is $\le LR$, this corresponds to an overall error $O(\epsopt)$, and we may adjust $\Cba$ by a constant to compensate.

\paragraph{Per-iteration requirements.} The ball optimization error guarantees are as stated in Proposition 1 of \cite{CarmonH22}, except we dropped the function evaluations requirement. To see that this is obtainable, note that \cite{CarmonH22} obtains their line search oracle (see Proposition~\ref{prop:mainballaccel}) by running $O(\log(\frac{R \kappa}{r}))$ ball optimization oracles to $O(\lam r^2)$ expected error, querying the function value, and applying Markov's inequality to argue at least one will succeed with high probability. We instead execute $O(\log(\frac{R \kappa}{r}))$ independent runs and apply a Chernoff bound to argue that with probability $O(\frac{1}{K\kappa \cdot \text{polylog}(K\kappa)})$, the preconditions of $\agg$ in Lemma~\ref{lem:agg} are met with $\Delta = O(r)$, as required by the line search oracle (see Algorithm~\ref{alg:privat_line_search}). Finally, applying a union bound over all iterations implies that the overall failure probability due to these line search oracles is $O(\frac 1 \kappa)$ as required by our earlier argument. 

\paragraph{Additional requirements.} The error requirements of the queries which occur every $\approx 2^{-j}$ iterations are as stated in \cite{CarmonH22}. The only difference is that we state the complexity deterministically (Proposition 1 of \cite{CarmonH22} implicitly states an expected gradient bound). The stochastic proximal oracle is implemented as Algorithm 2, \cite{CarmonH22}; it is also adapted with slightly different parameters as Algorithm~\ref{alg:low_bias_est} of this paper. The expected complexity bound is derived by summing over all $j \in [\lceil\log_2 K + \Cba\rceil]$, the probability $j$ is sampled in each iteration of Algorithm 2 of \cite{CarmonH22}. For all $j$ a Chernoff bound shows that the number of times in the entire algorithm $j$ is sampled is $O(2^{-j} K\log(\frac{R\kappa}{r}))$ (within a constant of its expectation), with probability $1 - \Omega(\text{poly}(\frac r {R\kappa}))$. Taking a union bound over all $j$ shows the failure probability of our complexity bound is $O(\frac 1 \kappa)$ as required.

%
\section{Discussion of Proposition~\ref{lem:parallel-agd}}\label{app:GLexplain}

In this section, we discuss how to obtain~\Cref{lem:parallel-agd} using results in~\cite{GhadimiL12}. We first state the following helper fact on the smoothness of a convolved function $\hf_\rho$ (see~\Cref{def:gaussian-convolution}).

\begin{fact}[Lemma 8,~\cite{BubeckJLLS19}]\label{fact:smoothness}
If $f: \R^d \to \R$ is $L$-Lipschitz, $\hf_\rho$ (see Definition~\ref{def:gaussian-convolution}) is $\frac L \rho$-smooth.
\end{fact}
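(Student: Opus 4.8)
The plan is to reduce $\frac{L}{\rho}$-smoothness of $\hf_\rho$ (i.e., Lipschitzness of $\grad\hf_\rho$) to a one-line estimate of a Gaussian absolute moment, via the Stein-type formula for the gradient of a Gaussian convolution. \textbf{Step 1: a gradient representation.} I would first record that
\[
\grad\hf_\rho(x) = \E_{y \sim \Nor(0,\rho^2\id_d)}\Brack{\frac{y}{\rho^2}\,f(x+y)}.
\]
This follows by writing $\hf_\rho(x) = \int f(u)\gamma_\rho(x-u)\,\d u$ (a change of variables from Definition~\ref{def:gaussian-convolution}), differentiating under the integral sign using $\grad_x \gamma_\rho(x-u) = -\frac{x-u}{\rho^2}\gamma_\rho(x-u)$, and substituting $y = u - x$. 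The interchange of $\grad$ and $\int$ is justified by dominated convergence: $f$ being $L$-Lipschitz gives $|f(u)| \le |f(0)| + L\norm{u}$, which grows only linearly, while $\gamma_\rho$ and $\grad\gamma_\rho$ decay faster than any polynomial, so the integrand and its $x$-gradient are locally uniformly dominated by integrable functions. (This incidentally re-derives the differentiability of $\hf_\rho$ asserted just after Definition~\ref{def:gaussian-convolution}.) Importantly, this identity uses no smoothness of $f$ whatsoever.

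\textbf{Step 2: the smoothness bound.} Subtracting the representation at $x$ and $x'$,
\[
\grad\hf_\rho(x) - \grad\hf_\rho(x') = \E_{y \sim \Nor(0,\rho^2\id_d)}\Brack{\frac{y}{\rho^2}\Par{f(x+y) - f(x'+y)}}.
\]
I would bound the norm of the right-hand side by testing against an arbitrary unit vector $v$: pulling the inner product inside the expectation and using $|f(x+y) - f(x'+y)| \le L\norm{x-x'}$ pointwise in $y$,
\[
\inprod{v}{\grad\hf_\rho(x) - \grad\hf_\rho(x')} \le \frac{L\norm{x-x'}}{\rho^2}\,\E_y\Abs{\inprod{y}{v}} = \sqrt{\tfrac{2}{\pi}}\cdot\frac{L\norm{x-x'}}{\rho},
\]
since $\inprod{y}{v} \sim \Nor(0,\rho^2)$ for $\norm{v} = 1$ and $\E_{Z \sim \Nor(0,\sigma^2)}|Z| = \sigma\sqrt{2/\pi}$. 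Taking the supremum over unit $v$ gives $\norm{\grad\hf_\rho(x) - \grad\hf_\rho(x')} \le \sqrt{2/\pi}\cdot\frac{L}{\rho}\norm{x-x'} \le \frac{L}{\rho}\norm{x-x'}$, which is the claim.

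I do not expect a serious obstacle here; the only point needing (routine) care is the differentiation-under-the-integral step of Step 1, where dominated convergence must be invoked using the Gaussian tails of $\gamma_\rho$ against the at-most-linear growth of $f$. An alternative, if one prefers to argue through the Hessian: since $\hf_\rho$ is a Gaussian convolution it is $C^\infty$, and the same computation yields $\grad^2\hf_\rho(x) = \frac{1}{\rho^2}\E_{y}\Brack{\Par{\frac{yy^\top}{\rho^2} - \id_d}f(x+y)}$; subtracting the mean-zero matrix $\frac{1}{\rho^2}\E_y[\frac{yy^\top}{\rho^2} - \id_d] = 0$, scaling $y = \rho w$ with $w \sim \Nor(0,\id_d)$, decomposing $w$ along and orthogonal to a test direction, and integrating by parts in one dimension via $(s^2-1)e^{-s^2/2} = \frac{\d}{\d s}\Brack{-se^{-s^2/2}}$ bounds $\norm{\grad^2\hf_\rho(x)}_{\mathrm{op}} \le \sqrt{2/\pi}\cdot\frac{L}{\rho}$; integrating this along the segment from $x'$ to $x$ recovers the same conclusion. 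I would present the first route as the main proof, since it is shorter and needs neither the Hessian nor a one-dimensional integration by parts.
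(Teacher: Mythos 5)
Your proof is correct. Note that the paper does not actually prove this statement: Fact~\ref{fact:smoothness} is cited verbatim from Lemma~8 of \cite{BubeckJLLS19} with no argument reproduced, so there is no ``paper's own proof'' to compare against. Your Step~1 Stein-type identity $\grad\hf_\rho(x) = \E_{y\sim\Nor(0,\rho^2\id_d)}[\rho^{-2}\,y\,f(x+y)]$, followed by the pointwise Lipschitz bound $|f(x+y)-f(x'+y)|\le L\norm{x-x'}$ and the Gaussian first-absolute-moment computation $\E|\inprod{y}{v}|=\rho\sqrt{2/\pi}$, is the standard route and is exactly what the cited source (and its antecedent \cite{DBM12}) uses; it even recovers the sharper constant $\sqrt{2/\pi}\cdot\frac{L}{\rho}$, which the statement then rounds up to $\frac{L}{\rho}$. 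The dominated-convergence justification for differentiating under the integral is routine and handled correctly. The alternative Hessian argument you sketch is also sound (after centering by $\E_y[\rho^{-2}yy^\top-\id_d]=0$ the one-dimensional integration by parts is legitimate since $s\mapsto f(x+\rho(sv+w_\perp))$ is $L\rho$-Lipschitz and hence absolutely continuous), but as you say the first route is shorter and needs no Hessian.
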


The statement of~\Cref{lem:parallel-agd} then follows from recursively applying Proposition 9 of~\cite{GhadimiL12} on the objective $\Psi = \hf_\rho + \frac \lam 2 \norm{\cdot - \bx}^2$, which is $\lambda$-strongly convex and $(\frac L \rho + \lam)$-smooth, together with the divergence choice of $V(x_0, x^*) \defeq \frac{1}{2}\|x_0-x^*\|^2$, which satisfies $\nu = 1$. Our parameter choices in~Algorithm \ref{alg:parallel-agd} are the same as in \cite{GhadimiL12}, where we use that our variance bound is $3L^2$ (Lemma~\ref{lem:stochastic_varbound}). 

In particular, denote the iterate $x_T^\ag$ after the $k^{\text{th}}$ outer loop by $x^k$. We will inductively assume that $\E\frac{1}{2}\|x^{k-1}-x_{\bar{x},\lambda}^\star\|^2\le \frac{r^2}{2^{k-1}}$ (clearly the base case holds). This then implies
\[
\E\left[\frac{\lambda}{2}\|x^k-x_{\bar{x},\lambda}^\star\|^2\right] \le \E \left[\Psi(x^k)-\Psi(x_{\bar{x},\lambda}^\star)\right]\le \frac{2(\frac L \rho + \lam)\|x^{k-1}-x_{\bar{x},\lambda}^\star\|^2}{T(T+1)}+\frac{24L^2}{\lambda N_k(T+1)}\le \frac{\lambda}{2^{k}}r^2\;
\]
where the second inequality is Proposition 9 in~\cite{GhadimiL12} (cf.\ equation (4.21) therein), and the last is by our choice of $T$ and $N_k$. Thus, when $K > \log_2(\frac{\lam r^2}{\phi})$ we have $\E \Psi(x_T^\ag) - \Psi(\xsbxl) \le \phi$ as in the last outer loop $k = K$. 
The computational depth follows immediately from computing $TK$, and the total oracle queries and computational complexity follow since $N_K$ asymptotically dominates:
\[
T\cdot\Par{\sum_{k\in[K]}N_k} = O\Par{TN_K + TK} = O\Par{\sqrt{1 + \frac{L}{\rho\lam}}\log\Par{\frac{\lam r^2}{\phi}} + \frac{L^2}{\lam \phi}}.
\] \end{appendix}

\end{document}